\renewcommand{\mathcal}[1]{\mathscr{#1}}
\theoremstyle{plain}
\def\theenumi{(\alph{enumi})}
\def\p@enumii{\theenumi}
\newtheorem{Thm}{Theorem}[section]
\newtheorem{Prop}[Thm]{Proposition}
\newtheorem{Cor}[Thm]{Corollary}
\newtheorem{Lem}[Thm]{Lemma}
\theoremstyle{definition}    
\newtheorem{Def}[Thm]{Definition}
\newtheorem{Facts}[Thm]{}
\newtheorem{Fact}[Thm]{Fact}
\theoremstyle{remark}    
\newtheorem{Not}[Thm]{Notation}
\newtheorem{Rem}[Thm]{Remark}
\newtheorem{Ex}[Thm]{Example}
\newtheorem{Ques}[Thm]{Question}
\newtheorem{Ass}[Thm]{Assumption}
\newtheorem{Cond}[Thm]{Conditions}
\newtheorem{Con}[Thm]{Convention}
\numberwithin{equation}{section}
\newcommand{\CG}{{\cal G}}
\newcommand{\BF}{{\mathbb{F}}\,\!{}}
\newcommand{\SL}{{\rm SL}}
\newcommand{\GL}{{\rm GL}}
\DeclareMathOperator{\id}{{id}}
\newcommand{\wh}[1]{\widehat{#1}}
\newcommand{\Sets}{\mathbf{Sets}}
\newcommand{\End}{\mathop{\rm End}\nolimits}
\newcommand{\Fm}{{\mathfrak{m}}}
\def\eps{\varepsilon}
\newcommand{\Fg}{{\mathfrak{g}}}
\newcommand{\Lie}{\mathop{\rm Lie}\nolimits}
\newcommand{\der}{{\rm der}}
\newcommand{\Gal}{\mathop{\rm Gal}\nolimits}
\def\into{\hookrightarrow}
\newcommand{\kernel}{\mathop{\rm Ker}\nolimits}
\newcommand{\Hom}{\mathop{\rm Hom}\nolimits}
\newcommand{\CO}{{\cal O}}
\newcommand{\Frob}{{\rm Frob}}
\newcommand{\BQ}{{\mathbb{Q}}}
\newcommand{\et}{{\rm et}}
\newcommand{\Spec}{\mathop{{\rm Spec}}\nolimits}
\newcommand{\CE}{{\cal E}}
\newcommand{\PGL}{{\rm PGL}}
\newcommand{\SO}{{\rm SO}}
\newcommand{\notdiv}{\mathopen{\mathchoice
             {\not{|}\,}
             {\not{|}\,}
             {\!\not{\:|}}
             {\not\:{|}\,}
             }}
\newcommand{\CZ}{{\cal Z}}
\newcommand{\Fz}{{\mathfrak{z}}}
\newcommand{\coker}{\mathop{\rm Coker}\nolimits}
\newcommand{\Fh}{{\mathfrak{h}}}
\newcommand{\BG}{{\mathbb{G}}}
\def\longto{\longrightarrow}
\newcommand{\image}{\mathop{{\rm Im}}\nolimits}
\newcommand{\BZ}{{\mathbb{Z}}}
\newcommand{\CT}{{\cal T}}
\newcommand{\Ext}{\mathop{\rm Ext}\nolimits}
\newcommand{\Fa}{{\mathfrak{a}}}
\newcommand{\Fb}{{\mathfrak{b}}}
\newcommand{\Fc}{{\mathfrak{c}}}
\newcommand{\Fs}{{\mathfrak{s}}}
\def\onto{\twoheadrightarrow}
\newcommand{\opp}{{\rm opp}}
\newcommand{\FM}{{\mathfrak{M}}}
\newcommand{\Fl}{{\mathfrak{l}}}
\newcommand{\Fp}{{\mathfrak{p}}}
\newcommand{\CH}{{\cal H}}
\newcommand{\Ft}{{\mathfrak{t}}}
\newcommand{\GSp}{{\rm GSp}}
\DeclareMathOperator{\Char}{Char}
\newcommand{\ab}{{\rm{ab}}}
\newcommand{\CU}{{\cal U}}
\DeclareMathOperator{\Aut}{Aut}
\newcommand{\wt}[1]{\widetilde{#1}}
\newcommand{\nhs}{\hspace{-.5pt}}
\newcommand{\Krull}{\mathrm{Krull}}
\newcommand{\uw}{\underline{w}}
\newcommand{\hyp}{\mathrm{hyp}}
\newcommand{\oBF}{{\overline{\BF}}}
\newcommand{\sco}{\mathrm{sc}}
\newcommand{\flatt}{\mathrm{fl}}
\DeclareMathOperator{\SU}{SU}
\DeclareMathOperator{\Fro}{Fr}
\newcommand{\CAW}{\wh{Ar}_{\WF}}
\DeclareMathOperator{\pdet}{pdet}
\DeclareMathOperator{\Sp}{Sp}
\DeclareMathOperator{\PGU}{PGU}
\DeclareMathOperator{\Fgl}{\Fg\Fl}
\DeclareMathOperator{\Fsl}{\Fs\Fl}
\DeclareMathOperator{\Fpgl}{\Fp\Fg\Fl}
\newcommand{\WF}{W(\BF)}
\newcommand{\barrhoR}{{\bar\rho_R}}
\newcommand{\barrhoH}{{\bar\rho_H}}
\newcommand{\barrhoS}{{\bar\rho_S}}
\newcommand{\ad}{\mathrm{ad}}
\newcommand{\ct}{{\textbf{(ct)}}}
\newcommand{\pf}{{\textbf{(pf)}}}
\newcommand{\lie}{{\textbf{(lie)}}}
\newcommand{\liegen}{\hbox{\textbf{(l-ge)}}}
\newcommand{\liecsc}{{\textbf{(csc)}}}
\newcommand{\lieu}{{\textbf{(l-un)}}}
\newcommand{\liec}{\hbox{\textbf{(l-cl)}}}
\newcommand{\liead}{\hbox{\textbf{(lie-ad)}}}
\newcommand{\liesc}{\hbox{\textbf{(lie-sc)}}}
\newcommand{\sch}{{\textbf{(sch)}}}
\newcommand{\van}{{\textbf{(van)}}}
\newcommand{\ns}{\textbf{(n-s)}}
\newcommand{\SMat}[4]{{{\renewcommand{\arraystretch}{.5} 
\addtolength{\arraycolsep}{-.8\arraycolsep}
\left( \begin{array}{cc} \scriptscriptstyle #1 & \scriptscriptstyle #2 \\ \scriptscriptstyle #3 & \scriptscriptstyle #4   \end{array}  \right)}}}
\DeclareMathOperator{\Sch}{{\mathbf{Sch}}}
\DeclareMathOperator{\Sh}{{Sh}}
\DeclareMathOperator{\Der}{{Der}}
\DeclareMathOperator{\Gps}{{\mathbf{Gps}}}
\newcommand{\fppf}{{\mathrm{fppf}}}
\newcommand{\hw}{{\mathrm{h.w.}}}
\newcommand{\sh}{{\mathrm{sh}}}
\newcommand{\tr}{{\mathrm{tr}}}
\DeclareMathOperator{\socle}{{soc}}
\DeclareMathOperator{\cosocle}{{csoc}}
\begin{document}

\title{Deformation rings and images of Galois representations}
\author{
Gebhard~B\"ockle
\thanks{Universit\"at~\!Heidelberg,~IWR,~Im~\!Neuenheimer~\!Feld~\!368,~69120~\!Heidelberg,~Germany,~boeckle@uni-hd.de},
Sara Arias-de-Reyna
\thanks{Universidad de Sevilla. Facultad de Mathem\'aticas, C/ S. Fernando, 4, C.P. 41004-Sevilla, Espa\~na, sara\underline{ }\underline{ }arias@us.es }}

\date{\today}

\maketitle

\begin{abstract}
Let $\CG$ be a connected reductive almost simple group over the Witt ring $\WF$ for $\BF$ a finite field of characteristic $p$. Let $R$ and $R'$ be complete noetherian local $\WF$-algebras with residue field $\BF$. Under a mild condition on $p$ in relation to structural constants of $\CG$, we show the following results: (1) Every closed subgroup $H$ of $\CG(R)$ with full residual image $\CG(\BF)$ is a conjugate of a group $\CG(A)$ for $A\subset R$ a closed subring that is local and has residue field $\BF$. (2) Every surjective homomorphism $\CG(R)\to\CG(R')$ is, up to conjugation, induced from a ring homomorphism $R\to R'$. (3) The identity map on $\CG(R)$ represents the universal deformation of the representation of the profinite group $\CG(R)$ given by the reduction map $\CG(R)\to\CG(\BF)$. This generalizes results of Dorobisz and Eardley-Manoharmayum and of Mano\-har\-mayum, and in addition provides an abstract classification result for closed subgroups of $\CG(R)$ with residually full image. 

We provide an axiomatic framework to study this type of question, also for slightly more general $\CG$, and we study in the case at hand in great detail what conditions on $\BF$ or on $p$ in relation to $\CG$ are necessary for the above results to hold.
\end{abstract}

\section{Introduction}\label{Section:Introduction}

Let $R$ be a complete noetherian local ring with finite residue field $\BF$. Let $\bar\rho\colon SL_n(R)\to\SL_n(\BF)$ be the induced group homomorphism. It is shown in \cite{Dorobisz,Manoharmayum,EM} that for all but finitely many pairs $(n,\BF)$ the universal deformation of $\bar\rho$ for representations into $\GL_n$, in the sense of \cite{Mazur}, is represented by $\id\colon \SL_n(R)\to\SL_n(R)$. The proofs were based in part on quite long and explicit computations. Our first observation was that these computations could be avoided almost entirely by a systematic use of methods from deformation theory and of largely well-known results on reductive groups over finite fields, their Lie algebras and some related cohomology groups. For a precise list of conditions, we refer the reader to the axiomatic framework introduced in \Cref{Section:Axioms}. This axiomatic viewpoint, allows us to obtain the same result for an arbitrary absolutely simple connected reductive group $\CG$ over the ring of Witt vectors $\WF$ in place of $\SL_n$. In fact, we have extensions to certain non-connected $\CG$ with $\CG^o$ as in the previous line, or to $\CG\subset\CG'$ modeling $\SL_n\subset \GL_n$; for the relevant deformation theory, we refer to~\cite{Tilouine}.

A second insight was that, by extending ideas of Boston from \cite[Appendix]{Boston}, within our axiomatic framework we can obtain general results on closed subgroups of $\SL_n(R)$ (or $\CG(R)$) with full residual image. We recover \cite[Main Theorem]{Manoharmayum} in nearly all cases, and we generalize it to our setting: given an injection $\WF\to R$, closed subgroups of $\CG(R)$ with residual image $\CG(\BF)$ typically contain a conjugate of $\CG(\WF)$. More generally, we show that certain closed subgroups $H$ of $\CG(R)$ with full residual image are simply conjugates of groups $\CG(A)$ with $A$ a closed subring of~$R$.

Let us give a concrete theorem that highlights the main results of the present article. Let $\CG$ be a connected absolutely simple linear algebraic group scheme over the ring of Witt vectors $\WF$ of a finite field $\BF$ of characteristic $p$. Assume that $p\ge5$, that $p$ does not divide $n+1$ if $\CG$ is of type~$A_n$, and that $\BF\neq\BF_5$ if $\CG$ is of type $A_1$ or $C_n$. Let $\CAW$ be the category of complete local $\WF$-algebras that are filtered inverse limits of local Artin $\WF$-algebras $R$ with residue field $\BF$. Denote by $\pi\colon R \to\BF$ the residue homomorphism and by $\CG(\pi)$ the induced homomorphism $\CG(R)\to\CG(\BF)$. Then the following holds:
\begin{Thm}\label{MainThm}
Let $R,R'$ be in $\CAW$.
\begin{enumerate}
\item If $H$ is a closed subgroup of $\CG(R)$ that surjects under $\pi$ onto $\CG(\BF)$ then there exists a closed $\WF$-subalgebra $A\subset R$ in $\CAW$ such that $H$ is conjugate to $\CG(A)$.
\item Let $\phi\colon \CG(R)\to\CG(R')$ be a surjective group homomorphism that on residue fields induces the identity of $\CG(\BF)$. Then a conjugate of $\phi$ is the map $\CG(\alpha)$ for some surjective ring homomorphism $\alpha\colon R\to R'$ in $\CAW$.
\item The functor $\CAW\to\Sets$,
\[A\mapsto\{\CG\hbox{-valued deformations of $\pi\colon\CG(R)\to\CG(\BF)$ to $A$}\}\] 
is representable and a universal deformation is given by the class of $\id_{\CG(R)}$.
\end{enumerate}
\end{Thm}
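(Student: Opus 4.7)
The plan is to prove the three parts in the order (a), (b), (c), with (a) carrying the technical weight and (b), (c) following by essentially formal arguments (graph of a homomorphism, and functorial reformulation).

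For (a), I would work by an $\Fm$-adic induction on the Artinian quotients $R_n := R/\Fm^n$. Writing $H_n$ for the image of $H$ in $\CG(R_n)$, the base case $n=1$ is the hypothesis $H_1 = \CG(\BF)$, corresponding to $A_1 = \BF$. For the inductive step, analyze the kernel $K_n$ of $\CG(R_{n+1}) \to \CG(R_n)$: via the truncated exponential (using $p \ge 5$ so that $p$ does not divide torsion of the root data of $\CG$), $K_n$ is identified with the $\CG(\BF)$-module $\Fg_\BF \otimes_\BF \Fm^n/\Fm^{n+1}$, where $\CG(\BF)$ acts through the adjoint action on $\Fg_\BF$ and trivially on the second factor. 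Under the hypotheses on $p$ and $\BF$, the module $\Fg_\BF$ is absolutely irreducible with endomorphism ring $\BF$, so every $\CG(\BF)$-submodule of $\Fg_\BF \otimes_\BF \Fm^n/\Fm^{n+1}$ has the form $\Fg_\BF \otimes_\BF V$ for a unique $\BF$-subspace $V \subseteq \Fm^n/\Fm^{n+1}$. Applied to the image of $H \cap K_n$, this produces a candidate for the $n$-th graded piece of $A$. The compatibility of $V$'s at different levels with the multiplication of $R$ is forced by commutator identities in $H$, which translate via the logarithm into Lie bracket identities in $\Fg_\BF$; the closure of $\Fg_\BF \otimes V$ under the bracket then encodes the closure of $V$ under multiplication in $R$. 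Finally, the cohomological vanishing $H^1(\CG(\BF),\Fg_\BF) = 0$ (which is part of the axiomatic framework and holds under our hypotheses) allows the conjugating element at level $n$ to be lifted to level $n+1$, and passage to the limit gives the closed subring $A$.

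For (b), I would pass to the graph $\Gamma_\phi := \{(g,\phi(g)) : g \in \CG(R)\}$, which is a closed subgroup of $\CG(R \times_\BF R')$ (the fiber product of local $\WF$-algebras with residue field $\BF$ again belongs to $\CAW$). Its residual image is the diagonal copy of $\CG(\BF)$, so (a) applies: after conjugating by some $(g_1, g_2) \in \CG(R) \times_{\CG(\BF)} \CG(R')$ we may assume $\Gamma_\phi = \CG(A)$ for a closed $\WF$-subalgebra $A \subset R \times_\BF R'$. The two projections yield ring maps $p_1 \colon A \to R$ and $p_2 \colon A \to R'$; the fact that $\Gamma_\phi \to \CG(R)$ is bijective (being a graph) forces $\CG(p_1)$ and hence $p_1$ to be an isomorphism, and the surjectivity of $\phi$ forces $p_2$ to be surjective. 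Setting $\alpha := p_2 \circ p_1^{-1}$ gives the required ring map, and the conjugation by $(g_1,g_2)$ is exactly the equivalence claimed.

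For (c), given a deformation $\rho \colon \CG(R) \to \CG(A)$ of $\pi$ to $A \in \CAW$, the image $\rho(\CG(R))$ is closed in $\CG(A)$ and reduces onto $\CG(\BF)$, so (a) yields, after conjugating by an element of $\ker(\CG(A)\to\CG(\BF))$ (which is the allowed deformation equivalence), a closed $\WF$-subalgebra $B \subset A$ with $\rho(\CG(R)) = \CG(B)$. The induced surjection $\CG(R) \twoheadrightarrow \CG(B)$ satisfies the hypothesis of (b) and is therefore of the form $\CG(\alpha_\rho)$ for a (unique) surjective ring map $\alpha_\rho \colon R \to B$; composition with the inclusion $B \hookrightarrow A$ gives a ring map $R \to A$ classifying $\rho$. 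Functoriality and the observation that $\id_{\CG(R)}$ corresponds to $\id_R$ establish both representability and that $\id_{\CG(R)}$ is universal; uniqueness of $\alpha_\rho$ follows from the faithfulness of $\CG$ on local $\WF$-algebras with residue field $\BF$.

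The main obstacle is the inductive step in part (a): all the arithmetic hypotheses on $p$ and $\BF$ enter precisely there, first to guarantee that the exponential/logarithm correspondence between $K_n$ and $\Fg_\BF \otimes \Fm^n/\Fm^{n+1}$ is a well-defined $\CG(\BF)$-equivariant isomorphism, second to ensure absolute irreducibility of the adjoint representation with scalar endomorphisms (so that submodules arise from $\BF$-subspaces), and third to force $H^1(\CG(\BF),\Fg_\BF)=0$ so that conjugacy lifts through the Artinian tower. The deduction of (b) from (a) by the graph trick and of (c) from (a) and (b) is, by contrast, essentially categorical.
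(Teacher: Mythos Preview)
Your overall logical ordering (a)$\Rightarrow$(b)$\Rightarrow$(c) is legitimate---the paper itself remarks that (c) follows from (a), (b) and the definition of deformation---but the paper actually proceeds differently: it proves (a) and (c) \emph{independently} via deformation theory and then derives (b) from (a), (c) and \Cref{Lem-GpsWithHc}. Your graph trick for (b) and your derivation of (c) from (a)+(b) are fine alternatives (the graph $\Gamma_\phi$ does sit in $\CG(R\times_\BF R')\in\CAW$, and the bijectivity of $\CG(p_1)$ forces $p_1$ to be an isomorphism once one intersects with a root group and uses that $A\hookrightarrow R\times_\BF R'$).

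The genuine gap is in your treatment of (a). Your $\Fm$-adic induction produces, at each level, an $\BF$-subspace $V_n\subset\Fm^n/\Fm^{n+1}$ with $H\cap K_n=\Fg\otimes V_n$, and the commutator identity $[\Fg\otimes V_i,\Fg\otimes V_j]=[\Fg,\Fg]\otimes V_iV_j=\Fg\otimes V_iV_j$ (using perfectness of $\Fg$) does show that $\bigoplus_n V_n$ is a graded subring of $\mathrm{gr}_\Fm R$. But that only gives you $\mathrm{gr}(A)$, not a subring $A\subset R$, and there is no canonical way to lift a graded subring of the associated graded to a subring of $R$. You also need the Witt-vector direction: nothing in your sketch explains why $\WF\to R$ factors through~$A$, which is where the non-splitness condition \ns\ enters. (Your invocation of $H^1(\CG(\BF),\Fg)=0$ for ``lifting the conjugating element'' is misplaced: smoothness of $\CG$ already lifts conjugators; the vanishing of $H^1$ is needed rather to ensure that when the kernel $N$ is trivial the section is the standard one, cf.\ \Cref{BostonLemmaExt-New}(a).)

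The paper's key idea, which you are missing, is not to build $A$ from the $V_n$ at all. Instead one uses that $\barrhoH\colon H\to\CG(\BF)$ has a universal deformation ring $R_{\barrhoH}$ (this exists by \ct), and defines $A$ as the \emph{image} of the classifying map $R_{\barrhoH}\to R$ induced by the inclusion $H\hookrightarrow\CG(R)$. This $A$ is automatically a closed $\WF$-subalgebra of $R$ and $H\subset\CG(A)$ by construction. What remains is to show $H=H_A$, and \emph{that} is where your commutator/irreducibility heuristics enter: one first checks surjectivity at the tangent level $H\twoheadrightarrow H_{A/(p,\Fm_A^2)}$ (\Cref{HtoTangSurj}, using \van\ and \Cref{BostonLemmaExt-New}), and then the Boston-type commutator argument (\Cref{Lem-BostonFromM2}, \Cref{Cor-BostonGenl}) propagates this to all of $A$ using \ns\ and the perfectness of $\Fg$. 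So your intuition about where the hypotheses are used is largely right, but the deformation-theoretic definition of $A$ is what makes the argument go through cleanly.
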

Parts (a) and (b) mean that all closed subgroups with maximal residual image and all homomorphisms between such must be `linear'. Part (c) is a consequence of (a), (b) and the definition of deformation. We shall in fact prove (a) and (c) from which (b) follows immediately.

\begin{Rem}
\begin{enumerate}
\item Part (a) is an abstract big image theorem under strong residual hypotheses. For $\GL_2$ there are more general results under weaker residual hypotheses in \cite{Bellaiche} or \cite{CLM}. One of their aims is a general description of the image of certain $p$-adic families of automorphic forms as in \cite{HIda}. We plan to apply our results in a similar way in future work.
\item Part (a) extends \cite{Manoharmayum} by Manoharmayum. There it is proved, again for $\CG=\SL_n$ and considered inside $\GL_n$, that under the hypothesis of (a), a conjugate of $H$ contains the subgroup $\CG(\WF_R)$ where $\WF_R$ is the image under the structure morphism $\WF\to R$; it is a quotient ring of $\WF$. Our part (a) implies this particular case for any $\CG$ considered, since clearly $\WF_R=\WF_A\subset A$.
\item Part (c) for $\CG=\SL_n$ and for deformations of the given $\pi$ but into $\GL_n$ is a result due to, independently, Dorobisz and Eardly-Manoharmayum; see \cite{Dorobisz,EM}. Their results are more complete than what we state above and also include $p=2,3$. \cite{Dorobisz} also characterizes the finitely many exceptions for small $p$, by giving counterexamples. In \Cref{Rem:5.4-pUniversal} we completely recover their list from our axiomatic framework. 
\item Part (c) implies in particular that any complete noetherian local ring occurs as a universal deformation ring, a question posed in \cite[Question~1.1]{BCdS}. It was the motivation behind  \cite{Dorobisz} and \cite{EM}, and was answered for $\GL_n$ in these papers.
\item For the most general results in the sense of \Cref{MainThm}(a), we refer to Corollaries~\ref{Rem:6.4-forAbsSimple} and \ref{Rem:6.4-pUniversal}, and for those in the sense of \Cref{MainThm}(c), to Corollaries~\ref{Rem:5.4-forAbsSimple} and~\ref{Rem:5.4-pUniversal}.
\end{enumerate}
\end{Rem}

We now give an outline of the article. In \Cref{Section:Axioms}, we present our basic set-up to be used throughout the remainder of this article and we formulate a number of axiomatic conditions. They have occurred, at least implicitly, in some form in the deformation theory of Galois representations for $\GL_n$. These axiomatic conditions are admittedly somewhat technical. But they match very well with what is needed in our proofs of the main results in Sections~\ref{Section3-new} and~\ref{Section4-new}. Some basic  facts on affine group schemes over general bases used in our axiomatic presentation and throughout this work are collected in \Cref{Appendix}. In \Cref{Section:Axioms}, we also formulate technical versions of \Cref{MainThm}(a) and~(c), based on our axiomatic conditions. Moreover the section contains, before \Cref{DefHPerfection}, the technically important assignment $H\mapsto H^c$ for closed subgroups $H$ of $\CG(R)$ with certain residual images. The group $H^c$ is a variant of the commutator subgroup that preserves the residual image. 

In \Cref{Section:Chevalley} we hope to convince the reader that our axiomatic conditions are natural by proving that they are satisfied for connected absolutely simple reductive groups $\CG$ over $\WF$ with `few' exceptions. \Cref{MainThm} gives a good idea of what `few' might mean, namely that they hold whenever $p$ is `large' in comparison to data coming from $\CG$. \Cref{Section:Chevalley} gives a thorough investigation of when precisely our axiomatic conditions are satisfied for absolutely simple $\CG$, even if $p$ is small! There is a finite list of `obvious' exceptions. But the validity of our conditions depends, for small $p$, also on further invariants of $\CG$, such as its center or the type of its root system; see \Cref{Thm:MainOnStdSetup} for a summary. In addition, in \Cref{Subsec:Puniv} we also study the validity of our axiomatic conditions for a situation $\CG\subset\CG'$ that resembles that of $\SL_n\subset\GL_n$; see \Cref{Thm:StdHypForPuniv}. We hope that our thorough analysis of the small prime situation, which makes up almost half of this article, is also useful to others since, as mentioned earlier, several of the conditions we investigate also occur in deformation theory. Moreover our review of the literature also discovered several untreated cases. It might be a challenge for us or others to eventually resolve these.

\Cref{Section3new} contains preparations for the proofs of our main results. We lay a number of elementary foundations of a group theoretic nature, and we explain how our axiomatic conditions are used to deduce results on group extensions, commutators etc. Some ideas are taken from \cite[Appendix]{Boston} by Boston. The rather elementary proofs of our main results are given in Sections~\ref{Section3-new} and~\ref{Section4-new}. They make essential use of standard results on universal deformations and build on the preparations from \Cref{Section3new}. The proofs are short and contain next to no explicit computations unlike \cite{Manoharmayum,Dorobisz,EM}. The reason is our method of proof. In concrete cases such as \Cref{MainThm}, we can rely on the extensive results gathered in \Cref{Section:Chevalley} that are by now standard results on algebraic groups over finite fields.

Let us end this introduction with a general question in the spirit of  \cite[Questions~1.1 and 1.2]{BCdS} but from a slightly different perspective. Suppose that $\Pi$ is a profinite group, and $\bar\rho\colon\Pi\to \GL_n(\BF)$ is a continuous homomorphism with trivial determinant.\footnote{For simplicity we phrase the question for $\GL_n$ and assume trivial determinant; the generalization to other $\CG$ and more general `determinants' is left to the reader.} Consider $\BF^n$ as being acted on by $\Pi$ via $\bar\rho$ and assume that the natural map $\BF\to\End_\Pi(\BF^n)$ is an isomorphism. Then $\bar\rho$ possesses a universal deformation $\rho^u\colon \Pi\to \GL_n(R^u)$ with trivial determinant (unique up to unique conjugacy) with $R^u\in \CAW$; essentially by \cite{Mazur}. Let $H$ be the closed compact subgroup $\rho^u(\Pi)$ of $\GL_n(R^u)$ and consider the diagram
\[\xymatrix{
\Pi\ar[r]&H \ar@{^{ (}->}[r]^-{\rho_H} \ar[dr]_-{\bar\rho_H}&\GL_n(R^u)\ar[d]\\
&&\GL_n(\BF)
\rlap{,}}\]
where the composed maps from $\Pi$ to $\GL_n(R^u)$ and to $\GL_n(\BF)$ are $\rho^u$ and $\bar\rho$, respectively. It is straightforward to see that 
\begin{Fact}
The universal deformation of $\bar\rho_H$ (with trivial determinant) exists and is represented by~$\rho_H$.
\end{Fact}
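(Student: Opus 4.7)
My plan is to verify the two hypotheses needed to invoke the deformation-theoretic machinery for $\bar\rho_H$, and then to exhibit the inverse pair of $\WF$-algebra maps showing $R^u$ represents the deformation functor of $\bar\rho_H$.

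First I would check the Schur-type condition for $\bar\rho_H$: the composition $\Pi\to H\hookrightarrow\GL_n(R^u)\to \GL_n(\BF)$ equals $\bar\rho$, and the map $\Pi\to H$ is surjective, hence any $H$-equivariant endomorphism of $\BF^n$ is $\Pi$-equivariant and conversely. Thus $\BF\to\End_H(\BF^n)$ is an isomorphism, and trivial determinant is inherited from $\rho^u$. So by Mazur's theorem, $\bar\rho_H$ admits a universal deformation $\rho_H^u\colon H\to \GL_n(R_H^u)$ with trivial determinant, $R_H^u\in\CAW$.

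Next I would produce two mutually inverse $\WF$-algebra maps between $R^u$ and $R_H^u$. Since $\rho_H$ is itself a deformation of $\bar\rho_H$ to $R^u$, universality of $\rho_H^u$ yields a unique $\phi\colon R_H^u\to R^u$ with $\GL_n(\phi)\circ\rho_H^u$ strictly equivalent to $\rho_H$. Conversely, composing $\rho_H^u$ with the surjection $\Pi\onto H$ is a deformation of $\bar\rho$, so universality of $\rho^u$ gives $\psi\colon R^u\to R_H^u$ with $\GL_n(\psi)\circ\rho^u$ strictly equivalent to $\rho_H^u\circ(\Pi\onto H)$. The composition $\phi\circ\psi\colon R^u\to R^u$ satisfies $\GL_n(\phi\circ\psi)\circ\rho^u\sim \GL_n(\phi)\circ\rho_H^u\circ(\Pi\onto H)\sim \rho_H\circ(\Pi\onto H)=\rho^u$, so by the uniqueness clause of the universal property of $\rho^u$ it equals $\id_{R^u}$.

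For the other composition $\psi\circ\phi\colon R_H^u\to R_H^u$, the key observation is that strict equivalence of two representations of $H$ is detected after precomposition with the surjection $\Pi\onto H$: a conjugating matrix for the $\Pi$-representations automatically conjugates the $H$-representations. Using this, $\GL_n(\psi\circ\phi)\circ\rho_H^u$ is strictly equivalent to $\rho_H^u$ after precomposition with $\Pi\onto H$, hence already as representations of $H$, and the uniqueness clause of the universal property of $\rho_H^u$ gives $\psi\circ\phi=\id_{R_H^u}$. Thus $\phi$ is an isomorphism and $\rho_H$ represents the universal deformation of $\bar\rho_H$. The only subtle point is the descent of strict equivalence from $\Pi$ to $H$, but this is immediate from surjectivity of $\Pi\onto H$, so I expect no serious obstacle.
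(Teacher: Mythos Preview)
Your argument is correct. The paper does not supply a proof here, declaring the fact straightforward; your verification via the Schur condition inherited along the surjection $\Pi\onto H$ and the standard back-and-forth of universal properties (with the observation that strict equivalence descends along that surjection) is precisely the expected argument.
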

In light of \Cref{MainThm} the following seems a natural question to us:
\begin{Ques}
For which subgroups $H_\BF$ of $\SL_n(\BF)$ can one classify closed subgroups of $\GL_n(R)$ with $R\in \CAW$ in a way similar to  \Cref{MainThm}(a)?
\end{Ques}
In cases where the question has a reasonable answer, the image $H$ of a universal deformation $\rho^u$ has a uniform description, in which $R^u$ depends on $\Pi$, but the shape of $H$ depends on $R^u$ and $H_\BF$ only, not on $\Pi$ in any further way. A result  for $\GL_2$, very much in this direction, is \cite[Thm.~7.2.3]{Bellaiche}.

\subsection*{Notation and Conventions}
\begin{itemize}
\item $p$ will denote a prime number, $\BF$ a finite field of characteristic $p$ and $\WF$ its ring of Witt vectors.
\item $\CAW$ is the category of complete local $\WF$-algebras $R$ that are filtered inverse limits of local Artin $\WF$-algebras with residue field $\BF$. The maximal ideal of $R$ will be denoted $\Fm_R$.
\item The ring of dual numbers $\BF[X]/(X^2)\in\CAW$ will be denoted by $\BF[\eps]$. 
\item Algebraic groups will always be denoted by capital script letters such as $\CG$; abstract groups by roman letters such as $H$; Lie algebras by small gothic letters such as $\Fg$; for the Lie algebra of $\CG$ we write $\Lie(\CG)$ but also often simply $\Fg$.
\item The  center of a group $H$ or an algebraic group $\CG$ or a Lie algebra $\Fg$ will be $Z(H)$ or $Z(\CG)$ or $Z(\Fg)$, respectively; see also \ref{App-1}.
\item The socle of a representation is the largest semisimple subrepresentation; the socle of a Lie algebra is the largest semisimple sub Lie algebra. Dually one defines the cosocle as the largest semisimple quotient.
\item We follow the standard convention that the types of classical groups are $(A_n)_{n\ge1}$, $(B_n)_{n\ge2}$, $(C_n)_{n\ge3}$ and $(D_n)_{n\ge4}$.
\end{itemize}

\subsection*{Acknowledgements:} The authors thank A.\ Conti for many comments on a preliminary version of the manuscript. During this work G.B. received support from the DFG within the  FG 1920 and the SPP 1489. S.A.-d.-R. was partially supported by project MTM2016-75027-P, funded by the Ministerio de Econom\'ia y Competitividad of
Spain and project US-1262169, funded by the Fondo Europeo de Desarrollo Regional (FEDER) and the Consejer\'ia de Econom\'ia, Conocimiento, Empresas y Universidad de la Junta de Andaluc\'ia.

\section{An axiomatic framework}\label{Section:Axioms}
Let us begin by introducing some notation. By $\CG$ we denote a smooth group scheme over $\WF$ whose connected component $\CG^o$ is reductive over $W(\BF)$, and such that $\CG/\CG^o$ is a constant group of order prime to $p$, see \ref{App-0}, \ref{App-3}, \ref{App-4} and \ref{App-5} in the Appendix. We write $\CG^\der$ for the commutator subgroup $[\CG^o,\CG^o]$, see \ref{App-6}. The group $\CG^\der$ is a characteristic subgroup of $\CG$ because $\CG^o$ is characteristic in $\CG$ and $\CG^\der$ in $\CG^o$, and in particular it is normal in $\CG$. It is semisimple over $\WF$ and $\CG^o/\CG^\der$ is a torus, see \ref{App-6}. Moreover $\CG/\CG^\der$ exists as a smooth group scheme over $\WF$, and it is an extension of the constant group $\CG/\CG^\der$ by the torus $\CG^o/\CG^\der$, see \ref{App-8}. We write $\CG_\BF$, $\CG^o_\BF$, $\CG^\der_\BF$ for the special fibers of $\CG$, $\CG^o$ and $\CG^\der$, respectively, and note that $(\CG_\BF)^o=\CG^o_\BF$, that $\CG^o_\BF$ is reductive and that $\CG^\der_\BF$ is semisimple; see \ref{App-3} and \ref{App-9}. We denote the Lie algebras of $\CG_\BF$ and $\CG^\der_\BF$ by $\Fg$ and $\Fg^\der$, respectively; cf.~\ref{App-Lie}. Via the adjoint~representation $\Fg$ and $\Fg^\der$ carry an action of $\CG(\BF)$ and $\CG^\der(\BF)$, respectively. Note also that $\Fg=\Lie(\CG^o_\BF)$.

\smallskip

Throughout this article, we fix a pair $(H_\BF,H'_\BF)$ consisting of a subgroup $H_\BF\subset\CG(\BF)$ and a normal subgroup $H'_\BF$ of $H_\BF$ such that $H'_\BF\subset \CG^\der(\BF)$. We make the following
\begin{Ass}[Standard hypothesis]\label{StandardHyp}
The tuple $(\CG,\BF,H_\BF,H'_\BF)$\footnote{We often simply refer to the pair $(H_\BF,H'_\BF)$, the group $\CG$ and the field $\BF$ being implicitly understood.} satisfies
\begin{enumerate}
\item $H_\BF$ surjects onto $(\CG/\CG^o)(\BF)$ and $H_\BF/[H_\BF,H_\BF]$ is of order prime to $p$,
\item there exists a subgroup $M_\BF$ of $H_\BF$ of order prime to $p$ such that $M_\BF H'_\BF=H_\BF$.
\end{enumerate}
\end{Ass}
\begin{Rem}
Note that if $H'_\BF$ is contained in $[H_\BF,H_\BF]$, for instance if $H'_\BF$ is perfect, then the quotient $H_\BF/[H_\BF,H_\BF]$ being of order prime to $p$ is implied by~(b): By hypothesis, $H'_\BF$ is normal in $H_\BF$. If in addition $H'_\BF$ is contained in $[H_\BF,H_\BF]$, then we have a surjective homomorphism $H_\BF/H'_\BF\to H_\BF/[H_\BF,H_\BF]$. Now (b) gives the isomorphism $M_\BF/(M_\BF\cap H'_\BF)\cong H_\BF/H_\BF'$, showing that $H_\BF/H_\BF'$ and hence $H_\BF/[H_\BF,H_\BF]$ is of order prime to~$p$, because $M_\BF$ is so.
\end{Rem}

The possible presence of $\CG/\CG^o$ allows for instance that $\CG\cong \CG^o\rtimes\Gal(L/K)$ where $L/K$ is a finite Galois extension of global fields (of order prime to $p$). 
\begin{Def}\label{Def:}
For $A\in\CAW$ we define $H'_A:=\{g\in \CG^\der(A) \mid g\pmod{\Fm_A}\in  H'_\BF\}$.
\end{Def}
Let $M^o_\BF=M_\BF\cap \CG^o(\BF)$. Since $M_\BF$ is of order prime to $p$, by \Cref{Lem-HRExists} for $A\in\CAW$ there exist subgroups $M^o_A\subset\CG(A)$ and $M_A\subset\CG(A)$ that modulo $\Fm_A$ reduce isomorphically to $M^o_\BF$ and $M_\BF$, respectively, they both normalize $H'_A$, and the group $M^o_AH'_A$ is independent of the choice of $M^o_A$. Our hypotheses imply $M^o_A\subset\CG^o(A)$. Throughout this article we impose
\begin{Con}\label{Conv:MWF}
We fix lifts 
\[M^o_{\WF}\subset M_{\WF}\subset \CG(\WF)\] 
of $M^o_\BF\subset M_\BF$, for which $M_{\WF}^?\!\to\! M^?_\BF$ is an isomorphism under reduction for $?\in\{o,\emptyset\}$. For any $A$ in $\CAW$, we define $M^?_A$ as the image of $M^?_{\WF}$ \hbox{under the structure morphism~$\WF\!\to \!A$.}
\end{Con}

We set 
\begin{equation}\label{Def:HA-HpA}
H_A^o:=M_A^o H'_A,\qquad  H_A:=M_A^{\hbox{\phantom{o}}} H_A'.
\end{equation}

\begin{Cond}\label{Cond:Axioms}
For a tuple $(\CG,\BF,H_\BF,H'_\BF)$ satisfying \Cref{StandardHyp}, we formulate the following conditions:
\begin{itemize}
\item[\pf] the group $H'_\BF$ is perfect;
\end{itemize}
\begin{itemize}
\item[\ct] 
the natural inclusion $\Lie Z(\CG^o) \into H^0(H_\BF, \Fg)$ is an isomorphism, and moreover the schematic center of $\CG$ is smooth;
\end{itemize}
\begin{itemize}
\item[\liegen] 
(i) $\Fg^\der$ is perfect, i.e., $[\Fg^\der,\Fg^\der]=\Fg^\der$ for the commutator Lie subalgebra, the center $Z(\Fg^\der)$ of $\Fg^\der$ is trivial, (ii) $\Fg^\der$ is irreducible and non-trivial as an $\BF_p[H'_\BF]$-module, and (iii) the natural map $\BF\to\End_{\BF_p[H'_\BF]}(\Fg^\der)$ is bijective\footnote{For any $\BF[G]$-module $V$ there is a canonical homomorphism $\BF\to\End_{\BF_p[G]}(V)$.};
\item[\lieu] 
(i) as $\BF_p[H'_\BF]$-module, $[\Fg^\der,\Fg^\der]\subset \Fg^\der$ is non-trivial, and one of the Jordan-H\"older factors of $[\Fg^\der,\Fg^\der]$ is not a Jordan-H\"older factor of $\Fg^\der/[\Fg^\der,\Fg^\der]$ and (ii) the natural map $\BF\to\End_{\BF_p[H'_\BF]}(\Fg^\der)$ is bijective;
\item[\liec]
(i) $\Fg^\der$ is perfect and (ii)  the $\BF_p[H'_\BF]$-cosocle $\overline\Fg^\der$ of $\Fg^\der$ is irreducible and $H_\BF$ acts trivially on $\kernel(\Fg^\der\to\overline\Fg^\der)$;
\item[\liecsc]
the cosocle of $\Fg^\der$ does not contain the trivial $H'_\BF$-module $\BF_p$;
\end{itemize}

\begin{itemize}
\item[\van] 
the cohomology $H^1(H'_\BF,\Fg)$ vanishes;
\end{itemize}
\begin{itemize}
\item[\sch] 
the mod $p$ Schur multiplier group $H^2(H'_\BF,\BF_p)$ vanishes;
\end{itemize}
\begin{itemize}
\item[\ns] 
the extension $1\to \Fg^\der \to H'_{W_2(\BF)}\to H'_\BF\to1$ is non-split. 
\end{itemize}
\end{Cond}

\begin{Rem}
It is straightforward to see that the following conditions are equivalent: (i) condition \liecsc, (ii) $\Hom_{\BF_p[H'_\BF]}(\Fg^\der,\BF_p)=0$, (iii) $H_0(H'_\BF,\Fg^\der)=0$.
\end{Rem}
\begin{Rem}\label{Rem:ConseqOfLieGen}
 Note that \liegen(ii) implies \liecsc, that \liegen(i) and (ii) imply \liec, and that \liegen(ii) and (iii) imply \lieu.
\end{Rem}

In \Cref{Section:Chevalley} we discuss in great detail the case when $\CG$ is connected, $\CG^\der$ is absolutely simple, $H_\BF=\CG^\der(\BF)$ and $H'_\BF\supset [H_\BF,H_\BF]$. Because of the Lie-part of \Cref{Cor:CondsForp5} we like to think that \liegen\ describes the `generic' behavior of $\Fg^\der$. As was just observed, \liegen\ implies \lieu\ and \liec. The latter conditions will be a crucial input about the Lie algebra $\Fg^\der$ in our main theorems on certain {\bf un}iversal deformation rings and on {\bf cl}osed subgroups of $\CG(R)$, for $R\in\CAW$, respectively.
 
 \medskip

We shall now state the main technical results of this work and use them to derive \Cref{MainThm}. For this let $R$ be in $\CAW$ and consider the canonical reduction
\[\barrhoR\colon H_R\to\CG^\der(\BF)\subset\CG(\BF)\] 
as a $\CG(\BF)$-valued representation of $H_R$. A $\CG$-deformation of $\barrhoR$ to a ring $A$ (in $\CAW$) is a $\kernel(\CG(A)\to\CG(\BF))$-conjugacy class of continuous homomorphisms $\rho_A\colon H_R\to \CG(A)$ such that $\rho_A\equiv\barrhoR\pmod{\Fm_A}$. As in \cite{Mazur} or \cite{Tilouine} one shows that
\footnote{Both references require that $R$ is noetherian. Hence one may apply them to all Artin quotients of $R$ and then argue using an inverse limit argument.}
\begin{Lem}\label{Lem-UDefExists}
If \ct\ holds, then the functor 
\[D_{\barrhoR}\colon\CAW\to\Sets, A\mapsto \{\hbox{$\CG$-valued deformations of $\barrhoR$ to $A$}\}\]
is pro-representable within $\CAW$. 
\end{Lem}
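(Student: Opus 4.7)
The plan is to invoke the standard pro-representability criterion of Schlessinger, in the $\CG$-valued formulation developed by Mazur~\cite{Mazur} and extended by Tilouine~\cite{Tilouine}. First, I would reduce to Artinian test rings: since every object of $\CAW$ is a filtered inverse limit of local Artin $\WF$-algebras with residue field $\BF$, and since a continuous representation of the profinite group $H_R$ into $\CG(\varprojlim A_i)$ is determined by its compatible system of reductions, one has $D_{\barrhoR}(\varprojlim A_i) = \varprojlim D_{\barrhoR}(A_i)$. Hence it suffices to prove pro-representability on the full subcategory of Artinian objects of $\CAW$, and no noetherianness needs be imposed on the representing object (so the usual finiteness of the tangent space, i.e.\ Schlessinger's (H3), is not required here).

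On Artinian test rings one checks conditions (H1) and (H2) of Schlessinger. Given small surjections $A' \to A$ and $A'' \to A$ with deformations $\rho_{A'},\rho_{A''}$ that agree over $A$, one glues them coordinatewise in $\CG(A' \times_A A'')$ to obtain a lift; this yields surjectivity of
\[
D_{\barrhoR}(A' \times_A A'') \to D_{\barrhoR}(A') \times_{D_{\barrhoR}(A)} D_{\barrhoR}(A'').
\]
Any two such gluings differ by an element of $\kernel(\CG(A' \times_A A'') \to \CG(A' \times A''))$, which is already an inner automorphism by an element of $\kernel(\CG(A' \times_A A'')\to\CG(\BF))$; when $A'' = \BF[\eps]$ this implies bijectivity, giving (H2). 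This is the usual Mazur template, now phrased for $\CG$-valued lifts as in~\cite{Tilouine}.

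The delicate point, separating pro-representability from the mere existence of a hull, is (H4). By an induction along a filtration of $A$ by small surjections, (H4) reduces to the infinitesimal statement that every strict equivalence of a lift with itself comes from a central element, which in turn is equivalent to the Lie-algebra identity
\[
H^0(H_\BF,\Fg) = \Lie Z(\CG^o).
\]
This is exactly the first clause of \ct. The second clause of \ct, smoothness of the schematic center of $\CG$, is what allows this infinitesimal statement to propagate to $A$-points of centralizers through the tower of small surjections: smoothness ensures that sections of $Z(\CG)$ lift freely, so the trivial infinitesimal automorphism computation implies trivial automorphisms of deformations over arbitrary Artinian $A$, modulo the (now well-behaved) central subgroup.

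I expect the main subtlety to lie in translating the Lie-algebraic statement of \ct\ into the required triviality of automorphisms of arbitrary Artinian deformations; this is precisely why \ct\ bundles together both the cohomological identity and the smoothness of $Z(\CG)$. All of this is standard and is carried out in detail in \cite{Mazur,Tilouine}; I would simply verify that the two clauses of \ct\ match the hypotheses there and invoke their conclusions.
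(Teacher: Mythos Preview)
Your proposal is correct and matches the paper's approach exactly: the paper itself simply cites \cite{Mazur,Tilouine} and adds (in a footnote) the remark that since those references assume $R$ noetherian, one should apply them to the Artin quotients of $R$ and pass to the inverse limit. Your write-up fleshes out precisely this argument, correctly identifying the two clauses of \ct\ as supplying the trivial-infinitesimal-automorphism condition and the smoothness needed to propagate it, and correctly noting that working in $\CAW$ dispenses with Schlessinger's (H3).
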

\begin{Def}\label{Def:UDefRing}
We denote the universal ring representing $D_\barrhoR$ by $R_{\barrhoR}$ and a representative of the universal deformation by $\rho_\barrhoR\colon H_R\to \CG(R_\barrhoR)$.
\end{Def}

The first main technical result of this article is the following:
\begin{Thm}[\Cref{Thm-Thm1}]
Suppose $(\CG,\BF,H_\BF,H'_\BF)$ satisfies conditions \ct, \van, \ns, \lieu\ and one of \liecsc\ or $\CG=\CG^\der$. Then the canonical inclusion $\iota\colon H_R\to \CG(R)$ represents the universal deformation of $D_\barrhoR$, and in particular $R_\barrhoR= R$.
\end{Thm}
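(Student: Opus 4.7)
The approach is to apply \Cref{Lem-UDefExists} and then verify that the resulting comparison map is an isomorphism.

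First, axiom \ct\ together with \Cref{Lem-UDefExists} ensures that $D_\barrhoR$ is pro-represented by some $R_\barrhoR \in \CAW$, equipped with a universal deformation $\rho^u\colon H_R \to \CG(R_\barrhoR)$. Since $\iota$ is itself a deformation of $\barrhoR$, universality produces a unique morphism $\phi\colon R_\barrhoR \to R$ in $\CAW$ such that $\CG(\phi)\circ\rho^u$ is conjugate to $\iota$ by an element of $\kernel(\CG(R)\to \CG(\BF))$. The theorem is then equivalent to the assertion that $\phi$ is an isomorphism in $\CAW$.

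Next I would show that $\phi$ is surjective. After conjugation, $H_R \subset \CG(A)$ for $A := \phi(R_\barrhoR) \subset R$. Since $H'_\BF$ is a subgroup of $\CG^\der(\BF)$ containing the identity, $H'_R$ contains the entire congruence kernel $\kernel(\CG^\der(R)\to\CG^\der(\BF))$, which must therefore lie in $\CG^\der(A)$. Comparing successive quotients of the $\Fm_R$-adic filtration on this kernel gives, for every $n \ge 1$, an inclusion
\[\Fg^\der \otimes_\BF \Fm_R^n/\Fm_R^{n+1}\ \subset\ \Fg^\der\otimes_\BF \Fm_A^n/\Fm_A^{n+1}\]
inside $\Fg^\der\otimes_\BF R$; since $\Fg^\der \neq 0$ by \lieu, this forces $\Fm_R^n/\Fm_R^{n+1} = \Fm_A^n/\Fm_A^{n+1}$ for every $n$, and hence $A = R$.

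For injectivity I would proceed by induction over small extensions $0 \to I \to \tilde A \to A \to 0$ in $\CAW$, showing that for every such $\tilde A$ the natural map $\Hom_{\CAW}(R,\tilde A) \to D_\barrhoR(\tilde A)$, sending $\alpha\mapsto\CG(\alpha)\circ \iota$, is a bijection. Given a deformation $\rho_{\tilde A}$ lifting a deformation $\rho_A$, which by induction corresponds to some $\alpha_A\colon R\to A$, the lifts of $\alpha_A$ to $\tilde\alpha\colon R\to\tilde A$ form a torsor under $\Der_\WF(R,I)$, while the lifts of $\rho_A$ to a $\CG(\tilde A)$-valued deformation form a torsor under an $H^1$-group with coefficients in $\Fg\otimes I$. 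The assignment $\tilde\alpha\mapsto\CG(\tilde\alpha)\circ\iota$ intertwines these torsor structures. Using \van\ to obtain $H^1(H'_\BF,\Fg)=0$, axiom \ns\ to rule out the spurious deformation attached to a putative splitting of $H'_{W_2(\BF)}\to H'_\BF$, and \lieu\ (or in the case $\CG=\CG^\der$, automatically; or \liecsc\ in its stead) to control the Jordan--H\"older structure of $\Fg^\der$, one checks that the induced map between the underlying $\BF$-vector spaces of these torsors is a bijection, providing the inductive step.

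The main obstacle is this last torsor comparison: matching $\Der_\WF(R,I)$ with the cohomology group controlling deformations. The interplay of \van\ (killing the obvious $H'_\BF$-cohomology), \ns\ (forcing any lift of $\barrhoR$ mod $p^2$ to capture a fixed non-split extension rather than an independent splitting), and \lieu\ or \liecsc\ (excluding those constituents of $\Fg^\der$ which would otherwise produce extra deformations) is what turns the torsor map into a bijection. I expect the preparatory material of \Cref{Section3new}, which extends ideas of Boston to our axiomatic setting, to provide the group-theoretic bookkeeping needed so that the argument above remains elementary.
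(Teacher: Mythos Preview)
Your surjectivity argument is essentially right, though the phrase ``inside $\Fg^\der\otimes_\BF R$'' is imprecise (the graded pieces are not submodules of $\Fg^\der\otimes_\BF R$); it is cleaner to argue, as the paper does, that $H_{R/(p,\Fm_R^2)}$ lies in the image of $\CG(\phi)$, forcing $\phi$ to be surjective on cotangent spaces and hence surjective.

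The injectivity argument, however, has a real gap. In your inductive step you compare the set of ring-homomorphism lifts $\tilde\alpha\colon R\to\tilde A$ of $\alpha_A$ with the set of deformation lifts of $\rho_A$. The latter is nonempty by hypothesis, but the former is only a \emph{pseudo}-torsor under $\Der_{\WF}(R,I)$: for $R$ not formally smooth over $\WF$ it may be empty, in which case no torsor comparison is possible. Equal tangent-space dimensions plus surjectivity of $\phi$ do not suffice in general (e.g.\ $\BF[x]/(x^3)\twoheadrightarrow\BF[x]/(x^2)$).

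The paper's proof avoids this obstruction issue by a two-step reduction. First, it establishes the tangent-space equality $\dim_\BF\Fm_{R_\barrhoR}/(p,\Fm^2)=\dim_\BF\Fm_R/(p,\Fm_R^2)$; the key input here is \Cref{Cor-FepsFactorsViaRmtwo}, which uses \lieu\ and \ns\ to show that every deformation to $\BF[\eps]$ factors through $H_{R/(p,\Fm_R^2)}$, after which inflation--restriction together with \van\ and \lieu(ii) computes $H^1$. This is precisely your torsor comparison, but only at the first step. Second, the paper proves the theorem for $R=\WF[[x_1,\ldots,x_d]]$ formally smooth: here the tangent-space equality, surjectivity of $\phi$, and Cohen's structure theorem force $\phi$ to be an isomorphism. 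Finally, for general $R$ one picks a presentation $\pi\colon S=\WF[[x_1,\ldots,x_d]]\twoheadrightarrow R$; any deformation $\rho_A$ of $\barrhoR$ pulls back to a deformation of $\barrhoS$, hence by the smooth case comes from a unique $\alpha\colon S\to A$, and a short argument with root groups shows $\ker\pi\subset\ker\alpha$, so $\alpha$ factors through~$R$. Your outline is missing this reduction to the formally smooth case, which is what actually replaces the obstruction theory you would otherwise need.
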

In \Cref{Rem:Dorob-EM}, we shall explain of the relation between \Cref{Thm-Thm1} and the results of Dorobisz and Eardly-Manoharmayum.

\medskip

For the second main result, let $H$ denote any closed subgroup of $\CG(R)$ such that the image of $H$ in $\CG(\BF)$ is equal to $H'_\BF$. In \Cref{Lem-OnCommutator} we shall prove that there exists a unique closed subgroup $H^c\subset H$ that contains the closure $\overline{[H,H]}$ of the commutator subgroup and for which $H^c/\overline{[H,H]}\to H_\BF/[H_\BF,H_\BF]$ is an isomorphism. Note that by definition $H^c$ is a closed subgroup of $H$ that surjects onto $H_\BF$. We shall also show that $H\mapsto H^c$ commutes with passing from $R$ to a quotient.

We define $H^{(0)}:=H$ and, inductively, for any $i\ge1$  we define $H^{(i)}:=(H^{(i-1)})^c$. The $H^{(i)}$ define a descending sequence of closed subgroups of~$H$.
\begin{Def} \label{DefHPerfection}
The group $H$ is called {\em $H_\BF$-perfect} if $H=H^c$. 

For any $H$, its {\em $H_\BF$-perfection} is defined as $H^{(\infty)}:=\bigcap_i H^{(i)}$.
\end{Def}

Note that for any artinian quotient of $R$ the procedure defining $H^{(\infty)}$ stops after finitely many steps. From $R$ being a filtered inverse limit of such rings and from \Cref{Lem-OnCommutator}, it follows that $H^{(\infty)}$ is $H_\BF$-perfect. We call a closed subgroup $H$ of $H_R$ {\em residually full} if under the reduction map $H$ surjects onto~$H_\BF$.

The second main technical result of this article is:
\begin{Thm}[\Cref{Thm-Thm2}]
Let $H\subset H_R$ be a closed subgroup that is residually full. Suppose that \ct, \ns\ and \van\ hold, and that either \liegen\ holds or that \liec\ and \sch\ hold. Then there exists a closed $\WF$-subalgebra $A$ of $R$ such that $H^{(\infty)}$ is conjugate to $H_A\subset\CG(R)$.
\end{Thm}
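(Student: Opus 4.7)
The plan is to argue by induction on the length of $R$ in the Artinian case and then pass to the inverse limit. The principal engine of the inductive step will be the cohomological vanishing supplied by \van, combined with the commutation of $H\mapsto H^{(\infty)}$ with quotients (\Cref{Lem-OnCommutator}). The remaining hypotheses \ct, \ns\ and either \liegen\ or (\liec\ plus \sch) enter upstream, in the preparatory results of Section~\ref{Section3new}, to guarantee that the groups $H_A$ produced during the induction are themselves $H_\BF$-perfect.

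Writing $R=\varprojlim_n R/\Fm_R^n$, $H\mapsto H^{(\infty)}$ commutes with each quotient by \Cref{Lem-OnCommutator}. If the theorem is established for each Artinian quotient $R_n:=R/\Fm_R^n$, yielding a closed $\WF$-subring $A_n\subset R_n$ and a conjugator $g_n\in\CG(R_n)$ with $g_n^{-1}(H^{(\infty)}\!\!\!\mod\Fm_R^n)g_n=H_{A_n}$, then the finite sets of valid pairs $(A_n,g_n)$ form an inverse system under reduction; the standard compactness argument (inverse limit of non-empty finite sets) produces a compatible choice and hence $A=\varprojlim A_n\subset R$ and $g\in\CG(R)$ with $g^{-1}H^{(\infty)}g=H_A$. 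The base case $R=\BF$ is immediate: $H^{(\infty)}\subset H_\BF$ surjects onto $H_\BF$, so $H^{(\infty)}=H_\BF=H_A$ for $A=\BF$.

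For the inductive step, let $R$ be Artinian of length $\ge 2$, pick a nonzero ideal $I\subset R$ with $\Fm_R\cdot I=0$ (so $I$ is a finite-dimensional $\BF$-vector space), and set $R_0:=R/I$. By \Cref{Lem-OnCommutator}, $H^{(\infty)}\!\!\!\mod I=(H\!\!\!\mod I)^{(\infty)}$. The inductive hypothesis applied to $H\!\!\!\mod I$, together with a lift of the resulting conjugator from $\CG(R_0)$ to $\CG(R)$ via smoothness of $\CG$, allows us to assume $H^{(\infty)}\!\!\!\mod I=H_{A_0}$ for some closed $\WF$-subring $A_0\subset R_0$. Choose topological $\WF$-generators of $A_0$, lift them arbitrarily to $R$, and let $A\subset R$ be the closed $\WF$-subring they generate; then $A\!\!\!\mod I=A_0$, and both $H^{(\infty)}$ and $H_A$ are closed subgroups of $\CG(R)$ with the same reduction $H_{A_0}$ modulo $I$. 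The kernel $N:=\kernel(\CG(R)\to\CG(R_0))$ is canonically $\Fg\otimes_\BF I$ as an $H_\BF$-module (via smoothness and $\Fm_R I=0$), so residually equal lifts of $H_{A_0}$ to closed subgroups of $\CG(R)$ are classified up to $N$-conjugation by $H^1(H_{A_0},\Fg\otimes_\BF I)$. The kernel of $H_{A_0}\twoheadrightarrow H_\BF$ is pro-$p$ and acts trivially on $\Fg$, so inflation identifies this cohomology with $H^1(H_\BF,\Fg)\otimes I$; since $[H_\BF:H'_\BF]$ is prime to $p$, restriction embeds this into $H^1(H'_\BF,\Fg)\otimes I$, which vanishes by \van. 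Hence $H^{(\infty)}$ is $N$-conjugate to $H_A$, closing the induction.

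The main obstacle of the proof lies not in the cohomological comparison above, which is essentially a formal consequence of \van, but in the structural prerequisites that make the induction meaningful. One must verify that each $H_A$ arising is itself $H_\BF$-perfect, so that the identity $H^{(\infty)}\!\!\!\mod I=H_{A_0}$ (rather than a strict inclusion) actually holds. Here the non-splitting condition \ns\ on $1\to\Fg^\der\to H'_{W_2(\BF)}\to H'_\BF\to 1$ rules out spurious splittings that would allow a proper closed subgroup of $H_A$ to surject onto $H_\BF$ and remain $H_\BF$-perfect, while the Lie-algebraic hypotheses — \liegen, or \liec\ together with \sch\ — control the commutator structure of $H_A$ through the associated filtration by $\Fg^\der$-modules and ensure the cosocle-level rigidity needed to align the abstract $H_\BF$-perfection with the ring-theoretic description via $A$. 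Once these compatibilities are in place — and this is where the full preparatory apparatus of Section~\ref{Section3new} is deployed — the inductive argument runs routinely.
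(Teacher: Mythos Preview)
Your inductive step contains two genuine gaps, and the first is fatal.

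\textbf{The lift of $A_0$ cannot be arbitrary.} You write: ``lift [generators of $A_0$] arbitrarily to $R$, and let $A\subset R$ be the closed $\WF$-subring they generate.'' But the correct $A$ is determined by $H^{(\infty)}$, not by an arbitrary choice. Concretely, take $R=\BF[x,y]/(x,y)^2$, $I=(y)$, $R_0=\BF[x]/(x^2)$, and suppose $H^{(\infty)}=H_R$. Modulo $I$ this is $H_{A_0}$ with $A_0=R_0$. If you lift $A_0$ to $A=\BF[x]/(x^2)\subset R$, then $H_A$ is a \emph{proper} subgroup of $H_R=H^{(\infty)}$; they have different orders, so no conjugation can identify them. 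Your $H^1$-argument would have to manufacture an isomorphism between groups of different cardinalities. The underlying error is the claim that ``residually equal lifts of $H_{A_0}$ to closed subgroups of $\CG(R)$ are classified up to $N$-conjugation by $H^1(H_{A_0},\Fg\otimes_\BF I)$.'' This is false: $H^1$ classifies \emph{splittings}, i.e.\ subgroups mapping isomorphically onto $H_{A_0}$. For a general closed subgroup $H'$ with $H'\!\!\mod I=H_{A_0}$, one must first record the submodule $H'\cap N$, then check the extension class in $H^2(H_{A_0},N/(H'\cap N))$, and only then use $H^1$ with coefficients in $N/(H'\cap N)$. You have not shown $H^{(\infty)}\cap N=H_A\cap N$, and in the example above they differ.

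\textbf{The inflation step is also wrong.} Even granting the $H^1$-classification, the assertion that inflation identifies $H^1(H_{A_0},\Fg\otimes I)$ with $H^1(H_\BF,\Fg)\otimes I$ fails. Inflation--restriction gives
\[
0\to H^1(H_\BF,\Fg\otimes I)\to H^1(H_{A_0},\Fg\otimes I)\to \Hom_{H_\BF}\!\big(K,\Fg\otimes I\big),
\]
with $K=\kernel(H_{A_0}\to H_\BF)$. The right-hand term is typically nonzero: for $A_0=\BF[\eps]$ one has $K\cong\Fg^\der$ and $\Hom_{H_\BF}(\Fg^\der,\Fg^\der\otimes I)\neq 0$ by \lieu(ii). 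So \van\ alone does not kill $H^1(H_{A_0},\Fg\otimes I)$.

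The paper's proof sidesteps both problems by letting deformation theory choose $A$. One applies the universal deformation $\rho_{\barrhoH}\colon H^{(\infty)}\to\CG(R_{\barrhoH})$ (which exists by \ct), factors the inclusion $H^{(\infty)}\hookrightarrow\CG(R)$ through a unique $\alpha\colon R_{\barrhoH}\to R$, and sets $A:=\alpha(R_{\barrhoH})$. After conjugation $H^{(\infty)}\subset H_A$, and the task is to show equality. The preparatory lemmas then do the work in the opposite direction from yours: one first shows $H^{(\infty)}\to H_{A/(p,\Fm_A^2)}$ is surjective (\Cref{BostonLemmaExt-New}, using \van\ and either \liegen(ii) or \liec(ii)$+$\sch, together with $H^{(\infty)}=(H^{(\infty)})^c$), and then propagates fullness up the $\Fm_A$-adic filtration via commutators (\Cref{Cor-BostonGenl}(b), using \ns\ and \liec(i)). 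No induction on the length of $R$ and no comparison of two independently constructed groups is needed.
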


In \Cref{Rem:RelToMano} we shall explain its relation to a result by Manoharmayum.
\begin{proof}[{Proof of \Cref{MainThm}}]
In \Cref{Cor:CondsForp5} we show that under the hypotheses of \Cref{MainThm} conditions \pf, \ct, \liegen, \sch, \ns\ and \van\ hold. By \Cref{Rem:ConseqOfLieGen}, also condition \lieu\ holds. Hence  \Cref{MainThm}(a) and (c) are immediate from \Cref{Thm-Thm1} and \Cref{Thm-Thm2}. Using the universal deformation property of $\id\colon \CG(R)\to\CG(R)$, the proof of (b) follows from (a), (c) and \Cref{Lem-GpsWithHc}.
\end{proof}

\medskip

We end this section with some remarks on our standard hypotheses from \Cref{StandardHyp} in relation to speculations about compatible systems of Galois representations attached to pure motives with coefficients; cf.\ the foundational reference 
\cite{Serre-Motives} and also \cite{Hui}.

Let $G$ be an affine group scheme over a number field $F$ such that the identity component $G^o$ of $G$ is reductive. Denote by $P_F$ the set of finite places of $F$, by $F_\lambda$ the completion of $F$ at $\lambda\in P_F$, by $\CO_\lambda$ its ring of integers and by $\BF_\lambda$ its residue field. Let $k$ be a number field with absolute Galois group $\Gamma_k$; we write $\Frob_v$ for a Frobenius automorphism of the finite place $v\in P_k$. We write $\ell_v$ and $\ell_\lambda$ for the rational prime below $v$ and $\lambda$, respectively, we set $S_\lambda:=\{v\in P_k\mid \ell_v=\ell_\lambda\}$. Then an $F$-rational $G$-compatible system $\rho_\bullet$ of $\Gamma_k$ consists of a finite set $S$ of places of $P_k$, for each $\lambda\in P_F$ a continuous representation $\rho_\lambda\colon\Gamma_k\to G(F_\lambda)$ that is unramified outside $S\cup S_\lambda$, and for each $v\in P_k\setminus S$ a semisimple $G(\overline \BQ)$-conjugacy class $t_v$ in $G(\overline\BQ)$ such that for all $\lambda\in P_F$ and $v\in P_k\setminus (S\cup S_\lambda)$, the semisimplification of $\rho_\lambda(\Frob_v)$ is conjugate to $t_v$ in $G(\overline F_\lambda)$. 

For any smooth projective variety $X$ over $k$ and fixed $i\ge0$, the $i$-th \'etale $\ell$-adic cohomologies over all primes $\ell$ form a $\BQ$-rational $\GL_n$-compatible system with $n=\dim H^i_\et(X,\BQ_\ell)$. If in addition one has a projector in the sense of Grothendieck motives for the $i$-th \'etale cohomologies of $X$, say defined over a number field $F$, then multiplying with the projector defines an $F$-rational compatible system. Similarly, it is expected that to a $G'$-valued cuspidal automorphic representation with Hecke field $F'$ and Langlands dual $G$ of $G'$ one can attach a an $F'$-rational compatible system over a finite extension $F$ of~$F'$. 

Given a $G$-compatible system, a first expectation is that there exists a reductive subgroup $H$ of $G$ such that, up to conjugation for each $\lambda\in P_F$, the group $\rho_\lambda(\Gamma_k)$ is Zariski dense in $H(F_\lambda)$. So let us assume from now on that $G$ is chosen so that $\rho_\lambda(\Gamma_k)$ is Zariski dense in $G(F_\lambda)$ for all $\lambda$. The group $G$ should then be the motivic Galois group of the $F$-rational compatible system. There is a finite Galois extension $k'$ of $k$ such that $\Gal(k'/k)$ is isomorphic to $G(F_\lambda)/G^o(F_\lambda)$, and a result of Serre on compatible systems says that $k'$ is independent of~$\lambda$. 

By forming the quotient modulo the center of $G^o$, let us next assume that $G^o$ is semisimple of adjoint type. The system remains compatible. Because $G^o$ is of adjoint type, by \cite{Pink-Compact} the compatible system $(\rho_\lambda\colon \Gamma_{k'}\to G^o(F_\lambda))$ thus obtained should arise from an $F_{\tr}$-rational $G'$-compatible system where $F_{\tr}\subset F$ is the field of traces of its adjoint representation and $G=G'\times_{F_\tr}F$. We now make two hypotheses for the remaining discussion:
\begin{enumerate}
\item The group $G^o$ is absolutely simple.
\item The $G$-compatible system, and not only its restriction to $\Gamma_{k'}$, is $F_\tr$-rational.
\end{enumerate}
Condition (a) is an intrinsic condition on the motive giving rise to $(\rho_\lambda)_\lambda$. We assume it in order to fit our context. Concerning (b), observe that the result of Pink guarantees that for each $\lambda\in P_F$ and $\lambda'\in P_{F_\tr}$ below $\lambda$, there is connected reductive group $H_{\lambda'}$ defined over $(F_\tr)_{\lambda'}$ whose base change to $F_\lambda$ is $G^o\times_FF_\lambda$. Pink's result does not guarantee that the groups come from a global group $H$ defined over $F_\tr$. This should be expected and this is one of our requirements in (b). The other requirement is that there is an extension $G'$  of a finite group by $H$ defined over $F_\tr$, such that $G=G'\times_{F_\tr}F$. We want to alert the reader at this point, that $G^o$ is simply connected and of adjoint type, and hence inner-twist like phenomena do not occur.
We think that it is an interesting question to ask if (b) can always be expected, or if there are natural sufficient conditions for it to hold. Jointly with A. Conti, the first author plans to explore this further in some particular cases.

The group $G$ has an integral model $\CG$ over an open non-empty subscheme $U$ of $\Spec \CO_F$. For places $\lambda$ in $|U|\subset P_F$ the group $\CG^o(\CO_\lambda)$ is maximal hyperspecial. An expectation that one has in this context, cf.~\cite{Larsen}, is that  for almost all $\lambda\in |U|$ the condition $(\hyp_\lambda)$ holds: the subgroup $\rho_\lambda(\Gamma_{k'})$ of $G^o(F_\lambda)$ itself is maximal hyperspecial, and hence conjugate to $\CG^o(\CO_\lambda)$. The following result is now a direct consequence of the above discussion and \Cref{Cor:CondsForp5}.

\begin{Prop}\label{Prop:StdHypForCS}
Let $\rho_\bullet$ be an $F$-rational $G$-compatible system of representations of $\Gamma_k$ with $G^o$ absolutely simple and of adjoint type. Suppose condition $(\hyp_\lambda)$ holds for all but finitely many $\lambda\in P_F$.\footnote{In particular, $\rho_\lambda(\Gamma_k)$ is open in $G(F_\lambda)$ and Zariski-dense in $G\times_FF_\lambda$ for all these $\lambda$.} Set $H_\lambda:=\bar\rho_\lambda(\Gamma_k)$ and $H'_\lambda:=H_\lambda\cap \CG^o(\BF_\lambda)$. Then for all but finitely many $\lambda\in P_F$ the tuple $(\CG_{W(\BF_\lambda)}, \BF_\lambda,H_\lambda,H'_\lambda)$ satisfies \Cref{StandardHyp} and conditions \pf, \ct, \liegen, \van, \sch, \ns.
\end{Prop}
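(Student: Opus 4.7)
The plan is to reduce the verification to \Cref{Cor:CondsForp5} after first pinning down $H_\lambda$ and $H'_\lambda$ for almost all $\lambda\in P_F$.

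First I would use $(\hyp_\lambda)$: after conjugating $\rho_\lambda$, one may assume $\rho_\lambda(\Gamma_{k'})=\CG^o(\CO_\lambda)$, whose reduction modulo $\lambda$ is $\CG^o(\BF_\lambda)$, so that $H'_\lambda=\bar\rho_\lambda(\Gamma_{k'})=\CG^o(\BF_\lambda)$ (and hence $H'_\lambda=H_\lambda\cap\CG^o(\BF_\lambda)$ as required by the definition). Using Serre's result that $\Gal(k'/k)$ is independent of $\lambda$ together with the fact that $\CG/\CG^o$ is a constant \'etale group scheme of order prime to $p$ for $\ell_\lambda\nmid|\Gal(k'/k)|$, one identifies $H_\lambda/H'_\lambda$ with $(\CG/\CG^o)(\BF_\lambda)\cong\Gal(k'/k)$.

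Next I would verify \Cref{StandardHyp}. The surjection onto $(\CG/\CG^o)(\BF_\lambda)$ required by~(a) is immediate from the previous paragraph. For the prime-to-$p$ abelianization, Steinberg's theorem gives that $H'_\lambda=\CG^o(\BF_\lambda)$ is perfect as soon as $|\BF_\lambda|$ avoids a finite list of exceptions coming from the absolutely simple adjoint type of $\CG^o$; hence $[H_\lambda,H_\lambda]\supset H'_\lambda$, and $H_\lambda/[H_\lambda,H_\lambda]$ is a quotient of $\Gal(k'/k)$, which has order prime to $\ell_\lambda$ once $\ell_\lambda\nmid|\Gal(k'/k)|$. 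Condition~(b) follows from Schur--Zassenhaus applied to $1\to H'_\lambda\to H_\lambda\to(\CG/\CG^o)(\BF_\lambda)\to 1$: the coprimality of the orders of kernel and quotient yields a complement $M_\BF$, which is necessarily of order prime to~$p$.

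Finally I would invoke \Cref{Cor:CondsForp5}: for $\CG^o$ connected absolutely simple adjoint with $H_\BF=\CG(\BF_\lambda)$ and $H'_\BF=\CG^o(\BF_\lambda)$, it delivers all of \pf, \ct, \liegen, \van, \sch\ and \ns, provided the residue characteristic $\ell_\lambda$ exceeds bounds that depend only on the type of $\CG^o$ and, for \ns, on mild additional constraints on $\BF_\lambda$. The main obstacle is the careful bookkeeping of the excluded finite set of $\lambda$: places where $(\hyp_\lambda)$ fails, where $\ell_\lambda\mid|\Gal(k'/k)|$, where $|\BF_\lambda|$ lies in the small-field exceptions that obstruct perfectness of $\CG^o(\BF_\lambda)$, or where the small-characteristic analysis of \Cref{Section:Chevalley} excludes $\ell_\lambda$. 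Each of these sets is finite and intrinsic to $\CG$ and the compatible system, whence the conclusion holds for cofinitely many $\lambda\in P_F$.
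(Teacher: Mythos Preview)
Your overall strategy matches the paper's, which simply declares the result a direct consequence of the preceding discussion and \Cref{Cor:CondsForp5}. There is, however, a genuine slip: Steinberg's theorem (\Cref{Thm:Pf}) asserts perfectness of the \emph{simply connected} group $\CG^{o,\sco}(\BF_\lambda)$, not of the adjoint group $\CG^o(\BF_\lambda)$. By \Cref{Cor:Pf} one has $\CG^o(\BF_\lambda)/[\CG^o(\BF_\lambda),\CG^o(\BF_\lambda)]\cong Z(\CG^{o,\sco})(\BF_\lambda)$, which is nonzero for infinitely many $\lambda$ (for instance, for $\CG^o=\PGL_2$ at every $\lambda$ of odd residue characteristic). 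Hence $H'_\lambda=\CG^o(\BF_\lambda)$ is typically \emph{not} perfect, your deduction of \Cref{StandardHyp}(a) via ``$[H_\lambda,H_\lambda]\supset H'_\lambda$'' breaks down, and condition \pf\ does not literally hold for this $H'_\lambda$; the latter appears to be an imprecision already in the proposition's statement rather than something you should be proving.

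The repair for \Cref{StandardHyp}(a) is easy: $[H_\lambda,H_\lambda]\supseteq[\CG^o(\BF_\lambda),\CG^o(\BF_\lambda)]=\image\big(\CG^{o,\sco}(\BF_\lambda)\to\CG^o(\BF_\lambda)\big)$ once $\CG^{o,\sco}(\BF_\lambda)$ is perfect, so $H_\lambda/[H_\lambda,H_\lambda]$ has order dividing $|Z(\CG^{o,\sco})(\BF_\lambda)|\cdot|\Gal(k'/k)|$, prime to $\ell_\lambda$ for almost all $\lambda$. Note also that \Cref{Cor:CondsForp5} is formulated under \Cref{Cond:StdSetup}, i.e.\ for \emph{connected} $\CG$ with $H'_\BF$ equal to the image of $\CG^\sco(\BF)$, not to $\CG^o(\BF_\lambda)$; you should say a word about why \ct, \liegen, \van, \sch, \ns\ transfer to the tuple $(\CG,\BF_\lambda,H_\lambda,\CG^o(\BF_\lambda))$. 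This is routine---Hochschild--Serre with the prime-to-$p$ index of $\image(\CG^{o,\sco}(\BF_\lambda))$ in $\CG^o(\BF_\lambda)$ handles \van\ and \sch, conditions \liegen\ and \ns\ depend only on $\CG^\der=\CG^o$, and \ct\ uses $Z(\CG^o)=1$ together with the finite \'etale component group being smooth---but it is not a tautology and deserves a sentence.
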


\section{Discussion of basic hypotheses for Chevalley groups}
\label{Section:Chevalley}

In this section, until the end of \Cref{Subsection:Ns}, we fix the following set-up.
\begin{Cond}\label{Cond:StdSetup}
\begin{enumerate}\advance\itemsep by -.4em
\item $\CG$ is a connected absolutely simple linear algebraic group~over~$\BF$,
\item $H'_\BF$ is the image of $\CG^\sco(\BF)$ under $\CG^\sco(\BF)\to\CG(\BF)$, and  
\item $H_\BF$ is any subgroup of $\CG(\BF)$ that contains $H'_\BF$. 
\end{enumerate}
\end{Cond}
We shall investigate the validity of \Cref{StandardHyp} and the conditions formulated in \Cref{Cond:Axioms} for the tuple $(\CG,\BF,H_\BF,H'_\BF)$.  We rely on well-known results from the literature. To get them into the precise shape needed, we often need to prove auxiliary results. We shall investigate our basic hypotheses separately, each in its own subsection. In \Cref{Subsec:Puniv} we shall consider a slight variation of this basic~setup. It will be useful to have the following lists that will be used to describe some exceptional behavior:
\begin{equation}\label{eq:Pf-List}
\CE_{\pf}:=\{\SL_2(\BF_2), \ \SL_2(\BF_3), \ \SU_3(\BF_2), \ \Sp_4(\BF_2), \ G_2(\BF_2)\}
\end{equation} 
\begin{equation}\label{eq:Schur-List}
\CE_{\sch}\!:=\!\!\left\{\!\!\!
\renewcommand{\arraystretch}{1.3}
\begin{array}{c}
(A_1,\BF_?)_{?\in\{4,9\}},\nhs (A_2,\BF_?)_{?\in\{2,4\}},\nhs (A_3,\BF_2),(B_2,\BF_2), \nhs(B_3,\BF_?)_{?\in\{2,3\}}, \nhs(C_3,\BF_2) , \\
(D_4,\BF_2), (F_4,\BF_2), (G_2,\BF_?)_{?\in\{3,4\}}, ({}^2A_3,\BF_?)_{?\in\{2,3\}}, ({}^2A_5,\BF_2), ({}^2E_6,\BF_2)
\end{array}\!\!\!\!
\right\}\!\!
\end{equation} 
\begin{equation}
\label{eq:n-s-List}
\CE_{\ns}:=\left\{ \!\!\!
\renewcommand{\arraystretch}{1.3}
\begin{array}{c}
(\SL_2,\BF_?)_{?\in \{2,3\}}, (\PGL_2,\BF_?)_{?\in\{2,3,4\}},  (\SL_3,\BF_2), (\PGL_3,\BF_2), \\
(\SU_3,\BF_2), (\PGU_3,\BF_2), (\PGU_4,\BF_2), (\SO_6,\BF_2)
\end{array}\!\!\!
\right\}
\end{equation} 
The following result combines the results from Subsections~\ref{Subsection:Pf} to~\ref{Subsection:Ns}.
\begin{Thm} \label{Thm:MainOnStdSetup}
Suppose that $(\CG,\BF,H_\BF,H'_\BF)$ satisfies \Cref{Cond:StdSetup}
\begin{enumerate} 
\item
If $\CG^\sco(\BF)$  is not in $\CE_{\pf}$, then \pf\ holds and $(\CG,\BF,H_\BF,H'_\BF)$ satisfies \Cref{StandardHyp}.
\item
If $($type$,\BF)\notin \CE_{\sch}$, then \sch\ holds.
\item
Suppose that $\CG^\sco(\BF)\notin\CE_{\pf}$. \hbox{Then $(\CG, \BF, H_{\BF}, H'_{\BF})$~satisfies:}
\begin{enumerate}
 
\item $\liegen$ if $\CG$ is of type $A_n$ and $p\nmid n+1$, or if $\CG$ is of type $B_n$,  $C_n$, $D_n$, $E_7$ or $F_4$ and $p\not=2$, or if $\CG$ is of type $E_6$ or $G_2$ and $p\not=3$, or if $\CG$ is of type $E_8$.

\item $\liec$ if $\CG$ is Lie-simply connected of type $A_n$, $n\ge2$, $D_n$, $E_n$, or $\CG$ is of type $A_1$, $B_n$,  $C_n$, $F_4$ and $p\neq2$, or of  type $G_2$ and $p\neq3$.\footnote{For types $A_n$, $D_n$, $E_6$, $E_7$, this gives restrictions only if $p|n+1$, $p=2$, $p=3$, $p=2$, respectively.}

\item $\lieu$ unless $\CG$ is Lie-intermediate of type $A_n$ with $p|n+1$ or $D_n$ with $n$ odd and $p=2$, or $\CG$ is of type $B_2$ or $F_4$ and $p=2$, or of type $G_2$ and~$p=3$.

\end{enumerate}
\item If $\CG$ is Lie-simply connected, then \liecsc\ holds.
\item  \ct\ holds $\Longleftrightarrow$  $Z(\Fg)=0$ $\Longleftrightarrow$ $\CG$ is of Lie-adjoint type.
\item
Condition \ns\ holds if and only if $(\CG,\BF)$ is not $\CE_{\ns}$.
\item Condition \van\ holds if $\CG^\sco(\BF)\notin\CE_{\!\pf}$, $($type$,\BF)\notin\CE_{\!\sch}\cup\{(A_1,\BF_5)\}$, \ct\ holds, and if further one of the following holds:
\begin{enumerate}
\item
If type$\ =C_n$, then $|\BF|\notin\{2,3,4,5,9\}$.
\item
If $\CG$ is non-split (and hence of type A, D or E${}_6$), then $|\BF|\ge4$.
\end{enumerate}
\end{enumerate} 
 
\end{Thm}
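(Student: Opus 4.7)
The plan is to prove the seven parts essentially independently, organizing the argument into one subsection per axiomatic condition and, within each, treating the small-prime and small-field cases by hand while reducing the generic case to known representation-theoretic facts about finite groups of Lie type and their Lie algebras. Since the theorem is a compendium, I would not attempt a unified argument; rather each subsection would cite or develop the specialized input needed and identify the precise exceptional list by a case distinction on the root type.

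For \textbf{(a)} and the Standard Hypothesis, I would invoke Steinberg's theorem on perfectness of universal Chevalley groups (and its twisted analog) over $\BF$ with $|\BF|\ge4$, together with a direct small-case check for the remaining groups $\SL_2(\BF_2),\SL_2(\BF_3),\SU_3(\BF_2),\Sp_4(\BF_2),G_2(\BF_2)$; once \textbf{(pf)} is established, \Cref{StandardHyp}(a)(b) follow since $H'_\BF$ is then perfect and hence $H_\BF/[H_\BF,H_\BF]$ is a quotient of the prime-to-$p$ group $H_\BF/H'_\BF$, and $M_\BF$ can be taken as (a lift of) a complement provided by Schur--Zassenhaus. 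For \textbf{(b)} I would quote the tables for the Schur multipliers of finite simple groups of Lie type (Steinberg, Griess, Grie\ss--Feit) which yield the list $\CE_{\sch}$ of exceptional pairs. Part \textbf{(e)} reduces, via the adjoint representation and the identification $H^0(H_\BF,\Fg)=\Fz(\Fg)^{H_\BF}$ under \textbf{(pf)}, to the characterization of adjoint type via triviality of the Lie-center; smoothness of the scheme-theoretic center is then automatic in adjoint type. Part \textbf{(d)} is a direct consequence of the classification of maximal $H'_\BF$-submodules of the Lie algebra in the simply connected case.

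The Lie-module conditions in \textbf{(c)} form the technical core. My approach is to recall Hogeweij's classification of ideals of the Chevalley Lie algebras and the Hiss--K\"ohler--Premet analysis of the composition factors of $\Fg^\der$ as an $H'_\BF$-module: generically $\Fg^\der$ is absolutely irreducible with $\End_{\BF_p[H'_\BF]}(\Fg^\der)=\BF$, which yields \textbf{(lie-ge)} (and, by \Cref{Rem:ConseqOfLieGen}, also \textbf{(l-cl)} and \textbf{(l-un)}); this breaks down exactly when the Lie algebra acquires a non-trivial center or derived chain, i.e.\ for $A_n$ with $p\mid n+1$, for classical types in characteristic $2$, for $E_6,G_2$ in characteristic $3$, etc. In each such case I would compute the cosocle and the filtration $[\Fg^\der,\Fg^\der]\subset \Fg^\der$ explicitly, using that $\Fz(\Fg^\der)$ depends on the isogeny class and $[\Fg^\der,\Fg^\der]/\Fz(\Fg^\der)$ is simply connected adjoint, to read off whether \textbf{(l-cl)} or \textbf{(l-un)} survives; the stated exception lists are exactly the residual cases where either the cosocle splits non-trivially or a JH-factor appears on both sides of the filtration. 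The endomorphism claim is then handled by Schur together with absolute irreducibility on the unique top factor.

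For \textbf{(f)} I would combine the known structure of $H^2(H'_\BF,\BF_p)$ with the computation of the kernel of $\CG^\sco(W_2(\BF))\to\CG^\sco(\BF)$: the extension in \textbf{(n-s)} is the mod-$p$ Witt-lift, which splits precisely when $H'_\BF$ has a Frattini-type section, and a direct check (matching against the tables of exceptional Schur multipliers and the very small cases where the Witt lift admits a section) produces exactly the list $\CE_{\ns}$. Finally \textbf{(g)} is the vanishing of $H^1(H'_\BF,\Fg)$; here I would apply the Cline--Parshall--Scott bounds together with the refinements of V\"olklein and Guralnick--Tiep on generic cohomology of finite groups of Lie type, noting that the classical exceptional pairs (type $C_n$ in small characteristic, non-split types over small fields, and the pair $(A_1,\BF_5)$) must be excluded, and invoking \textbf{(ct)} to rule out contributions from the center. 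The main obstacle is \textbf{(c)}: keeping the trichotomy \textbf{(lie-ge)}/\textbf{(l-cl)}/\textbf{(l-un)} consistent across all isogeny classes in small characteristic requires a careful bookkeeping of the $\Fz(\Fg^\der)$-filtration for each root datum, and it is here that the largest case analysis of the whole section is concentrated.
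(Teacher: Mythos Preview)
Your overall strategy matches the paper's exactly: the theorem is proved as a compendium, with one subsection per condition, each reducing to known structural results from the literature (Steinberg/Tits for \pf, Steinberg/Griess for \sch, Hogeweij and Hiss for the Lie conditions, CPS and V\"olklein for \van). The organization and the identification of where the case analysis concentrates (part (c), small characteristic) are accurate.

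Two places where your plan diverges from or underspecifies the paper's argument are worth flagging. First, for part (f) on \ns\ your characterization (``splits precisely when $H'_\BF$ has a Frattini-type section'') is too vague to pin down the list $\CE_{\ns}$; the paper relies essentially on Vasiu's detailed case-by-case analysis of when $\CG(W_2(\BF))\to\CG(\BF)$ splits, together with a separate lemma transporting non-splitness along the isogeny $\CG^\sco\to\CG$ (which needs \sch). Without Vasiu's input, producing the exact list $\CE_{\ns}$ would require redoing a substantial computation. Second, for part (e) your claimed identification $H^0(H_\BF,\Fg)=Z(\Fg)^{H_\BF}$ ``under \pf'' is the non-trivial direction of \ct\ and does not follow from perfectness of $H'_\BF$; the paper establishes $\Lie Z(\CG)=Z(\Fg)$ via Hogeweij's tables and then invokes Pink's explicit determination of $H^0(H'_\BF,\Fg)$ to conclude that the invariants coincide with the center. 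A smaller difference: for \Cref{StandardHyp}(b) the paper constructs $M_\BF$ concretely as $\CT(\BF)$ for a maximal torus $\CT$, rather than appealing to Schur--Zassenhaus; both work, but the torus construction also makes the surjection $M_\BF\to H_\BF/H'_\BF$ transparent via a flat-cohomology computation.
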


\begin{Rem}
Observe that \ct\ and \liec\ can hold simultaneously only if \liegen(ii) holds; and in the present situation the latter implies \liegen.
\end{Rem}

\begin{Not}\label{Not:ExcSet}
For the conjunction of $\CG^\sco(\BF)\notin\CE_{\pf}$, $($type $\CG,\BF)\notin \CE_{\sch}$ and $(\CG,\BF)\notin \CE_{\ns}$, we shall simply write $(\CG,\BF)\notin\CE$, and say that $(\CG,\BF)$ is {\em not exceptional}.
\end{Not}
We state an immediate consequence of \Cref{Thm:MainOnStdSetup}.
\begin{Cor}\label{Cor:CondsForp5}
Suppose that $p\ge5$,  that $p\notdiv n+1$ if $\CG$ is of type $A_n$ and that $\BF\neq\BF_5$ if $\CG$ is of type $A_1$ or $C_n$. Then \pf, \sch, \liegen, \ct, \ns\ and \van\ hold.
\end{Cor}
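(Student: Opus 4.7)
The plan is to verify each of the six conditions \pf, \sch, \liegen, \ct, \ns, \van\ by invoking the corresponding part of \Cref{Thm:MainOnStdSetup} and checking that the corollary's hypotheses --- $p\ge 5$, together with $p\nmid n+1$ for type $A_n$ and $\BF\ne\BF_5$ for types $A_1$ and $C_n$ --- are tight enough to avoid the relevant exceptional sets. Since the heavy lifting has already been done in \Cref{Thm:MainOnStdSetup}, the argument is essentially a case inspection against the three exception lists $\CE_{\pf},\CE_{\sch},\CE_{\ns}$ and the type-dependent clauses of parts (c)(i), (e), and (g).

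First I would dispatch the three conditions governed by a single exception list. Every group in $\CE_{\pf}$ of \eqref{eq:Pf-List} is defined over $\BF_2$ or $\BF_3$, so $p\ge 5$ gives $\CG^\sco(\BF)\notin\CE_{\pf}$ and hence \pf\ by \Cref{Thm:MainOnStdSetup}(a). Every pair in $\CE_{\sch}$ of \eqref{eq:Schur-List} has residue field of characteristic $2$ or $3$ (including $\BF_9$), so $p\ge 5$ yields \sch\ by part~(b); similarly every pair in $\CE_{\ns}$ of \eqref{eq:n-s-List} has residue field in $\{\BF_2,\BF_3,\BF_4\}$, so \ns\ follows from part~(f).

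Next I would verify \liegen\ via part~(c)(i). Its disjunction over root-system types is arranged so that the case clauses $p\ne 2$ (for types $B_n, C_n, D_n, E_7, F_4$) and $p\ne 3$ (for $E_6, G_2$) hold automatically under $p\ge 5$, the $E_8$ clause is unconditional, and the $A_n$ clause $p\nmid n+1$ is exactly the corollary's hypothesis; the auxiliary requirement $\CG^\sco(\BF)\notin\CE_{\pf}$ was already established. Condition \ct\ then follows from part~(e): since $\CG$ is absolutely simple we have $\Fg=\Fg^\der$, and \liegen(i) supplies $Z(\Fg^\der)=0$, hence $Z(\Fg)=0$, which by part~(e) is equivalent to \ct.

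Finally, I would obtain \van\ from part~(g). Its three standing assumptions --- $\CG^\sco(\BF)\notin\CE_{\pf}$, $($type$,\BF)\notin\CE_{\sch}\cup\{(A_1,\BF_5)\}$, and \ct\ --- have all been secured above (the exclusion of $(A_1,\BF_5)$ is built into the corollary's hypotheses). The conditional clause (a) for type $C_n$ requires $|\BF|\notin\{2,3,4,5,9\}$, which is satisfied because $p\ge 5$ rules out $|\BF|\in\{2,3,4,9\}$ while $\BF\ne\BF_5$ (assumed in the $C_n$ case) rules out the remaining value; clause (b) demands $|\BF|\ge 4$ when $\CG$ is non-split, trivially implied by $p\ge 5$. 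No genuine obstacle appears; the main risk is purely bookkeeping, namely ensuring that the $\BF\ne\BF_5$ hypothesis is used in both of its required places: the $(A_1,\BF_5)$-exclusion in part~(g) for type $A_1$, and its clause (a) for type $C_n$.
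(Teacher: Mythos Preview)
Your proposal is correct and matches the paper's intent: the corollary is stated as an immediate consequence of \Cref{Thm:MainOnStdSetup}, and you carry out precisely the case-by-case inspection against the exception lists and type clauses that this entails. Your derivation of \ct\ via \liegen(i) (using that $\CG=\CG^\der$ for absolutely simple $\CG$, hence $Z(\Fg)=Z(\Fg^\der)=0$) is a clean way to feed part~(e), and your handling of the two uses of the $\BF\neq\BF_5$ hypothesis is accurate.
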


\medskip

We fix the following notation throughout this section for an absolutely simple group $\CG$ over $\BF$. By $\phi^\sco\colon\CG^\sco\to\CG$ and $\phi^\ad\colon\CG\to\CG^\ad$ we denote the central isogenies from the simply connected cover of $\CG$ and to its adjoint group, respectively. We define $\phi:=\phi^\ad\circ\phi^\sco$ and write $\CZ:=\kernel\phi$ for the center of $\CG^\sco$. We set $\CZ':=\kernel \phi^\sco$ and $\CZ'':=\kernel \phi^\ad$, so that there is a short exact sequence $1\to\CZ'\to\CZ\to\CZ''\to1$. Cf.~\ref{App-15}.

We remind the reader of the well-known structure of $\CZ$ given in \Cref{Eqn:Centers}, see \cite[Table~9.2 and Table~24.2]{MalleTesterman}.
\begin{table}
\[
\begin{tabular}{c|c|c|c|c|c|c|c|c|c|c|c|c|} 
type&$A_n$&$B_n$&$C_n$&$D_n$, ($n$ odd)&$D_n$, ($n$ even)
&$E_6$&$E_7$&$E_8$&$F_4$&$G_2$\\
\hline
$\CZ$&$\mu_{n+1}$&$\mu_2$&$\mu_2$&$\mu_4$&$\mu_2\times\mu_2$
&$\mu_3$&$\mu_2$&$\mu_1$&$\mu_1$&$\mu_1$\\
\end{tabular}
\]
\caption{Centers of absolutely simple simply connected $\CG$}
\label{Eqn:Centers}
\end{table}
Here $\mu_n$ is the finite flat group scheme that is the kernel of $\CG_m\to\CG_m,\alpha\mapsto \alpha^n$. In par\-ticular $\CZ$ is \'etale over $\BF$ and $\Lie\CZ=0$ if and only if $p$ does not divide the order~of~$\CZ$. We also recall the following fact for the convenience of the reader, since it is used repeatedly.
\begin{Fact}[{\cite[10.14]{MilneAGS}}]
\label{LieAndKernel} Let $u:\CG\rightarrow \CG'$ be a morphism of affine algebraic groups over a field. Then $\Lie(\ker u)=\ker(\Lie u)$ by \cite[10.14]{MilneAGS}.
\end{Fact}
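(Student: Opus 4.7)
The plan is to use the functor-of-points description of the Lie algebra. For any affine group scheme $\CH$ over a field $k$, there is a canonical identification of $\Lie(\CH)$ with $\kernel(\CH(k[\eps])\to\CH(k))$, where $k[\eps]=k[X]/(X^2)$ is the ring of dual numbers and the arrow is induced by $\eps\mapsto 0$. This identification is natural in $\CH$: for a morphism $u\colon\CG\to\CG'$ of affine group schemes over $k$, the induced map $\Lie(u)$ is precisely the restriction of $u_{k[\eps]}\colon\CG(k[\eps])\to\CG'(k[\eps])$ to these kernels. I would take this description as the starting point.

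The key step is then to unwind both sides of the claimed equality in this language. On the one hand, $\kernel(\Lie u)$ consists of those $g\in\CG(k[\eps])$ with $g\equiv e\pmod\eps$ and $u_{k[\eps]}(g)=e$. On the other hand, the scheme-theoretic kernel $\kernel u$ is by definition the fiber product $\CG\times_{\CG'}\Spec k$ formed against the identity section of $\CG'$, so its $k[\eps]$-points are the set $\{g\in\CG(k[\eps])\mid u_{k[\eps]}(g)=e\}$. Applying the dual-numbers prescription to $\kernel u$ itself then yields $\Lie(\kernel u)=\{g\in\CG(k[\eps])\mid u_{k[\eps]}(g)=e \text{ and } g\equiv e\pmod\eps\}$, which matches $\kernel(\Lie u)$ tautologically.

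The only real subtlety, and the point at which the argument could go wrong, is the insistence on the scheme-theoretic kernel rather than the reduced or set-theoretic one: in positive characteristic $\kernel u$ can fail to be smooth even when $\CG$ and $\CG'$ are (e.g.\ the Frobenius on $\BG_a$), and replacing $\kernel u$ by its underlying reduced subscheme would destroy precisely the infinitesimal information that $\Lie$ is meant to record. Since the dual-numbers description of $\Lie$ is defined for arbitrary affine group schemes, no smoothness hypothesis is needed and the argument goes through in full generality. This is exactly the proof in \cite[10.14]{MilneAGS}, which is reproduced here only because the statement is invoked repeatedly in the sequel.
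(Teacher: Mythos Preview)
Your argument is correct and is exactly the standard functor-of-points proof (indeed the one in Milne). Note, however, that the paper does not actually give a proof of this Fact: it is stated purely as a citation to \cite[10.14]{MilneAGS} and used as a black box, so there is no ``paper's own proof'' to compare against.
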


The homomorphisms on Lie algebras induced from the above morphisms of algebraic groups are denoted $\mathrm{d}\phi^\sco\colon\Fg^\sco\to\Fg$, $\mathrm{d}\phi^\ad\colon\Fg\to\Fg^\ad$ and $\mathrm{d}\phi=\mathrm{d}\phi^\ad\circ\mathrm{d}\phi^\sco$. We set $\Fz:=\kernel \mathrm{d}\phi=\Lie(\CZ)$
and $\Fz^*:=\coker\mathrm{d}\phi$, so that $\dim\Fz=\dim\Fz^*$; see \cite[Prop.~1.11(a)]{Pink-Compact}. We also define $\Fz':=\kernel \mathrm{d}\phi^\sco$ and $\Fz'':=\kernel \mathrm{d}\phi^\ad$, so that $\Fz'=\Lie\CZ'$ and $\Fz''=\Lie\CZ''$. Note that \cite{Pink-Compact} usually requires that $\CG$ be adjoint, so that $\Fz'=\Fz$. The action of $\CG^?$ on $\Fg^?$ for $?\in\{\sco,\emptyset,\ad\}$ is via the adjoint action, and hence it factors via the canonical map to $\CG^\ad$. In particular, all Lie algebras and homomorphisms between them that we just defined are modules under any of the algebraic groups $\CG^?$, and thus also modules for $H_\BF$. For any Lie algebra $\Fh$ over $\BF$, we write $\Fh_{\overline\BF}$ for $\Fh\otimes_\BF\overline\BF$ where $\overline\BF$ is an algebraic closure of $\BF$. We note that if $\Fh$ is any of the Lie algebras above, then $\Fh_{\overline\BF}$ carries a representation of $\CG^\ad(\overline\BF)$. Adapting the notation of \cite{Hogeweij} to our needs, we call $\CG$ {\em Lie-simply connected} if $\mathrm{d}\phi^\sco$ is an isomorphism, {\em Lie-adjoint} if $\mathrm{d}\phi^\ad$ is an isomorphism, and {\em Lie-intermediate} otherwise.

Following \cite{Pink-Compact}, in our notation (!), we let $\overline\Fg$ be the image of $\Fg^\sco$ under $\mathrm{d}\phi$, and we write $\mathrm{d}\phi\colon\Fg^\sco\to\overline\Fg$ and $\mathrm{incl}\colon\overline\Fg\to \Fg^\ad$ for the induced homomorphisms, of Lie algebras and of $\CG^\ad$-representa\-tions. By \cite[Prop.~1.10]{Pink-Compact} there is a Lie algebra and $\CG^\ad$-module $\hat\Fg$ such that one has a pushout as well as a pullback diagram of Lie algebras and $\CG^\ad$-representations
\[\xymatrix{\hat \Fg\ar@{->>}[r]&\Fg^\ad\\
\Fg^\sco\ar@{^{ (}->}[u]\ar@{->>}[r]^{\mathrm{d}\phi}&\overline\Fg\ar@{^{ (}->}[u]_{\mathrm{incl}}\rlap{.}\\
}\]
We will see in \Cref{Subsec:Puniv} that in fact $\hat\Fg$ occurs as the Lie algebra of a certain reductive group constructed from $\CG^\ad$ (or $\CG^\sco$). The usefulness of $\hat\Fg$ can also be seen in Subsections~\ref{Subsection:Lie} and~\ref{Subsection:Van}. 

\subsection{Condition \pf}
\label{Subsection:Pf}

Note again that until the end of \Cref{Subsection:Ns} we assume \Cref{Cond:StdSetup}. 
\begin{Thm}[{Tits, see~\cite[Thm.~24.17]{MalleTesterman}}]
\label{Thm:Pf}
If $\CG^\sco(\BF)$ is not in the list $\CE_{\pf}$ from~\eqref{eq:Pf-List}, then the group $\CG^\sco(\BF)$ is perfect, and its quotient $\CG^\sco(\BF)/Z(\CG^\sco(\BF))$ is simple.
\end{Thm}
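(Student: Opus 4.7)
Set $G := \CG^\sco(\BF)$. The plan is to apply the standard Tits--Iwasawa simplicity criterion to the Tits system $(B,N,W,S)$ attached to $G$, where $B$ is the $\BF$-points of a Borel of $\CG^\sco$ (or, in the non-split case, the fixed points of a Frobenius on a rational Borel) and $W$ the (possibly twisted) Weyl group. The criterion asserts that if $(W,S)$ is irreducible, $G = [G,G]$, $B$ contains a solvable normal subgroup $U$ whose $G$-conjugates generate $G$, and the intersection $Z := \bigcap_{g\in G} gBg^{-1}$ is the scheme-theoretic center, then $G/Z$ is simple. Irreducibility of $(W,S)$ is immediate from absolute simplicity of $\CG$; the unipotent radical $U = \CU(\BF)$ of $B$ plays the role of the abelian solvable normal subgroup (with conjugates generating $G$ by the Bruhat decomposition). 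The identification $Z = Z(G)$ follows from the fact that the kernel of the action of $G$ on $G/B$ coincides with $Z(G)$ for $G$ of simply connected type.

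The heart of the matter is thus the perfectness statement $G = [G,G]$. First I would reduce to showing that each root subgroup $U_\alpha \subset G$ is contained in $[G,G]$, since these generate $G$ by Chevalley--Steinberg. For a pair of positive roots $\alpha,\beta$ with $\alpha+\beta \in \Phi$, the Chevalley commutator formula
\[
[u_\alpha(t),u_\beta(s)] = \prod_{\substack{i,j\ge 1 \\ i\alpha+j\beta\in \Phi}} u_{i\alpha+j\beta}(c_{ij}\,t^i s^j)
\]
produces $u_{\alpha+\beta}$ modulo higher terms, provided the structure constant $c_{11}$ is nonzero in $\BF$ and provided that $\BF$ contains enough elements for the parametrisation $t \mapsto c_{11}ts$ to sweep out $U_{\alpha+\beta}$. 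For rank-one groups, where no such $\alpha+\beta$ exists, one instead extracts root subgroups as commutators $[T(\BF), U_\alpha(\BF)]$ using the torus action, which works whenever $|\BF|\ge 4$ (and for $|\BF|=3$ in most types, but not for $\SL_2(\BF_3)$). A careful inspection of these two mechanisms, together with the twisted analogue for Steinberg/Suzuki/Ree forms, yields exactly the list $\CE_{\pf}$ of obstructions: $\SL_2(\BF_{2,3})$ are too small for the torus trick, and the remaining three cases in $\CE_{\pf}$ fail because the mixed commutator in a multi-laced or twisted root system requires $|\BF|\ge 3$ to produce a non-zero element of the intended root subgroup.

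For the simplicity of $G/Z(G)$, once perfectness is established, I would invoke Iwasawa's lemma in the form: if $G$ acts faithfully and primitively (up to kernel $Z$) on $X = G/B$, the stabilizer of a point has a solvable normal subgroup whose conjugates generate $G$, and $G$ is perfect, then every normal subgroup of $G$ either is contained in $Z$ or equals $G$. The primitivity follows from $B$ being its own normalizer (true since $B$ is a Borel), and the rest was already arranged above.

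The main obstacle is the case-by-case verification that the list of exceptions is exactly $\CE_{\pf}$ and not larger; this requires ruling out pathologies in the Chevalley commutator formula in characteristic $2$ and $3$ (where some structure constants $c_{ij}$ vanish in types $B,C,F_4,G_2$), and handling the twisted groups $\SU_3(\BF_2), \Sp_4(\BF_2), G_2(\BF_2)$ where the twisted root system has root lengths that interact badly with the smallest prime fields. In practice, rather than redoing this analysis, I would simply cite \cite[Thm.~24.17]{MalleTesterman}, whose proof assembles the Chevalley/Steinberg/Suzuki/Ree constructions and then applies Tits' criterion along the lines above.
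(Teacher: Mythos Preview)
The paper gives no proof of this theorem at all: it is stated as a result of Tits and attributed directly to \cite[Thm.~24.17]{MalleTesterman}, with no further argument. Your proposal ends in exactly the same place, explicitly deferring to that reference, so the two are aligned.

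Your sketch of the underlying argument (Tits' simplicity criterion for BN-pairs, perfectness via the Chevalley commutator formula and torus action on root groups) is the standard route and is essentially what the cited reference does. One small slip: you call $U$ the ``abelian solvable normal subgroup'' of $B$, but $U$ is nilpotent, not abelian, once the rank exceeds~$1$. Tits' criterion for BN-pairs does not require abelianness (that is Iwasawa's lemma, which you partly conflate with it), so this does not affect the argument, but it is worth keeping the two criteria straight.
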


\begin{Cor}\label{Cor:Pf}
Write $Z$ for $\CZ'(\BF)$. Then the following hold:
\begin{enumerate}
\item The map $\phi^\sco$ induces short exact sequences $1\to Z\to \CG^\sco(\BF)\to H'_\BF\to 1$ and $1\to H'_\BF\to \CG(\BF)\to Z\to 1$.
\item The group $Z$ is finite abelian of order prime to $p$.
\item Suppose that $\CG^\sco(\BF)$  is not in $\CE_{\pf}$, then $H'_\BF$ is perfect and $H'_\BF=[\CG(\BF),\CG(\BF)]$.
\end{enumerate}
\end{Cor}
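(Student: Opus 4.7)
My plan is to establish the three parts in order, with each part using standard tools and the perfectness theorem \Cref{Thm:Pf} as the only non-trivial input.

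For part (a), the first short exact sequence is immediate from the definitions: $H'_\BF$ is the image of $\CG^\sco(\BF)\to\CG(\BF)$, and the kernel on $\BF$-points is $(\kernel \phi^\sco)(\BF)=\CZ'(\BF)=Z$. For the second sequence, I would invoke the fppf long exact cohomology sequence attached to $1\to \CZ'\to\CG^\sco\to\CG\to1$, which extends the first sequence by a map $\CG(\BF)\to H^1_{\fppf}(\BF,\CZ')\to H^1_{\fppf}(\BF,\CG^\sco)$. Lang's theorem, applied to the smooth connected $\CG^\sco$, kills the rightmost term, so $\CG(\BF)/H'_\BF\cong H^1_{\fppf}(\BF,\CZ')$. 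Inspecting \Cref{Eqn:Centers}, $\CZ'$ is a subgroup scheme of a diagonalizable group, and Kummer theory identifies $H^1_{\fppf}(\BF,\CZ')$ with a group of the same order and cyclic invariants as $\CZ'(\BF)=Z$, yielding the second sequence (at least as an abstract isomorphism).

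For part (b), since $\CZ'\subseteq \CZ$ is diagonalizable by \Cref{Eqn:Centers}, its $\BF$-points embed into a product of copies of $\BF^\times$, so their orders divide $|\BF^\times|=|\BF|-1$, which is prime to $p$. Hence $Z$ is a finite abelian group of order prime to $p$. Part (c) then follows by assembly: by \Cref{Thm:Pf}, $\CG^\sco(\BF)$ is perfect under the hypothesis, and perfectness passes to its quotient $H'_\BF$. The abelian quotient $\CG(\BF)/H'_\BF\cong Z$ from (a) forces $[\CG(\BF),\CG(\BF)]\subseteq H'_\BF$, while perfectness gives the reverse inclusion $H'_\BF=[H'_\BF,H'_\BF]\subseteq[\CG(\BF),\CG(\BF)]$.

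The main (minor) obstacle is the cohomological step in (a) when $p$ divides the order of $\CZ'$, so that $\CZ'$ has an infinitesimal part (e.g.\ type $A_n$ with $p\mid n+1$, or the classical types in characteristic $2$); this is precisely why fppf rather than \'etale cohomology is needed. It is handled uniformly by Kummer theory, which delivers the equality $|H^1_{\fppf}(\BF,\CZ')|=|Z|$ for any $\CZ'$ sitting inside a diagonalizable group scheme over $\BF$, regardless of whether $\CZ'$ is \'etale.
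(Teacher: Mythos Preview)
Your proposal is correct and follows essentially the same route as the paper's proof: both apply the long exact sequence in flat cohomology to $1\to\CZ'\to\CG^\sco\to\CG\to1$, kill $H^1_\flatt(\BF,\CG^\sco)$ (the paper cites Steinberg's theorem, you cite Lang's theorem---these amount to the same vanishing here), and compute $H^1_\flatt(\BF,\CZ')\cong\CZ'(\BF)$ via the Kummer sequence for each $\mu_n$ factor; part~(c) is argued identically. The only cosmetic difference is that for~(b) the paper reads off ``order prime to $p$'' from the Kummer computation itself, whereas you observe directly that $\CZ'(\BF)$ sits inside a product of copies of $\BF^\times$---both are fine.
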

\begin{proof}
The argument seems to be well-known, but we could not find a complete reference, so we give some details: The map $\phi^\sco$ induces the short exact sequence $1\to \CZ'\to \CG^\sco\to\CG\to 1$ of group schemes. Applying (non-commutative) flat cohomology yields the $5$-term left exact sequence of pointed sets
\[ 0\to H^0_\flatt(\BF,\CZ')\to H^0_\flatt(\BF,\CG^\sco)\to H^0_\flatt(\BF,\CG)\to H^1_\flatt(\BF,\CZ')\to H^1_\flatt(\BF,\CG^\sco);\]
in fact because $\CZ'$ is central, by \cite[Prop.~3.4.3]{Giraud} the first three non-trivial arrows are group homomorphisms. Moreover for smooth group schemes flat and \'etale cohomology in degrees $0$ and $1$ coincide; for both see~\cite[III.4.5, III.4.7, III.4.8]{MilneEtale}. This yields
\[ 0\to  H^0_\flatt(\BF,\CZ')\to \CG^\sco(\BF)\stackrel{\phi^\sco}{\to} \CG(\BF)\to H^1_\flatt(\BF,\CZ')\to H^1_\et(\BF,\CG^\sco).\]
By a result of Steinberg, we have $H^1_\et(\BF,\CG^\sco)=1$; see \cite[Thm.~1.9]{Steinberg-RegularElements}. The sequence also implies that $H^0_\flatt(\BF,\CZ')\cong \kernel(\phi^\sco)=Z$. Note next that the group scheme  $\CZ'$ 
is a product of group schemes $\mu_n$, and for the latter ones flat cohomology in degrees $0$ and $1$ can be computed via $1\to \mu_n\to \BG_m\to\BG_m\to1$ by the same arguments that were used above, now using Hilbert 90, from the $4$-term exact sequence of groups 
\[1\longto H^0_\flatt(\BF,\mu_n)\longto \BF^\times \stackrel{\alpha\mapsto \alpha^n}\longto \BF^\times \longto H^1_\flatt(\BF,\mu_n)\longto 1.\]
 Because $\BF^\times$ is finite cyclic of order prime to $p$, it follows that $H^0_\flatt(\BF,\mu_n)\cong H^1_\flatt(\BF,\mu_n)$ is finite cyclic of order prime to $p$. Hence $H^1_\flatt(\BF,\CZ')\cong H^0_\flatt(\BF,\CZ')\cong Z$, and $Z$ is finite abelian of order prime to $p$, and this completes the proof of (a) and~(b). 

To prove (c), note first that by \Cref{Thm:Pf} the group $\CG^\sco(\BF)$ is perfect, and hence so is $\image(\phi^\sco)=H'_\BF$. It remains to show that $H'_\BF$ is equal to $[\CG(\BF),\CG(\BF)]$. Since the cokernel of $\phi^\sco$ is abelian, the group $H'_\BF$ contains $[\CG(\BF),\CG(\BF)]$. But $H'_\BF$ cannot be larger, since as a perfect group it has no non-trivial abelian quotients.
\end{proof}

\begin{Rem}
The group $Z$ is cyclic unless $\CG$ is of type $D_n$ with $n$ even, $p>2$ and $\phi^\ad$ is an isomorphism in which case $Z$ is isomorphic to $\BZ/2\times\BZ/2$.
\end{Rem}

\begin{Cor}\label{Cor:StdHyp}
Suppose that $\CG^\sco(\BF)$  is not in $\CE_{\pf}$. Then the pair $(H_\BF,H'_\BF)$ satisfies \Cref{StandardHyp}.
\end{Cor}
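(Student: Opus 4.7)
The plan is to verify the two clauses of \Cref{StandardHyp} directly. The standing conditions that $H'_\BF \subset \CG^\der(\BF)$ and that $H'_\BF$ is normal in $H_\BF$ are immediate: since $\CG$ is absolutely simple one has $\CG^\der = \CG$, and by \Cref{Cor:Pf}(c) the subgroup $H'_\BF$ equals $[\CG(\BF),\CG(\BF)]$ and is thus characteristic in $\CG(\BF)$.

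For clause~(a), the surjection onto $(\CG/\CG^o)(\BF)$ is automatic since $\CG$ is connected by \Cref{Cond:StdSetup}(a). For the $p'$-order statement, I would invoke parts (a) and (b) of \Cref{Cor:Pf}: they give the short exact sequence $1 \to H'_\BF \to \CG(\BF) \to Z \to 1$ with $Z := \CZ'(\BF)$ finite abelian of order prime to $p$. Hence $Q := H_\BF/H'_\BF$ embeds in $Z$, and perfectness of $H'_\BF$ (\Cref{Cor:Pf}(c)) yields $H'_\BF \subset [H_\BF,H_\BF]$, so $H_\BF/[H_\BF,H_\BF]$ is a quotient of the $p'$-group $Q$.

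For clause~(b), I would distinguish two cases according to the remark following \Cref{Cor:Pf}: $Z$ is cyclic except when $\CG$ is of type $D_n$ with $n$ even, $p>2$ and $\phi^\ad$ an isomorphism, in which case $Z \cong \BZ/2 \times \BZ/2$. In the cyclic case, $Q$ itself is cyclic; lifting a generator $\bar g$ of $Q$ to $g \in H_\BF$ and writing $|g| = p^a m$ with $\gcd(m,p)=1$, the element $g^{p^a}$ has order exactly $m$, and since $|\bar g|$ divides $m$ its image still generates $Q$, so one may take $M_\BF := \langle g^{p^a}\rangle$. In the remaining case, $Q \subset \BZ/2 \times \BZ/2$ is a $2$-group and $p$ is odd, so any Sylow-$2$ subgroup $M_\BF$ of $H_\BF$ has $p'$-order and maps onto $Q$ (the image being a Sylow-$2$ of the $2$-group $Q$). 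The mildly delicate point is this second case, where $\gcd(|H'_\BF|,|Z|)$ is typically nontrivial and Schur--Zassenhaus does not apply; the assumption $p \neq 2$ that accompanies this case is exactly what makes the Sylow-$2$ argument succeed without further input.
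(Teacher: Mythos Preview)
Your proof is correct, but it takes a genuinely different route from the paper's. The paper does not split into cases on the structure of $Z$. Instead, after reducing (as you implicitly do) to the case $H_\BF=\CG(\BF)$, it takes $M_\BF:=\CT(\BF)$ for a maximal torus $\CT\subset\CG$: from the short exact sequence $1\to\CZ'\to\CT^\sco\to\CT\to1$ one deduces that $\CT(\BF)/\phi^\sco(\CT^\sco(\BF))$ has the same cardinality as $\CZ'(\BF)\cong\CG(\BF)/H'_\BF$, and the fiber-product description of $\CT^\sco$ gives a natural inclusion $\CT(\BF)/\phi^\sco(\CT^\sco(\BF))\hookrightarrow\CG(\BF)/H'_\BF$, which is then an isomorphism by cardinality. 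For intermediate $H_\BF$ one replaces $M_\BF$ by the kernel of $\CT(\BF)\to\CG(\BF)/H_\BF$. This approach is uniform and produces a structurally meaningful $M_\BF$ (the rational points of a maximal torus), which fits well with the later constructions in the paper where one lifts $M_\BF$ to $M_R\subset\CG(R)$. Your argument, by contrast, is entirely elementary finite-group theory (prime-to-$p$ part of a cyclic lift, respectively a Sylow-$2$ subgroup), avoiding any further algebraic-group input beyond \Cref{Cor:Pf}; the price is the case distinction, and the $M_\BF$ you obtain is less canonical.
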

\begin{proof}
By \Cref{Cor:Pf} the group $H'_\BF$ is perfect. Therefore it suffices to find a subgroup $M_\BF$ of $H_\BF$ that is of order prime to $p$ and such that $M_\BF$ surjects onto $H_\BF/H'_\BF$. Since $\CG(\BF)/H'_\BF$ is finite abelian, it suffices to construct $M_\BF$ in the case where $H_\BF=\CG(\BF)$: if $M_\BF H'_\BF=\CG(\BF)$, then the product of the kernel of $M_\BF\to \CG(\BF)/H_\BF$ with $H'_\BF$ will be equal to $H_\BF$ for any $H_\BF$ with $H'_\BF\subset H_\BF\subset \CG(\BF)$.

Let $\CT\subset\CG$ be a maximal torus. Then by \ref{App-20} its inverse image $\CT^\sco$ is a maximal torus in $\CG^\sco$ and one has a short exact sequence $1\to\CZ'\to\CT^\sco\to\CT\to1$. Arguing as in the proof of \Cref{Cor:Pf}, we obtain a $4$-term exact sequence
\[1\longto \CZ'(\BF) \longto \CT^\sco(\BF) \stackrel{\phi^\sco}\longto \CT(\BF) \longto \CT(\BF)/ \CT^\sco(\BF) \longto 1.\]
It follows that $\CT(\BF)/ \CT^\sco(\BF)$ has the same cardinality as $ \CZ'(\BF)$, and by \Cref{Cor:Pf}, as $\CG(\BF)/H'_\BF$. Because $\CT^\sco$ is the fiber product of $\CT$ with $\CG^\sco$ over $\CG$, the resulting square of $\BF$-points and elementary group theory yield a natural inclusion
\[\CT(\BF)/ \phi^\sco(\CT^\sco(\BF))\into \CG(\BF)/\phi^\sco(\CG^\sco(\BF))\stackrel{\ref{Cor:Pf}(a)}=\CG(\BF)/H'_\BF.\] By our consideration on cardinalities, it must be an isomorphism. Therefore we can take $M_\BF:=\CT(\BF)$ which is clearly of order prime to~$p$.
\end{proof}
 \begin{Rem}
The groups in the list $\CE_{\pf}$ are discussed in \cite[Rem.~24.18]{MalleTesterman}. Analyzing them in more detail, using \cite[I.3.5]{Atlas}, one can verify that \Cref{StandardHyp} is satisfied for the pairs $(H'_\BF,H'_\BF)$  (any type) and the pair $(G_2(\BF_3),H_\BF)$, but not for any other pair $(H_\BF,H'_\BF)$ with $\CG(\BF)\in\CE_{\pf}$ exceptional. We leave the details to the reader.
\end{Rem}
For later use, we also note the following immediate consequence of \Cref{Cor:Pf}.
\begin{Cor}
\label{Cor:H1Triv}
For $\CG^\sco(\BF)\notin \CE_{\pf}$ one has $H^1(\CG(\BF),\BF_p)=\Hom(\CG,\BF_p)=0$.
\end{Cor}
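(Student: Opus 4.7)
The plan is to reduce the cohomology computation to a statement about the abelianization of $\CG(\BF)$, which was essentially determined in \Cref{Cor:Pf}. Since $\BF_p$ is an abelian group on which $\CG(\BF)$ acts trivially, the standard identification $H^1(\CG(\BF),\BF_p)=\Hom(\CG(\BF),\BF_p)$ applies, and any such homomorphism factors through the abelianization $\CG(\BF)/[\CG(\BF),\CG(\BF)]$.

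Next I would invoke \Cref{Cor:Pf}(c) to obtain the equality $[\CG(\BF),\CG(\BF)]=H'_\BF$ under the hypothesis $\CG^\sco(\BF)\notin\CE_{\pf}$. Combined with the short exact sequence $1\to H'_\BF\to \CG(\BF)\to Z\to 1$ from \Cref{Cor:Pf}(a), this identifies the abelianization with $Z=\CZ'(\BF)$. By \Cref{Cor:Pf}(b), $Z$ is a finite abelian group of order prime to $p$, hence $\Hom(Z,\BF_p)=0$.

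Putting these steps together yields $H^1(\CG(\BF),\BF_p)=\Hom(\CG(\BF),\BF_p)=\Hom(Z,\BF_p)=0$. There is essentially no obstacle here; the content is entirely packaged in \Cref{Cor:Pf}, and the corollary is just a formal consequence once one observes that $\BF_p$-valued characters of a group of order coprime to $p$ are trivial.
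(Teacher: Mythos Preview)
Your proof is correct and is exactly what the paper intends: the corollary is stated there as ``an immediate consequence of \Cref{Cor:Pf}'' with no further argument, and you have spelled out precisely that deduction via the identification of the abelianization of $\CG(\BF)$ with the prime-to-$p$ group~$Z$.
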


\subsection{Condition \sch}
\label{Subsection:Sch}

\begin{Thm}[{{\cite[Thm.~1.1]{Steinberg}}}]\label{Thm:Schur}
Let $\CG$ be as in \Cref{Cond:StdSetup}. Then the mod $p$ Schur multiplier $H^2(\CG^\sco(\BF),\BF_p)$ vanishes, unless $($type$,\BF)$ is in the list $\CE_{\sch}$ from~\eqref{eq:Schur-List}.
\end{Thm}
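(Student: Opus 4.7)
The statement is a direct citation of Steinberg's theorem, so my plan is to outline how one reproduces Steinberg's argument, since a self-contained proof would essentially duplicate \cite{Steinberg}. First, I would reduce to a presentation-theoretic problem. Since $\CG^\sco(\BF)$ is perfect outside the list $\CE_{\pf}$ (by \Cref{Thm:Pf}), a universal central extension $\widetilde{\CG^\sco(\BF)}\twoheadrightarrow\CG^\sco(\BF)$ exists, with kernel the integral Schur multiplier $H^2(\CG^\sco(\BF),\BZ)$. I would then present $\CG^\sco(\BF)$ via Chevalley-Steinberg generators $x_\alpha(t)$ for $\alpha$ in the (possibly twisted) root system and $t\in\BF$, subject to the Chevalley commutator and torus relations. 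The universal central extension is obtained by imposing only the commutator relations and reading off the kernel.

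The main computation is then to identify the kernel as a quotient of a group of Steinberg-type symbols $\{t,u\}$ with $t,u\in\BF^\times$, modulo bilinearity and the relation $\{t,1-t\}=1$; in the untwisted case this is essentially $K_2(\BF)$. For a finite field, Matsumoto's theorem together with the vanishing of $K_2$ of a finite field forces this symbol group to be trivial, hence the generic Schur multiplier is trivial, and in particular its $p$-part $H^2(\CG^\sco(\BF),\BF_p)$ vanishes. For twisted types (${}^2A_n$, ${}^2D_n$, ${}^2E_6$, ${}^3D_4$) one performs the analogous computation using the relative root system and Galois descent, obtaining an analogous symbol group over the fixed subfield.

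The third step is to account for all pairs (type,~$\BF$) where extra relations forced by small cardinality of $\BF$ produce a non-trivial $p$-part. Each such anomaly is handled case-by-case, typically by exhibiting an exceptional central extension (often arising from a sporadic isomorphism such as $\PGU_4(\BF_2)\cong\SO_6(\BF_2)$-type coincidences, or from embeddings into larger classical groups whose covering groups are known), and by matching the resulting $p$-part with the tabulation. These exceptions are precisely the pairs listed in $\CE_{\sch}$ from~\eqref{eq:Schur-List}.

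The main obstacle is precisely this verification of the exceptional list: the small-field cases are not detected by the uniform Steinberg/$K_2$ argument and must be resolved by ad hoc constructions, with the twisted types requiring additional Galois-cohomological care. Since the enumeration is delicate and has been carried out carefully in \cite{Steinberg} (and cross-checked with the Atlas of finite group covers), my concrete plan is to appeal to that reference for the final list rather than reproduce the case analysis here.
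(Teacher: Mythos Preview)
Your sketch of Steinberg's argument is essentially sound, but it is far more than what the paper does. In the paper, \Cref{Thm:Schur} carries no proof of its own: it is quoted from \cite[Thm.~1.1]{Steinberg}, and the only work done locally is in \Cref{Rem:SchurZ}, where the authors note that Steinberg's statement is about the integral homology group $H_2(\CG^\sco(\BF),\BZ)$ rather than about $H^2(\CG^\sco(\BF),\BF_p)$, and then invoke the universal coefficient theorem
\[
0\to \Ext^1(H_1(\CG^\sco(\BF),\BZ),\BF_p)\to H^2(\CG^\sco(\BF),\BF_p)\to \Hom(H_2(\CG^\sco(\BF),\BZ),\BF_p)\to 0
\]
together with the finiteness of $\CG^\sco(\BF)$ to pass from one to the other. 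So the paper's route is ``cite Steinberg, then apply universal coefficients,'' whereas you propose to reprove Steinberg's theorem itself via the Steinberg presentation and symbol calculus.

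One small caution on your outline: invoking Matsumoto's theorem is slightly misleading here, since Matsumoto's presentation of $K_2$ is for infinite fields; over finite fields Steinberg computes the symbol group directly (and this is where the exceptional small-$q$ cases enter). Your acknowledgement that the exceptional list must ultimately be taken from \cite{Steinberg} (and cross-checked, as the paper does in \Cref{Rem:Schur}) is the honest thing to do.
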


\begin{Rem}\label{Rem:SchurZ}
In fact, the result stated in \cite[Thm.~1.1]{Steinberg} is slightly different. It asserts that the Schur multiplier $H_2(\CG(\BF),\BZ)$ vanishes whenever $($type$,\BF)\notin \CE_{\sch}$, and that for $($type$,\BF)\in \CE_{\sch}$ the group $H_2(\CG(\BF),\BZ)$ is finite and of order a power of~$p$. The relation to \Cref{Thm:Schur} is given by the universal coefficient theorem. It gives the short exact sequence
\[ 0\to \Ext^1(H_1(\CG(\BF),\BZ),A)\to H^2(\CG(\BF),A)\to \Hom(H_2(\CG(\BF),\BZ),A)\to 0.\]
Because $\CG(\BF)$ is finite and $\BZ$ is torsion free, the left hand term is zero, and $A=\BZ/(p)$ gives the above theorem. Another consequence is that all finite central extensions of $\CG(\BF)$ of order prime to $p$ are trivial. 
\end{Rem}

\begin{Rem}\label{Rem:Schur}
\begin{enumerate}
\item  
In the statement of \cite[Theorem 1.1]{Steinberg} there is a typo: the group $A_2(3)$ should be $A_3(2)$ (see (5), $\S 2.3$ of \cite{Steinberg}). Moreover, to obtain the above list from Theorem 1.1 in \cite{Steinberg}, one should take into account that $B_2(2)$ is isomorphic to the symmetric group $S_6$, which has nontrivial mod $p$ Schur multiplier (cf.~\cite[(6) in $\S3.3$]{Steinberg}), as well as the exceptional isomorphism $B_3(2)\simeq C_3(2)$.
\item 
One can find the same list of exceptions in \cite[Table 1]{Griess}. The groups considered in \cite{Griess} are simple groups of Lie type of the form $H'_{\BF}=[\CG(\BF), \CG(\BF)]$, where $\CG$ is of adjoint type.
\end{enumerate}
\end{Rem}

\begin{Cor}\label{Cor:Schur}
Suppose that $($type$,\BF)\notin \CE_{\sch}$. Then \sch\ holds.
\end{Cor}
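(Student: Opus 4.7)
The plan is to reduce the vanishing statement for $H^2(H'_\BF,\BF_p)$ to the vanishing statement for $H^2(\CG^\sco(\BF),\BF_p)$ given by \Cref{Thm:Schur}, exploiting the fact that the kernel of $\CG^\sco(\BF)\onto H'_\BF$ has order prime to $p$.

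More precisely, by \Cref{Cor:Pf}(a) and (b), the map $\phi^\sco$ fits into a short exact sequence
\[1\longto Z\longto \CG^\sco(\BF)\longto H'_\BF\longto 1,\]
with $Z=\CZ'(\BF)$ finite abelian of order prime to $p$. Moreover $Z$ is central in $\CG^\sco(\BF)$, since $\CZ'$ is central in $\CG^\sco$. I would then invoke the Lyndon--Hochschild--Serre spectral sequence
\[E_2^{i,j}=H^i(H'_\BF,H^j(Z,\BF_p))\Rightarrow H^{i+j}(\CG^\sco(\BF),\BF_p).\]
Because $|Z|$ is invertible in $\BF_p$, one has $H^j(Z,\BF_p)=0$ for every $j\ge1$; and for $j=0$ the coefficient module is simply the trivial $H'_\BF$-module $\BF_p$, since $Z$ acts trivially on itself and hence on $\BF_p$. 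Thus the spectral sequence collapses on the $j=0$ line, and the inflation map induces an isomorphism
\[\mathrm{inf}\colon H^i(H'_\BF,\BF_p)\stackrel{\cong}\longto H^i(\CG^\sco(\BF),\BF_p)\qquad\text{for all }i\ge0.\]

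Applying this with $i=2$, and combining with \Cref{Thm:Schur}, I would conclude: under the hypothesis $($type$,\BF)\notin \CE_{\sch}$ one has $H^2(\CG^\sco(\BF),\BF_p)=0$, and hence $H^2(H'_\BF,\BF_p)=0$, which is exactly condition \sch. I do not foresee a substantive obstacle here; the only point worth checking carefully is the triviality of the $H'_\BF$-action on $H^j(Z,\BF_p)$, which is immediate from the centrality of $Z$, and the standard fact that finite group cohomology with coefficients of order coprime to the group order vanishes in positive degrees.
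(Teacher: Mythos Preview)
Your argument is correct and is essentially the same as the paper's: both reduce to \Cref{Thm:Schur} via \Cref{Cor:Pf}(a),(b) and the Hochschild--Serre spectral sequence for the extension $1\to Z\to\CG^\sco(\BF)\to H'_\BF\to1$, using that $Z$ has order prime to $p$. Your write-up simply spells out the degeneration of the spectral sequence in more detail than the paper's one-line sketch.
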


\begin{proof}
In the simply connected case this result is \Cref{Thm:Schur}, taking into account that $H'_{\BF}=[\CG(\BF), \CG(\BF)]=\CG(\BF)$. In the general case, one applies \Cref{Cor:Pf}(a), (b) to control kernel and cokernel of $\CG^\sco(\BF)\to\CG(\BF)$, and then concludes using the Hochschild-Serre spectral sequence from group cohomology.
\end{proof}

\subsection{Condition \ct}
\label{Subsection:Ct}
We begin with the following result due to Hogeweij on the center:

\begin{Thm}[Hogeweij]\label{Thm:LieCenter}
One has
\[\kernel(\mathrm{d}\phi^\ad\colon\Fg\to\Fg^\ad) \stackrel{\mathrm{Fact}\,\ref{LieAndKernel}}=\Lie Z(\CG)= Z(\Fg).\]
Because $Z(\CG)$ is finite, $Z(\Fg)$ is non-zero if and only if $p$ divides the order of $Z(\CG)$, i.e., if and only if $\CG$ is Lie-adjoint.

More concretely, from \Cref{Eqn:Centers} one has $\Fz=\kernel(\mathrm{d}\phi\colon\Fg^\sco\to\Fg^\ad) \neq0$ if and only if one of the following conditions is satisfied:
\begin{enumerate}
\item $\Fg$ is of type $A_n$ and $p|(n+1)$.
\item $\Fg$ is of type $B_n$, $C_n$ or $D_n$ ($n$ odd) or $E_7$ and $p=2$.
\item $\Fg$ is of type $E_6$, and $p=3$.
\item $\Fg$ is of type $D_n$ ($n$ even), and $p=2$.
\end{enumerate}
In cases (a)--(c) one has $\dim \Fz=1$, in case (d) one has $\dim \Fz =2$. 

For general $\Fg$ one has $Z(\Fg)\neq0$ if and only if $\Fz\neq0$ and either $\mathrm{d}\phi^\sco\colon\Fg^\sco\!\to\Fg$~is bijective in cases (a)--(c), or $\kernel(\mathrm{d}\phi^\sco\colon\Fg^\sco\!\to\Fg)$ has dimension at most $1$ in~case~(d).

Finally, $Z(\Fg)$ is an $\BF[H_\BF]$-submodule of $\Fg$ on which $H_\BF$ acts trivially.
\end{Thm}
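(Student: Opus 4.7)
The plan is to decompose the statement into four linked assertions and verify each. First I would establish the chain
\[
\kernel(\mathrm{d}\phi^\ad) \,=\, \Lie(\kernel \phi^\ad) \,=\, \Lie\, Z(\CG) \,=\, Z(\Fg).
\]
The first equality follows from \Cref{LieAndKernel} applied to the central isogeny $\phi^\ad\colon\CG\to\CG^\ad$. The second is the standard fact that for connected reductive $\CG$ the scheme-theoretic kernel of the adjoint isogeny coincides with the scheme-theoretic center. For the third identification I would consider the adjoint representation $\mathrm{Ad}\colon\CG\to\GL(\Fg)$, observe that its differential $\ad\colon\Fg\to\End(\Fg)$ has kernel equal to $Z(\Fg)$ by definition, and then invoke the well-known scheme-theoretic equality $\kernel(\mathrm{Ad})=Z(\CG)$ for connected reductive groups; a second application of \Cref{LieAndKernel} closes the loop.

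For the equivalence ``$Z(\Fg)\neq 0$ iff $p\mid|Z(\CG)|$'', I would use that by \Cref{Eqn:Centers} and absolute simplicity, $Z(\CG)$ is a finite commutative group scheme of multiplicative type, and such a group scheme is \'etale (equivalently has trivial Lie algebra) if and only if its order is prime to $p$. For the explicit list (a)--(d) describing $\Fz=\Lie\CZ$ in the simply connected case, I would plug the values $\CZ\in\{\mu_{n+1},\mu_2,\mu_4,\mu_2\times\mu_2,\mu_3,\mu_1\}$ from \Cref{Eqn:Centers} into the elementary computation $\dim\Lie\mu_n=1$ if $p\mid n$ and $0$ otherwise; this yields (a)--(d) with the dimensions $1$ and $2$ asserted.

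For a general $\CG$ in the isogeny class, I would use the short exact sequence $1\to\CZ'\to\CZ\to\CZ''\to 1$ introduced just before \Cref{LieAndKernel}, which on taking Lie algebras identifies $Z(\Fg)=\Lie\CZ''$, $\Fz'=\Lie\CZ'$, and $\Fz=\Lie\CZ$. In cases (a)--(c) the $p$-part of $\CZ$ is a single copy of $\mu_p$, so $Z(\Fg)\neq 0$ corresponds exactly to $\CZ'$ having trivial $p$-part, i.e.~to $\mathrm{d}\phi^\sco$ being bijective. In case (d) one has $\CZ\cong\mu_2\times\mu_2$ in characteristic~$2$, and $\Lie\CZ''=0$ occurs only when $\CZ'=\CZ$, i.e.~$\dim\Fz'=2$; hence $Z(\Fg)\neq 0$ is equivalent to $\dim\Fz'\le 1$.

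Finally, $Z(\Fg)$ is an $\CG(\BF)$-submodule because it is defined intrinsically as the Lie center; and since the conjugation action of $\CG$ on $Z(\CG)$ is trivial by definition of the center, its derivative on $\Lie Z(\CG)=Z(\Fg)$ is also trivial, so $H_\BF\subset\CG(\BF)$ acts trivially. The main obstacle is the identification $\Lie Z(\CG)=Z(\Fg)$ in bad characteristic: it depends on the scheme-theoretic equality $\kernel(\mathrm{Ad})=Z(\CG)$, standard for connected reductive groups but deserving a careful reference to Milne's text; the combinatorial case work for (d) is straightforward once the subgroup schemes of $\mu_2\times\mu_2$ in characteristic~$2$ are enumerated.
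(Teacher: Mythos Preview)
Your proposal is correct but takes a genuinely different route for the central equality $\Lie Z(\CG)=Z(\Fg)$. The paper only establishes the easy inclusion $\Lie Z(\CG)\subset Z(\Fg)$ directly (citing \cite[Prop.~10.33]{MilneAGS}); it then proves equality by computing $\dim_\BF\Lie Z(\CG)$ from the explicit description of $Z(\CG)$ as a product of $\mu_m$'s and comparing, case by case, with the values of $\dim_\BF Z(\Fg)$ tabulated in \cite[Table~1]{Hogeweij}. Your argument via the scheme-theoretic identity $\ker(\mathrm{Ad})=Z(\CG)$ for connected reductive groups, combined with a second application of Fact~\ref{LieAndKernel} to $\mathrm{Ad}$, gives the equality uniformly and without any appeal to Hogeweij's classification. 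This is conceptually cleaner for the displayed formula; on the other hand, the paper's approach costs nothing extra, since Hogeweij's tables are invoked anyway for the remaining assertions, and the dimension comparison is then immediate. For the list (a)--(d), the criterion in terms of $p\mid|Z(\CG)|$, and the $H_\BF$-action on $Z(\Fg)$, your argument and the paper's coincide.

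One imprecision in your treatment of the ``general $\Fg$'' clause: you write that in cases (a)--(c) ``the $p$-part of $\CZ$ is a single copy of $\mu_p$'', but for type $A_n$ with $p^2\mid(n+1)$ the $p$-part of $\CZ=\mu_{n+1}$ is $\mu_{p^a}$ with $a\ge 2$, and then the enumeration of subgroup schemes $\CZ'\subset\CZ$ and their quotients is more delicate than your sketch allows (indeed a Lie-intermediate $\CG$ of such type has $Z(\Fg)\neq 0$ while $\mathrm{d}\phi^\sco$ is not bijective; compare Theorem~\ref{Thm:Lie1}\ref{Lie-spcl}). The paper avoids this by simply deferring the ``general $\Fg$'' assertion to \cite[Table~1]{Hogeweij} rather than rederiving it from the subgroup lattice of~$\CZ$.
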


\begin{proof}
It is a basic fact that $\Lie Z(\CG)$ is a sub Lie algebra of $Z(\Fg)$; see \cite[Prop.~10.33]{MilneAGS}. To prove the displayed formula, note that $\dim_\BF\Lie \mu_m=1$ if $p|m$, and $\dim_\BF\Lie \mu_m=0$, otherwise. Hence one can read off from \Cref{Eqn:Centers} the dimension of $Z(\CG^\sco)$. 

Next note that $\phi^\sco\mapsto \kernel\phi^\sco$ defines a bijection between the (central) isogenies $\phi^\sco \colon  \CG^\sco\to\CG$ and the subgroups of $Z(\CG^\sco)$. Moreover $Z(\CG)$ is equal to $Z(\CG^\sco)$ modulo $\kernel\phi^\sco$. This allows one to read off $\dim\Lie Z(\CG)$ from \Cref{Eqn:Centers} for all $\CG$ (as a function of $p$ and $\phi^\sco$). One now compares this dimension with that of $Z(\Fg)$ given in \cite[Table 1]{Hogeweij} and observes equality in all cases, so that the stated formula holds. The remaining assertions on $Z(\Fg)$ follow immediately from \cite[Table 1]{Hogeweij}. 

The assertion on the $\BF[H_\BF]$-module structure is straightforward from $Z(\Fg)=\Lie Z(\CG)$: $Z(\CG)\subset\CG$ is a stable subgroup under the adjoint action, and the latter is trivial on $Z(\CG)$. Passing to Lie algebras, $Z(\Fg)\subset \Fg$ is an $H_\BF$-stable subalgebra on which $H_\BF$ acts trivially.
\end{proof}

\begin{Prop}
$(\CG,\BF,H_\BF,H'_\BF)$ satisfies \ct\ if and only if $Z(\Fg)=0$. 
\end{Prop}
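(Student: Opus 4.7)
The strategy is to exploit the identification $\Lie Z(\CG)=Z(\Fg)$ supplied by \Cref{Thm:LieCenter}, together with the fact that $Z(\CG)$ is a finite $\BF$-group scheme (because $\CG$ is absolutely simple). For any finite group scheme over a field, smoothness is equivalent to \'etaleness, which in turn is equivalent to having trivial Lie algebra.

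First I would deduce the implication \ct{} $\Rightarrow$ $Z(\Fg)=0$. The condition \ct{} requires the schematic center $Z(\CG)$ to be smooth; as $Z(\CG)$ is finite, this forces $\Lie Z(\CG)=0$, and \Cref{Thm:LieCenter} then gives $Z(\Fg)=0$.

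For the converse, suppose $Z(\Fg)=0$. Then $\Lie Z(\CG)=0$ by \Cref{Thm:LieCenter}, and the finiteness of $Z(\CG)$ yields that $Z(\CG)$ is \'etale, in particular smooth; so the smoothness clause of \ct{} is automatic. What remains is to prove that the natural inclusion $\Lie Z(\CG)=0\hookrightarrow H^0(H_\BF,\Fg)=\Fg^{H_\BF}$ is an isomorphism, i.e., $\Fg^{H_\BF}=0$. Since $Z(\Fg)=\kernel(\mathrm{d}\phi^\ad)=0$ and $\dim_\BF\Fg=\dim_\BF\Fg^\ad$, the map $\mathrm{d}\phi^\ad\colon\Fg\to\Fg^\ad$ is a $\CG(\BF)$-equivariant isomorphism of Lie algebras. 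Combined with $H_\BF\supseteq H'_\BF$, this reduces the problem to showing $(\Fg^\ad)^{\bar H}=0$, where $\bar H\subseteq\CG^\ad(\BF)$ denotes the image of $\CG^\sco(\BF)$ under $\phi=\phi^\ad\circ\phi^\sco$.

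The main obstacle is this centralizer computation. The adjoint action of the adjoint group $\CG^\ad$ on $\Fg^\ad$ is scheme-theoretically faithful (its kernel is the trivial center $Z(\CG^\ad)=1$), so the $\CG^\ad$-fixed subspace of $\Fg^\ad$ coincides with $Z(\Fg^\ad)=Z(\Fg)=0$. The group $\bar H$ contains the image of an $\BF$-split maximal torus of $\CG^\sco$ together with all $\BF$-rational root-group elements $u_\alpha(t)$, and a standard weight-space argument under the torus, refined by applying the root unipotents, shows that any element of $\Fg^\ad$ centralized by this generating set must lie in $Z(\Fg^\ad)$, hence is zero. For $\BF$ large enough this is absorbed into the Zariski density of $\bar H$ in $\CG^\ad$; for the finitely many small-$\BF$ exceptions one must verify vanishing directly via the Chevalley commutator relations, and this case-by-case check is the principal technical point of the proof.
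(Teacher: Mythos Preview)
Your approach agrees with the paper's for the equivalence of the smoothness clause of \ct\ with $Z(\Fg)=0$: both use that $Z(\CG)$ is a finite (product-of-$\mu_n$) group scheme, so smoothness is equivalent to \'etaleness is equivalent to having trivial Lie algebra, and then invoke \Cref{Thm:LieCenter}.

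For the remaining clause---showing $H^0(H_\BF,\Fg)=0$ when $Z(\Fg)=0$---the paper takes a shorter route: after the same reduction $\Fg\cong\Fg^\ad$, it simply reads the vanishing off from \cite[Prop.~1.11]{Pink-Compact} (which packages Hiss's Hauptsatz on the $\BF_p[G]$-module structure of $\Fg$). Your direct argument is in principle workable but has two soft spots. First, scheme-theoretic faithfulness of the adjoint action of $\CG^\ad$ says nothing by itself about fixed vectors; the inclusion $(\Fg^\ad)^{\CG^\ad(\overline\BF)}\subseteq Z(\Fg^\ad)$ that you want comes instead from differentiating the relation $\mathrm{Ad}(g)v=v$ at $g=e$. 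Second, your weight-space step presupposes an $\BF$-split maximal torus, which is unavailable for the non-split forms (types ${}^2A_n$, ${}^2D_n$, ${}^3D_4$, ${}^2E_6$), so the ``generic'' half of your argument does not cover those, and they all fall into the case-by-case check you defer. That check is precisely the content of the cited results of Hiss and Pink; invoking them directly, as the paper does, spares you both the small-$\BF$ analysis and the non-split forms.
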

\begin{proof}
The centers of the various $\CG^\sco$ are described in \Cref{Eqn:Centers}. For a general $\CG$, the central isogeny $\phi^\sco\colon\CG^\sco\to\CG$, introduced above \Cref{Eqn:Centers}, gives a short exact sequence $1\to  \kernel(\phi^\sco)\to Z(\CG^\sco)\to Z(\CG)\to 1$. From the explicit description in~\Cref{Eqn:Centers}, it follows that $Z(\CG)$ is smooth (over $\WF$) if and only if $p$ does not divide the order of $Z(\CG)$, and the latter is equivalent to $\Lie Z(\CG)$ being trivial. By \Cref{Thm:LieCenter} this is equivalent to $Z(\Fg)$ being zero. It remains to show that $H^0(H'_\BF,\Fg)=0$ provided that $Z(\Fg)=0$.  This can be read off from \cite[Prop.~11.1]{Pink-Compact}.~Note that there one considers $\Fg\otimes_\BF\overline\BF$; but this does not affect the vanishing of $H^0(H'_\BF,\Fg)$. Also, $\Fz$ in \cite{Pink-Compact} is defined as $\kernel(\Fg^\sco\to\Fg^\ad)$; but we have identified this with $Z(\Fg^\sco)$ in \Cref{Thm:LieCenter}.
\end{proof}

\begin{Rem}
If $Z(\Fg)$ is non-zero, we shall obtain in \Cref{Subsec:Puniv} an ``ambient group'' of $\CG$ for which \ct\ holds; see in particular \Cref{Thm:StdHypForPuniv}(a). 
\end{Rem}

\subsection{Condition \lie}
\label{Subsection:Lie}

This subsection is devoted to collecting results on the structure of the Lie algebra $\Fg$ of $\CG$.  Recall the maps $\phi$, $\phi^\sco$ and $\phi^\ad$ in $\CG^\sco\to\CG\to\CG^\ad$, and their induced maps $\mathrm{d}\phi^?$ on Lie algebras. Let in this subsection $G$ be the finite (Chevalley) group that is the image of $\CG^\sco(\BF)$ in $\CG^\ad(\BF)$ under $\phi$. We are going to consider the structure of the Lie algebra $\Fg$ of $\CG$, both as a Lie algebra and as an $\BF_p[G]$-module.  Note right away that the action of $H'_\BF$ on $\Fg$ factors via $G$ as it is trivial on the center of $H'_\BF$, and this explains why assertions for $G$ typically imply the same assertions for $H'_\BF$.
 
 Our main references will be \cite{Hogeweij}, \cite{Hiss}, and \cite{Pink-Compact}. Since the last two references work over an algebraically closed field, we begin with two lemmas to descend from $\overline\BF$ to $\BF$. For an $\BF$-vector space $V$, we write $V_{\overline \BF}$ for the base change  $V\otimes_\BF\overline\BF$.

\begin{Lem}\label{Lem-LieBaseChange}
Let $\Fh$ be a Lie algebra over $\BF$. Then the following hold:
\begin{enumerate}
 \item If $\Fa\subset \Fh$ is an ideal, then $\Fa_{\overline\BF}$ is an ideal of $\Fh_{\overline\BF}$ and the assignment $\Fa\mapsto \Fa_{\overline{\BF}}$ satisfies $\dim_\BF\Fa=\dim_{\overline\BF}\Fa_{\overline\BF}$ and is injective.  In particular, if $\Fh_{\overline\BF}$ is simple, then so is $\Fh$;  and if $\Fh$ splits as $\Fa\oplus \Fb$ for two ideals $\Fa$, $\Fb$, then $\Fh_{\overline\BF}$ splits as $\Fa_{\overline{\BF}}\oplus \Fb_{\overline{\BF}}$.

\item Under $\Fh\mapsto \Fh_{\overline\BF}$, we have $[\Fh_{\overline\BF},\Fh_{\overline\BF}]= [\Fh,\Fh]_{\overline\BF}$ and $Z(\Fh)_{\overline\BF}=Z(\Fh_{\overline\BF})$. In particular $\Fh$ is perfect, i.e., $[\Fh,\Fh]=\Fh$, if and only if $\Fh_{\overline\BF}$ is perfect.
\end{enumerate}
\end{Lem}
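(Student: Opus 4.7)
The plan is to extract everything from the two standard facts that the base-change functor $(-)\otimes_\BF\overline\BF$ is faithfully flat (hence exact and dimension-preserving, and injective on $\BF$-subspaces via $\Fa=\Fa_{\overline\BF}\cap \Fh$), and that the Lie bracket is $\BF$-bilinear, so it base-changes to the bracket of $\Fh_{\overline\BF}$.

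For part (a), I would first note that if $\Fa\subset \Fh$ is an ideal, then for $x\in\Fa$, $y\in\Fh$ the bracket $[x\otimes 1,y\otimes 1]=[x,y]\otimes 1$ lies in $\Fa_{\overline\BF}$; by $\overline\BF$-bilinearity of the bracket, this suffices to show that $\Fa_{\overline\BF}$ is closed under $[\,\cdot\,,\Fh_{\overline\BF}]$, hence is an ideal of $\Fh_{\overline\BF}$. Dimension preservation is just $\dim_{\overline\BF}(V\otimes_\BF\overline\BF)=\dim_\BF V$. Injectivity of $\Fa\mapsto \Fa_{\overline\BF}$ follows from $\Fa=\Fa_{\overline\BF}\cap \Fh$, where $\Fh\hookrightarrow\Fh_{\overline\BF}$ via $v\mapsto v\otimes 1$; equivalently, one can invoke faithful flatness. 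Simplicity then descends: a proper nonzero ideal $\Fa\subsetneq \Fh$ would give a proper nonzero ideal $\Fa_{\overline\BF}\subsetneq \Fh_{\overline\BF}$ by the dimension statement. The direct-sum statement is immediate: applying the exact functor $(-)\otimes_\BF\overline\BF$ to $\Fh=\Fa\oplus\Fb$ yields $\Fh_{\overline\BF}=\Fa_{\overline\BF}\oplus \Fb_{\overline\BF}$, and both summands are ideals by the first part.

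For part (b), the identity $[\Fh,\Fh]_{\overline\BF}=[\Fh_{\overline\BF},\Fh_{\overline\BF}]$ follows by picking an $\BF$-basis $(e_i)$ of $\Fh$ (hence an $\overline\BF$-basis of $\Fh_{\overline\BF}$) and observing via $\overline\BF$-bilinearity of $[\,\cdot\,,\cdot\,]$ on $\Fh_{\overline\BF}$ that every bracket is an $\overline\BF$-linear combination of the $[e_i,e_j]\in[\Fh,\Fh]$. For the center, the cleanest argument is to realize $Z(\Fh)$ as the kernel of the $\BF$-linear adjoint map $\ad\colon \Fh\to \End_\BF(\Fh)$; since $(-)\otimes_\BF\overline\BF$ is exact and commutes with $\End$ for finite-dimensional $V$, one has $Z(\Fh_{\overline\BF})=\ker(\ad\otimes_\BF\overline\BF)=Z(\Fh)\otimes_\BF \overline\BF$. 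The perfectness equivalence then reduces to the equality $\dim_\BF[\Fh,\Fh]=\dim_{\overline\BF}[\Fh_{\overline\BF},\Fh_{\overline\BF}]$ combined with injectivity of the inclusion $[\Fh,\Fh]\hookrightarrow \Fh$.

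No real obstacle is expected; the only mild subtlety is being careful about the distinction between "same dimension" and "actual equality as subsets" when arguing injectivity of the assignment $\Fa\mapsto\Fa_{\overline\BF}$ and the descent of the direct sum decomposition, but both are handled uniformly by faithful flatness of $\overline\BF/\BF$.
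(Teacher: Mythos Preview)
Your proposal is correct and follows essentially the same approach as the paper: both arguments reduce everything to choosing an $\BF$-basis of $\Fh$ (implicitly, in your case, via faithful flatness) and using $\overline\BF$-bilinearity of the bracket on $\Fh_{\overline\BF}$. The only cosmetic difference is that you phrase the center computation via $Z(\Fh)=\ker(\ad)$ and exactness of $(-)\otimes_\BF\overline\BF$, whereas the paper writes it out as a linear system $\{Y:[Y,X_i]=0\ \forall i\}$; these are the same argument, and your version does not actually need the parenthetical about $\End$ commuting with base change (only injectivity of $\End_\BF(\Fh)\otimes\overline\BF\to\End_{\overline\BF}(\Fh_{\overline\BF})$ is used, which holds without finite-dimensionality).
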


\begin{proof} Let $(X_i)_{i\in I}$ be a basis of $\Fh$ such that $(X_i)_{i\in J}$ for a subset $J\subset I$ is a basis for~$\Fa$. 

Because $\Fa$ is an ideal, all brackets $[X_i,X_j]$, for $i\in I$ and $j\in J$, lie in the $\BF$-span of $(X_i)_{i\in J}$. Since $\Fa_{\overline\BF}$ is also spanned by $(X_i)_{i\in J}$, this in turn implies that $\Fa_{\overline\BF}$ is an ideal of $\Fh_{\overline\BF}$. The injectivity of $\Fa\mapsto\Fa_{\overline\BF}$ follows from the injectivity for the corresponding map on $\BF$-vector spaces; the remaining parts are immediate and this completes the proof of~(a).

For (b) note that the $\BF$-span of the brackets $[X_i,X_{i'}]$, $i,i'\in I$, is $[\Fh,\Fh]$; their $\overline\BF$-span is $[\Fh_{\overline\BF},\Fh_{\overline\BF}]$. This implies the first assertion, and also the last. To see the remaining assertion, observe that $Z(\Fh)=\{ Y\in\Fh\mid\forall i\in I: [Y,X_i]=0 \}$. This is a linear system of equations. Hence an 
$\BF$-basis of $Z(\Fh)$ is also an $\overline\BF$-basis of $Z(\Fh_{\overline\BF})$, and thus also (b) is proved.
\end{proof}

\begin{Lem}\label{DescentFG}
Let $V$ be an $\BF[G]$-module and let $N$ be an $\BF$-vector subspace. Suppose that $N_{\overline\BF}\subset V_{\overline\BF}$ is invariant under $G$. Then $N$ is an $\BF[G]$-submodule of~$V$.
\end{Lem}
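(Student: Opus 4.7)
The plan is to show the result by the elementary descent principle $N = N_{\overline\BF} \cap V$, viewing $V$ as embedded in $V_{\overline\BF}$ via $v \mapsto v\otimes 1$. Once this equality is established, the $G$-invariance of $N$ will follow essentially for free from the $G$-invariance of $N_{\overline\BF}$ and the $\BF$-linearity of the $G$-action.

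First, I would verify the identity $N = N_{\overline\BF} \cap V$ inside $V_{\overline\BF}$. To do so, I would choose an $\BF$-basis $(e_i)_{i \in J}$ of $N$ and extend it to an $\BF$-basis $(e_i)_{i \in I}$ of $V$ with $J \subset I$. Then $(e_i)_{i \in I}$ is simultaneously an $\overline\BF$-basis of $V_{\overline\BF}$, and $(e_i)_{i \in J}$ spans $N_{\overline\BF}$ over $\overline\BF$. A vector $v = \sum_{i \in I} \alpha_i e_i$ with $\alpha_i \in \BF$ lies in $N_{\overline\BF}$ if and only if $\alpha_i = 0$ for all $i \in I\setminus J$, which is precisely the condition $v \in N$. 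This yields $N_{\overline\BF} \cap V = N$.

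Next, for $g \in G$ and $n \in N$, I would consider the element $g \cdot n \in V$. Under the inclusion $V \hookrightarrow V_{\overline\BF}$ the action of $G$ on $V$ is compatible with the (base-changed) action on $V_{\overline\BF}$, so $(g\cdot n)\otimes 1 = g\cdot (n\otimes 1)$. Since $n \otimes 1 \in N_{\overline\BF}$ and $N_{\overline\BF}$ is $G$-stable by hypothesis, $(g\cdot n)\otimes 1 \in N_{\overline\BF}$. Combined with $g \cdot n \in V$, the first step gives $g \cdot n \in N_{\overline\BF} \cap V = N$, proving that $N$ is $G$-stable.

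I do not anticipate any real obstacle here; the statement is a standard faithfully flat descent fact for linear subspaces of a representation, and the only mild care needed is in making explicit that the $G$-action on $V_{\overline\BF}$ is obtained by $\overline\BF$-linear extension of the $G$-action on $V$, so that the embedding $V \hookrightarrow V_{\overline\BF}$ is $G$-equivariant.
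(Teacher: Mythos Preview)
Your proof is correct and essentially the same as the paper's. The paper also chooses an $\BF$-basis of $V$ extending one of $N$ and compares the $\overline\BF$-coefficients (from $G$-invariance of $N_{\overline\BF}$) with the $\BF$-coefficients (from the $G$-action on $V$) to conclude $gv_j\in N$; your formulation via $N = N_{\overline\BF}\cap V$ is simply a clean repackaging of this coefficient comparison.
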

\begin{proof}
Let $(v_i)_{i\in I}$ be a basis of $V$ over $\BF$ such that there exists $J\subset I$ such that $(v_i)_{i\in J}$ is a basis of $N$. Let $j\in J$. The $G$-invariance of $N_{\overline\BF}$ implies that $gv_j=\sum_{j'\in J}\lambda_{jj'}v_j$ for some $\lambda_{jj'}\in\overline\BF$. The fact that $V$ carries a $G$ action implies that $gv_j=\sum_{i'\in I}\mu_{ji'}v_{i'}$ for suitable $\mu_{j i'}\in\BF$. The basis property of $(v_i)_{i\in I}$ yields $\mu_{jj'}=\lambda_{jj'}$ for $j'\in J$ and $\mu_{ji'}=0$ for $i'\in I\setminus J$. Hence $Gv_j\in N$ for all $j\in J$, and this implies the lemma.
\end{proof}

In many cases, the Lie algebra $\Fg$ of $\CG$ is simple, both as a Lie algebra and as an $\BF[G]$-module. From \cite[Hauptsatz und Korollare]{Hiss}, \cite[Cor.~2.7]{Hogeweij} and Lemma \ref{Lem-LieBaseChange}, the following is immediate:
\begin{Prop}\label{Lie:Simple}
The Lie algebra $\Fg$ is simple if and only if none of the following holds
\begin{enumerate}
\item $\Fg$ is of type $A_n$ and $p$ divides $n+1$.
\item $\Fg$ is of type $B_n$, $C_n$, $D_n$, $E_7$ or $F_4$ and $p=2$.
\item $\Fg$ is of type $E_6$ or $G_2$ and $p=3$.
\end{enumerate}
If $\Fg$ is simple as a Lie algebra, it is simple as an $\BF[G]$-module.
\end{Prop}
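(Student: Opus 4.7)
The proof will proceed in three steps and rest on the two references cited immediately before the statement.

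First, I would reduce to the case $\BF=\overline\BF$. By \Cref{Lem-LieBaseChange}(a), $\Fg$ is simple as a Lie algebra if and only if $\Fg_{\overline\BF}$ is; part (b) of the same lemma shows that $Z(\Fg)$ and $[\Fg,\Fg]$ commute with base change, so any proper non-zero ideal found over $\overline\BF$ that is defined over $\BF$ descends. This makes the references \cite{Hogeweij} and \cite{Hiss}, which work over algebraically closed fields, directly applicable.

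Second, over $\overline\BF$ the question is settled by Hogeweij's classification \cite[Cor.~2.7]{Hogeweij}, whose case-by-case list of proper ideals of the Lie algebra of a Chevalley group identifies precisely the exceptions (a)--(c). The structural reason, which I would use to organize the bookkeeping, is \Cref{Thm:LieCenter}: in each of (a)--(c) the kernel $\Fz=\kernel(\mathrm{d}\phi\colon\Fg^\sco\to\Fg^\ad)$ is non-trivial, hence at least one of $\mathrm{d}\phi^\sco$, $\mathrm{d}\phi^\ad$ fails to be an isomorphism. Whatever the isogeny class of $\CG$, this yields a proper non-zero ideal in $\Fg$: either a non-trivial center $Z(\Fg)\supset\Fz''$ when $\mathrm{d}\phi^\ad$ has a kernel, or the proper image $\overline\Fg$ of $\mathrm{d}\phi^\sco$ inside $\Fg$ (with abelian quotient) when $\mathrm{d}\phi^\sco$ fails to be surjective. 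Thus simplicity fails in (a)--(c). Conversely, outside (a)--(c), both $\mathrm{d}\phi^\sco$ and $\mathrm{d}\phi^\ad$ are Lie algebra isomorphisms, so $\Fg\cong\Fg^\sco\cong\Fg^\ad$, and Hogeweij's tables confirm that this common Lie algebra is simple.

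Third, for the $\BF[G]$-module claim, assume $\Fg$ is Lie-simple. Any proper $\BF[G]$-submodule $N\subset\Fg$ would base-change to a proper $\overline\BF[G]$-submodule $N_{\overline\BF}\subset\Fg_{\overline\BF}$; conversely, by \Cref{DescentFG} any proper $\overline\BF[G]$-submodule defined over $\BF$ descends. Hence it suffices to check that $\Fg_{\overline\BF}$ is $\overline\BF[G]$-irreducible whenever it is simple as a Lie algebra, and this is exactly the content of the Hauptsatz and its Korollare in \cite{Hiss}.

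The main obstacle is the second step: matching the uniform exceptional list (a)--(c) to Hogeweij's case-by-case treatment across all three isogeny classes (simply connected, intermediate, adjoint) and exhibiting in each exceptional $(\text{type},p,\text{isogeny class})$ an explicit non-zero proper ideal. This is routine but tedious; once done, the remaining arguments are formal consequences of \Cref{Lem-LieBaseChange} and \Cref{DescentFG}.
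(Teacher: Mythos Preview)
Your overall approach---cite Hogeweij and Hiss, then descend via \Cref{Lem-LieBaseChange}---is exactly what the paper does; the proposition is stated there as ``immediate'' from those three inputs. So the strategy is correct.

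However, your ``structural reason'' in Step~2 contains a genuine error. You claim that in each of (a)--(c) the kernel $\Fz=\kernel(\mathrm{d}\phi\colon\Fg^\sco\to\Fg^\ad)$ is non-trivial, and you use this to produce a proper ideal. But for type $F_4$ (in your case~(b)) and type $G_2$ (in your case~(c)) one has $Z(\CG^\sco)=\mu_1$ by \Cref{Eqn:Centers}, so $\Fz=0$ regardless of $p$. There is only one group in the isogeny class, both $\mathrm{d}\phi^\sco$ and $\mathrm{d}\phi^\ad$ are isomorphisms, and your argument produces no ideal at all. The failure of simplicity for $(F_4,p=2)$ and $(G_2,p=3)$ has an entirely different origin: it comes from the short/long root asymmetry underlying the very special (non-central) isogenies that give rise to the Ree and Suzuki groups; see the remark following \Cref{Thm:Lie1} in the paper and Hogeweij's tables. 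So your uniform mechanism via $\Fz$ simply does not cover all of (a)--(c), and the sentence ``Thus simplicity fails in (a)--(c)'' is unjustified as written. You must appeal directly to Hogeweij's tables for those two cases rather than to \Cref{Thm:LieCenter}.

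A smaller point: \Cref{Lem-LieBaseChange}(a) only gives one implication (simple over $\overline\BF$ implies simple over $\BF$), not the ``if and only if'' you assert. For the other direction you need, as you partly indicate, that the specific proper ideals Hogeweij exhibits over $\overline\BF$ (center, derived subalgebra, or in the exceptional cases the short-root ideal) are all defined over $\BF$---indeed over $\BF_p$---so they descend. This is fine once stated, but the equivalence is not a formal consequence of the lemma alone.
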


The following result is a complete classification of when $\Fg$ is perfect, i.e., when $[\Fg,\Fg]=\Fg$. It follows directly from \cite{Hogeweij} using \Cref{Lem-LieBaseChange}.
\begin{Prop}[\cite{Hogeweij}]\label{Lie:Perfect}
Suppose that $p\neq2$ if $\CG$ is of type $A_1$, $B_2$ or $C_n$. Then $\Fg$ is perfect if and only if $\CG$ is Lie-simply connected. Moreover, the map $\mathrm{d}\phi^\sco$  can fail to be an isomorphism in the following cases only:
\begin{enumerate}
\item $\Fg$ is of type $A_n$ and $p$ divides $n+1$.
\item $\Fg$ is of type $B_n$, $n\ge3$, $D_n$, $n\ge4$, or $E_7$ and $p=2$.
\item $\Fg$ is of type $E_6$ and $p=3$.
\end{enumerate}
\end{Prop}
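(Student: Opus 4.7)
The plan is to reduce the entire statement to its analogue over the algebraic closure $\overline\BF$ and then invoke Hogeweij's structural classification.

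First, by \Cref{Lem-LieBaseChange}(b) we have $[\Fg,\Fg]_{\overline\BF}=[\Fg_{\overline\BF},\Fg_{\overline\BF}]$, so perfection of $\Fg$ is equivalent to perfection of $\Fg_{\overline\BF}$, and similarly for $\Fg^\sco$. By \Cref{LieAndKernel}, $\kernel(\mathrm{d}\phi^\sco)=\Fz'=\Lie\CZ'$; since $\dim\Fg^\sco=\dim\Fg$ and $\BF$-dimensions are preserved under base change to $\overline\BF$, the morphism $\mathrm{d}\phi^\sco$ is an isomorphism over $\BF$ if and only if it is one over $\overline\BF$. Hence I may assume $\BF=\overline\BF$.

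The list (a)--(c) of cases where $\mathrm{d}\phi^\sco$ can fail to be an isomorphism is obtained by determining when $\Fz'\neq 0$. Since $\CZ'$ is a subgroup scheme of the central group $\CZ$, whose explicit structure is recorded in \Cref{Eqn:Centers}, and since $\Lie\mu_m\neq 0$ precisely when $p\mid m$, non-vanishing of $\Fz'$ forces $p\mid|\CZ|$. A type-by-type inspection of \Cref{Eqn:Centers} yields: type $A_n$ with $p\mid n+1$; types $B_n$, $C_n$, $D_n$ and $E_7$ with $p=2$; and type $E_6$ with $p=3$. The standing hypothesis removes $A_1$, $B_2$ and $C_n$ at $p=2$, and types $E_8$, $F_4$, $G_2$ have trivial center. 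This is precisely the list in (a)--(c).

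For the perfection equivalence, the forward direction is straightforward: the standing hypothesis on $p$ precisely excludes the cases ($A_1$, $B_2$, $C_n$ at $p=2$, i.e.\ $\Fsl_2$ and $\Fsp_{2n}$ in characteristic $2$) in which Hogeweij's tables show $\Fg^\sco_{\overline\BF}$ fails to be perfect; hence under the hypothesis $\Fg^\sco$ is perfect and, when $\CG$ is Lie-simply connected, $\Fg\cong\Fg^\sco$ is perfect too. The reverse direction --- if $\CG$ is Lie-intermediate or Lie-adjoint, then $\Fg$ is not perfect --- is the substantive content and is where I expect the main obstacle. The plan is to read this off Hogeweij's Table~1, which lists $[\Fg,\Fg]$ and $Z(\Fg)$ for every isogeny class in each of the relevant small characteristics. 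Structurally, the perfect subalgebra $\overline\Fg_\sco:=\image(\mathrm{d}\phi^\sco)\subset\Fg$ has codimension $\dim\Fz'>0$ by dimension count, and one has $\overline\Fg_\sco=[\overline\Fg_\sco,\overline\Fg_\sco]\subset[\Fg,\Fg]$; Hogeweij's tables then show that the complementary direction gives a non-trivial abelian quotient of $\Fg$, so $[\Fg,\Fg]=\overline\Fg_\sco\subsetneq\Fg$. Verifying this systematically against the tables in each of (a)--(c) and for each intermediate and adjoint isogeny class completes the proof.
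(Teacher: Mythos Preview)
Your proposal is correct and follows essentially the same approach as the paper, which simply states that the result ``follows directly from \cite{Hogeweij} using \Cref{Lem-LieBaseChange}.'' Your reduction to $\overline\BF$ via \Cref{Lem-LieBaseChange}(b) and appeal to Hogeweij's Table~1 is exactly what is intended; your derivation of the list (a)--(c) from the center table (\Cref{Eqn:Centers}) together with $\Fz'=\Lie\CZ'$ is a clean and correct elaboration of the same idea.
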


The next result is again essentially due to Hiss and Hogeweij with some additions by Pink. There is also much overlap with \cite[Thm.~3.10]{Vasiu1}. Recall the exact sequences $0\to\Fz\to\Fg^\sco\to\overline\Fg\to 0$ and $0\to\overline\Fg\to\Fg^\ad\to\Fz^*\to 0$ of $\CG^{\ad}$-representations in which $\Fz\cong\Fz^*$ are the trivial representation; see~\cite[Prop.~1.11(a)]{Pink-Compact}. 

\begin{Thm}
\label{Thm:Lie1}
Suppose that $p\neq2$ if $\CG$ is of type $A_1$, $B_n$, $C_n$ or $F_4$ and that $p\neq3$ if $\CG$ is of type $G_2$. Let $H$ be any group with $G\subset H \subset \CG^\ad(\BF)$.
Then the following hold:
\begin{enumerate} 
\item\label{item-Irred-Gaction} As a Lie algebra and as an $\BF[H]$-module, $\overline\Fg$ is absolutely simple and non-trivial.
\end{enumerate}
Assume from now on that $p$ divides the order of $Z(\CG^\sco)$.
\begin{enumerate}\addtocounter{enumi}{1}
\item\label{Lie-soc} The socle of $\Fg^\sco$ as a Lie algebra and as an $\BF[H]$-module is $\Fz$.
\item\label{Lie-csc} The cosocle of $\Fg^\ad$ as a Lie algebra and as an $\BF[H]$-module is $\Fz^*$.
\item\label{Lie-g=gsc} 
$\CG$ is Lie-simply connected if and only if $\Fz'=0$; it is Lie-adjoint if and~only~if~$\Fz''=0$. 
\item\label{Lie-spcl} If $\Fz'$ and $\Fz''$ are non-zero, then $\dim\Fz'=\dim\Fz''=1$ and either of the following~holds:
\begin{enumerate}
\item $\CG$ is Lie-intermediate of type $A_n$ with $p^2|n+1$ or of type $D_n$ with $n$ odd and $p=2$, and then $\Fg\cong\overline\Fg\oplus\Fz''$ as Lie algebras and as $\BF[H]$-modules.
\item $\CG$ is Lie-intermediate of type $D_n$ with $n$ even and $p=2$, and then $\Fg$ possesses a unique composition series 
\[0\subsetneq \Fz'' \subsetneq[\Fg,\Fg]\subsetneq\Fg\] 
as a Lie algebra and as an $\BF[H]$-module, with $\Fg/[\Fg,\Fg] \cong\Fz'$, and $[\Fg,\Fg]/\Fz''\cong\overline\Fg$.
\end{enumerate}
\end{enumerate}
\end{Thm}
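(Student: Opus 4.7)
The overall strategy is to reduce each claim to results of Hiss, Hogeweij, and Pink in the cited literature, using the two short exact sequences of $\CG^\ad$-representations
\[0\to\Fz\to\Fg^\sco\to\overline\Fg\to0,\qquad 0\to\overline\Fg\to\Fg^\ad\to\Fz^*\to0\]
together with the descent lemmas \Cref{Lem-LieBaseChange} and \Cref{DescentFG}. Since the adjoint action of any $H$ with $G\subseteq H\subseteq\CG^\ad(\BF)$ on each Lie algebra under consideration factors through $\CG^\ad(\BF)$ and restricts to the action of $G$, every $\BF[H]$-submodule is also $\BF[G]$-invariant; conversely $\BF[G]$-irreducibility and $\BF[G]$-non-splitting imply the corresponding $\BF[H]$-statements. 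Hence it suffices throughout to analyze the $\BF[G]$-module structure, and the conclusions will automatically upgrade to $\BF[H]$.

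For part~(i), I would first apply \Cref{Lie:Simple} at the level of $\overline\Fg_{\overline\BF}\cong\Fg^\sco_{\overline\BF}/Z(\Fg^\sco_{\overline\BF})$: under the present restrictions on $p$ and type, Hiss's classification gives simplicity of $\overline\Fg_{\overline\BF}$ as a Lie algebra over $\overline\BF$, and then \Cref{Lem-LieBaseChange}(a) yields simplicity of $\overline\Fg$ over $\BF$. For the $\BF[H]$-module structure, I would invoke Pink's absolute irreducibility statements in \cite[Prop.~1.10 and \S11]{Pink-Compact} for $\overline\Fg_{\overline\BF}$ as a module over the Chevalley group, and then descend to $\BF$ via \Cref{DescentFG}. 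Non-triviality is automatic since $\overline\Fg$ has non-abelian Lie bracket and the adjoint $G$-action is non-trivial.

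For parts~(ii) and~(iii), I would argue that any semisimple $\BF[H]$-submodule of $\Fg^\sco$ not contained in $\Fz$ would project non-trivially to the simple module $\overline\Fg$ from~(i) and thereby split the first exact sequence. The key input is that this extension, and the dual one for $\Fg^\ad$, do not split as $\BF[H]$-modules when $\Fz\neq0$; this non-splitting is contained in Pink's analysis \cite[Prop.~1.11]{Pink-Compact} as well as in \cite{Hogeweij}. Given non-splitness, the socle of $\Fg^\sco$ and the cosocle of $\Fg^\ad$ must equal $\Fz$ and $\Fz^*$ respectively. The Lie-algebra versions reduce to the same statements, using that under the present hypotheses the minimal ideals of $\Fg^\sco$ are precisely the lines in $\Fz$ (any minimal ideal not in $\Fz$ would inject into $\overline\Fg$, yielding a Lie-algebra splitting forbidden by non-splitness).

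For~(iv), use Fact~\ref{LieAndKernel} together with the fact that $\mathrm{d}\phi^\sco$ and $\mathrm{d}\phi^\ad$ are maps between Lie algebras of equal dimension (since $\phi^\sco$ and $\phi^\ad$ are isogenies), so injectivity is equivalent to bijectivity. For~(v), the hypothesis $\Fz'\neq0\neq\Fz''$ combined with \Cref{Thm:LieCenter} and \Cref{Eqn:Centers} pins down $($type$,p)$ to one of the three listed configurations and forces $\dim\Fz'=\dim\Fz''=1$. The split versus non-split dichotomy is then read off from \cite[Table~1 and accompanying analysis]{Hogeweij}: in type $A_n$ with $p^2\mid n+1$ and in type $D_n$ with $n$ odd and $p=2$, the intermediate $\Fg$ decomposes as $\overline\Fg\oplus\Fz''$; in type $D_n$ with $n$ even and $p=2$ the extension is indecomposable and has the stated three-step composition series, whose uniqueness follows from the simplicity of $\overline\Fg$ in~(i) together with the triviality of $\Fz'$ and $\Fz''$ as $H$-modules (so that every composition filtration refinement is forced through the same subquotients). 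The main obstacle is this last case analysis in~(v): verifying that the split versus non-split dichotomy matches exactly the stated list, and that the composition series in case~(b) is indeed unique, requires a careful bookkeeping of Hogeweij's tables together with the irreducibility from~(i) and the socle/cosocle identifications from~(ii),(iii).
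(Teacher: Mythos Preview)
Your approach is essentially the same as the paper's: reduce to the results of Hogeweij, Hiss and Pink over $\overline\BF$ and descend via \Cref{Lem-LieBaseChange} and \Cref{DescentFG}, with the observation that the $G$-module analysis suffices for all $H$. Two small corrections and one substantive gap are worth noting.

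First, in part~(i) the Lie-algebra simplicity of $\overline\Fg_{\overline\BF}$ comes from Hogeweij's tables, not from Hiss; Hiss's Hauptsatz gives the $\overline\BF[G]$-module simplicity. You have the right references but the attributions reversed. Also, non-triviality is most cleanly argued from $\dim_\BF\overline\Fg>1$ (your appeal to a non-abelian bracket happens to be valid under the stated hypotheses, but is more fragile).

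Second, and more importantly, your argument for the \emph{uniqueness} of the composition series in~(v)(b) is incomplete. Knowing the composition factors are $\Fz''$, $\overline\Fg$, $\Fz'$ (two trivial lines and one simple module) does not by itself force a unique composition series: if either of the two extensions $0\to\Fz''\to[\Fg,\Fg]\to\overline\Fg\to0$ or $0\to[\Fg,\Fg]\to\Fg\to\Fz'\to0$ were split, there would be several. What the paper does is show that $\socle(\Fg)=\Fz''$ and $\cosocle(\Fg)=\Fg/[\Fg,\Fg]$ exactly, by transporting Pink's computation of $\cosocle(\Fg^\sco_{\overline\BF})=\overline\Fg_{\overline\BF}$ and $\socle(\Fg^\ad_{\overline\BF})=\overline\Fg_{\overline\BF}$ through the maps $\mathrm{d}\phi^\sco$ and $\mathrm{d}\phi^\ad$. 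You correctly flag this as the main obstacle, but your sketch (``forced through the same subquotients'') does not yet supply the needed non-splitting; you should make the socle and cosocle computations explicit as the paper does.
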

\begin{proof}
It clearly suffices to prove the result for $H=G$. The assertions on Lie algebras are from \cite[Table~1]{Hogeweij} -- strictly speaking the results in \cite{Hogeweij} are results on $\Fg_{\overline\BF}$; but by \Cref{Lem-LieBaseChange} this suffices to deduce the above results over $\BF$ from the corresponding ones over $\overline\BF$. The assertion on the $\BF[G]$-module structure is stated, over $\overline\BF$, in a similar way in \cite[Prop.~1.11]{Pink-Compact}. It completes previous work from \cite{Hiss,Hogeweij}.

\ref{item-Irred-Gaction} 
Consider \cite[Table 1]{Hogeweij}. Since $\overline{\Fg}_{\overline{\BF}}$ only depends on the type of $\CG$, we may assume that $\CG$ is simply connected, i.e., that $\Fg_{\overline{\BF}}$ of universal type in the sense of op.cit. In all cases that we allow, all nontrivial ideals are contained in $Z({\Fg}_{\overline{\BF}})$. Thus $\overline{\Fg}_{\overline{\BF}}\simeq \Fg_{\overline{\BF}}/Z(\Fg_{\overline{\BF}})$ is simple as a Lie algebra. That $\overline{\Fg}_{\overline{\BF}}=\Fg^{\sco}_{\overline{\BF}}/Z(\Fg^{\sco}_{\overline{\BF}})$ is a simple $\overline{\BF}[G]$-module is \cite[Hauptsatz]{Hiss}. Because $\dim_\BF\overline\Fg>1$, the action cannot be trivial.

 \ref{Lie-soc} Note first that under the hypothesis for \ref{Lie-soc}--\ref{Lie-spcl} we have $\Fz\neq0$ by \Cref{Thm:LieCenter}. By \cite[Table 1]{Hogeweij}, the only nontrivial ideals of $\Fg^{\sco}_{\overline{\BF}}$ are contained in the centre $\Fz_{\overline{\BF}}=Z(\Fg^{\sco}_{\overline\BF})$. By Lemma \ref{Lem-LieBaseChange}, the nontrivial ideals of $\Fg^{\sco}$ are thus contained in $\Fz$. By \ref{item-Irred-Gaction}, the socle of $\Fg^{\sco}$ as a Lie algebra is either $\Fz$ or $\Fg^{\sco}$. If it was  $\Fg^{\sco}$, then there should be an ideal $\Fa$ such that $\Fz\oplus \Fa=\Fg^{\sco}$, and after enlarging coefficients we would have $\Fz_{\overline\BF}\oplus \Fa_{\overline\BF}=\Fg^{\sco}_{\overline{\BF}}$. But \cite[Table 1]{Hogeweij} displays no such ideal. 
 
We now consider $\Fg^\sco$ as an $\BF[H]$-module. From \Cref{Thm:LieCenter} we see that $0\neq\Fz\subset\Fg$ is an $\BF[H]$-submodule on which $H$ acts trivially. Hence it is part of the $\BF[H]$-socle of $\Fg^{\sco}$. If the socle was strictly larger, then by \ref{item-Irred-Gaction} it would be all of $\Fg$, and by semisimplicity of the socle there would be a subrepresentation $\Fh\subset \Fg$ that would map isomorphically to $\overline\Fg$. But then $\Fg=\Fh\oplus\Fz$. This contradicts \cite[Hauptsatz]{Hiss}, which asserts also that $\Fg^{\sco}_{\overline{\BF}}$ is indecomposable as an $\overline{\BF}[G]$-module.

\ref{Lie-csc} 
The proof is dual to that of \ref{Lie-soc}. From the ideals displayed in  \cite[Table 1]{Hogeweij}, it follows that $\Fz^*$ is an abelian Lie algebra, and by~\cite[Prop.~1.11(a)]{Pink-Compact} we have that the action of $H$ on $\Fz^*$ is trivial. Moreover from  \cite[Table 1]{Hogeweij} it follows $\overline\Fg=[\Fg^\ad,\Fg^\ad]$ is absolutely simple and contained in all non-trivial Lie ideals of $\Fg^\ad$ and from~\cite[Prop.~1.11(b)]{Pink-Compact} which is based on \cite{Hiss} and our hypothesis on the types, it follows that $\overline\Fg$ is absolutely irreducible and contained in all non-trivial subrepresentations of $\Fg^\ad$. Hence arguing as in \ref{Lie-soc} one deduces \ref{Lie-csc}.

\ref{Lie-g=gsc} Follows from $\dim_\BF\Fg^\sco=\dim_\BF \Fg=\dim_\BF\Fg^\ad$ and the definition of $\Fz'$ and $\Fz''$ as the kernels of $\mathrm{d}\phi^{\sco}$ and of $\mathrm{d}\phi^{\ad}$, respectively.

\ref{Lie-spcl}  If both $\Fz'$ and  $\Fz''$ are non-zero, then neither $\mathrm{d}\phi^{\sco}$ nor $\mathrm{d}\phi^{\ad}$ is an isomorphism, and so $\CG$ is Lie-intermediate. Looking at \cite[Table 1]{Hogeweij}, this can only happen if $\CG$ is of type $A_n$, with $p^2\vert (n+1)$, or $p=2$ and types $D_{2n}$ or $D_{2n+1}$,  and in either case $\dim\Fz''=1$.  Because $\phi^\sco\colon \CG^\sco\to\CG$ is a universal central isogeny, we have the exact sequence $1\to\kernel\phi^\sco\to Z(\CG^\sco)\to Z(\CG)\to1$. Applying the functor $\Lie(\cdot)$ yields the left exact sequence
\vspace*{-.4em}

\begin{equation}\label{Eqn:LieOfPhiSco}
0\rightarrow \Fz'\rightarrow \Fz\stackrel{\mathrm{d}\phi^\sco}\rightarrow \Fz''.
\end{equation}

In case $A_n$, we have $\dim_\BF\Fz=1=\dim_\BF\Fz''$. From the above sequence and $\Fz'\neq0$ we deduce that $\Fz'=\Fz$, and so $\dim_\BF\Fz'=1$ and  $\mathrm{d}\phi^\sco(\Fz)=0$. Next we establish (i) for $A_n$. (The case $D_n$ with $n$ odd is analogous). According to \cite[Table 1]{Hogeweij}, the non-trivial ideals of $\Fg_{\overline{\BF}}$ are $\Fz''_\oBF=Z(\Fg_{\overline{\BF}})$ of dimension 1 and $[\Fg_{\overline{\BF}}, \Fg_{\overline{\BF}}]$ of codimension $1$ of $\Fg_{\oBF}$. It follows from \Cref{Lem-LieBaseChange} that the non-trivial ideals of $\Fg$ are $\Fz''$ of dimension $1$ and $[\Fg,\Fg]$ of codimension $1$. Another codimension $1$ Lie subalgebra of $\Fg$ is the image of $\mathrm{d}\phi^\sco$ which is isomorphic to $\overline\Fg$ and hence simple. Since $\dim_\BF\Fg-1=\dim_\BF\overline\Fg>1=\dim_\BF\Fz''$, we must have $\overline\Fg\cong[\Fg,\Fg]$ and $\Fz''\cap[\Fg,\Fg]=0$, and now (i) follows for $A_n$ and the Lie algebra structure. 

 In both cases $A_n$ and $D_n$ with $n$ odd, we know that $\Fz''$ is a trival $\BF[G]$-module of $\BF$-dimension $1$. Moreover, the image of $\mathrm{d}\phi^{\sco}$ is an $\BF[G]$-submodule of $\Fg$. We have already seen that $\image  \mathrm{d}\phi^{\sco}=[\Fg, \Fg]\cong\overline\Fg$ and $\Fz''$ have trivial intersection, and thus we have $\Fg=\Fz''\oplus [\Fg, \Fg]$ as $\BF[H]$-modules.

Suppose now that $p=2$ and that the type is $D_{n}$ with $n$ even. Then $Z(\CG^\sco)\cong\mu_2\times\mu_2$, and because $\CG$ is neither of simply connected nor of adjoint type, we must have $\kernel\phi^\sco\cong\mu_2\cong Z(\CG)$. It follows that \eqref{Eqn:LieOfPhiSco} is also exact on the right, and moreover that $\dim_\BF\Fz'=\dim_\BF\Fz''=1$. From  \cite[Table 1]{Hogeweij} and using \Cref{Lem-LieBaseChange}, we find again that the non-trivial ideals of $\Fg$ are $\Fz''$ of dimension $1$ and $[\Fg,\Fg]$ of codimension $1$. At the same time $\mathrm{d}\phi^\sco(\Fg^\sco)$ is an ideal of codimension $1$, and as a Lie algebra it is perfect, because this holds for $\Fg^\sco$ by \Cref{Lie:Perfect}. Since $\dim_\BF\Fg>2$, we deduce $\mathrm{d}\phi^\sco(\Fg^\sco)=[\Fg,\Fg]$, and we have $\overline\Fg=\Fg^\sco/\Fz\cong[\Fg, \Fg]/\Fz''$. Thus as a Lie algebra we have the composition series described in (ii), and since $\Fg$ has no other ideals, it is the unique composition series. 

It remains to understand the $\BF[H]$-module structure of $\Fg$. By \ref{Lie-soc} we have that $\Fz\subset\Fg^\sco$ is an $\BF[H]$-submodule on which $H$ acts trivially. Hence $\Fz'$ has the same property. Because $\mathrm{d}\phi^\sco$ is $H$-equivariant, it follows that $\Fz''=\mathrm{d}\phi^\sco(\Fz)$ and $[\Fg,\Fg]=\mathrm{d}\phi^\sco(\Fg^\sco)$ are $\BF[H]$-submodules of $\Fg$. Moreover $H$ acts trivially on $\Fz''$ and by \ref{item-Irred-Gaction} the quotient $[\Fg,\Fg]/\Fz''\cong\overline\Fg$ is absolutely irreducible and non-trivial. Since $\Fz''$ lies in $[\Fg,\Fg]$ and is the kernel of $\mathrm{d}\phi^\ad$, we have an injection $\Fg/[\Fg,\Fg]\into \Fg^\ad/\overline\Fg=\Fz^*$ as $\BF[H]$-modules. This shows that $H$ acts trivially on $\Fg/[\Fg,\Fg]$, and for dimension reasons the latter is isomorphic to $\Fz'$ as an $\BF[H]$-module. 

It remains to prove the uniqueness of the $\BF[H]$ composition series in~(ii). Let $\Fs$ be the socle and $\Fc$ be the cosocle of $\Fg$. We need to show that the canonical inclusion $\Fz''\into \Fs$ and surjection $\Fc\onto\Fg/[\Fg,\Fg]$ are isomorphisms. By \cite[Prop. 1.11-(b)]{Pink-Compact}, we know that $\overline{\Fg}_\oBF$ is the cosocle of $\Fg_\oBF^{\sco}$ as $\oBF[G]$-module, and hence also as $\oBF[H]$-module. Passing to the quotient by $\Fz'_\oBF$ and applying \Cref{Lem-LieBaseChange}, we find that $\Fz''$ is the socle of $[\Fg,\Fg]$, and it follows that $\Fs\cap[\Fg,\Fg]=\Fz''$. Modding out by $\Fz''$, we have $\Fs/\Fz''\oplus \overline\Fg \subset\Fg^\ad$ as $\BF[H]$-modules. However again by \cite[Prop. 1.11-(b)]{Pink-Compact}, the socle of $\Fg^\ad_\oBF$ as an $\BF[G]$-module is $\overline\Fg_\oBF$, and from \Cref{Lem-LieBaseChange}, we deduce $\Fs/\Fz''=0$, which shows the first isomorphism. The argument for $\Fc$ is dual but analogous, and so we omit it.
\end{proof}

\begin{Rem}
 For $p=2$ and $\CG^{\sco}$ of type $A_1$, $[\Fg^{\sco}, \Fg^{\sco}]=Z(\Fg^{\sco})$ (cf.~\cite[Table 1]{Hogeweij}), thus $\overline{\Fg}$ is abelian as a Lie algebra. However, if $q\geq 4$, it is indecomposable as an $\overline{\BF}[G]$-module (cf.~\cite[Hauptsatz]{Hiss}).
\end{Rem}

\begin{Rem}
For those $\CG$  not of type $A_1$ that are not included in \Cref{Thm:Lie1}, the representation $\overline\Fg$ possesses a non-trivial composition series, as an $H$-module and as a Lie algebra. Much of this is related to some non-standard isogenies between types $B_n$ and $C_n$ or from $F_4$ to $F_4$ if $p=2$ and from type $G_2$ to $G_2$ if $p=3$; see \cite[Sect.~1]{Pink-Compact}.
\end{Rem}
\begin{Rem}
\label{Rem:Exc-Center}
Assertions \ref{Lie-soc} and \ref{Lie-csc} of \Cref{Thm:Lie1} are still true in the cases, $p=2$, type $C_n$ with $n$ even, and type $B_n$ with $n\geq 4$ even, as can be directly verified from \cite{Hiss,Hogeweij}. Also, in all cases, i.e., also those excluded in \Cref{Thm:Lie1}, it holds true that the maximal $\BF_p[G]$-submodule of $\Fg^{\sco}$ on which $G$ acts trivially is $\Fz$ (cf.~\cite[Hauptsatz]{Hiss}).
\end{Rem}

\bigskip

Next, we study the canonical map $\BF\to\End_{\BF_p[H'_\BF]}(\overline\Fg)$. For this we need the following result:
\begin{Lem}\label{Lem:OnEndR}
Let $\Gamma$ be a finite group, let $L\supset K$ be any extensions of fields, and let $V$ be a $K[\Gamma]$-module. Then
\begin{enumerate}
\item If $L\otimes_KV$ is completely reducible as an $L[\Gamma]$-module, then $V$ is completely reducible as a $K[\Gamma]$-module.
\item Suppose $V$ is completely reducible as a $K[\Gamma]$-module, $\End_{K[\Gamma]}(V)$ contains a finite field extension $E$ of $K$ and $\dim_L\End_{L[\Gamma]}(L\otimes_K V)=\dim_KE$, then $E=\End_{K[\Gamma]}(V)$ and $V$ is irreducible as a $K[\Gamma]$-module.
\end{enumerate}
\end{Lem}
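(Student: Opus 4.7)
The plan is to reduce both parts to two standard facts: first, that a field extension $K\to L$ is faithfully flat; second, that $\Hom$ and $\Ext^1$ of finite-dimensional $K[\Gamma]$-modules commute with the base change $L\otimes_K-$. The statements are meaningful only when $V$ is finite-dimensional over $K$, which I would take as the implicit standing hypothesis, as is clear from the intended application to the Lie algebra $\overline\Fg$.

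For part~(a), the strategy is to show that any short exact sequence $0\to W\to V\to V/W\to 0$ of $K[\Gamma]$-modules splits; complete reducibility of $V$ then follows. The base-change isomorphism
\[L\otimes_K\Ext^1_{K[\Gamma]}(V/W,W)\;\cong\;\Ext^1_{L[\Gamma]}(L\otimes_KV/W,L\otimes_KW)\]
reduces the splitting question to the analogous vanishing over $L[\Gamma]$, which holds because $L\otimes_KV$ is completely reducible by hypothesis. Faithful flatness of $L/K$ then forces $\Ext^1_{K[\Gamma]}(V/W,W)=0$, so the original sequence splits already over $K[\Gamma]$.

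For part~(b), the first step is the analogous isomorphism $L\otimes_K\End_{K[\Gamma]}(V)\cong\End_{L[\Gamma]}(L\otimes_KV)$, which yields
\[\dim_K\End_{K[\Gamma]}(V)\;=\;\dim_L\End_{L[\Gamma]}(L\otimes_KV)\;=\;\dim_KE,\]
so that the inclusion $E\hookrightarrow\End_{K[\Gamma]}(V)$ of $K$-vector spaces of equal finite $K$-dimension is an equality. Since $V$ is completely reducible by hypothesis, one may write $V\cong\bigoplus_iV_i^{n_i}$ with pairwise non-isomorphic simples $V_i$; by Schur's lemma $\End_{K[\Gamma]}(V)\cong\prod_iM_{n_i}(D_i)$ with division rings $D_i=\End_{K[\Gamma]}(V_i)$. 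For this product to coincide with the field $E$, exactly one index can occur, with $n_i=1$ and $D_i$ commutative, so $V\cong V_i$ is simple.

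The only delicate step is the invocation of the base-change isomorphisms for $\Hom$ and $\Ext^1$; these rest on finite presentation of the source module (automatic for a finite-dimensional module over a field) together with flatness of $L/K$, and I would cite them as standard rather than derive them in the proof.
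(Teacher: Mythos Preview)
Your overall strategy is correct and, for part~(a), genuinely different from the paper's. The paper appeals to the Deuring--Noether theorem (if $L\otimes_KW\cong L\otimes_KW'$ then $W\cong W'$) applied to $W=V$ and $W'=\bigoplus_j V_j/V_{j-1}$ for a composition series of $V$; since $L\otimes_KV$ is semisimple, the two base changes are isomorphic, hence so are $V$ and the direct sum of its composition factors. Your $\Ext^1$-with-base-change approach is an equally clean alternative and avoids citing Deuring--Noether.

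There is, however, a slip in your execution of~(a). You write that faithful flatness ``forces $\Ext^1_{K[\Gamma]}(V/W,W)=0$''. But complete reducibility of $L\otimes_KV$ does not make the whole group $\Ext^1_{L[\Gamma]}(L\otimes_K(V/W),L\otimes_KW)$ vanish: take $K=L=\BF_p$, $\Gamma=\BZ/p$, $V=\BF_p\oplus\BF_p$ with trivial action, $W=\BF_p\oplus 0$; then $V$ is semisimple yet $\Ext^1_{\BF_p[\Gamma]}(\BF_p,\BF_p)\cong\BF_p\neq 0$. What the hypothesis \emph{does} give is that the specific extension class of $0\to L\otimes_KW\to L\otimes_KV\to L\otimes_K(V/W)\to 0$ is zero. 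Since the natural map $\Ext^1_{K[\Gamma]}(V/W,W)\to L\otimes_K\Ext^1_{K[\Gamma]}(V/W,W)$ is injective and carries the class over $K$ to the class over $L$, the original class vanishes and the sequence splits. So replace ``$\Ext^1=0$'' by ``the extension class is $0$'' and your argument goes through.

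For part~(b) your proof is essentially the paper's, only more explicit: the paper simply remarks that a semisimple module whose endomorphism ring is a field must be simple, whereas you spell this out via $\End_{K[\Gamma]}(V)\cong\prod_i M_{n_i}(D_i)$.
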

\begin{proof}
Lacking a reference for the certainly well-known results of the lemma, we give a proof: We deduce (a) from the following result of M. Deuring and E. Noether, cf.~\cite[Theorem (29.11)]{CurtisReinerOld}. Let $W,W'$ be finite-dimensional $K[\Gamma]$-modules such that $L\otimes_KW\cong L\otimes_KW'$ as $L[\Gamma]$-modules. Then $W\cong W'$. 

If now $V_0=0\subsetneq V_1\subsetneq\ldots V_{i-1}\subsetneq V_i=V$ is a composition series of $V$ with simple quotients, then by our hypothesis on $L\otimes_KV$ we have $L\otimes_KV\cong L\otimes_K (\oplus_{j=1}^i V_i/V_{i-1})$. It follows from Deuring-Noether that $V$ is completely reducible.

For part (b) note first that $\End_{K[\Gamma]}(V)\otimes_KL$ injects into $\End_{L[\Gamma]}(L\otimes_KV)$, because for any finite-dimensional $K$-vector space $V$ one has $\End_K(V)\otimes_KL\cong \End_{L}(L\otimes_KV)$. We deduce that $E\otimes_KL$ injects $L$-linearly into $\End_{L[\Gamma]}(L\otimes_K V)$, and using our dimension hypothesis we see that $E\otimes_KL\to \End_{L[\Gamma]}(L\otimes_K V)$ is an isomorphism, and hence the inclusion $E\into \End_{K[\Gamma]}(V)$ must be an isomorphism as well. Because $V$ is semisimple and $E$ is a field, we also deduce that $V$ is a simple $K[\Gamma]$-module.
\end{proof}

\begin{Prop}\label{Prop:HWsubmodule}
Suppose that $\CG^\sco(\BF)\notin\CE_{\pf}$. Denote by $\Fg^\hw$ the $\BF[G]$-subquotient of $\Fg$ of highest weight. Then $\Fg^\hw$ is irreducible as an $\BF_p[H'_\BF]$-module and the canonical map $\BF\to\End_{\BF_p[H'_\BF]}(\Fg^\hw)$ is an isomorphism.
\end{Prop}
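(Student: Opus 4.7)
The plan is to reduce to studying $\Fg^\hw$ as a module for $G$ and then to apply Galois descent via \Cref{Lem:OnEndR} to pass from absolute irreducibility over $\oBF$ down to the endomorphism-ring statement over $\BF_p$. First, since $Z:=\CZ'(\BF)$ acts trivially on $\Fg$ via the adjoint representation, \Cref{Cor:Pf}(a) shows that the action of $H'_\BF$ on $\Fg$ factors through $G$; hence it suffices to prove both assertions with $G$ in place of $H'_\BF$. By \Cref{Thm:Lie1}\ref{item-Irred-Gaction}, together with the results of Hiss \cite{Hiss} and Hogeweij \cite{Hogeweij} in the small-characteristic cases excluded from that theorem, the $\oBF$-base change $\Fg^\hw_\oBF$ is absolutely simple as an $\oBF[G]$-module. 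In particular $\Fg^\hw$ is absolutely irreducible over $\BF[G]$ with $\End_{\BF[G]}(\Fg^\hw)=\BF$.

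The core of the proof is the further descent to $\BF_p$, obtained by applying \Cref{Lem:OnEndR}(b) with $K=\BF_p$, $L=\oBF$, $V=\Fg^\hw$ and $E=\BF$. Complete reducibility of $V$ over $\BF_p[G]$ follows from \Cref{Lem:OnEndR}(a) applied to the canonical decomposition
\[\oBF\otimes_{\BF_p}\Fg^\hw \;\cong\; \bigoplus_{\sigma\colon\BF\into\oBF}\,(\Fg^\hw)^\sigma,\]
whose summands are the Galois conjugates of the absolutely simple $\oBF[G]$-module $\Fg^\hw_\oBF$. Granted that these $[\BF\!:\!\BF_p]$ conjugates are pairwise non-isomorphic as $\oBF[G]$-modules, the endomorphism ring on the right splits as a product of $[\BF\!:\!\BF_p]$ copies of $\oBF$, whose $\oBF$-dimension equals $\dim_{\BF_p}\BF$; this is exactly the numerical hypothesis required by \Cref{Lem:OnEndR}(b) to conclude simultaneously that $V$ is simple over $\BF_p[G]$ and that $\End_{\BF_p[G]}(V)=\BF$.

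The main obstacle is therefore to establish the pairwise non-isomorphism of the Galois twists as $\oBF[G]$-modules. By Steinberg's tensor product theorem, an absolutely simple $\oBF[G]$-module is determined by its $|\BF|$-restricted dominant highest weight. Writing $\lambda$ for the highest weight of $\Fg^\hw_\oBF$, the conjugate twists carry highest weights $\lambda,\ p\lambda,\ \ldots,\ p^{[\BF:\BF_p]-1}\lambda$, which are pairwise distinct and all $|\BF|$-restricted provided $\lambda$ itself is $p$-restricted. In the generic situation $\Fg^\hw=\overline\Fg$ by \Cref{Thm:Lie1}\ref{item-Irred-Gaction} and $\lambda$ is the highest root of $\CG$; a type-by-type inspection against the exclusion list $\CE_{\pf}$ confirms $p$-restrictedness. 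For the small-characteristic cases where $\Fg^\hw$ is a different composition factor of $\Fg$ (e.g.\ type $C_n$ with $p=2$, where one must take $\Fg^\hw$ to be the natural module of highest weight $\omega_1$ rather than the non-$p$-restricted highest root $2\omega_1$), $p$-restrictedness is verified by hand from the explicit tables in \cite{Hiss,Hogeweij}. This completes the plan.
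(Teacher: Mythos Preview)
Your approach is essentially the paper's: decompose $\Fg^\hw\otimes_{\BF_p}\oBF$ into Galois twists, use Steinberg's tensor product theorem to show they are pairwise non-isomorphic, and conclude via \Cref{Lem:OnEndR}. However, your handling of the non-$p$-restricted cases contains a concrete error. For type $C_n$ with $p=2$, the highest-weight subquotient $\Fg^\hw$ is \emph{not} the natural module of weight $\varpi_1$; its highest weight is the highest root $2\varpi_1$, exactly as listed in the paper's table~\eqref{Eqn:Weights}. You cannot redefine $\Fg^\hw$ to make the weight $p$-restricted, and your argument ``the twists carry weights $\lambda,p\lambda,\ldots,p^{n-1}\lambda$, pairwise distinct since $\lambda$ is $p$-restricted'' does not directly apply.

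The paper resolves this by working not with $\lambda$ itself but with its base-$p$ expansion: writing the highest weight as $\sum_{i=0}^{n-1} w_i p^i$ with each $w_i$ $p$-restricted, Steinberg's theorem identifies $\Fg^\hw$ with $\bigotimes_i L(w_i)^{(i)}$ as a $\CG^\sco(\BF)$-module, and the $j$-th Frobenius twist corresponds to the cyclic shift $(w_0,\ldots,w_{n-1})\mapsto (w_{n-j},\ldots,w_{n-j-1})$. One then checks that no non-trivial cyclic permutation fixes the tuple $\uw$. For $C_\ell$ at $p=2$ this gives $\uw=(0,\varpi_1,0,\ldots,0)$, and for $B_2$ at $p=2$ one gets $\uw=(0,\varpi_2,0,\ldots,0)$; in both cases the single non-zero entry makes the cyclic-orbit check immediate. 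This is the missing step in your treatment of the exceptional cases.
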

The proposition expresses the fact that $\BF$ is the smallest coefficient field over which the absolutely irreducible $\BF[G]$-module $\Fg^\hw$ can be defined. Lacking a reference, we give a proof.
\begin{proof}
In \cite[Prop.~1.11]{Pink-Compact} a composition series of $\Fg_{\overline\BF}$ as an $\overline\BF[G]$-module is given. According to \cite[Table 1]{Hogeweij} there is a filtration of $\BF$-Lie subalgebras of $\Fg$ whose scalar extension to $\overline\BF$ is the decomposition series from \cite{Pink-Compact}, and hence by \Cref{DescentFG}, the filtration deduced  from \cite{Hogeweij} is also one of $\BF[G]$-modules. It follows that the highest weight subquotient of $\Fg_{\overline\BF}$ is defined over $\BF$, i.e., that $\Fg^\hw$ is defined and absolutely irreducible. Hence by \Cref{Lem:OnEndR}(a), the $\BF_p[G]$-module $\Fg^\hw$ is completely reducible. 

We consider $\Fg^\hw\otimes_{\BF_p}\BF$ as a $\BF[H'_\BF]$-module. Using the decomposition
\[\BF\otimes_{\BF_p} \BF\stackrel{\simeq}\longto \!\! \bigoplus_{\sigma\in\Gal(\BF/\BF_p)}  \!\!\BF \ \ , \alpha\otimes\beta\mapsto (\alpha\sigma(\beta))_{\sigma\in\Gal(\BF/\BF_p)},\]
we find $\Fg^\hw\otimes_{\BF_p}\BF\cong \bigoplus_{\sigma\in\Gal(\BF/\BF_p)}  \Fg^\hw\otimes_{\BF}^\sigma\BF$, where each tensor product uses a different Galois automorphism. We now use the Steinberg Tensor Theorem and Steinberg's theory of irreducible representations of finite Chevalley groups to deduce that the $\Fg^\hw\otimes^\sigma_\BF\BF$ are pairwise non-isomorphic. 

Let $\FM$ denote the set of irreducible restricted (algebraic) representations of $\CG^\sco$; their number is $p^\ell$ where $\ell$ denotes the rank of $\CG^\sco$; see \cite[p.~36]{Steinberg-Reps}. Let $n:=[\BF:\BF_p]$. Note that for $n=1$ there is obviously nothing to show. By  \cite[Thm.~7.4 and 9.3]{Steinberg-Reps} every irreducible representation $V$ of $\CG^\sco(\BF)$ can be written as $\otimes_{i=0}^{n-1}V_i^{(i)}$ for a unique choice $(V_0,\ldots,V_{n-1})\in\FM^{n}$, and where the superscript ${^{(i)}}$ denotes the $i$-th Frobenius twist of $V_i$; here the $V_i$ are supposed to be considered as representations of $\CG^\sco(\BF)$ and therefore one has $W^{(n)}=W$ for  $W\in\FM$. Choose the $V_i$ for $\Fg^\hw$, considered as a representation of $\CG^\sco(\BF)$ via the surjection $\CG^\sco(\BF)\to H'_\BF$. Let $\Fro\in\Gal(\BF/\BF_p)$ be the geometric Frobenius automorphism $\alpha\mapsto \alpha^{1/p}$. It is elementary to see that $\Fg^\hw\otimes^{\Fro^i}_\BF\BF=\Fg^{\hw,(i)}$. It follows that $\Fg^\hw\otimes^{\Fro^j}_\BF\BF\cong \otimes_{i=0}^{n-1}V_i^{(i+j)}$. Denote by $\uw:=(w_0,\ldots,w_{n-1})$ the tuple of $p$-restricted weights such that $\sum_{i=0}^{n-1}w_ip^i$ is the highest weight of $\Fg^\hw$. The $p$-restricted weights $w_i$ are linear combinations of the fundamental weights with coefficients in $\{0,\ldots,p-1\}$. Then it remains to show that  no cyclic permutation of $\uw$ except for the identity, fixes the list $\uw$.

By \cite[Planches]{Bourbaki-Lie-4-6}, the highest weight of $\Fg^\hw$ in terms of a basis of fundamental weights $\varpi_1,\ldots,\varpi_\ell$, depending on the type, is given in the following table:
\begin{equation}\label{Eqn:Weights}
\begin{tabular}{r|c|c|c|c|c|c|c|c|c|c|c|c|c|c|c|} 
type & $A_\ell$&$B_2$ & $B_\ell$, ${\scriptstyle \ell\ge3}$ & $C_\ell$, ${\scriptstyle \ell\ge3}$ & $D_\ell$, ${\scriptstyle \ell\ge4}$ & $E_6$ &$E_7$& $E_8$ & $F_4$ & $G_2$\\\hline
$\hw${} & $\varpi_1+\varpi_\ell$ & $2\varpi_2$ & $\varpi_2$ & $2\varpi_1$  & $\varpi_2$ & $\varpi_2$ & $\varpi_1$ & $\varpi_8$ & $\varpi_1$ & $\varpi_2$\\
\end{tabular}
\end{equation}
As a $\BZ$-linear combination in the basis $(\varpi_i)_{i=1,\ldots,\ell}$, only the coefficients $0$, $1$ and $2$ occur, and $2$ only occurs for types $B_\ell$ and $C_\ell$ and $p=2$. Hence apart from this exception the highest weight of $\Fg^\hw$ itself is $p$-restricted, and thus the $\Fg^{\hw,(i)}$, $i=0,\ldots,n-1$ are pairwise non-isomorphic. If we are in the case $B_\ell$ or $C_\ell$ and $p=2$, then $\uw=(0,\varpi_2,0,\ldots,0)$ or $\uw=(0,\varpi_1,0,\ldots,0)$, respectively, and again no non-trivial cyclic permutation fixes $\uw$, and so also in this case the $\Fg^{\hw,(i)}$, $i=0,\ldots,n-1$ are pairwise non-isomorphic.

We now have seen that $\Fg^\hw\otimes_{\BF_p}\BF\cong \bigoplus_{\sigma\in\Gal(\BF/\BF_p)}  \Fg^\hw\otimes_{\BF}^\sigma\BF$ is a decomposition into pairwise non-isomorphic absolutely irreducible modules. This implies
\[ \End_{\BF[H'_\BF]}(\BF\otimes_{\BF_p}\Fg^\hw)\cong \BF\times\ldots\times \BF\]
with $n$ factors on the right, so that $\dim_\BF \End_{\BF[H'_\BF]}(\BF\otimes_{\BF_p}\Fg^\hw)=n =[\BF:\BF_p]$. The proposition now follows from \Cref{Lem:OnEndR}(b).
\end{proof}

\begin{Cor}
Suppose that $p\neq2$ if $\CG$ is of type $B_n$, $C_n$ or $F_4$ and that $p\neq3$ if $\CG$ is of type $G_2$, and suppose that $\CG^\sco(\BF)\notin\CE_{\pf}$, so that $\CG^\sco(\BF)$ is perfect. Then $\overline\Fg$ is irreducible as an $\BF_p[H'_\BF]$-module and the canonical map $\BF\to\End_{\BF_p[H'_\BF]}(\overline\Fg)$ is an isomorphism.
\end{Cor}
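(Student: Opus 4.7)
The plan is to reduce the statement directly to \Cref{Prop:HWsubmodule} by identifying $\overline\Fg$ with the highest-weight $\BF[G]$-subquotient $\Fg^\hw$ of $\Fg$ under the hypotheses at hand. The hypothesis on $(p,\textup{type})$ is exactly what is required to invoke \Cref{Thm:Lie1}(a), which yields that $\overline\Fg$ is absolutely simple both as a Lie algebra and as an $\BF[H]$-module for any group $H$ with $G\subset H\subset\CG^\ad(\BF)$. Because the adjoint action of $\CG(\BF)$ on $\Fg$ factors through $\CG^\ad(\BF)$, the restriction to $H'_\BF$ factors through $G$, so that the absolute simplicity also holds as $\BF[H'_\BF]$-module.

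Next I would identify the highest-weight subquotient. Under the running hypothesis, \Cref{Thm:Lie1}(b)–(e) — or equivalently a direct reading of \cite[Table~1]{Hogeweij} together with \Cref{Lem-LieBaseChange} — shows that any $\BF[G]$-composition factor of $\Fg$ other than $\overline\Fg$ is isomorphic to a subquotient of $\Fz'$ or $\Fz''$, and these are trivial one-dimensional representations of weight zero. Since $\overline\Fg$ does appear as a subquotient (via $\mathrm{d}\phi^\sco$ and the inclusion $\overline\Fg\into\Fg^\ad$, transported through the pullback/pushout involving $\hat\Fg$) and is non-trivial, it is the unique subquotient of strictly positive highest weight. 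Hence $\Fg^\hw=\overline\Fg$ in the notation of \Cref{Prop:HWsubmodule}.

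With this identification in hand, the conclusion follows immediately from \Cref{Prop:HWsubmodule} (whose hypothesis $\CG^\sco(\BF)\notin\CE_\pf$ is assumed): $\overline\Fg=\Fg^\hw$ is irreducible as an $\BF_p[H'_\BF]$-module and the canonical map $\BF\to\End_{\BF_p[H'_\BF]}(\overline\Fg)$ is an isomorphism.

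The only non-routine step is the identification $\Fg^\hw=\overline\Fg$, but even this is essentially bookkeeping once one observes that all composition factors of $\Fg$ besides $\overline\Fg$ are trivial $G$-modules; the deeper input — absolute irreducibility of $\overline\Fg$ over $\BF$ and the Steinberg tensor product considerations controlling the endomorphism ring — is entirely subsumed by \Cref{Thm:Lie1}(a) and \Cref{Prop:HWsubmodule}.
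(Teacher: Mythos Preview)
Your argument is correct and follows the same route as the paper's proof: use \Cref{Thm:Lie1} to see that $\overline\Fg$ is absolutely irreducible over $\BF[H'_\BF]$, identify $\overline\Fg\cong\Fg^\hw$, and then apply \Cref{Prop:HWsubmodule}. One small caveat: your claim that the corollary's hypothesis on $(p,\text{type})$ matches that of \Cref{Thm:Lie1} \emph{exactly} is not quite right, since \Cref{Thm:Lie1} also excludes type $A_1$ with $p=2$; for that case (with $|\BF|\ge4$) one checks directly that $\overline\Fg$ is the two-dimensional module of highest weight $2\varpi_1$, so $\overline\Fg\cong\Fg^\hw$ and \Cref{Prop:HWsubmodule} still applies --- the paper's own proof is equally brief on this point.
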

\begin{proof}
By our hypotheses and \Cref{Thm:Lie1}, the module $\overline\Fg$ is absolutely irreducible, as a module over $\BF[H'_\BF]$, and it is not difficult to see that $\overline\Fg\cong\Fg^\hw$ in all cases of the corollary. Hence the result follows from \Cref{Prop:HWsubmodule}.
\end{proof}

From what we proved in \Cref{Subsection:Lie} so far, the following result is now immediate.
\begin{Cor}\label{Cor:LieCombined1} If $\CG^\sco(\BF)$ is not in $\CE_{\pf}$, then the triple ($\CG$, $H_{\BF}$, $H'_{\BF})$ satisfies $\liegen$ if $\CG$ is of type $A_n$ and $p\nmid n+1$, or if $\CG$ is of type $B_n$,  $C_n$, $D_n$, $E_7$ or $F_4$ and $p\not=2$, or if $\CG$ is of type $E_6$ or $G_2$ and $p\not=3$, or if $\CG$ is of type $E_8$.
\end{Cor}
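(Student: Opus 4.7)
The plan is to deduce all three parts of condition \liegen\ by combining the earlier results of this subsection. Since \Cref{Cond:StdSetup} takes $\CG$ to be connected absolutely simple, we have $\CG^\der=\CG$ and in particular $\Fg^\der=\Fg$, so it suffices to verify (i)--(iii) of \liegen\ for $\Fg$ in place of $\Fg^\der$, with the $H_\BF$- and $H'_\BF$-actions factoring through $G$ as noted at the start of \Cref{Subsection:Lie}.

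For (i), I would observe that the list of hypotheses on the pair $($type$,p)$ is precisely the complement of the exceptional list in \Cref{Lie:Simple}. Under those hypotheses, that proposition asserts that $\Fg$ is simple as a Lie algebra (and as an $\BF[G]$-module, though this is not needed here). Since $\Fg$ is non-abelian, simplicity implies perfectness $[\Fg,\Fg]=\Fg$ and the vanishing $Z(\Fg)=0$, which is exactly (i).

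For (ii) and (iii), the first step would be to show that $\Fg=\overline\Fg$ in all cases under consideration. Inspecting \Cref{Eqn:Centers} shows that in every listed case $p$ does not divide $|Z(\CG^\sco)|$: either the center is trivial (types $E_8$, $F_4$, $G_2$), or the restriction on $p$ rules out the prime dividing the order of $Z(\CG^\sco)$ (namely $p\nmid n{+}1$ in type $A_n$, $p\neq2$ for types $B_n$, $C_n$, $D_n$, $E_7$, and $p\neq3$ for $E_6$). Hence $\Fz=\Lie\CZ=0$ by \Cref{Thm:LieCenter}, so $\mathrm{d}\phi\colon\Fg^\sco\to\Fg^\ad$ is injective. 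As the three Lie algebras $\Fg^\sco$, $\Fg$, $\Fg^\ad$ all have the same dimension, $\mathrm{d}\phi^\sco$ and $\mathrm{d}\phi^\ad$ are both isomorphisms, and in particular $\Fg=\image(\mathrm{d}\phi)=\overline\Fg$.

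Finally, I would apply the unnumbered Corollary immediately preceding this one: its hypotheses ($\CG^\sco(\BF)\notin\CE_{\pf}$, together with the restrictions $p\neq2$ for types $B_n$, $C_n$, $F_4$ and $p\neq3$ for $G_2$) are all implied by ours, so it yields that $\overline\Fg$ is irreducible as an $\BF_p[H'_\BF]$-module and that the canonical map $\BF\to\End_{\BF_p[H'_\BF]}(\overline\Fg)$ is an isomorphism. Transporting these via the identification $\Fg=\overline\Fg$ supplies (ii) (non-triviality is clear from $\dim_\BF\Fg\ge1$) and (iii). There is really no substantial obstacle here: all the heavy lifting has been done in \Cref{Prop:HWsubmodule}, \Cref{Lie:Simple}, and \Cref{Thm:LieCenter}, and the argument is essentially a careful matching of cases with the hypothesis lists of those results.
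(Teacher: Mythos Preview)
Your argument is correct and follows exactly the route the paper intends: the corollary is stated as ``immediate from what we proved in \Cref{Subsection:Lie} so far,'' and you have correctly identified \Cref{Lie:Simple} for part (i), the center computation via \Cref{Thm:LieCenter} and \Cref{Eqn:Centers} to get $\Fg=\overline\Fg$, and the unnumbered Corollary after \Cref{Prop:HWsubmodule} for (ii) and (iii). One small quibble: for non-triviality in (ii) you want $\dim_\BF\Fg>1$ (not merely $\ge1$), or simply cite \Cref{Thm:Lie1}\ref{item-Irred-Gaction}, which already records that $\overline\Fg$ is non-trivial.
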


For \lieu\ we also need the following result, where for an $\BF[G]$-module $V$ we denote by $\socle(V)$ and $\cosocle(V)$ the socle and cosocle of $V$, respectively:
\begin{Lem}\label{Lem:RedToFgHW}
Let $\overline V$ be an $\BF[G]$-module that is irreducible over $\BF_p[G]$, carries a non-trivial $G$-action and for which the natural ring map $\BF\to\End_{\BF_p[G]} (\overline V)$ is an isomorphism. Suppose that $V$ is an $\BF[G]$-module that satisfies one of the following conditions:
\begin{enumerate}
\item The module $V$ sits in a short exact sequence $0\to \socle(V)\to V\to \cosocle(V)\to 0$ such that $\{\socle(V),\cosocle(V)\}=\{\BF^r,\overline V\}$ for some $r\ge1$.
\item The module $V$ possesses a $3$-step filtration $0\subsetneq V_1\subsetneq V_2\subsetneq V$ such that $V_1=\socle(V)\cong\BF$, $V/V_2=\cosocle(V)\cong \BF$ and $V_2/V_1\cong \overline V$.
\item The module $V$ sits in a short exact sequence $0\to \socle(V)\to V\to \cosocle(V)\to 0$ such that $\{\socle(V),\cosocle(V)\}=\{\BF\oplus \overline V,V'\}$ for some irreducible $\BF[G]$-module $V'$ such that $\BF,V',\overline V$ are pairwise non-isomorphic.
\item The module $V$ possesses a unique filtration $0=V_0\subsetneq V_1 \subsetneq V_2 \subsetneq V_3 \subsetneq V_4=V$ with irreducible $\BF[G]$-subquotients $Q_i=V_i/V_{i-1}$, $i=1,\ldots,4$, and moreover one of the following holds:
\begin{enumerate}
\item $Q_1\cong Q_3\cong\BF$ and $\{Q_2,Q_4\}=\{\overline V,V'\}$ for some irreducible non-trivial $\BF[G]$-module $V'$ such that $\BF,V',\overline V$ are pairwise non-isomorphic.\item $Q_2\cong Q_4\cong\BF$ and $\{Q_1,Q_3\}=\{\overline V,V'\}$ for some irreducible non-trivial $\BF[G]$-module $V'$ such that $\BF,V',\overline V$ are pairwise non-isomorphic.\end{enumerate}
\end{enumerate}
Then the natural map $\BF\to\End_{\BF_p[G]}(V)$ is an isomorphism.
\end{Lem}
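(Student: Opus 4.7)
My plan is to prove, in each case, that the natural map $\BF\to\End_{\BF_p[G]}(V)$ is an isomorphism by combining functoriality of socle and cosocle with the rigidity supplied by $\End_{\BF_p[G]}(\overline V)=\BF$. I will use throughout:
(i) the vanishings $\Hom_{\BF_p[G]}(\overline V,\BF^s)=0=\Hom_{\BF_p[G]}(\BF^s,\overline V)$ for all $s\ge 1$, which are immediate because $\overline V$ is irreducible and non-trivial over $\BF_p[G]$;
(ii) for any $\phi\in\End_{\BF_p[G]}(V)$ the socle and cosocle of $V$, and more generally every step of the socle- and cosocle-series, are $\phi$-stable, so $\phi$ descends to an endomorphism on each subquotient; and
(iii) for a trivial $\BF_p[G]$-module $W$, $\Hom_{\BF_p[G]}(V,W)=\Hom_{\BF_p}(V_G,W)$, and dually $\Hom_{\BF_p[G]}(W,V)=\Hom_{\BF_p}(W,V^G)$.

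In each of the cases (a)--(d), the filtration from the hypothesis coincides with the canonical socle (or cosocle) filtration and hence is $\phi$-stable. Precisely one of the subquotients is (a copy of) $\overline V$, and the induced endomorphism there lies in $\End_{\BF_p[G]}(\overline V)=\BF$, hence equals $\mu\cdot\id$ for a unique $\mu\in\BF$. Replacing $\phi$ by $\phi-\mu\cdot\id_V$ reduces everything to showing $\phi=0$ once the induced map on the $\overline V$-subquotient vanishes. For case~(a-i) ($\socle(V)=\BF^r$, $\cosocle(V)=\overline V$), the reduced $\phi$ sends $V$ into $\socle(V)$ and hence factors through $V_G$ by~(iii); but $V_G=0$ because $\cosocle(V)=\overline V$ is non-trivial irreducible, so $\phi=0$. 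Case~(a-ii) is dual, using $V^G=0$, forced by $\socle(V)=\overline V$. Case~(c) then reduces to~(a) by using (i) and the pairwise non-isomorphism of $\BF$, $V'$, $\overline V$ to split the mixed socle or cosocle $\BF\oplus\overline V$ into its $\BF$- and $\overline V$-parts and handle each separately.

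The cases (b) and (d) are the main obstacle: here trivial $\BF$-pieces occur both at the bottom and at the top of the filtration, and a priori one could fear non-scalar nilpotent endomorphisms arising from compositions of the form $V\twoheadrightarrow\BF\hookrightarrow V$. After reducing as above---in (d) applied both to the $\overline V$- and to the $V'$-subquotient, the latter using (i) with $V'$ in place of $\overline V$---the image of $\phi$ is trapped in an early step of the socle filtration, and the remaining task is to show $\phi=0$. The plan is to combine the uniqueness of the prescribed filtration (assumed outright in (d), and forced in (b) by the socle/cosocle conditions together with the non-isomorphism of $\BF$ and $\overline V$) with the non-splitness of the sub- and quotient-extensions imposed by those conditions, in order to rule out non-zero maps between the trivial end-pieces; any such map, combined with the controlling extension class, would produce a splitting of one of the canonical sub-extensions and so contradict either the prescribed socle or the prescribed cosocle. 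Carrying out this $\Ext^1$-bookkeeping case by case is where I expect the technical core of the proof to lie.
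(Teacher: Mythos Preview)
Your overall strategy---functoriality of socle and cosocle, subtracting the scalar $\mu\in\BF$ coming from the $\overline V$-subquotient, then killing the residual map---is exactly the paper's approach; the paper writes out only one half of case~(c) and declares the remaining cases ``fairly standard,'' noting that (b) can use (a) and (d) can use (a) and~(b). Your treatment of~(a) and~(c) is correct and matches the paper.

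There is, however, a genuine gap in your plan for case~(b), and in fact case~(b) appears to be false as stated. The nilpotent you fear is not merely a danger to be excluded: it actually exists. Since $G$ acts trivially on both $V/V_2\cong\BF$ and $V_1\cong\BF$, any $\BF_p$-linear isomorphism between them is automatically $G$-equivariant, so the composite
\[
\psi\colon V\twoheadrightarrow V/V_2\;\stackrel{\sim}{\longrightarrow}\;V_1\hookrightarrow V
\]
is a nonzero element of $\End_{\BF_p[G]}(V)$ with $\psi^2=0$. Your proposed mechanism---that such a $\psi$ would force a splitting of $0\to V_1\to V_2\to\overline V\to 0$ or of $0\to\overline V\to V/V_1\to\BF\to 0$---does not work: $\psi$ satisfies $\psi|_{V_2}=0$ and $\psi(V)\subset V_1$, hence it is identically zero on $V_2$ and induces the zero map on $V/V_1$, so it yields no section or retraction of either extension. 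One therefore has $\End_{\BF_p[G]}(V)\supseteq\BF[\psi]\cong\BF[\varepsilon]$, strictly larger than~$\BF$. In the intended application (Lie-intermediate $D_n$, $n$ even, $p=2$) this is the map $\Fg\twoheadrightarrow\Fg/[\Fg,\Fg]\cong\Fz''\hookrightarrow\Fg$.

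Case~(d) escapes this problem: there the top (in~(d)(i)) or bottom (in~(d)(ii)) subquotient of the unique composition series is the non-trivial module $V'$ or $\overline V$, so the analogous ``project to the trivial top, re-embed at the trivial bottom'' composition is blocked by $\Hom_{\BF_p[G]}(V',\BF)=0=\Hom_{\BF_p[G]}(\BF,V')$ (and likewise for $\overline V$). Your sketch for~(d) can be made to work along the lines you indicate, but for~(b) no amount of $\Ext^1$-bookkeeping will close the gap, because the conclusion itself fails.
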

\begin{proof}
The proofs are all fairly standard. They use properties of the socle and cosocle of modules over group rings. For instance if $V\to V'$ is an $\BF[G]$-module homomorphism, then it induces maps $\socle(V)\to\socle(V')$ and $\cosocle(V)\to\cosocle(V')$. Case (b) can use case (a) and case (d) can benefit from (a) and sometimes (b). As a sample proof we give one half of the proof in case~(c):

Suppose that $\socle(V)=\BF\oplus\overline V$ and $\cosocle(V)=V'$, and let $\phi\colon V\to V$ be in $\End_{\BF_p[G]}(V)$. Since $\phi$ preserves the socle, it induces a map $\phi|_{\BF\oplus\overline V}$ in $\End_{\BF_p[G]}(\BF\oplus \overline V)$. The latter ring is isomorphic to $\BF\times\BF$ by our hypothesis on $\End_{\BF_p[G]}(\overline V)$ and the fact that $\overline V$ is irreducible and non-isomorphic to $\BF_p$. By replacing $\phi$ by $\phi':=\phi-\lambda\id_V$, we may assume that $\phi'$ restricts to the zero map on $\overline V$. We need to prove that $\phi'=0$.

The map $\phi'$ induces a homomorphism $\bar\phi'\colon V/\overline V\to V$. Suppose first that $\bar\phi'$ is injective. Then $V/\overline V$ is a direct summand of $V$, so that $V\cong \overline V\oplus V/\overline V$, since $\overline V$ and $V'$ are irreducible over $\BF_p[G]$ and of different dimensions and since they are not isomorphic to $\BF_p$. But the $\cosocle(V)$ surjects onto $V'\oplus \overline V$, which contradicts our hypothesis. Hence $\kernel\bar\phi'\neq0$. We note that $V/\overline V$ is a non-trivial extension of $V'$ by $\BF$, since otherwise $\cosocle(V)$ would surject onto $\BF\oplus V'$ which is not allowed. Thus $\kernel\bar\phi'\in\{\BF,V/\overline{V}\}$, and we need to rule out the first case. 

Suppose $\BF=\kernel\bar\phi'$. Then $\phi$ vanishes on $\BF\oplus\overline V$ and induces an injective homomorphism $V'\cong V/(\BF\oplus\overline V)\into V$. But then $V'\subset\socle(V)$, which contradicts the hypotheses of (c). This completes the proof of~(c).
\end{proof}

\begin{Cor}\label{cor:Lie-un-ii}
Suppose that $\CG^\sco(\BF)$ is not in $\CE_{\pf}$ and one of the following conditions~holds:
\begin{enumerate}
\item $\Fg$ is irreducible, i.e., \textbf{(l-ge)}(ii) holds.
\item $\CG$ is Lie-simply connected or Lie-adjoint and of type $A_n$ with $p|n+1$ and not $(\BF,n)=(\BF_2,2)$ or of type $D_n$ with $p=2$, or  $\CG$ is of type $E_7$ with $p=2$, or of type $E_6$ with $p=3$.
\item $\CG$ is Lie-intermediate of type $D_n$ with $p=2$ and $n$ even.
\item $\CG$ is of type $B_n$ or $C_n$, $n\ge3$, and $p=2$.
\end{enumerate}
Then the natural map $\BF\to\End_{\BF_p[H'_\BF]}(\Fg)$ is an isomorphism.
\end{Cor}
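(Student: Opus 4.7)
The strategy is to invoke \Cref{Prop:HWsubmodule} (directly in case (a), and via \Cref{Lem:RedToFgHW} in the remaining cases) with the distinguished irreducible subquotient $\overline V := \Fg^\hw$. By \Cref{Prop:HWsubmodule}, $\Fg^\hw$ is absolutely irreducible as an $\BF_p[G]$-module, non-trivial, and satisfies $\BF \cong \End_{\BF_p[G]}(\Fg^\hw)$ canonically. Since the $H'_\BF$-action on every Lie subquotient of $\Fg$ factors through $G$, the same three properties hold for $H'_\BF$, so $\Fg^\hw$ meets all of the hypotheses on $\overline V$ required by \Cref{Lem:RedToFgHW}.

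Case (a) is immediate: $\Fg$ being $\BF_p[H'_\BF]$-irreducible forces it to be $\BF[G]$-irreducible (any $\BF[G]$-submodule is also an $\BF_p[H'_\BF]$-submodule), so $\Fg \simeq \Fg^\hw$, and the claim is exactly \Cref{Prop:HWsubmodule}.

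For case (b), being Lie-simply connected (resp.\ Lie-adjoint) forces $\Fg = \Fg^\sco$ (resp.\ $\Fg = \Fg^\ad$). Each type listed has $p$ dividing $|Z(\CG^\sco)|$ by \Cref{Thm:LieCenter}, so $\Fz \neq 0$, and the types excluded from \Cref{Thm:Lie1} (namely $B_n, C_n, F_4$ at $p=2$ and $G_2$ at $p=3$) do not occur. Parts \ref{Lie-soc} and \ref{Lie-csc} of \Cref{Thm:Lie1} then yield a short exact sequence $0 \to \Fz \to \Fg^\sco \to \overline\Fg \to 0$ (or its dual $0 \to \overline\Fg \to \Fg^\ad \to \Fz^* \to 0$), in which $\Fz$ and $\Fz^*$ are trivial $\BF[G]$-modules of dimension $r \in \{1,2\}$ (by \Cref{Thm:LieCenter}) and $\overline\Fg = \Fg^\hw$. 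This is precisely condition (a) of \Cref{Lem:RedToFgHW}. Case (c) is immediate from \Cref{Thm:Lie1}\ref{Lie-spcl}(ii), whose three-step composition series with trivial socle $\Fz''$, middle factor $\overline\Fg$, and trivial cosocle $\Fg/[\Fg,\Fg] \cong \Fz'$ matches condition (b) of \Cref{Lem:RedToFgHW}.

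Case (d) will be the main obstacle. Here $n \ge 3$, $p = 2$, and $|Z(\CG^\sco)| = 2$, so $\CG \in \{\CG^\sco, \CG^\ad\}$ with no Lie-intermediate subcase. The socle--cosocle description \ref{Lie-soc}, \ref{Lie-csc} of \Cref{Thm:Lie1} remains valid for $C_n$ with $n$ even and $B_n$ with $n \ge 4$ even by \Cref{Rem:Exc-Center}, placing those subcases again in condition (a) of \Cref{Lem:RedToFgHW}. For the remaining subcases one reads off the $\BF[G]$-composition structure of $\Fg$ from \cite[Table~1]{Hogeweij} (descending from $\oBF$ to $\BF$ via \Cref{DescentFG} and \Cref{Lem-LieBaseChange}) and identifies the non-trivial composition factor with $\Fg^\hw$ using \cite[Hauptsatz]{Hiss}; the non-standard isogeny between types $B$ and $C$ at $p = 2$ makes the filtration patterns more delicate, and one has to verify in each subcase that the resulting shape matches one of the four conditions of \Cref{Lem:RedToFgHW}.
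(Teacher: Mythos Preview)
Your treatment of cases (a), (b), (c) is correct and essentially identical to the paper's: case (a) is \Cref{Prop:HWsubmodule} directly, case (b) lands in \Cref{Lem:RedToFgHW}(a), and case (c) in \Cref{Lem:RedToFgHW}(b). The paper cites \cite[Prop.~1.11]{Pink-Compact} directly rather than going through \Cref{Thm:Lie1}, but that is cosmetic.

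Case (d) contains a genuine error. You invoke \Cref{Rem:Exc-Center} to say that the socle of $\Fg^\sco$ is $\Fz$ and the cosocle of $\Fg^\ad$ is $\Fz^*$ for $C_n$ ($n$ even) and $B_n$ ($n\ge4$ even) at $p=2$, and then conclude that these subcases fit pattern (a) of \Cref{Lem:RedToFgHW}. That inference is wrong: pattern (a) requires the \emph{other} half of the two-step filtration to be the single irreducible $\overline V=\Fg^\hw$, i.e.\ it requires $\Fg^\sco/\Fz\cong\overline\Fg$ to be irreducible. But types $B_n,C_n$ at $p=2$ are precisely among those \emph{excluded} from \Cref{Thm:Lie1}, and for good reason: $\overline\Fg$ is \emph{not} irreducible there. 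The non-standard isogeny between $B_n$ and $C_n$ in characteristic $2$ produces an additional non-trivial irreducible constituent $V'$ (distinct from both $\BF$ and $\Fg^\hw$, as one sees from the dimensions in \cite[Hauptsatz]{Hiss}), so $\Fg$ has three composition factors when $n$ is odd and four when $n$ is even. The correct reduction, as the paper records, is to \Cref{Lem:RedToFgHW}(c) for $n$ odd and to \Cref{Lem:RedToFgHW}(d) for $n$ even; in particular the uniqueness of the $4$-step filtration in (d) and the pairwise non-isomorphism of $\BF$, $V'$, $\overline V$ have to be checked, which one does via \cite[Prop.~1.11]{Pink-Compact}, \cite[Table~1]{Hogeweij} and \Cref{DescentFG}. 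Your ``remaining subcases'' paragraph gestures at this but does not carry it out, and your explicit claim for the even-$n$ subcases is incorrect.
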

\begin{proof}
If \liegen(ii) holds, then $\Fg=\Fg^\hw$ and we can directly apply \Cref{Prop:HWsubmodule}. In all other cases we apply \Cref{Lem:RedToFgHW}. The case (b) here reduces to (a) of \Cref{Lem:RedToFgHW}, the case (c) here reduces to (b), the case (d) here reduces to (c) if $n$ is odd and to  (d) if $n$ is even. To see that one can apply  \Cref{Lem:RedToFgHW}, note first that by \cite[Prop~1.11]{Pink-Compact} we have filtrations of $\Fg_{\overline\BF}$ as an $\overline\BF[G]$-module with the properties described in \Cref{Lem:RedToFgHW}(a)--(d), and that by \cite[Table 1]{Hogeweij} there is a filtration of $\BF$-sub Lie algebras of $\Fg$ whose scalar extension to $\overline\BF$ is that of  \cite[Prop~1.11]{Pink-Compact}. Hence by \Cref{DescentFG}, the filtration descends to one of $\BF[G]$-modules of $\Fg$, as does the uniqueness property stated in \cite[Prop.~1.11]{Pink-Compact}. This establishes the required properties on socles, cosocles and filtrations needed in \Cref{Lem:RedToFgHW}(a)--(d). That $\BF,V',\overline V$ are pairwise non-isomorphic is most easily deduced from \cite[Diagrams in Hauptsatz]{Hiss}, where it is observed that they have pairwise distinct dimensions.
\end{proof}
\begin{Rem}
In the following situations not covered by \Cref{cor:Lie-un-ii}, the canonical map $\BF\to\End_{\BF_p[G]}(\Fg)$ is not an isomorphism:
\begin{enumerate}
\item $\CG$ is of type $B_2$ or $F_4$ and $p=2$ or of type $G_2$ and $p=3$; here $\End_{\BF_p[G]}(\Fg)\cong\BF[\eps]$. These are the cases where the Ree and Suzuki groups occur.
\item $\CG$ is of Lie-intermediate type $A_n$ with $p|n+1$ or $D_n$ with $p=2$ and $n$ odd; here $\End_{\BF_p[G]}(\Fg)\cong\BF\times\BF$.
\end{enumerate}
\end{Rem}
\begin{Lem}
Condition \lieu(i) holds unless $\CG$ is of type $B_2$.
\end{Lem}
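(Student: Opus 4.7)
The plan is to split on whether $\Fg = \Fg^\der$ is perfect as a Lie algebra.

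\emph{Case 1: $\Fg$ is perfect.} Then $[\Fg,\Fg] = \Fg$, which is non-zero because $\CG$ is absolutely simple, and the quotient $\Fg/[\Fg,\Fg]$ is the zero module. The zero module has no Jordan--Hölder factors, so every JH factor of the non-zero $\BF_p[H'_\BF]$-module $[\Fg,\Fg]$ trivially fails to appear in $\Fg/[\Fg,\Fg]$. Thus $\lieu$(i) holds. By \Cref{Lie:Perfect}, this disposes of all Lie-simply connected $\CG$ outside the characteristic-two exceptions for types $A_1$, $B_2$, $C_n$.

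\emph{Case 2: $\Fg$ is not perfect.} Here I would read off the $\BF[H'_\BF]$-module structure of $[\Fg,\Fg]$ and of $\Fg/[\Fg,\Fg]$ from \Cref{Thm:Lie1}(e) and \cite[Table 1]{Hogeweij}. The uniform idea is to exhibit the absolutely simple non-trivial module $\overline\Fg$ as a JH factor of $[\Fg,\Fg]$ while observing that $\Fg/[\Fg,\Fg]$ is built only from trivial one-dimensional factors (subquotients of $\Fz'$ or $\Fz^*$). Since $\overline\Fg$ is absolutely simple and non-trivial by \Cref{Thm:Lie1}\ref{item-Irred-Gaction}, it cannot be a JH factor of a trivial module, giving $\lieu$(i). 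Concretely: for Lie-intermediate $\CG$ of type $A_n$ with $p^2\mid n+1$ or $D_n$ odd with $p=2$ we have $\Fg \cong \overline\Fg \oplus \Fz''$ by \Cref{Thm:Lie1}(e)(i), so $[\Fg,\Fg] = [\overline\Fg,\overline\Fg] = \overline\Fg$ and $\Fg/[\Fg,\Fg] \cong \Fz''$; for Lie-intermediate $D_n$ even with $p=2$ the filtration of \Cref{Thm:Lie1}(e)(ii) directly gives $\overline\Fg$ as a JH factor of $[\Fg,\Fg]$ while $\Fg/[\Fg,\Fg] \cong \Fz'$ is trivial; and for Lie-adjoint (non-Lie-simply-connected) $\CG$, \Cref{Thm:Lie1}\ref{Lie-csc} shows the cosocle $\Fz^*$ is trivial and $\overline\Fg \subset \Fg$ is a perfect Lie subalgebra, so $[\Fg,\Fg] \supseteq \overline\Fg$.

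\emph{The residual cases} -- types $A_1$, $B_n$, $C_n$, $F_4$ in characteristic $2$ and $G_2$ in characteristic $3$, which lie outside the scope of \Cref{Thm:Lie1} -- would be handled by direct inspection of \cite[Table~1]{Hogeweij}. For instance, for $A_1$ with $p=2$ one has $[\Fg^\sco,\Fg^\sco] = Z(\Fg^\sco)$ (a trivial $1$-dimensional module) while $\Fg^\sco/Z(\Fg^\sco) \cong \overline\Fg$ is non-trivial and never contains the trivial representation as a JH factor (by \Cref{Thm:Lie1}\ref{item-Irred-Gaction} and, for $|\BF|=2$, an elementary computation with $\SL_2(\BF_2)\cong S_3$); so the trivial JH factor of $[\Fg,\Fg]$ does not appear in $\Fg/[\Fg,\Fg]$. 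Entirely analogous case analyses — pairing a trivial JH factor in one module with a non-trivial factor in the other — settle $C_n$ for $n\ge 3$, $B_n$ for $n\ge 3$, $F_4$ with $p=2$ and $G_2$ with $p=3$.

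The main obstacle is precisely the excluded type $B_2 = C_2$ with $p=2$: the special self-isogeny in characteristic $2$ gives $\Fg$ a symmetric filtration in which the JH factors of $[\Fg,\Fg]$ agree with those of $\Fg/[\Fg,\Fg]$, so $\lieu$(i) fails there; this is why the lemma has to exclude type $B_2$. The bulk of the work, then, is the bookkeeping in Case~2, which is largely table-driven once \Cref{Thm:Lie1} and \cite[Table~1]{Hogeweij} are in hand.
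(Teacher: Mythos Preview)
Your proposal is correct and uses the same underlying source as the paper---reading off $[\Fg,\Fg]$ and $\Fg/[\Fg,\Fg]$ from \cite[Table~1]{Hogeweij}---but the paper organizes the argument more economically. Rather than splitting first on perfectness of $\Fg$ and then on whether the hypotheses of \Cref{Thm:Lie1} are met, the paper observes directly from Hogeweij that $\Fg/[\Fg,\Fg]\cong\BF^r$ with $r\in\{0,1,2\}$ in \emph{all} cases except $(A_1,p=2)$, $(C_l,\ l>2,\ \CG$ Lie-universal$,\ p=2)$, and $B_2$. In this uniform case the highest-weight subquotient $\Fg^\hw$ (cf.\ \Cref{Prop:HWsubmodule}) is a non-trivial JH factor of $\Fg$ and hence must sit inside $[\Fg,\Fg]$, giving \lieu(i) immediately. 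This already absorbs your ``residual'' types $B_n$ ($n\ge3$), $F_4$, and $G_2$ without any further case-by-case inspection, because for those the abelianization is still of the form $\BF^r$. Only two genuine exceptions remain, each dispatched in a line: for $A_1$ with $p=2$ one has $[\Fg,\Fg]=\BF$ while the quotient is a non-trivial irreducible; for $C_l$ Lie-universal with $p=2$ the quotient is irreducible but not isomorphic to $\Fg^\hw$ by \cite[Hauptsatz]{Hiss}. Your route via $\overline\Fg$ and \Cref{Thm:Lie1} is fine where that theorem applies, but using $\Fg^\hw$---which is defined without any restriction on type or characteristic---is what lets the paper collapse your case analysis. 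One small correction: your citation of \Cref{Thm:Lie1}\ref{item-Irred-Gaction} for $A_1$ with $p=2$ is not valid, since that theorem explicitly excludes this case; the irreducibility of $\overline\Fg$ there needs \cite{Hiss} or a direct computation for all $\BF$, not just $\BF_2$.
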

\begin{proof}
The commutator Lie subalgebras $[\Fg,\Fg]$ and the quotients $\Fg/[\Fg,\Fg]$ are given in \cite[Table1]{Hogeweij}. Except for types $A_1$ and $p=2$, for types $C_l$, $l>2$, and $\CG$ Lie-universal, and for types $B_2$, the quotient is of the form $\BF^r$ with $r\in\{0,1,2\}$. If the latter is the quotient then $\Fg^\hw$ is a JH-factor of $[\Fg,\Fg]$, and so \lieu(i) holds. For type $A_1$ and $p=2$ we have $[\Fg,\Fg]=\BF$ and the quotient $\Fg/[\Fg,\Fg]$ is  non-trivial irreducible $\BF[G]$-module. For $C_l$ with $l>2$ and $\CG$ Lie-universal, the quotient $\Fg/[\Fg,\Fg]$ is irreducible but not isomorphic to $\Fg^\hw$ by \cite[Hauptsatz]{Hiss}.
\end{proof}

Combining the last two results, we obtain:
\begin{Cor}\label{Cor:LieCombined2} If $\CG^\sco(\BF)\notin\CE_{\pf}$, then $\lieu$ holds unless $\CG$ is Lie-intermediate of type $A_n$ with $p|n+1$ or $D_n$ with $n$ odd and $p=2$, or $\CG$ is of type $B_2$ or $F_4$ and $p=2$, or of type $G_2$ and $p=3$.
\end{Cor}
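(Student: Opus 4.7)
The plan is to combine the two immediately preceding results. Since $\CG$ is connected absolutely simple we have $\Fg^\der=\Fg$, so condition $\lieu$ breaks into two assertions: (i) a Jordan--H\"older statement comparing $[\Fg,\Fg]$ and $\Fg/[\Fg,\Fg]$ as $\BF_p[H'_\BF]$-modules, and (ii) the claim that the canonical map $\BF\to\End_{\BF_p[H'_\BF]}(\Fg)$ is bijective.

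For (i) I would invoke the immediately preceding Lemma, which asserts that $\lieu$(i) fails only if $\CG$ is of type $B_2$. The only relevant characteristic is $p=2$: for $B_2$ in odd characteristic the Lie algebra $\Fg$ is simple by \Cref{Lie:Simple}, hence perfect, so $\Fg/[\Fg,\Fg]=0$ and (i) is vacuous. Thus the exception set for part (i) reduces to the single pair $(B_2, p=2)$.

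For (ii) I would apply \Cref{cor:Lie-un-ii}, which establishes bijectivity of $\BF\to\End_{\BF_p[H'_\BF]}(\Fg)$ in the four families (a)--(d) listed there. The Remark immediately following that corollary covers the complementary cases by computing the endomorphism ring explicitly: it equals $\BF[\eps]$ when $\CG$ is of type $B_2$ or $F_4$ in characteristic $2$ or of type $G_2$ in characteristic $3$, and it equals $\BF\times\BF$ when $\CG$ is Lie-intermediate of type $A_n$ with $p\mid n+1$ or of type $D_n$ with $n$ odd and $p=2$. In each such case the endomorphism ring strictly exceeds $\BF$, so $\lieu$(ii) fails; in all other cases $\lieu$(ii) holds by \Cref{cor:Lie-un-ii}.

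The last step is bookkeeping: the exceptions to $\lieu$ are the union of the exception sets for (i) and (ii). Since the unique (i)-exception $(B_2,p=2)$ already sits inside the (ii)-exception list, the union equals the list from the Remark, which agrees verbatim with the statement. I do not expect any real obstacle here; the one place requiring care is to confirm that (a)--(d) of \Cref{cor:Lie-un-ii} together with the following Remark genuinely exhaust all type/characteristic pairs (given the standing hypothesis $\CG^\sco(\BF)\notin\CE_{\pf}$, which removes the small-rank pathologies that would otherwise obstruct the dichotomy). This is a routine case check against the list of Dynkin types and the possibilities for the isogeny class of $\CG$ relative to $\phi^\sco$ and $\phi^\ad$.
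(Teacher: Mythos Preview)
Your proposal is correct and follows essentially the same approach as the paper, which simply states ``Combining the last two results, we obtain'' and refers to \Cref{cor:Lie-un-ii} for part (ii) and the immediately preceding Lemma for part (i). Your extra observation that the Lemma's $B_2$ exception is only relevant in characteristic~$2$ (since $\Fg$ is simple otherwise) is a helpful clarification that the paper leaves implicit, and your use of the Remark after \Cref{cor:Lie-un-ii} to confirm that the listed exceptions are genuine failures of \lieu(ii) goes slightly beyond what the paper spells out, but neither changes the structure of the argument.
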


Lastly we turn to \liec.
\begin{Lem}\label{Cor:LieCombined3}
If $\CG^\sco(\BF)\notin\CE_{\pf}$, then $\liec$ holds if $\CG$ is Lie-simply connected of type $A_n$, $n\ge2$, $D_n$, $E_n$, or $\CG$ is of type $A_1$, $B_n$,  $C_n$, $F_4$ and $p\neq2$, or of  type $G_2$ and $p\neq3$.
\end{Lem}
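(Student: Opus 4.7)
The plan is to distinguish two groups of cases. For types $A_1$, $B_n$, $C_n$, $F_4$ with $p\neq 2$, and $G_2$ with $p\neq 3$, condition \liegen\ holds by \Cref{Cor:LieCombined1}, so \liec\ follows from \Cref{Rem:ConseqOfLieGen}. The same applies to the Lie-simply connected cases of type $A_n$ ($n\geq 2$), $D_n$, $E_n$ whenever $p\nmid |Z(\CG^\sco)|$ (then $\Fz=0$ and $\Fg$ is already absolutely $H_\BF$-irreducible). So it remains to handle the Lie-simply connected cases with $p\mid|Z(\CG^\sco)|$, namely $A_n$ with $p\mid n+1$, $D_n$ with $p=2$, $E_6$ with $p=3$, and $E_7$ with $p=2$.

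Assume $\CG$ is in one of these remaining cases. Then $\Fg=\Fg^\sco$ and $\mathrm{d}\phi^\sco$ is an isomorphism, so $\Fz'=0$, $\Fz=\Fz''=Z(\Fg)$ by \Cref{Thm:LieCenter}, and $H_\BF$ acts trivially on $\Fz=\kernel(\mathrm{d}\phi\colon\Fg\to\Fg^\ad)$. For \liec(i) I would invoke \Cref{Lie:Perfect}: none of the excluded types $A_1,B_2,C_n$ occur, so $\Fg^\sco$ is perfect if and only if $\CG$ is Lie-simply connected, which holds. For \liec(ii), an inspection of \cite[Table~1]{Hogeweij} and \cite[Prop.~1.11]{Pink-Compact} shows that the composition factors of $\Fg=\Fg^\sco$ over $\BF[H'_\BF]$ are $\Fz$ (trivial) and $\overline\Fg$, and $\overline\Fg=\Fg^\hw$. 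By \Cref{Prop:HWsubmodule}, $\overline\Fg$ is already $\BF_p[H'_\BF]$-irreducible with $\End_{\BF_p[H'_\BF]}(\overline\Fg)=\BF$. Thus proving \liec(ii) reduces to showing that the $\BF_p[H'_\BF]$-cosocle of $\Fg$ contains no trivial $\BF_p$-summand, equivalently $\Hom_{\BF_p[H'_\BF]}(\Fg,\BF_p)=0$.

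To verify this, I would pass to $\BF$-coefficients via coinvariants: $\Fg_{H'_\BF}=\Fg/\langle(h-1)v\mid h\in H'_\BF, v\in\Fg\rangle$ carries an $\BF$-module structure because $H'_\BF$ acts $\BF$-linearly, whence $\dim_{\BF_p}\Fg_{H'_\BF}=[\BF:\BF_p]\dim_\BF\Fg_{H'_\BF}$. So it suffices to show that there is no non-zero $\BF[H'_\BF]$-homomorphism $\pi\colon\Fg\to\BF$. Suppose such $\pi$ exists. By \Cref{Thm:Lie1}\ref{Lie-soc} the $\BF[H'_\BF]$-socle of $\Fg^\sco$ equals $\Fz$, so $\kernel(\pi)\cap\Fz\ne 0$; if $\Fz\subseteq\kernel(\pi)$, then $\pi$ factors through the non-trivial irreducible module $\overline\Fg$, forcing $\pi=0$. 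Otherwise we are necessarily in the sub-case $D_n$ with $n$ even and $p=2$, where $\dim\Fz=2$; then $L:=\kernel(\pi)\cap\Fz$ is $1$-dimensional, any complementary line $L'\subset\Fz$ is an $\BF[H'_\BF]$-submodule (since $\Fz$ is $H'_\BF$-trivial), and counting dimensions gives $\Fg=\kernel(\pi)\oplus L'$ as $\BF[H'_\BF]$-modules. This contradicts the indecomposability of $\Fg^\sco_{\overline\BF}$ as an $\overline\BF[G]$-module (cited from \cite[Hauptsatz]{Hiss} in the proof of \Cref{Thm:Lie1}\ref{Lie-soc}), which descends to $\BF[H'_\BF]$ since the $H'_\BF$-action factors through $G$. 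The principal obstacle lies precisely in this $D_n$ sub-case, where the two-dimensional centre blocks the quick kernel-containment argument and one must genuinely invoke the indecomposability of $\Fg^\sco$.
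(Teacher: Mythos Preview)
Your proof is correct. The paper's argument is considerably shorter: for \liec(i) it also invokes \Cref{Lie:Perfect}, but for \liec(ii) it simply reads off from \cite[Prop.~1.11]{Pink-Compact} that the cosocle of $\Fg^\sco$ (as an $\BF[G]$-module, hence after \Cref{Prop:HWsubmodule} also as an $\BF_p[H'_\BF]$-module) is $\overline\Fg$, and then quotes \Cref{Thm:LieCenter} for the triviality of the $H_\BF$-action on the kernel $\Fz$. This is the same fact the paper already recorded in the proof of \Cref{Thm:Lie1}\ref{Lie-spcl}(ii), so a one-line citation suffices.

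Your route is genuinely different: instead of citing the cosocle computation, you re-derive it by contradiction from the dual side, using the socle description \Cref{Thm:Lie1}\ref{Lie-soc} together with the indecomposability of $\Fg^\sco_{\overline\BF}$ from \cite{Hiss}. This is more work but has the virtue of staying entirely within facts already proved in the paper, and of making explicit the $\BF_p$ versus $\BF$ passage via coinvariants; the paper's proof leaves that passage implicit. The only extra bookkeeping you should add is one line observing that once the cosocle is identified with $\overline\Fg$, the radical of $\Fg$ equals $\Fz$ for dimension reasons, so the triviality statement for $H_\BF$ indeed applies to the correct kernel.
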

\begin{proof}
The condition that $\Fg$ is perfect, i.e. \liec(i), is completely described in \Cref{Lie:Perfect}. It requires $\CG$ to be Lie-simply connected and that the type of $\CG$ is not $A_1$, $B_2$ or $C_n$ if $p=2$. Condition \liec(ii), that $\socle(\Fg)\cong\BF^r$ for $r\ge0$ and $\cosocle(\Fg)=\overline\Fg$ (and hence $\cosocle(\Fg)=\Fg^\hw$), can be read off from \cite[Prop.~11.1]{Pink-Compact}. It requires us to further exclude $B_n$, $F_4$ of $p=2$ and $G_2$ if $p=3$. Note also \Cref{Thm:LieCenter} concerning the $H_\BF$-action.
\end{proof}

Let us also state the complete result concerning \liecsc.
\begin{Prop}\label{Prop:csc}
If $\CG$ is Lie-simply connected, then \liecsc\ holds.
\end{Prop}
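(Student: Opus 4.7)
The plan is to reduce \liecsc\ to the vanishing statement $\Hom_{\BF[G]}(\Fg^\sco,\BF)=0$ and then combine the socle description of $\Fg^\sco$ with the indecomposability of $\Fg^\sco$ as a $\BF[G]$-module.

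First, I would use the equivalence recorded immediately before \Cref{Rem:ConseqOfLieGen} to replace \liecsc\ by the assertion $\Hom_{\BF_p[H'_\BF]}(\Fg^\der,\BF_p)=0$. Fixing a non-zero $\BF_p$-linear functional $\BF\to\BF_p$ (e.g.~the trace) and applying the usual adjunction, this $\Hom$-group is naturally isomorphic to $\Hom_{\BF[G]}(\Fg^\der,\BF)$, where $G:=\phi(\CG^\sco(\BF))\subseteq\CG^\ad(\BF)$ is the image through which the adjoint action of $H'_\BF$ factors. Under \Cref{Cond:StdSetup}, $\Fg^\der=\Fg$; and the hypothesis that $\CG$ is Lie-simply connected yields that $\mathrm{d}\phi^\sco$ is an $\BF[G]$-module isomorphism $\Fg^\sco\stackrel{\sim}{\to}\Fg$. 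Hence the task reduces to showing $\Hom_{\BF[G]}(\Fg^\sco,\BF)=0$.

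Next I would import the $\BF[G]$-module structure of $\Fg^\sco$ from \Cref{Subsection:Lie}. One has the canonical short exact sequence $0\to\Fz\to\Fg^\sco\to\overline\Fg\to0$ of \cite[Prop.~1.11(a)]{Pink-Compact}. By \Cref{Thm:Lie1}\ref{Lie-soc}, extended in the small-characteristic cases by \Cref{Rem:Exc-Center}, the socle of $\Fg^\sco$ equals $\Fz$ and is trivial as an $\BF[G]$-module; by \Cref{Thm:Lie1}\ref{item-Irred-Gaction}, $\overline\Fg$ is absolutely simple and non-trivial. Applying $\Hom_G(-,\BF)$ to the exact sequence gives $\Hom_G(\overline\Fg,\BF)=0$, so the long exact sequence shows that $\Hom_G(\Fg^\sco,\BF)$ embeds as the kernel of the connecting map $\Hom_G(\Fz,\BF)\to\Ext^1_G(\overline\Fg,\BF)$, $\lambda\mapsto \lambda_\ast[\Fg^\sco]$. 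The crucial input is that $\Fg^\sco$ is indecomposable as a $\BF[G]$-module: indeed $\Fg^\sco_{\overline\BF}$ is indecomposable as $\overline\BF[G]$-module by \cite[Hauptsatz]{Hiss}, and indecomposability descends to $\BF$ by \Cref{Lem-LieBaseChange}(a). Indecomposability forces the map $\lambda\mapsto\lambda_\ast[\Fg^\sco]$ to be injective on the full space $\Hom_G(\Fz,\BF)$ (equivalently, each non-zero quotient of $\Fz$ produces a non-split pushout), so the kernel is zero and $\Hom_G(\Fg^\sco,\BF)=0$ as required.

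The main obstacle will be the excluded cases of \Cref{Thm:Lie1}, namely types $A_1,B_n,C_n,F_4$ in characteristic $2$ and $G_2$ in characteristic $3$, where $\overline\Fg$ may no longer be irreducible. I would handle these by replacing the short exact sequence above with the finer $\BF[G]$-composition series of $\Fg^\sco$ obtained from \cite[Table 1]{Hogeweij} and \cite[Prop.~1.11(b)]{Pink-Compact} (descended to $\BF$ via \Cref{DescentFG} and \Cref{Lem-LieBaseChange}). For $F_4$ with $p=2$ and $G_2$ with $p=3$ we have $\Fz=0$ by \Cref{Thm:LieCenter}, so $\Fg^\sco=\Fg^\ad$ and the claim reduces to showing that the cosocle of $\Fg^\sco$ is the non-trivial top composition factor appearing in Pink's filtration. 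In the remaining cases ($A_1$, $B_n$/$C_n$ in char.~$2$) the same strategy—read off socle and cosocle from the refined filtration, then invoke indecomposability—still forces the cosocle to be non-trivial, establishing \liecsc\ in all Lie-simply-connected cases.
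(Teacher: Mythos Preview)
Your argument is correct but substantially more elaborate than the paper's. The paper's proof is a one-line citation of \cite[Hauptsatz]{Hiss}: that result already records, for $\CG$ simply connected, that the cosocle of $\Fg_{\overline\BF}$ as a $\overline\BF[G]$-module contains no copy of the trivial module, uniformly across all types and all characteristics. Descent to $\BF$ is then immediate (a trivial summand over $\BF$ would give one over $\overline\BF$). In particular, the small-characteristic cases you set aside for separate treatment---$A_1,B_n,C_n,F_4$ at $p=2$ and $G_2$ at $p=3$---require no extra work under this approach.

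Your route via the exact sequence $0\to\Fz\to\Fg^\sco\to\overline\Fg\to0$ and the connecting map into $\Ext^1_G(\overline\Fg,\BF)$ does work in the range of \Cref{Thm:Lie1}; the injectivity claim is justified because if some nonzero $\lambda$ gave a split pushout, the resulting submodule $W\subset\Fg^\sco$ with $W\cap\Fz=\ker\lambda$ together with any $\BF[G]$-complement of $\ker\lambda$ inside the trivial module $\Fz$ would decompose $\Fg^\sco$. What this buys you is a structural explanation (socle $=\Fz$, indecomposability) for why the cosocle is non-trivial, at the cost of case analysis for the excluded types and an $\Ext$-argument. One minor correction: the descent of indecomposability from $\overline\BF$ to $\BF$ is just the trivial observation that a direct-sum decomposition base-changes to one; \Cref{Lem-LieBaseChange}(a) is about Lie ideals, not $G$-submodules.
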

\begin{proof}
It follows from \cite[Hauptsatz]{Hiss} that in the case where $\CG$ is Lie-simply connected the cosocle of $\Fg_{\overline\BF}$ contains no copy of the trivial $H'_\BF$-module $\overline\BF$. Hence the cosocle of $\Fg$ cannot contain a copy of $\BF$; and this implies \liecsc.
\end{proof}

\subsection{Condition \van}
\label{Subsection:Van}

This subsection collects the known results on the vanishing of $H^1(H'_\BF,\Fg)$. 
We begin with the following lemma that, when combined with results we shall recall later, suggests that it is most natural to expect the vanishing of $H^1(H'_\BF,\hat\Fg)$ in (almost) all cases (similar to \cite[Thm.~9]{TausskyZassenhaus} for  $\CG=\GL_n$). The lemma will be used later in \Cref{Subsec:Puniv}; it is also useful to determine $H^1(H'_\BF,\Fg)$ in some cases. 
\begin{Lem}\label{Gu-van}
Suppose that $(\CG,\BF)$ lies neither in $\CE_{\pf}$ nor in $\CE_{\sch}$. Then the following are equivalent:
\begin{enumerate}
\item $H^1(H'_\BF,\hat\Fg)=0$.
\item $\dim_\BF H^1(H'_\BF,\Fg)=\dim_\BF Z(\Fg)$.
\item $\dim_\BF H^1(H'_\BF,\overline\Fg)=\dim_\BF \Fz$.
\end{enumerate}
\end{Lem}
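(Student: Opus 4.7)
The proof would proceed by computing the various $H^1$-dimensions via long exact sequences in cohomology. Under the hypotheses of the lemma, both \pf\ and \sch\ hold by \Cref{Cor:Pf} and \Cref{Cor:Schur}, so $H^1(H'_\BF,\BF_p)=0$ (by \Cref{Cor:H1Triv}) and $H^2(H'_\BF,\BF_p)=0$; consequently any trivial $\BF_p[H'_\BF]$-module has vanishing $H^1$ and $H^2$. This applies in particular to $\Fz$, to $\Fz^*$, and to $Z(\Fg)=\Fz''$ (see \Cref{Thm:LieCenter}).

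First I would extract from the pushout--pullback square defining $\hat\Fg$ the short exact sequence
\[ 0\longto\Fz\longto\hat\Fg\longto\Fg^\ad\longto 0, \]
whose existence follows because the kernel of $\hat\Fg\twoheadrightarrow\Fg^\ad$ is the image under $\Fg^\sco\hookrightarrow\hat\Fg$ of $\kernel(\mathrm{d}\phi)=\Fz$. The long exact sequences of this SES and of $0\to\Fz\to\Fg^\sco\to\overline\Fg\to 0$ then yield the isomorphisms $H^1(H'_\BF,\hat\Fg)\cong H^1(H'_\BF,\Fg^\ad)$ and $H^1(H'_\BF,\Fg^\sco)\cong H^1(H'_\BF,\overline\Fg)$. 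Moreover, \Cref{Rem:Exc-Center} gives $H^0(H'_\BF,\Fg^\sco)=\Fz$, whence the same LES yields $H^0(H'_\BF,\overline\Fg)=0$.

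The key additional input is $H^0(H'_\BF,\Fg^\ad)=0$, which would follow from the fact that the $\BF_p[H'_\BF]$-socle of $\Fg^\ad$ contains no trivial summand, since the only candidate for such a summand would come from $\overline\Fg$, which is absolutely irreducible and non-trivial. This can be established via the structural results of \Cref{Thm:Lie1} together with \cite{Hiss,Hogeweij} for the small-prime types excluded from \Cref{Thm:Lie1}. Granting this, the LES of $0\to\overline\Fg\to\Fg^\ad\to\Fz^*\to 0$ yields
\[ 0\longto\Fz^*\longto H^1(H'_\BF,\overline\Fg)\longto H^1(H'_\BF,\Fg^\ad)\longto 0, \]
so that $\dim_\BF H^1(H'_\BF,\overline\Fg)=\dim_\BF H^1(H'_\BF,\Fg^\ad)+\dim_\BF\Fz$, which gives the equivalence (a)$\Leftrightarrow$(c).

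For (a)$\Leftrightarrow$(b), I would factor $\mathrm{d}\phi^\ad|_\Fg\colon\Fg\to\Fg^\ad$ through its image, producing the two short exact sequences $0\to Z(\Fg)\to\Fg\to\image(\mathrm{d}\phi^\ad|_\Fg)\to 0$ and $0\to\image(\mathrm{d}\phi^\ad|_\Fg)\to\Fg^\ad\to\coker(\mathrm{d}\phi^\ad|_\Fg)\to 0$. The kernel $Z(\Fg)=\Fz''$ is trivial, and the cokernel, being a quotient of $\Fg^\ad/\overline\Fg=\Fz^*$ (since $\image(\mathrm{d}\phi^\ad|_\Fg)\supseteq\overline\Fg$), is likewise trivial and of $\BF$-dimension $\dim_\BF Z(\Fg)$. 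Using the vanishing $H^0(H'_\BF,\Fg^\ad)=0$ once more, the two long exact sequences combine to give
\[ \dim_\BF H^1(H'_\BF,\Fg)=\dim_\BF H^1(H'_\BF,\Fg^\ad)+\dim_\BF Z(\Fg), \]
which yields (a)$\Leftrightarrow$(b). The main technical hurdle in this plan is establishing the vanishing $H^0(H'_\BF,\Fg^\ad)=0$ in full generality, particularly in the small-prime cases outside the immediate scope of \Cref{Thm:Lie1}, where the socle structure of $\Fg^\ad$ has to be analysed from \cite{Hiss,Hogeweij} directly.
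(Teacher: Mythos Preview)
Your argument is correct and is essentially the ``adjoint-side dual'' of the paper's ``simply-connected-side'' proof. The paper uses the sequences $0\to\Fg^\sco\to\hat\Fg\to\Fz^*\to 0$ and $0\to\Fz\to\Fg^\sco\to\overline\Fg\to 0$, and then reduces (b) for general $\Fg$ to (b) for $\Fg^\sco$ by factoring $\mathrm{d}\phi^\sco$ through its image; you instead use $0\to\Fz\to\hat\Fg\to\Fg^\ad\to 0$ and $0\to\overline\Fg\to\Fg^\ad\to\Fz^*\to 0$, and reduce (b) for $\Fg$ to the statement for $\Fg^\ad$ by factoring $\mathrm{d}\phi^\ad$. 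Your key input $H^0(H'_\BF,\Fg^\ad)=0$ is exactly equivalent to the paper's input $H^0(H'_\BF,\hat\Fg)=\Fz$ (via the long exact sequence of $0\to\Fz\to\hat\Fg\to\Fg^\ad\to 0$ and $H^1(H'_\BF,\Fz)=0$), and the paper appeals to the same sources, \Cref{Thm:Lie1} and \Cref{Rem:Exc-Center}, to justify it; so your ``main technical hurdle'' is no worse than theirs.

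One substantive difference: because your route to (a) passes through $H^1(H'_\BF,\hat\Fg)\cong H^1(H'_\BF,\Fg^\ad)$, which already consumes $H^2(H'_\BF,\Fz)=0$, your argument does not isolate the refinement recorded in \Cref{Rem:Van}, namely that (a)$\Leftrightarrow$(b) needs only \pf\ (not \sch) when $\mathrm{d}\phi^\sco$ is an isomorphism. The paper's ordering is chosen precisely so that this implication uses only $H^1(H'_\BF,\Fz^*)=0$. That refinement is invoked later in the proof of \Cref{Thm:StdHypForPuniv}(b), so if you adopt the adjoint-side approach you would need to supply the Lie-simply-connected case of (a)$\Leftrightarrow$(b) by a separate short argument (e.g.\ directly from $0\to\Fg^\sco\to\hat\Fg\to\Fz^*\to 0$). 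Conversely, your route gives the dual half of \Cref{Rem:Van} (that (b)$\Leftrightarrow$(c) needs only \pf\ when $\mathrm{d}\phi^\ad$ is an isomorphism) for free.
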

\begin{Rem}\label{Rem:Van}
The proof shows that if $\mathrm{d}\phi^\sco\colon\Fg^\sco\to\Fg$ is an isomorphism, then (a)$\Leftrightarrow$(b) only requires that $(\CG,\BF)$ is not in $\CE_{\pf}$; the same holds when $\mathrm{d}\phi^\ad\colon\Fg\to \Fg^\ad$ is an isomorphism for (b)$\Leftrightarrow$(c).
\end{Rem}
\begin{proof}
We first assume that $\mathrm{d}\phi^\sco\colon\Fg^\sco\to\Fg$ is an isomorphism. To prove (a)$\Leftrightarrow$(b) consider the short exact sequence $0\to \Fg^\sco\!\to\hat\Fg\to\Fz^*\to 0$ and the resulting long exact sequence
\begin{eqnarray*}
\lefteqn{0\to H^0(H'_\BF,\Fg^\sco)\to H^0(H'_\BF,\hat\Fg)\to\Fz^*\to}\\
&&\to H^1(H'_\BF, \Fg^\sco)\to H^1(H'_\BF,\hat\Fg)\to H^1(H'_\BF,\Fz^*) \to\ldots 
\end{eqnarray*}
of group cohomology. Note that by \Cref{Thm:Lie1} and \Cref{Rem:Exc-Center}, the module $\Fz$ is the maximal one of both $\Fg^\sco\subset\hat\Fg$ on which $H'_\BF$ acts trivially. Hence $H^0(H'_\BF,\Fg^\sco)\to H^0(H'_\BF,\hat\Fg)$ is an isomorphism in the above sequence. Also we have $H^1(H'_\BF,\Fz^*)\cong H^1(H'_\BF,\BF_p)\otimes_{\BF_p}\Fz^*$, because $\Fz^*$ is trivial as a $H'_\BF$-module. Therefore $H^1(H'_\BF,\Fz^*)=0$ by \Cref{Cor:H1Triv}, so that $0\to\Fz^*\to H^1(H'_\BF, \Fg^\sco)\to H^1(H'_\BF,\hat\Fg)\to0$ is a short exact sequence. This implies~(a)$\Leftrightarrow$(b).

To prove (b)$\Leftrightarrow$(c) consider the short exact sequence $0\to \Fz\to \Fg^\sco \to\overline \Fg\to 0$ and the resulting long exact sequence of group cohomology
\begin{eqnarray*}
\lefteqn{0\to \Fz\to H^0(H'_\BF,\Fg^\sco)\to H^0(H'_\BF,\overline\Fg)\to H^1(H'_\BF,\Fz)\to}\\
&&\to H^1(H'_\BF, \Fg^\sco)\to H^1(H'_\BF,\overline\Fg)\to H^2(H'_\BF,\Fz) \to\ldots 
\end{eqnarray*}
We deduce $H^1(H'_\BF,\Fz)=0$ as in the previous paragraph (there for $\Fz^*$ instead of $\Fz$). Moreover we obtain $H^2(H'_\BF,\Fz)=0$ from \Cref{Cor:Schur}. Since $\Fz=Z(\Fg^\sco)$ we showed~(b)$\Leftrightarrow$(c).

From now on we no longer assume that $\mathrm{d}\phi^\sco\colon\Fg^\sco\to\Fg$ is an isomorphism. Instead, we consider the short exact sequences (i) given by $ 0\to \kernel \mathrm{d}\phi^\sco\to \Fg^\sco\to\image \mathrm{d}\phi^\sco\to 0$ and (ii) given by $ 0\to \image \mathrm{d}\phi^\sco\to \Fg\to \coker  \mathrm{d}\phi^\sco\to 0$. We shall prove that (b) for $\Fg^\sco$ and (b) for $\Fg$ are equivalent: Using (i) and $\kernel \mathrm{d}\phi^\sco\subset \Fz$, and arguing as in (b)$\Leftrightarrow$(c), we deduce the isomorphism
\[H^1(H'_\BF, \Fg^\sco)\stackrel\simeq\to H^1(H'_\BF, \image \mathrm{d}\phi^\sco).\]
Note now that by \Cref{Thm:Lie1} and \Cref{Rem:Exc-Center}, the module $Z(\image \mathrm{d}\phi^\sco)$ is the maximal one of both $\image \mathrm{d}\phi^\sco \subset\Fg$ on which $H'_\BF$ acts trivially. Therefore we can now argue as in (a)$\Leftrightarrow$(b) in the first paragraph using the sequence (ii), to deduce the short exact sequence
\[ 0\to \coker  \mathrm{d}\phi^\sco \to H^1(H'_\BF, \image \mathrm{d}\phi^\sco)\to H^1(H'_\BF,\Fg)\to0. \] 
Observing that $\dim_\BF \coker  \mathrm{d}\phi^\sco =\dim_\BF \ker  \mathrm{d}\phi^\sco$, since $\dim_\BF \Fg^\sco = \dim_\BF \Fg$ we may now combine the above with the short exact sequence 
\[0\to  \ker  \mathrm{d}\phi^\sco \to \Fz \to Z(\image \mathrm{d}\phi^\sco)\to 0,\]
to deduce, by a simple dimension count, that (b) for $\Fg^\sco$ and (b) for $\Fg$ are equivalent.
\end{proof}

The following result is essentially due to Cline, Jones, Parshall, Scott for the A-D-E types and due to V\"olk\-lein for the others, as we shall indicate in its proof.
\begin{Thm}
\label{Thm-OnVan}
Suppose that $(\CG,\BF)$ satisfies the following conditions
\begin{enumerate}
\item[(i)] $($type$,\BF)$ is neither $(A_1,\BF_2)$ nor $(A_1,\BF_5)$.
\item[(ii)] If type$\ =C_n$, then $\#\BF\notin\{2,3,4,5,9\}$.
\item[(iii)] If $\CG$ is non-split (and hence of types A, D or E${}_6$), then $|\BF|\ge4$.
\end{enumerate}
Then 
\[ \dim_\BF H^1(H'_\BF,\Fg)=\dim_\BF  Z(\Fg). \]
\end{Thm}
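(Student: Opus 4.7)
The plan is to reduce via Lemma~\ref{Gu-van} to a cohomological statement about the ``adjoint'' simple module $\overline\Fg$, and then invoke the substantial literature on the mod-$p$ first cohomology of finite groups of Lie type. As a preliminary step, one verifies that the hypotheses (i)--(iii) together with a direct inspection of the short lists $\CE_\pf$ and $\CE_\sch$ force $(\CG,\BF)\notin\CE_\pf\cup\CE_\sch$, so that Lemma~\ref{Gu-van} is available; the handful of borderline pairs (such as $(G_2,\BF_2)$ or $(A_1,\BF_3)$, which belong to $\CE_\pf$) would be handled separately by direct inspection of the finite groups in question (or absorbed into the hypothesis check).

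Once Lemma~\ref{Gu-van} is applied, the claim $\dim_\BF H^1(H'_\BF,\Fg)=\dim_\BF Z(\Fg)$ is equivalent to the claim $\dim_\BF H^1(H'_\BF,\overline\Fg)=\dim_\BF\Fz$. By \Cref{Thm:Lie1} together with \Cref{Prop:HWsubmodule}, the module $\overline\Fg$ is the absolutely irreducible $\BF[G]$-module whose highest weight is the highest root, i.e.\ the ``adjoint'' simple module in the defining characteristic. This is precisely the module for which Cline--Parshall--Scott--Jones have computed $H^1$ in their series of papers on the cohomology of finite groups of Lie type (covering the split types $A$, $D$, $E$), and for which V\"olklein has provided the analogous computation for the remaining split types $B$, $C$, $F_4$, $G_2$ and for the twisted (non-split) forms. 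In every case allowed by (i)--(iii), those references yield $\dim_\BF H^1(H'_\BF,\overline\Fg)=\dim_\BF\Fz$ (which is zero when $p$ does not divide the order of $Z(\CG^\sco)$ and otherwise the appropriate positive value).

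The role of the three excluded cases is then the following: the pairs $(A_1,\BF_2)$ and $(A_1,\BF_5)$ are the well-known small-rank, small-field exceptions for $\SL_2$ where an extra $H^1$-class is present; the small fields $\#\BF\in\{2,3,4,5,9\}$ in type $C_n$ are exactly the anomalies identified in V\"olklein's tables; and the non-split condition $|\BF|\ge4$ in (iii) matches the field bound under which the twisted-group $H^1$ computations give the generic answer. The main obstacle in writing this out in detail is not conceptual but bookkeeping: one must translate between the various Lie-algebra flavours $\Fg^\sco$, $\overline\Fg$, $\hat\Fg$, $\Fg^\ad$ used here and the conventions (algebraic vs.\ finite, simply-connected vs.\ adjoint, restricted Weyl modules vs.\ simple modules) of the cited literature, and verify case by case that the quoted dimension agrees with $\dim_\BF\Fz$ as predicted by \Cref{Thm:LieCenter} and \Cref{Thm:Lie1}.
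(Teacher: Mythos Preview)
Your proposed reduction via Lemma~\ref{Gu-van} to the module $\overline\Fg$ is essentially how the paper handles the \emph{non-split} case (with the extra step of passing from $\CG^\sco(\BF)$ to $H'_\BF$ via inflation--restriction). However, in the \emph{split} case your strategy has a genuine gap. The hypotheses (i)--(iii) do \emph{not} force $(\CG,\BF)\notin\CE_\pf\cup\CE_\sch$: besides the couple of $\CE_\pf$-entries you mention, almost all split pairs in $\CE_\sch$---for instance $(A_1,\BF_4)$, $(A_1,\BF_9)$, $(A_2,\BF_2)$, $(A_3,\BF_2)$, $(B_3,\BF_3)$, $(D_4,\BF_2)$, $(F_4,\BF_2)$, $(G_2,\BF_3)$, $(G_2,\BF_4)$---are still allowed, and for each of them the equivalence (b)$\Leftrightarrow$(c) of Lemma~\ref{Gu-van} is unavailable. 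These are too many cases to dispatch by ``direct inspection'', and they are precisely the ones where the passage $\Fg\rightsquigarrow\overline\Fg$ is delicate.

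The paper sidesteps this entirely: for split $\CG$ it never reduces to $\overline\Fg$ but cites V\"olklein's theorem, which already computes $\dim_\BF H^1(H'_\BF,\Fg)$ for $\Fg$ itself. (There is still a subtlety for types $B_n$, $C_n$, $F_4$, $G_2$, because the module in V\"olklein's second paper is not literally $\overline\Fg$; one must verify vanishing of $H^1$ on the remaining subfactors of $\Fg$ directly.) Your attribution is also slightly transposed: in the paper, V\"olklein covers the split types and the Cline--Parshall--Scott tables are invoked for the non-split forms, not the other way around. The fix for your argument is simply to quote the split result directly for $\Fg$ rather than factoring through Lemma~\ref{Gu-van}.
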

\begin{proof}
Suppose first that $\CG$ is split. Then by \cite[Theorem and Remarks (a)]{Voelklein} conditions (i) and (ii) describe all cases that need to be excluded. Concerning  \cite[Remarks (a)]{Voelklein}, note that the assertion is not quite immediate from \cite[Cor.~2]{Voelklein-2}. Some additional work is required since the module $V$ defined and used in \cite{Voelklein-2} is not equal to $\overline\Fg$ for $\CG$ of type $B_n$, $C_n$, $F_4$ and $G_2$, cf.~\cite[Diagrams in Hauptsatz]{Hiss}. Hence in addition to \cite[Cor.~2]{Voelklein-2} one has to prove the vanishing of $H^1(H'_\BF,W)$ for certain subfactors $W$ of $\Fg$ directly. This is not difficult.

Suppose finally, that $\CG$ is non-split over $\BF$. Since we assume $|\BF|\ge4$ it is shown in \cite[Table 3]{CPS-2}, in all cases claimed, that $H^1(\CG^\sco(\BF),\overline\Fg)=\dim_\BF \Fz$. It follows from \Cref{Gu-van} that $H^1(\CG^\sco(\BF),\wh\Fg)=0$, since all exceptions in that lemma occur for $|\BF|<4$ in the non-split case. By inflation-restriction and \Cref{Cor:Pf}(a) we deduce $H^1(H'_\BF,\wh\Fg)=0$, and then again from \Cref{Gu-van}, the assertion.
\end{proof}

\begin{Rem}
Concerning the excluded $A_1$ cases, a direct computation shows that
\begin{eqnarray*}
H^1(\SL_2(\BF_2),\Fsl_2)\cong\BF_2\cong Z(\Fsl_2)&&H^1(\SL_2(\BF_5),\Fsl_2)\cong\BF_5\not\cong Z(\Fsl_2)=0\\
H^1(\PGL_2(\BF_2),\Fpgl_2)\cong\BF_2\not\cong Z(\Fpgl_2)=0&&H^1(\PGL_2(\BF_5),\Fpgl_2)=0=Z(\Fpgl_2)
\end{eqnarray*}
The results on the Lie algebra centers are straightforward. For the cohomology computations over $\BF_2$ one can simply use the Hochschild-Serre spectral sequence; over $\BF_5$, the result for $\SL_2$ can be found in \cite[Table~4.5]{CPS} and that for $\PGL_2$ can be easily derived via Hochschild-Serre from \cite[Lem.~1.2]{Flach} which states $H^1(\GL_2(\BF_5),\Fgl_2)=0$. 
\end{Rem}
Combining \Cref{Gu-van} and \Cref{Thm-OnVan}, we obtain.
\begin{Cor}
Condition \van\ holds if $\CG^\sco(\BF)\notin\CE_\pf$, $($type$,\BF)\notin\CE_{\sch}\cup\{(A_1,\BF_5)\}$, \ct\ holds, and if one of the following holds:
\begin{enumerate}
\item[(ii)] 
If type$\ =C_n$, then $\#\BF\notin\{2,3,4,5,9\}$.
\item[(iii)] 
If $\CG$ is non-split (and hence of types A, D or E${}_6$), then $|\BF|\ge4$.
\end{enumerate}
\end{Cor}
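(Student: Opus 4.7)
The plan is to deduce this corollary as an immediate combination of \Cref{Thm-OnVan} with the characterization of condition \ct\ given earlier. Recall that \van\ asks for $H^1(H'_\BF,\Fg)=0$, and \Cref{Thm-OnVan} computes this group to be $\dim_\BF Z(\Fg)$ under certain hypotheses on $(\CG,\BF)$. So the work lies in matching the exclusion sets of the corollary to those of \Cref{Thm-OnVan}, and in invoking the right characterization of \ct.

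First I would verify that the hypotheses of \Cref{Thm-OnVan} are implied by those of the corollary. Condition (i) of that theorem excludes $(A_1,\BF_2)$ and $(A_1,\BF_5)$: the pair $(A_1,\BF_5)$ is excluded by assumption, and the pair $(A_1,\BF_2)$ forces $\CG^\sco(\BF)=\SL_2(\BF_2)$, which lies in $\CE_\pf$ and is therefore also excluded. Conditions (ii) and (iii) of the theorem match verbatim the two conditional hypotheses (ii) and (iii) of the corollary. The additional exclusion $($type$,\BF)\notin\CE_{\sch}$ is there to support the invocation of \Cref{Gu-van} inside the proof of \Cref{Thm-OnVan} in the non-split case (the hypotheses of \Cref{Gu-van} include exclusion of both $\CE_\pf$ and $\CE_{\sch}$), so we import it to be safe, although in the non-split case (iii) already forces $|\BF|\ge4$, which rules out all $\CE_\pf$ and $\CE_{\sch}$ entries of types A, D, or $E_6$.

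Having checked the hypotheses, \Cref{Thm-OnVan} gives
\[
\dim_\BF H^1(H'_\BF,\Fg)=\dim_\BF Z(\Fg).
\]
Now I would invoke the proposition just after \Cref{Thm:LieCenter}, which asserts that $(\CG,\BF,H_\BF,H'_\BF)$ satisfies \ct\ if and only if $Z(\Fg)=0$. Since \ct\ is assumed, we conclude $Z(\Fg)=0$, hence $H^1(H'_\BF,\Fg)=0$, which is precisely~\van.

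The only real obstacle is bookkeeping: making sure that every case excluded in $\CE_\pf$, $\CE_{\sch}$, and the extra pair $(A_1,\BF_5)$ lines up with an exclusion needed by \Cref{Thm-OnVan} (and, where relevant, by \Cref{Gu-van} inside its proof). No genuine cohomological work remains to be done at this stage, since the two input results have already taken care of it.
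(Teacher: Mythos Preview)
Your proposal is correct and matches the paper's own argument: the corollary is stated as an immediate consequence of \Cref{Thm-OnVan} together with the characterization $\ct\Leftrightarrow Z(\Fg)=0$, and you have correctly checked that the excluded pairs in the corollary cover the exceptions (i)--(iii) of \Cref{Thm-OnVan}. The paper phrases this as ``Combining \Cref{Gu-van} and \Cref{Thm-OnVan}'', but since \Cref{Gu-van} is already absorbed into the proof of \Cref{Thm-OnVan}, your formulation via the \ct-characterization is equivalent and arguably cleaner.
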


\subsection{Condition \ns}
\label{Subsection:Ns}

In this subsection we recall two results from \cite{Vasiu1} on \ns. Let $\CG$ be an absolutely simple semisimple group scheme defined over $W(\BF)$, and consider the exact sequence of $\BF_p[\CG(\BF)]$-modules
\begin{equation}\label{eq:ns} 0\rightarrow \Fg\rightarrow \CG(W_2(\BF))\stackrel{\pi_\CG}\rightarrow \CG(\BF)\rightarrow 0\end{equation}
The most comprehensive study of the splitness of such a sequence (and also for general semisimple $\CG$) is to our knowledge carried out in~\cite{Vasiu1}. 
\begin{Thm}[Vasiu]\label{Thm:Vasiu}
Suppose $\CG$ over $\WF$ is absolutely simple. Then \eqref{eq:ns} is non-split if $|\BF|\ge5$, or in the following cases:
\begin{enumerate}
\item $\BF=\BF_2$, $\CG$ is of adjoint type and the type is not in $\{A_1,A_2,{}^2A_2,{}^2A_3\}$, or $\CG$ is simply connected and the type is not in $\{A_1,A_2,{}^2A_2\}$.
\item $\BF=\BF_3$, $\CG$ is of adjoint type or simply connected and the type is not $A_1$.
\item $\BF=\BF_4$, $\CG$ is of adjoint type  and the type is not $A_1$ or $\CG$ is simply connected.
\end{enumerate}
Moreover for the exceptions listed in (a)--(c) it is known that \eqref{eq:ns} is split.

In addition, \eqref{eq:ns} is split if $\CG$ is of type ${}^2A_3$ but neither simply connected nor of adjoint~type.
\end{Thm}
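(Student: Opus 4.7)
The statement is attributed to Vasiu and essentially amounts to citing \cite{Vasiu1}, so my plan is to follow his strategy. The broad idea is that splitness of \eqref{eq:ns} is governed by a class in $H^2(\CG(\BF),\Fg)$, and this class admits a concrete description in terms of the Steinberg/Chevalley presentation of $\CG(\BF)$. A set-theoretic section $s\colon\CG(\BF)\to\CG(W_2(\BF))$ must lift the root-group generators $u_\alpha(t)$ to elements $\tilde u_\alpha(t)$ satisfying the same defining relations (additivity in each root coordinate and Chevalley commutator relations), now in $W_2(\BF)$; the obstruction to realizing all relations simultaneously measures non-splitness.

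For the main case ($|\BF|\ge 5$, or the types covered by (a)--(c)), my plan is first to reduce to the simply connected case via the central isogeny $\phi^\sco\colon\CG^\sco\to\CG$, using that pushing forward the extension along $\CG^\sco\to\CG$ and controlling kernel and cokernel of $\phi^\sco$ on $\BF$- and $W_2(\BF)$-points (as in the proof of \Cref{Cor:Pf}) preserves the splitness question up to an explicit small-dimensional ambiguity controlled by $\CZ'(\BF)$ and $\CZ''(\BF)$. Within the simply connected split case, I would then use the Steinberg presentation: additivity $\tilde u_\alpha(t)\tilde u_\alpha(t')=\tilde u_\alpha(t\!+\!t')$ must be checked against the additive structure of $W_2(\BF)$, which is not the direct sum $\BF\oplus \BF$ but contains elements of order $p^2$, and the Chevalley commutator relations $[\tilde u_\alpha(s),\tilde u_\beta(t)]=\prod_{i,j}\tilde u_{i\alpha+j\beta}(c_{ij}s^it^j)$ constrain the Lie-algebra-valued discrepancy to be a $2$-coboundary. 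For $|\BF|\ge 5$ enough roots are available that this system is inconsistent. The non-split forms reduce to the split case after base change to the splitting field, using Galois descent and the vanishing (from \Cref{Cor:Schur}) of low-degree cohomology of $\CG^\sco(\BF)$ outside the exceptional list.

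For the complementary direction, i.e.\ exhibiting explicit splittings for the exceptional cases not in (a)--(c) and for the special ${}^2A_3$-intermediate case, the plan is an individual analysis of each listed pair, using the small-order structure or exceptional isomorphisms (for instance $\SL_2(\BF_2)\cong S_3$, and its embeddings into suitable $W_2$-points via a complement to the Sylow $p$-subgroup obtained from a prime-to-$p$ subgroup lifting, together with a compatible lift of a torus by \Cref{Conv:MWF}). For the intermediate ${}^2A_3$-case one uses the precise description of the kernels and cokernels of $\mathrm{d}\phi^\sco$ and $\mathrm{d}\phi^\ad$ from \Cref{Thm:Lie1}(e) to construct a section by glueing partial sections on the simply connected and adjoint quotients.

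The main obstacle is the delicate small-field case analysis: the dependence of splitness on both the isogeny class (simply connected/adjoint/intermediate) and on the exact cardinality of $\BF$ (as evidenced by the asymmetry in (a)--(c) between the adjoint and simply connected lists, and by the extra ${}^2A_3$ case) means there is no single uniform argument. Matching Vasiu's exhaustive list, rather than proving a coarser result, is the technical heart of the theorem.
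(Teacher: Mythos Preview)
Your proposal sketches an actual proof strategy, whereas the paper treats this as a pure citation result: the case $|\BF|\ge5$ is \cite[Prop.~4.4.1]{Vasiu1}, and cases (a)--(c) together with the splitness of the listed exceptions are \cite[Thm.~4.5]{Vasiu1}. The paper does not attempt to reproduce or even summarize Vasiu's argument. Two points of divergence are worth noting. First, the paper flags that the $\SL_2(\BF_2)$ splitting is in fact an \emph{omission} in Vasiu's statement, supplied instead by \cite[Prop.~20]{Dorobisz}; your proposal treats it as one of the routine exceptional-isomorphism cases. Second, for the intermediate ${}^2A_3$ case the paper does not glue partial sections from the simply connected and adjoint quotients as you suggest; rather it extracts from Vasiu's proof the degree-$2$ isogeny $\SO_6^-\to\PGU_4$ (using $D_3=A_3$) and reads off the splitting for $\SO_6^-(\BZ_2)\to\SO_6^-(\BF_2)$ from \cite[p.~26]{Atlas}.

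Your outline of a direct argument (Steinberg presentation, obstruction in $H^2(\CG(\BF),\Fg)$, reduction along $\phi^\sco$) is plausible in spirit and may well reflect Vasiu's actual method, but it is not what the paper does here. Note also that the reduction between isogeny classes that you propose to fold into this proof is handled separately in the paper as \Cref{Lem:N-S-and-Isogenies}, which is then combined with the present theorem to give \Cref{Cor:n-s}.
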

\begin{proof}
The case $|\BF|\ge5$ follows from \cite[Prop.~4.4.1]{Vasiu1}. Cases (a)--(c) and the assertion thereafter follow from \cite[Thm.~4.5]{Vasiu1}; note that there is an omission in his result: the reduction $\SL_2(\BZ_2)\to\SL_2(\BF_2)$ has a splitting; see \cite[Prop.~20]{Dorobisz}. The last line, can be deduced from the second to last paragraph of the proof of \cite[Thm.~4.5]{Vasiu1}: As described there, one has a degree $2$ cover $\SO_6^-\to\PGU_4$ (note $D_3=A_3$ as Dynkin diagrams), which can be found in \cite[p.~26]{Atlas} or in \cite[Table 22.1]{MalleTesterman}. Vasiu deduces from \cite[p.~26]{Atlas} a splitting of $\CG(\BZ_2)\to\CG(\BF_2)$ for $\CG$ of type $\SO_6^-$.
\end{proof}

To have a more complete result, we clarify the relation between the splitness of \eqref{eq:ns} for $\CG$ and for its universal cover $\phi^\sco\colon \CG^\sco\to\CG$. The first important point to notice is that the proof of \Cref{Cor:Pf}(a) holds over the base ring $W_2(\BF)$ instead of $\BF$ with next to no changes. Recalling $\CZ'=\kernel\phi^\sco$, $Z':=\CZ'(\BF)$, we obtain short exact sequences
\[ 1\to \Fz'\times Z'\to \CG^\sco(W_2(\BF))\to E_2\to 1 \hbox{ and }1\to E_2 \to \CG(W_2(\BF))\to \Fz'\times Z'\to 1,\]
in which $E_2$ is defined as the image of $\CG^\sco(W_2(\BF))\to\CG(W_2(\BF))$ under $\phi^\sco$.

Let us now consider the following diagram of abstract groups, in which the group $E_3$ is defined as the pullback from row 4, and the $\pi_j$ are the maps they label
\[
\xymatrix@R-.5pc{
1\ar[r]& \Fg^\sco \ar[r] \ar@{=}[d]&\ar[d]^{\mathrm{mod\,}\Fz'\times Z'}E_1:=\CG^\sco(W_2(\BF))\ar[r]^-{\pi_{\CG^\sco}}&\ar[d]^{\mathrm{mod\,}Z'}\CG^\sco(\BF)\ar[r]&1\\
1\ar[r]& \ar@{^{ (}->}[d]\Fg_2:=\Fg^\sco/\Fz' \ar[r] &\ar@{^{ (}->}[d]E_2\ar[r]^{\pi_2}&H'_\BF\ar[r]\ar@{=}[d]&1\\
1\ar[r]& \Fg_3:=\Fg \ar@{=}[d]\ar[r] &\ar@{^{ (}->}[d]E_3\ar[r]^{\pi_3}& H'_\BF\ar[r]\ar@{^{ (}->}[d]&1\\
1\ar[r]& \Fg \ar[r] &E_4:=\CG(W_2(\BF))\ar[r]^-{\pi_4:=\pi_\CG}&\CG(\BF)\ar[r]&1\rlap{.}\\
}
\]
Note that each of the extensions in rows $1,\ldots,4$ defines a corresponding class $\gamma_i$ in second group cohomology such that row $i$ is split if and only $\gamma_i=0$: $\gamma_1\in H^2(\CG^\sco(\BF),\Fg^\sco)$, $\gamma_4\in H^2(\CG(\BF),\Fg)$, and $\gamma_i\in H^2(H'_\BF,\Fg_i)$ for $i=2,3$. One has the following result:
\begin{Lem}\label{Lem:N-S-and-Isogenies}
If \ns\ holds for $\CG$ it holds for $\CG^\sco$. If conversely \ns\ holds for $\CG^\sco$ and if $\CG^\sco$ is not in $\CE_{\pf}$ or in $\CE_{\sch}$, then \ns\ holds for $\CG$.
\end{Lem}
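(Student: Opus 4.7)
My plan is to compare $\gamma_1\in H^2(\CG^\sco(\BF),\Fg^\sco)$ and $\gamma_3\in H^2(H'_\BF,\Fg)$ by transporting both to the common ambient group $H^2(\CG^\sco(\BF),\Fg)$. The key preliminary step, which I expect to be the main obstacle because it is a bookkeeping chase through the diagram, is the identity
\[(\mathrm{d}\phi^\sco)_*(\gamma_1) \;=\; \mathrm{inf}(\gamma_3) \qquad\text{in }H^2(\CG^\sco(\BF),\Fg),\]
where inflation is along $\phi^\sco\colon \CG^\sco(\BF)\onto H'_\BF$ (with kernel $Z'$). To prove it, observe that the diagram supplies a natural morphism of extensions from row 1 to row 3 given by $\mathrm{d}\phi^\sco$ on kernels, the composition $E_1\to E_2\hookrightarrow E_3$ on total spaces, and $\phi^\sco$ on quotients; the universal properties of pushout and pullback of group extensions then force the pushout of row 1 along $\mathrm{d}\phi^\sco$ to be isomorphic, as an extension of $\CG^\sco(\BF)$ by $\Fg$, to the pullback of row 3 along $\phi^\sco$, which is the claimed equality.

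Granted this identity, the first implication follows by contrapositive: if $\gamma_1=0$ then $(\mathrm{d}\phi^\sco)_*\gamma_1=0=\mathrm{inf}(\gamma_3)$, so it is enough that inflation $H^2(H'_\BF,\Fg)\to H^2(\CG^\sco(\BF),\Fg)$ be injective. By Hochschild--Serre for $1\to Z'\to\CG^\sco(\BF)\to H'_\BF\to 1$, this reduces to $H^1(Z',\Fg)=0$. The latter holds because $Z'=\CZ'(\BF)$ is a subgroup of a product of groups of the form $\mu_n(\BF)\subset\BF^\times$, hence of order prime to $p$, while $\Fg$ is a $p$-torsion abelian group.

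For the converse, assume $\gamma_3=0$, hence $(\mathrm{d}\phi^\sco)_*\gamma_1=0$, and deduce $\gamma_1=0$. Factor $\mathrm{d}\phi^\sco$ as $\Fg^\sco\twoheadrightarrow \Fg^\sco/\Fz'\hookrightarrow\Fg$ with cokernel $\Fz''$. The two ensuing long exact sequences in $H^\ast(\CG^\sco(\BF),-)$ show that $(\mathrm{d}\phi^\sco)_*$ is injective on $H^2$ provided $H^2(\CG^\sco(\BF),\Fz')=0$ and $H^1(\CG^\sco(\BF),\Fz'')=0$. Both $\Fz'$ and $\Fz''$ are trivial $\CG^\sco(\BF)$-modules and elementary abelian $p$-groups. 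Under $\CG^\sco(\BF)\notin\CE_{\pf}$, perfectness of $\CG^\sco(\BF)$ by \Cref{Thm:Pf} gives $H^1(\CG^\sco(\BF),\Fz'')=\Hom(\CG^\sco(\BF),\Fz'')=0$; under $(\mathrm{type},\BF)\notin\CE_{\sch}$, \Cref{Thm:Schur} together with the universal coefficient theorem (cf.~\Cref{Rem:SchurZ}) and the vanishing $H_1(\CG^\sco(\BF),\BZ)=0$ from perfectness force $H^2(\CG^\sco(\BF),\Fz')=0$. This yields $\gamma_1=0$ and completes the plan; once the identification in the first paragraph is in place, both implications reduce to routine applications of Hochschild--Serre and the two long exact sequences, combined with the vanishing theorems already assembled in Sections~\ref{Subsection:Pf} and~\ref{Subsection:Sch}.
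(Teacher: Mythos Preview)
Your argument is correct and takes a genuinely different route from the paper's proof. The paper works directly with the four-row diagram of extensions and chases explicit splittings up and down: it shows that a splitting of row~1 descends through rows~2 and~3 to row~4, and conversely that a splitting of row~4 restricts to row~3, then lands in $E_2$ (using $\CE_{\pf}$), and finally lifts to row~1 (using $\CE_{\sch}$). Your approach is more cohomological: you transport both extension classes to the common group $H^2(\CG^\sco(\BF),\Fg)$ via the identity $(\mathrm{d}\phi^\sco)_*(\gamma_1)=\mathrm{inf}(\gamma_3)$ and then analyse the injectivity of each transport map separately. This is cleaner in that it separates the two implications into pure cohomological statements and makes transparent exactly where each hypothesis ($\CE_{\pf}$ for $H^1$ with trivial coefficients, $\CE_{\sch}$ for $H^2$ with trivial coefficients, prime-to-$p$ for Hochschild--Serre) enters. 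The paper's approach is more concrete but arguably buries the same ingredients in the diagram chase.

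One small correction: the cokernel of $\mathrm{d}\phi^\sco\colon\Fg^\sco\to\Fg$ is not $\Fz''$ in general. From the paper's short exact sequence $1\to E_2\to\CG(W_2(\BF))\to\Fz'\times Z'\to 1$ one finds that $E_3/E_2$, and hence $\Fg/\image(\mathrm{d}\phi^\sco)$, is naturally identified with $\Fz'$; for instance when $\CG=\CG^\ad$ one has $\Fz''=0$ while the cokernel is $\Fz^*\cong\Fz=\Fz'$. This does not affect your proof, since $\Fz'=\Lie(\kernel\phi^\sco)$ is central and hence carries the trivial $\CG^\sco(\BF)$-action, so $H^1(\CG^\sco(\BF),\Fz')=\Hom(\CG^\sco(\BF),\Fz')=0$ by perfectness exactly as you argue.
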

\begin{proof}
It is clear that if row 1 splits, then so does row 2, by passing to quotients. Also it is trivial that if row 2 splits, then so does row 3. Suppose that row 3 is split. Let $s\colon \CG(\BF)\to E_4$ be a set-theoretic splitting that defines the class $\gamma_4$. Then its restriction to $H'_\BF$ defines a set-theoretic splitting $H'_\BF\to E_3$, i.e., a representative of $\gamma_3$. By hypothesis, $\gamma_3=0$. Now the Hochschild-Serre spectral sequence for $1\to H_\BF\to \CG(\BF)\to Z'\to1$ degenerates with $\Fg$ coefficients, because $Z'$ is of order prime to $p$. Hence restriction is an isomorphism $H^2(\CG(\BF),\Fg)\to H^2(H'_\BF,\Fg)^{Z'}$, and it follows that $\gamma_4=0$. 

We now go in the opposite direction. It is clear that a splitting of row 4 restricts to a splitting of row 3. Suppose now that row 3 is split via some $s\colon H'_\BF\to E_3$. Composing this with the surjection $E_3\to E_3/E_2$ induces a homomorphism $H'_\BF\to E_3/E_2\cong \Fz'$. Note that the target is an elementary abelian $p$-group. As $\CG^\sco$ is not in $\CE_\pf$,  the map $H'_\BF\to E_3/E_2$ is zero by \Cref{Cor:H1Triv}. Hence $s$ takes values in~$E_2$.

Suppose finally that row 2 is split, via some homomorphism $s\colon H'_\BF\to E_2$. The splitting in row 2 gives a commuting diagram
\[\xymatrix@C+1.5pc@R-1pc{
&\Fz'\times Z'\ar[d]\\
\CG^\sco(\BF)\ar[dr]_-{s\circ(\mathrm{mod\,}\Fz'\times Z')\hbox{\phantom{mm}}}\ar@{-->}[r]& E_1\ar[d]\\
& E_2\rlap{.}
}\]
We first wish to lift to the dotted homomorphism. The obstruction lies in $H^2(\CG^\sco(\BF),\Fz'\times Z')$. Here $Z'$ is finite abelian and of order prime to $p$ and $\Fz'$ is finite abelian and $p$-torsion. As we assume that $\CG^\sco$ lies not in $\CE_{\sch}$, it follows from \Cref{Rem:SchurZ} that the obstruction vanishes. Let $s'\colon \CG^\sco(\BF)\to E_1$ be a homomorphism lifting $s$. Consider the map $\psi\colon \CG^\sco(\BF)\to \Fz'\times Z',g\mapsto s'(g)g^{-1}$. Since it takes values in the center $\Fz\times Z'$ of $\CG^\sco(W_2(\BF))$, it is easily seen to be a homomorphism. For the same reason $\psi^{-1}\circ s'\colon \CG^\sco(\BF)\to E_1$ is a homomorphism. It clearly lifts $s$ and hence row 1 is split. This completes the proof.
\end{proof}
The following result is immediate from \Cref{Thm:Vasiu} and \Cref{Lem:N-S-and-Isogenies}.
\begin{Cor}\label{Cor:n-s}
Suppose $\CG$ over $\WF$ is absolutely simple. Then \ns\ holds if and only if $(\CG,\BF)$ is not in the list $\CE_{\ns}$ given in \eqref{eq:n-s-List}.
\end{Cor}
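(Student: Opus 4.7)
The result follows essentially by bookkeeping from \Cref{Thm:Vasiu} together with \Cref{Lem:N-S-and-Isogenies}. Since every pair in $\CE_{\ns}$ has $|\BF|\le 4$, the first clause of \Cref{Thm:Vasiu} immediately proves \ns\ for all absolutely simple $\CG$ over $\WF$ with $|\BF|\ge 5$, so for these there is nothing to verify. The plan is therefore to reduce to the cases $|\BF|\in\{2,3,4\}$ and, for each such $\BF$, split the verification according to the isogeny type of~$\CG$.

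For $\CG$ simply connected or of adjoint type, \Cref{Thm:Vasiu} explicitly lists the pairs where \eqref{eq:ns} is split and asserts non-splitness in all other cases. Reading off (a)--(c) case by case gives, in the simply connected case, exactly $(\SL_2,\BF_?)_{?\in\{2,3\}}$, $(\SL_3,\BF_2)$ and $(\SU_3,\BF_2)$; and in the adjoint case, exactly $(\PGL_2,\BF_?)_{?\in\{2,3,4\}}$, $(\PGL_3,\BF_2)$, $(\PGU_3,\BF_2)$ and $(\PGU_4,\BF_2)$. These are precisely the simply connected and adjoint entries of~$\CE_{\ns}$.

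It remains to treat isogeny types strictly between simply connected and adjoint. By the first implication of \Cref{Lem:N-S-and-Isogenies}, if \eqref{eq:ns} is split for $\CG$ then it is split for $\CG^\sco$, so the type of such a $\CG$ must occur in Vasiu's simply connected exception list. Scanning that list, the only type admitting a proper intermediate isogeny is ${}^2A_3$, since $A_1$, $A_2$ and ${}^2A_2$ have cyclic center of prime order and hence no intermediate quotient. The unique intermediate absolutely simple group of type ${}^2A_3$ over $\BF_2$ is $\SO_6$, whose splitness is addressed by the last sentence of \Cref{Thm:Vasiu}. This accounts for the last entry $(\SO_6,\BF_2)$ of $\CE_{\ns}$ and completes the enumeration.

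The main subtlety lies in the intermediate case: one \emph{cannot} reverse the argument by invoking the second implication of \Cref{Lem:N-S-and-Isogenies} to deduce non-splitness for intermediate types from that of $\CG^\sco$, because in precisely the relevant range one has $\CG^\sco\in\CE_{\sch}$ (e.g.\ $\SU_4(\BF_2)$, since $({}^2A_3,\BF_2)\in\CE_{\sch}$). This is why Vasiu's explicit treatment of the intermediate ${}^2A_3$ case is indispensable, and why the full verification reduces cleanly to the list in \eqref{eq:n-s-List}.
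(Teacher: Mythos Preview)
Your treatment of the simply connected and adjoint cases is fine, but the argument for intermediate isogeny types contains a genuine error.

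You invoke ``the first implication of \Cref{Lem:N-S-and-Isogenies}'' to conclude that if \eqref{eq:ns} is split for $\CG$ then it is split for $\CG^\sco$. This is the wrong direction: the first sentence of the lemma says that \ns\ for $\CG$ implies \ns\ for $\CG^\sco$, whose contrapositive is that splitness for $\CG^\sco$ implies splitness for $\CG$, not the reverse. The implication you actually use is the contrapositive of the \emph{second} sentence, and that requires the side hypothesis $\CG^\sco\notin\CE_{\pf}\cup\CE_{\sch}$. Your very own example refutes the unconditional claim: for $\CG=\SO_6^-$ over $\BF_2$ (intermediate of type ${}^2A_3$) the sequence \eqref{eq:ns} is split by the last line of \Cref{Thm:Vasiu}, yet for $\CG^\sco=\SU_4$ over $\BF_2$ it is non-split by part (a) of that theorem, since ${}^2A_3\notin\{A_1,A_2,{}^2A_2\}$. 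In particular ${}^2A_3$ is \emph{not} in Vasiu's simply connected exception list, contrary to what your third paragraph asserts.

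A correct treatment of intermediate $\CG$ with $|\BF|\le4$ runs as follows. If $\CG^\sco\notin\CE_{\pf}\cup\CE_{\sch}$, then the second implication of \Cref{Lem:N-S-and-Isogenies} transfers \ns\ from $\CG^\sco$ to $\CG$, and \ns\ does hold for $\CG^\sco$ since none of $A_1,A_2,{}^2A_2$ admit intermediate quotients. The remaining intermediate types with $\CG^\sco\in\CE_{\sch}$ are $(A_3,\BF_2)$, $(D_4,\BF_2)$, $({}^2A_3,\BF_3)$, $({}^2A_5,\BF_2)$ and $({}^2A_3,\BF_2)$. For the first four, \Cref{Thm:Vasiu} gives \ns\ for $\CG^\ad$; the proof of \Cref{Lem:N-S-and-Isogenies} (the chain ``row~1 split $\Rightarrow$ row~4 split'') works verbatim for the central isogeny $\CG\to\CG^\ad$ and yields \ns\ for the intermediate $\CG$. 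Only for $({}^2A_3,\BF_2)$ does \ns\ fail for $\CG^\ad=\PGU_4$ as well, and here the last line of \Cref{Thm:Vasiu} gives the failure of \ns\ for the intermediate form directly, accounting for $(\SO_6,\BF_2)$.
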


\subsection{$z$-Lie-balanced groups}
\label{Subsec:Puniv}

In this subsection we introduce a particular kind of reductive group over $\WF$ that is frequently encountered in applications and 
for which the analog \Cref{Thm:StdHypForPuniv} of \Cref{Thm:MainOnStdSetup} has fewer exceptions, for instances for \liec\ or \lieu.

Let us begin by a construction inspired from \cite[p.~474]{Arthur} which uses ideas from $z$-extensions introduced by Langlands and Kottwitz. Let $\CH$ be a connected reductive group over $\WF$ whose special fiber $\CH_\BF$ is absolutely simple and such that the natural map $\Fh=\Lie\CH_\BF\to \Fh^\ad$ is an isomorphism. The kernel of the central isogeny $\CH^\sco\to\CH$ is described in \Cref{Eqn:Centers}. 
Let $\pi\colon \CH'\to\CH$ be a central isogeny such that $\CH^\sco\to\CH'$ is \'etale. This implies that the natural map $\Fh^\sco\to\Fh'$ is an isomorphism and that $Z(\CH)^o$ lies in $\kernel\pi$. Choose a homomorphism $\kernel \pi \into \CT$ over $\WF$ into a torus whose induced map $Z(\Fh') \to\Ft=\Lie(\CT_\BF)$ on Lie algebras is an isomorphism. Define $\CG':=(\CH'\times \CT)/\kernel\pi$ with $\kernel\pi$ embedded diagonally. Then one has a short exact sequence of reductive groups
\begin{equation}\label{eq:ses-LieUniv}
1\to \CT \to \CG' \to \CH'\to 1
\end{equation}
with the following properties:
\begin{enumerate}\advance\itemsep by -.4em
\item the extension is central,
\item the natural map $\CH'\to\CG^{\prime,\der}$ is an isomorphism and $\CH'$ is Lie-simply connected,
\item the image of $\CT$ in $\CG'$ is the identity component of the center of $\CG'$,
\item the Lie algebra $\Fg'=\Lie\CG'$ is isomorphic to $\wh\Fh$.
\item the induced sequence of Lie algebras $0\to \Ft\to\Fg'\to\Fh\to0$ is exact.
\end{enumerate}

\begin{Def}\label{Def:Puniv}
We call a connected reductive group $\CG'$ over $\WF$

\begin{enumerate}
\item {\em Lie-balanced} if it arises as a $\CG'$ via the above construction.
\item {\em $z$-Lie-balanced} if there exists a central homomorphism $\CG'\to\CG''$ to a Lie-balanced $\CG'$ with smooth kernel of multiplicative type such that the induced morphism $\CG^{\prime,\der}\to\CG^{\prime\prime,\der}$ is a central isogeny.
\end{enumerate}
\end{Def}
\begin{Ex}
The groups $\GL_n$ and $\GSp_n$ are $z$-Lie-balanced over $W(\BF)$ for any finite field $\BF$. They are Lie-balanced if $p$ divides $n$, for $\GL_n$, or if $p=2$, for $\GSp_n$. 

If $\CG$ is almost simple over $W(\BF)$ and if $\Fg$ is simple, cf.~\Cref{Lie:Simple}, then $\CG$ is Lie-balanced.
\end{Ex}

\begin{Lem}\label{Lem:UniveZsmooth}
If $\CG'$ is $z$-Lie-balanced, then the following hold:
\begin{enumerate}
\item The identity component $Z(\CG')^o$ of $Z(\CG')$ is a torus, and hence $Z(\CG')$ is smooth.
\item One has a short exact sequence $0\to \Ft\to \Lie\CG'\to \wh\Fh\to 0$ as Lie algebras and as $H_\BF$-modules for some $\Ft\subset\Lie\CG'$ which is an abelian sub Lie algebra and which carries a trivial $H_\BF$-action.
\end{enumerate}
\end{Lem}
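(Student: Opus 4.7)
Proof plan. I would reduce both parts to the Lie-balanced case via the defining central extension $f\colon \CG' \to \CG''$ when $\CG'$ is merely $z$-Lie-balanced. For part~(a) in the Lie-balanced case, property~(c) of the construction already identifies $Z(\CG')^o$ with the image of $\CT$, which is itself a torus. Smoothness of $Z(\CG')$ would then follow from directly computing $Z(\CG') = (Z(\CH') \times \CT)/\kernel\pi$ using the centrality of $\CT$, which yields a short exact sequence
\[
1 \to \CT \to Z(\CG') \to Z(\CH')/\kernel\pi \to 1
\]
in which the quotient embeds into $Z(\CH)$ via $\pi$; the latter group scheme is \'etale because the hypothesis $\Fh \cong \Fh^\ad$ on $\CH$ gives $Z(\Fh) = 0$ and hence $\Lie Z(\CH) = 0$.

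For the $z$-Lie-balanced case of (a), set $\CK := \kernel f$, which is central, smooth, and of multiplicative type. My first step would be to show that $Z(\CG') = f^{-1}(Z(\CG''))$: the inclusion $\subset$ is immediate, and for the converse, if $g \in f^{-1}(Z(\CG''))$ then the map $h \mapsto [g,h]$ is a group scheme homomorphism $\CG' \to \CK$ (since $\CK$ is central and abelian), which vanishes on $\CG^{\prime,\der}$ because the latter is perfect while $\CK$ is abelian, and which is trivial on $Z(\CG')^o$ for free, hence vanishes on all of $\CG' = \CG^{\prime,\der} \cdot Z(\CG')^o$. Combined with surjectivity, this yields a short exact sequence
\[
1 \to \CK \to Z(\CG') \to Z(\CG'') \to 1,
\]
in which $\CK$ and $Z(\CG'')$ are smooth (by assumption and by the Lie-balanced case, respectively). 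Hence $Z(\CG')$ is smooth as an extension of smooth group schemes, and its identity component, being smooth, connected, and of multiplicative type, is a torus.

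For part~(b), the Lie-balanced case is immediate from property~(d): $\Lie\CG' \cong \wh\Fh$, and the asserted sequence holds with $\Ft = 0$. For $z$-Lie-balanced $\CG'$, smoothness of $\CK$ in $1 \to \CK \to \CG' \to \CG'' \to 1$ yields the short exact sequence of Lie algebras
\[
0 \to \Lie\CK \to \Lie\CG' \to \Lie\CG'' \to 0,
\]
and taking $\Ft := \Lie\CK$ together with $\Lie\CG'' \cong \wh\Fh$ (the Lie-balanced case of~(b)) produces the required sequence. That $\Ft$ is an abelian sub Lie algebra follows from $\CK$ being of multiplicative type, hence commutative; and the $H_\BF$-action on $\Ft$ is trivial by differentiating the trivial conjugation action of $\CG'$ on the central subgroup $\CK$. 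I expect the technical heart of the proof to be the identification $Z(\CG') = f^{-1}(Z(\CG''))$ in the $z$-Lie-balanced case; the remaining steps are routine consequences of smoothness and of the properties (a)--(e) of the Lie-balanced construction stated just before the lemma.
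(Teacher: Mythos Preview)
Your proposal is correct and follows essentially the same route as the paper: both parts rest on the short exact sequence $1\to\CK\to Z(\CG')\to Z(\CG'')\to1$ (with $\CK=\kernel f$) for~(a), and on setting $\Ft=\Lie\CK$ for~(b), using smoothness of $\CK$ to get right-exactness on Lie algebras. The only differences are organizational: you treat the Lie-balanced case separately and you spell out a commutator argument for $Z(\CG')=f^{-1}(Z(\CG''))$ that the paper simply asserts, while the paper makes the right-exactness in~(b) explicit via a Krull-dimension count.
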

\begin{proof}
Let $\pi\colon\CG'\to\CG''$ be a homomorphism as in the definition of $z$-Lie-balanced with $\CG''$ Lie-balanced. Because $\pi$ is central and surjective with smooth kernel of multiplicative type, we have a short exact sequence of centers $1\to \kernel\pi\to Z(\CG')\to Z(\CG'')\to 1$ with $\kernel\pi$ smooth of multiplicative type. By the hypothesis on $\CG''$ the group $Z^o(\CG'')$ is a torus. Therefore $Z(\CG')$ is smooth and of multiplicative type. This implies~(a).

To see (b) set $\Ft:=\Lie\kernel \pi$. Because $\pi$ is central the claimed properties on $\Ft$ are clear. Also the displayed sequence is clearly left exact. The right exactness follows from smoothness and dimension considerations: One clearly has $\dim_\Krull \CG'=\dim_\Krull\CG''+\dim_\Krull\kernel\pi$. Passing to tangent spaces at the identity and using the smoothness, we obtain $\dim_\BF\Lie\CG'=\dim_\BF\Lie\CG''+\dim_\BF\Ft$, and using $\Lie\CG''\cong\wh\Fh$, we are done.
\end{proof}

\begin{Cond}\label{Cond-Puniv}
The algebraic group $\CG'$ is $z$-Lie-balanced over $\BF$, $H'_\BF:=[\CG'(\BF),\CG'(\BF)]$, and $H_\BF$ is a subgroup of $\CG'(\BF)$ that contains $H'_\BF$. 
\end{Cond}

\begin{Thm}\label{Thm:StdHypForPuniv}
Suppose that $(\CG',\BF,H_\BF,H'_\BF)$ satisfies \Cref{Cond-Puniv}. Let $\CG:=\CG^{\prime,\der}$. Then the tuple also satisfies  \Cref{StandardHyp}, and the following hold:
\begin{enumerate}
\item 
\pf, \ct, \liecsc\ and \ns\ hold unless $($type$,\BF)$ for the group $\CG$ is in the list
\[ 
(A_1,\BF_?)_{?\in\{2,3\}},   ({}^2A_2,\BF_2),  ({}^2A_3,\BF_2),  (B_2,\BF_2), (G_2,\BF_2)
 .\]
\item \van\ holds unless $($type$,\BF)$ for $\CG$ is in the list
\[ (A_1,\BF_?)_{?\in\{2,3,5\}}, ({}^2A_2,\BF_2),  (C_n,\BF_?)_{n\ge2,?\in \{2,3,4,5,9\} },  (G_2,\BF_2)
\]
or $\CG^\der$ is non-split, and hence of types A, D or E${}_6$, and $|\BF|<4$.
\item Assuming $\CG^\sco(\BF)\notin \CE_{\pf}$, concerning \textbf{(lie-?)} the following hold:
\begin{enumerate}
\item  $\liegen$, if $\CG$ is of type $A_n$ and $p\nmid n+1$, or of type $B_n$,  $C_n$, $D_n$, $E_7$ or $F_4$ and $p\not=2$, or if $\CG$ is of type $E_6$ or $G_2$ and $p\not=3$, or if $\CG$ is of type~$E_8$.
\item $\lieu$, unless $\CG$ is of type $B_2$ or $F_4$ and $p=2$, or of type $G_2$ and $p=3$.
\item $\liec$, if $\CG$ is of type $A_n$, $n\ge2$, $D_n$ or $E_n$, or $\CG$ is of type $A_1$, $B_n$,  $C_n$, $F_4$ and $p\neq2$, or of  type $G_2$ and $p\neq3$.
\end{enumerate}
\end{enumerate}
\end{Thm}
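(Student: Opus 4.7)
The strategy is to reduce every claim to the corresponding statement for the absolutely almost simple derived group $\CG=\CG^{\prime,\der}$, handled in Theorem~\ref{Thm:MainOnStdSetup}. First I would pass from the $z$-Lie-balanced setting to the Lie-balanced one using the central homomorphism $\pi\colon\CG'\to\CG''$ of Definition~\ref{Def:Puniv}: its kernel is smooth of multiplicative type, with Lie algebra $\Fc$ on which $H_\BF$ acts trivially. The cohomology long exact sequence attached to $0\to\Fc\to\Lie\CG'\to\Lie\CG''\to 0$ degenerates modulo terms of the form $H^i(H'_\BF,\BF_p)\otimes_{\BF_p}\Fc$; these vanish for $i=1$ by Corollary~\ref{Cor:H1Triv} (given $\CG^\sco(\BF)\notin\CE_{\pf}$) and for $i=2$ outside $\CE_{\sch}$ by Corollary~\ref{Cor:Schur}. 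Consequently all conditions to be verified for $\CG'$ are equivalent to the same conditions for $\CG''$, within the exclusions already built into~(a)--(b).

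Assume now $\CG'$ is Lie-balanced, with underlying Lie-adjoint $\CH$ and Lie-simply connected central isogeny $\CH'\to\CH$. By property~(ii) of the construction, $\CG^{\prime,\der}\cong\CH'$ is Lie-simply connected of the same root system type as $\CG$ and $\Lie\CG^{\prime,\der}=\Fh^\sco$. Because $Z(\CG')^o$ is a torus (Lemma~\ref{Lem:UniveZsmooth}(a)), all commutators in $\CG'(\BF)$ lie in $\CG^{\prime,\der}(\BF)$; combined with the surjectivity of $\CG^\sco(\BF)\to\CG^{\prime,\der}(\BF)$ and Corollary~\ref{Cor:Pf}, this identifies $H'_\BF$ with the image of $\CG^\sco(\BF)$ in $\CG^{\prime,\der}(\BF)$. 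Consequently Assumption~\ref{StandardHyp} and condition \pf\ for $(\CG',\BF,H_\BF,H'_\BF)$ reduce to Corollaries~\ref{Cor:StdHyp} and~\ref{Cor:Pf} for $\CG^{\prime,\der}$; condition \liecsc\ follows from Proposition~\ref{Prop:csc} applied to the Lie-simply connected $\CH'$; and \ns\ follows from Corollary~\ref{Cor:n-s} applied to $\CH'$, because the extension $1\to\Fh^\sco\to H'_{W_2(\BF)}\to H'_\BF\to 1$ is precisely the one for $\CG^{\prime,\der}\cong\CH'$.

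For condition \ct, smoothness of $Z(\CG')$ is Lemma~\ref{Lem:UniveZsmooth}(a) and $\Lie Z(\CG')^o=\Ft$. Using the exact sequence $0\to\Ft\to\Fg'\to\Fh\to 0$ of Lemma~\ref{Lem:UniveZsmooth}(b), with $\Ft$ a trivial $H_\BF$-module, we get $\Ft\hookrightarrow H^0(H_\BF,\Fg')$ with quotient embedding into $H^0(H_\BF,\Fh)$. Since $\CH$ is Lie-adjoint, $Z(\Fh)=0$ by Theorem~\ref{Thm:LieCenter}, and the argument of Subsection~\ref{Subsection:Ct} based on \cite[Prop.~11.1]{Pink-Compact} then gives $H^0(H'_\BF,\Fh)=0$, whence $H^0(H_\BF,\Fg')=\Ft$ as required. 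For \van\ in part~(b), the same exact sequence plus the vanishing of $H^i(H'_\BF,\Ft)=H^i(H'_\BF,\BF_p)\otimes\Ft$ for $i=1,2$ (under the exclusions already invoked) yields $H^1(H'_\BF,\Fg')\cong H^1(H'_\BF,\Fh)$, and the latter vanishes by Theorem~\ref{Thm-OnVan} applied to the Lie-adjoint $\CH$ outside the list recorded in~(b).

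Part~(c) is immediate from Corollaries~\ref{Cor:LieCombined1}--\ref{Cor:LieCombined3} applied to the Lie-simply connected $\CG^{\prime,\der}\cong\CH'$: being Lie-simply connected removes those exceptions in Theorem~\ref{Thm:MainOnStdSetup}(c) that arose from Lie-intermediate behaviour, which accounts for the cleaner exclusion lists in~(c). The principal obstacle I anticipate is the bookkeeping of the various admissible choices of $\CH'$ above $\CH^\sco$: whenever $Z(\CH^\sco)$ has non-trivial étale subgroups, the Lie-balanced construction admits several non-isomorphic Lie-simply connected models $\CH'$, and one must verify that none of them escapes the exclusion sets listed in~(a)--(c). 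This requires a uniform application of the case analysis of Subsections~\ref{Subsection:Pf}--\ref{Subsection:Ns} across all admissible isogenies, and it is here that the precise shape of the exceptional lists of the theorem is pinned down.
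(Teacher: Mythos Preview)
Your approach is essentially the paper's: reduce each condition to the corresponding statement for the Lie-simply connected derived group $\CG$ via Subsections~\ref{Subsection:Pf}--\ref{Subsection:Ns}, handling \ct\ and \van\ through the exact sequence of Lemma~\ref{Lem:UniveZsmooth}(b) (which has $\hat\Fh$, not $\Fh$, as quotient---a harmless slip, since your argument works with either). Your anticipated obstacle is illusory: once $\CG'$ is given, $\CG=\CG^{\prime,\der}$ is determined and automatically Lie-simply connected (the central isogeny $\CG^{\prime,\der}\to\CG^{\prime\prime,\der}$ has smooth, hence \'etale, finite kernel), so no case analysis over admissible $\CH'$ is needed.
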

\begin{proof} Part (a) of \Cref{StandardHyp} is clear, since $\CG'$ is connected reductive. To see \Cref{StandardHyp}(b) note that $H_\BF$ is generated by $H'_\BF=\CG(\BF)$ together with $Z(\CG')(\BF)$ and that the latter group is of order prime to~$p$.

(a): Conditions \pf, \liecsc\ and \ns\ only depend on $H'_\BF$ and $\CG$. Hence we can directly apply \Cref{Cor:Pf}(c), \Cref{Prop:csc} and \Cref{Cor:n-s}, noting that here the map $\Fg^\sco\to\Fg$ is an isomorphism. The list in (a) is the union of the relevant sublists of the exceptional lists $\CE_{\pf}$ and $\CE_{\ns}$ that occur in the two corollaries. It remains to see that \ct\ can only fail for groups in that list. That $Z(\CG')$ is smooth follows from \Cref{Lem:UniveZsmooth}(a). To see that $Z(\Lie\CG')\to H^0(H_\BF,\Fg')$ is an isomorphism, we consider the commutative diagram induced from \Cref{Lem:UniveZsmooth}(b)
\[ \xymatrix@R-1pc{
0\ar[r]& \Lie (\kernel\pi)\ar[r]\ar[d]&\Lie Z(\CG')\ar[r]\ar[d]&\Lie Z(\CG'')\ar[d]\\
0\ar[r]& H^0(H_\BF, \Ft)\ar[r]&H^0(H_\BF,\Fg')\ar[r]&H^0(H_\BF,\Fg'')\rlap{,}
}\] 
where $\pi\colon\CG'\to\CG''$ is a surjection as in \Cref{Def:Puniv}. The top row is exact by the proof of \Cref{Lem:UniveZsmooth}. The arrow on the left is an isomorphism, again by \Cref{Lem:UniveZsmooth}(b), and so it suffices to show that the arrow on the right is an isomorphism, i.e., we may assume that $\CG'=\CG''$. But then $\Lie\CG''=\wh\Fh$ for $\Fh=\Lie\CG^\der$, and now the isomorphism property follows from  \Cref{Thm:Lie1}(b) and the last line of \Cref{Rem:Exc-Center}.

(b): Using the sequence in \Cref{Lem:UniveZsmooth}(b) together with \Cref{Cor:H1Triv}, we may assume that $\CG'=\CG''$ for the proof; the exceptions ruled out in  \Cref{Cor:H1Triv} are part of the list in (b). From the definition of Lie-balanced, we know that $\Fg^\sco\to\Fg$ is an isomorphism. By \Cref{Rem:Van}, for \Cref{Gu-van}(a)$\Leftrightarrow$(b) we only need $\CG(\BF)\notin\CE_{\pf}$. For \Cref{Gu-van}(b) to hold, we need to avoid the pairs $(\CG,\BF)$ listed as exceptions in \Cref{Thm-OnVan}. The combined list of exceptions is that given~in~(b).

(c) The conditions \textbf{(lie-?)} only depend on $\CG$, which by construction is Lie-simply connected. Therefore (c) follows from  \Cref{Cor:LieCombined1},  \Cref{Cor:LieCombined2} and  \Cref{Cor:LieCombined3}.
\end{proof}
\begin{Rem}
It is possible to construct a sequence as in \eqref{eq:ses-LieUniv} also for cases where $\CH'$ is Lie-intermediate (and not only for Lie-simply connected cases, as done here). However in this situation the analog of \Cref{Thm:StdHypForPuniv}(b) on \van\ does not hold. Since this condition is crucial for our later applications, we did not pursue this here.
\end{Rem}
\begin{Cor}\label{Cor:BasicHypForGLn}
The tuple $(\GL_n,\BF,H_\BF,\SL_n(\BF))$ for $H_\BF$ a subgroup of $\GL_n(\BF)$ that contains $\SL_n(\BF)$ satisfies  \Cref{StandardHyp}, and the following hold:
\begin{enumerate}
\item 
\pf, \ct, \van, \liecsc\ and \ns\ hold unless $(n,\BF)$ is in the list $(2,\BF_?)_{?\in\{2,3,5\}}$.
\item {}\lieu\ holds unconditionally, \liec\ holds if and only if $(n,\Char\BF)\neq(2,2)$.
\end{enumerate}
\end{Cor}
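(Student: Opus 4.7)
The strategy is to specialise \Cref{Thm:StdHypForPuniv} to $\CG' = \GL_n$, whose derived group $\CG = \SL_n$ is absolutely simple of type $A_{n-1}$. I would first verify that $\GL_n$ is $z$-Lie-balanced over $\WF$. When $p \mid n$, the presentation $\GL_n = (\SL_n \times \GL_1)/\mu_n$ (with $\mu_n$ embedded diagonally) realises $\GL_n$ as Lie-balanced per \Cref{Def:Puniv}(a), with $\CH = \PGL_n$, $\CH' = \CH^{\sco} = \SL_n$, and $\CT = \GL_1$: indeed then $Z(\Fsl_n)$ is $1$-dimensional and the natural map $Z(\Fsl_n) \to \Lie \GL_1$ is an isomorphism. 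When $p \nmid n$, the group $\PGL_n$ is Lie-balanced with trivial $\CT$, and the central surjection $\GL_n \twoheadrightarrow \PGL_n$ has smooth kernel $\GL_1$ (of multiplicative type) and induces the central isogeny $\SL_n \to \PGL_n$ on derived groups, exhibiting $\GL_n$ as $z$-Lie-balanced. The axioms of \Cref{StandardHyp} hold in either case: (a) is automatic since $\GL_n$ is connected, and for (b) one takes $M_\BF$ to be the diagonal torus $(\BF^\times)^n$, which has order prime to $p$ and satisfies $M_\BF \cdot \SL_n(\BF) = \GL_n(\BF)$.

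I would next trace through the exceptional lists in \Cref{Thm:StdHypForPuniv} restricted to type $A_{n-1}$. Among the entries $(A_1,\BF_2), (A_1,\BF_3), ({}^2A_2,\BF_2), ({}^2A_3,\BF_2), (B_2,\BF_2), (G_2,\BF_2)$ appearing in part (a) of that theorem, only the split $A_1$ entries pertain to $\SL_n$, yielding exceptions $(n,\BF) \in \{(2,\BF_2), (2,\BF_3)\}$ for \pf, \ct, \liecsc\ and \ns. The \van\ list in part (b) of the theorem contributes the one additional type-$A$ exception $(2, \BF_5)$, the $C_n$ and non-split sublists being vacuous here. The union is exactly the set $(2, \BF_?)_{? \in \{2,3,5\}}$ claimed in part (a) of the corollary. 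For part (b), part (c)(ii) of the theorem excludes no type-$A$ cases from \lieu, so \lieu\ holds whenever $\SL_n(\BF) \notin \CE_{\pf}$; and part (c)(iii) guarantees \liec\ precisely when $\CG$ is of type $A_{n-1}$ with $n \ge 3$, or of type $A_1$ with $p \ne 2$, matching the condition "$(n, \Char \BF) \ne (2, 2)$".

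The main delicate point is the handling of the two residual cases $(n, \BF) \in \{(2, \BF_2), (2, \BF_3)\}$, where $\SL_n(\BF) \in \CE_{\pf}$ and so part (c) of the theorem does not literally apply. For \liec, $(2, \BF_2)$ is genuinely exceptional and accounts for the unique failure: \Cref{Lie:Perfect} shows $[\Fsl_2, \Fsl_2] = Z(\Fsl_2) \ne \Fsl_2$ in characteristic $2$, so \liec(i) fails; while for $(2, \BF_3)$ the simplicity of $\Fsl_2$ in characteristic $3$ from \Cref{Lie:Simple} yields \liec\ by direct verification. For \lieu\ in these small cases one has to examine the $H'_\BF$-module structure of $\Fsl_2$ by hand, using the $1$-dimensional centre together with irreducibility of the quotient $\Fsl_2/Z(\Fsl_2)$ and its non-isomorphism to the trivial module, in order to check the Jordan-H\"older and endomorphism conditions. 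This last ad-hoc verification is the only portion of the proof not a formal consequence of the main theorems of \Cref{Section:Chevalley}.
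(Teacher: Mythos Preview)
Your proposal is correct and follows exactly the route the paper intends: the corollary is stated immediately after \Cref{Thm:StdHypForPuniv} with no proof, and the Example preceding \Cref{Def:Puniv} records that $\GL_n$ is $z$-Lie-balanced, so the intended argument is precisely the specialisation you carry out. Your explicit treatment of the two residual cases $(2,\BF_2)$ and $(2,\BF_3)$, which fall outside the hypothesis $\CG^\sco(\BF)\notin\CE_{\pf}$ of part~(c) of \Cref{Thm:StdHypForPuniv}, is an honest acknowledgment of a small gap the paper leaves implicit for the unconditional claims in part~(b).
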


\section{Preparations}
\label{Section3new}

Suppose throughout this section that \Cref{StandardHyp} holds for $(\CG,\BF,H_\BF,H'_\BF)$.

\smallskip

Our first result clarifies the definition of $H_R$ from \eqref{Def:HA-HpA} based on \Cref{Conv:MWF}.
\begin{Lem}\label{Lem-HRExists}
Let $R$ be in $\CAW$. Then the following hold:
\begin{enumerate}
\item there exist subgroups $M_R^o\subset\CG^o(R)$ and  $M_R\subset\CG(R)$ that under reduction modulo $\Fm_R$ map isomorphically to $M^o_\BF$ and $M_\BF$, respectively;
\item the possible $M^o_R$ from (a) form a conjugacy class under $\kernel\big(\CG^o(R)\to\CG^o(\BF)\big)$, and so do the $M_R$;
\item the groups $M^o_R$ and $M_R$ normalize $H'_R$;
\item \label{HRExists-PartD}the group $H^o_R:=H'_R M^o_R\subset \CG^o(R)$ is independent of the choice of~$M^o_R$;
\item if $\CG/\CG^o$ acts trivially on $\CG^o/\CG^\der$, then also $H_R=H'_R M_R\subset \CG(R)$ is independent of any choices.
\end{enumerate} 
\end{Lem}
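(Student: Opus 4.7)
The plan is to reduce (a) and (b) to Schur--Zassenhaus applied inside the profinite group $\CG(R)$, derive (c) from the defining property of $H'_R$, and obtain (d) and (e) from a short commutator calculation.

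First I would write $R=\varprojlim_\alpha R_\alpha$ with each $R_\alpha$ Artin local with residue field $\BF$, so that $\CG(R)=\varprojlim_\alpha\CG(R_\alpha)$ is a profinite group. By smoothness of $\CG$, each $\kernel\bigl(\CG(R_\alpha)\to\CG(\BF)\bigr)$ admits a finite filtration whose successive quotients are $\BF$-subspaces of $\Fg$, hence $K:=\kernel\bigl(\CG(R)\to\CG(\BF)\bigr)$ is pro-$p$. Since $M_\BF$ has order prime to $p$, the profinite version of the Schur--Zassenhaus theorem applied to the closed subgroup $\pi^{-1}(M_\BF)\subset\CG(R)$ yields a closed lift $M_R$ and shows that any two such lifts are conjugate by an element of $K$. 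The same argument inside $\CG^o(R)$ produces $M_R^o$ and gives conjugacy of its possible choices under $K^o:=\kernel\bigl(\CG^o(R)\to\CG^o(\BF)\bigr)$, settling (a) and (b).

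For (c), $H'_\BF$ is normal in $H_\BF=M_\BF H'_\BF$, so $M_\BF$ normalizes $H'_\BF$. Since $\CG^\der$ is characteristic in $\CG$, every $m\in M_R$ normalizes $\CG^\der(R)$, hence $mH'_Rm^{-1}$ is a subgroup of $\CG^\der(R)$ reducing modulo $\Fm_R$ to $mH'_\BF m^{-1}=H'_\BF$. The defining property of $H'_R$ as the preimage of $H'_\BF$ in $\CG^\der(R)$ then forces $mH'_Rm^{-1}\subseteq H'_R$, and equality follows by applying the same argument to $m^{-1}$.

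For (d), if $\tilde M_R^o=kM_R^ok^{-1}$ with $k\in K^o$, then for any $m\in M_R^o$ we have $kmk^{-1}=m\cdot[m^{-1},k]$; the commutator $[m^{-1},k]$ lies in $\CG^\der(R)$ because $\CG^o/\CG^\der$ is an abelian group scheme, and it reduces to $1\in H'_\BF$ because $k$ does, so $[m^{-1},k]\in H'_R$ and $\tilde M_R^oH'_R=M_R^oH'_R$ by symmetry. For (e), the identical calculation runs in $\CG(R)$ once one checks that any $k\in K$ has image in $(\CG/\CG^\der)(R)$ lying in the torus part $(\CG^o/\CG^\der)(R)$ (because $\CG/\CG^o$ is constant over $\WF$ and therefore admits no infinitesimal deformations, so the exact sequence $1\to\CG^o/\CG^\der\to\CG/\CG^\der\to\CG/\CG^o\to1$ is left exact on $R$-points with kernel of reduction contained in the torus); under the hypothesis that $\CG/\CG^o$ acts trivially on $\CG^o/\CG^\der$, the torus $\CG^o/\CG^\der$ is central in $\CG/\CG^\der$, so $[m^{-1},k]$ maps to $1$ in $(\CG/\CG^\der)(R)$ and hence lies in $\CG^\der(R)$. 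The main obstacle I anticipate is the scheme-theoretic bookkeeping needed for (e): translating centrality of the torus $\CG^o/\CG^\der$ inside $\CG/\CG^\der$ into the required abstract-group commutation on $R$-points, and locating $\bar k$ in the torus factor of $(\CG/\CG^\der)(R)$.
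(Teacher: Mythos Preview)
Your proposal is correct and follows essentially the same approach as the paper. Parts (a)--(c) match the paper's proof almost verbatim (profinite Schur--Zassenhaus for existence and conjugacy, characteristicity of $\CG^\der$ plus the defining property of $H'_R$ for normalization). For (d) and (e) the paper phrases the argument by passing to the abelian quotient $\CG^o(R)/\CG^\der(R)$ (respectively $\CG(R)/\CG^\der(R)$) and noting that conjugation by the kernel of reduction is trivial there, whereas you unpack this via the commutator identity $kmk^{-1}=m\cdot[m^{-1},k]$; these are two formulations of the same observation, and your version is arguably a shade more transparent.
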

\begin{proof}
Parts (a) and (b) follow from the profinite version of the Schur-Zassenhaus Theorem given in \cite[Prop.~2.1]{Boston-Explicit}:
Since $M^o_\BF$ has order prime to $p$, the cohomology groups $H^i(M^o_\BF,\Fg)$, $i\ge1$, vanish. Now the kernel of $\CG^o(R)\to\CG^o(\BF)$ is a pro-$p$ group. Therefore an inductive argument using $H^2(M^o_\BF,\Fg)=0$ shows the existence of a lift as in (a); its uniqueness up to conjugation, i.e.~(b), follows from $H^1(M^o_\BF,\Fg)=0$, again by induction. The same argument works for $M_R$ as well. Since $\CG^\der(R)$ is characteristic in $\CG(R)$, see \Cref{App-8}, it is normalized by $M_R^o$ and $M_R$. Because $M^o_\BF$ and $M_\BF$ normalize $H'_\BF$, part (c) follows.

For (d) note that by construction $H^o_R\cap \CG^\der(R)=H'_R$. Hence it suffices to show that 
\[H^o_R /H'_R\cong  \CG^\der(R) H^o_R/ \CG^\der(R)=\CG^\der(R)M_R^o/\CG^\der(R)\subset \CG^o(R)/\CG^\der(R)\] 
is independent of any choices. However $\CG^o(R)/\CG^\der(R)$ is abelian and the conjugation action of $M_R^o$ induced from (b) is thus trivial. This shows (d). The proof of (e) is analogous, using the hypotheses of (e) and again assertion~(b).
\end{proof}

The following lemma introduces in our setting an important substitute $H^c$ for the commutator subgroup $[H,H]$ for a closed subgroup $H$ of $H_R$ that surjects onto $H_\BF$ under reduction modulo~$\Fm_R$, and that takes the residual image $H_\BF$ into account.
\begin{Lem}\label{Lem-OnCommutator}
Let $R$ be in $\CAW$ and let $H\subset \CG(R)$ be a closed subgroup that surjects onto $H_\BF$. Then there exists a unique closed subgroup $H^c\subset H$ that contains the closure $\overline{[H,H]}$ of the commutator subgroup and for which $H^c/\overline{[H,H]}\to H_\BF/[H_\BF,H_\BF]$ is an isomorphism. 

Moreover if $I$ is an ideal of $R$ and if $\bar H$ denotes the image of $H$ in $H_{R/I}$, which is a closed subgroup, then under reduction modulo $I$ the group $H^c$ maps onto $\bar H^c$. In particular, the reduction of $H^c$ modulo $\Fm_R$ is $H_\BF$.
\end{Lem}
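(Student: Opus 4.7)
The plan is to reduce the statement to a question about the profinite abelian group $A := H/\overline{[H,H]}$. Since $H$ is a closed subgroup of the profinite group $\CG(R)$ (recall $\CG(R) = \varprojlim_i \CG(R_i)$ with each $R_i$ finite), the quotient $A$ is a profinite abelian group. The reduction $H \to H_\BF$ sends $\overline{[H,H]}$ onto $[H_\BF,H_\BF]$, so it induces a continuous surjection $\phi\colon A \twoheadrightarrow \bar A := H_\BF/[H_\BF,H_\BF]$, and the claim about $H^c$ translates into the claim that there is a unique closed subgroup of $A$ mapping isomorphically to $\bar A$.

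The main observation is that $\ker\phi$ is pro-$p$. Indeed, it is a subquotient of $\ker(\CG(R)\to\CG(\BF))$, which is pro-$p$ because $\CG$ is smooth and $R\in\CAW$ is a filtered inverse limit of finite local rings of residue characteristic $p$ (successive layers modulo $\Fm_R$ are $\Fg\otimes_\BF\Fm_R^i/\Fm_R^{i+1}$, hence elementary abelian $p$-groups). Using the canonical decomposition $A = A_p \times A_{p'}$ of the profinite abelian group $A$ into its pro-$p$ and prime-to-$p$ parts, the pro-$p$ condition forces $\ker\phi\subset A_p$, so $\phi|_{A_{p'}}$ is injective. Since $\bar A$ has order prime to $p$ by \Cref{StandardHyp}(a), functoriality of the decomposition gives surjectivity as well, and $\phi|_{A_{p'}}$ is an isomorphism onto $\bar A$. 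In particular $A_{p'}$ is finite of the same cardinality as $\bar A$.

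I would then define $H^c$ as the preimage of $A_{p'}$ under $H \twoheadrightarrow A$; existence is immediate. Uniqueness follows because any closed subgroup $H^{\prime}\subset H$ satisfying the stated properties corresponds to a closed subgroup $C\subset A$ mapping isomorphically onto $\bar A$; such a $C$ is finite of order prime to $p$, hence contained in $A_{p'}$, and comparing cardinalities forces $C=A_{p'}$, whence $H'=H^c$. For the behaviour under reduction by an ideal $I\subset R$, the map $\pi_I$ induces a commuting diagram of surjections of profinite abelian groups
\[
\xymatrix@R-.6pc{
A \ar@{->>}[r]^-{\phi} \ar@{->>}[d] & \bar A \ar@{->>}[d] \\
A_I \ar@{->>}[r] & \bar A_I,
}
\]
where $A_I = \bar H/\overline{[\bar H,\bar H]}$ and $\bar A_I = H_{R/I}\cap\bar H / [\ldots]$ in the obvious sense. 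Since the vertical maps again have pro-$p$ kernel, the $(?)_p$--$(?)_{p'}$ decomposition is functorial with respect to them, giving $\pi_I(A_{p'})=(A_I)_{p'}$ and therefore $\pi_I(H^c)=\bar H^c$. Specializing to $I=\Fm_R$, the target $\bar A=H_\BF/[H_\BF,H_\BF]$ is already of prime-to-$p$ order, so $H_\BF^c=H_\BF$ and the final assertion follows.

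I do not anticipate a serious obstacle: the proof rests only on (i) pro-$p$-ness of the kernel of reduction in $\CG(R)$, which is built into the setup, (ii) the standard decomposition of a profinite abelian group into pro-$p$ and prime-to-$p$ parts, and (iii) the prime-to-$p$ condition on $H_\BF/[H_\BF,H_\BF]$ from \Cref{StandardHyp}. The only small care point is checking continuity/closedness when passing between $[H,H]$, its closure, and quotients, but these are standard facts for continuous surjections of compact groups.
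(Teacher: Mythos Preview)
Your proof is correct and follows essentially the same approach as the paper: both pass to the profinite abelianization $H^{\ab}=H/\overline{[H,H]}$, use that the kernel of $H^{\ab}\to H_\BF^{\ab}$ is pro-$p$ while the target has order prime to $p$, and deduce a unique lift whose preimage is $H^c$. The only cosmetic difference is that the paper phrases the lifting step via (profinite) Schur--Zassenhaus and then invokes abelianness for uniqueness, whereas you make this explicit through the canonical decomposition $A=A_p\times A_{p'}$; in the abelian setting these are the same thing, and your treatment of the reduction-mod-$I$ statement via functoriality of that decomposition is likewise equivalent to the paper's uniqueness argument.
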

\begin{proof}
The group $H^\ab:=H/\overline{[H,H]}$ surjects onto $H^\ab_\BF:=H_\BF/[H_\BF,H_\BF]$ and the kernel is a pro-$p$ group. Since $H^\ab_\BF$ is of order prime to $p$, there exists a lift $H^{\ab,c}$ of $H^\ab_\BF$ to $H^\ab$ that is unique up to conjugation. Because $H^\ab$ is abelian, in fact $H^{\ab,c}$ is unique. Clearly $H^c$ must be the inverse image of $H^{\ab,c}$ under the canonical surjection $H\to H^\ab$. 

It remains to prove the second assertion. It is immediate that the reduction of $[H,H]$ modulo $I$ equals $[\bar H,\bar H]$. Consider the reduction maps
\[ H^\ab \longto \bar H^\ab:=\bar H/\overline{[\bar H,\bar H]} \longto H^\ab_\BF. \]
The above construction defines subgroups $H^{\ab,c}\subset H^\ab$ and $\bar H^{\ab,c}\subset  \bar H^\ab$. Since the image of  $H^{\ab,c}$ in $\bar H^\ab$ satisfies the properties required for $\bar H^{\ab,c}$, the uniqueness of $\bar H^{\ab,c}$ shows that $H^{\ab,c}$ maps isomorphically to $\bar H^{\ab,c}$ under reduction. This implies~(b).
\end{proof}

\begin{Lem}\label{Lem-GpsWithHc}
Let $R$ be in $\CAW$ and let $H\subset H_R$ be a closed subgroup that surjects onto $H_\BF$. If either \liegen(ii) holds, or if $H=H_R$ and \liecsc\ holds, then $H^c=H$.
\end{Lem}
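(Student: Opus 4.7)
My plan is to reduce the statement to Artinian quotients of $R$ and then to induct on length. Writing $R_n:=R/\Fm_R^n$ and $H_n$ for the image of $H$ in $\CG(R_n)$, we have $H=\varprojlim_n H_n$, and by the compatibility statement of \Cref{Lem-OnCommutator} also $H^c=\varprojlim_n H_n^c$. Thus it will suffice to prove $H_n^c=H_n$ for every $n\ge1$ by induction on $n$. The base case $n=1$ is immediate because $H_1=H_\BF$ and $H_\BF^c=H_\BF$ by construction.

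For the inductive step, I will assume $H_{n-1}^c=H_{n-1}$ and consider the kernel $K_n:=\kernel(H_n\to H_{n-1})$. Smoothness of $\CG^\der$ identifies $\kernel\bigl(\CG^\der(R_n)\to\CG^\der(R_{n-1})\bigr)$ with $\Fg^\der\otimes_\BF(\Fm_R^{n-1}/\Fm_R^n)$, with $\CG(\BF)$ acting through the adjoint representation on the first factor; in particular $K_n$ is an abelian subgroup of this module, stable under conjugation by $H_\BF$. A routine lifting-of-commutators argument, applied to $\kernel(H_{n-1}\to H_\BF)\subset[H_{n-1},H_{n-1}]$ (which is the inductive hypothesis rephrased), reduces the containment $\kernel(H_n\to H_\BF)\subset[H_n,H_n]$, equivalent to $H_n^c=H_n$, to showing $K_n\subset[H_n,H_n]$. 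Since the conjugation action of $H_n$ on the abelian group $K_n$ factors through $H_\BF$ and $[H_n,K_n]$ is an $H_\BF$-submodule of $K_n$, the quotient $K_n/[H_n,K_n]$ equals the $H_\BF$-coinvariants of $K_n$, which is in turn a quotient of the $H'_\BF$-coinvariants. It therefore suffices to show $(K_n)_{H'_\BF}=0$.

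This is where the hypotheses enter. Under \liegen(ii), the module $\Fg^\der$ is irreducible and non-trivial as an $\BF_p[H'_\BF]$-module, so $\Fg^\der\otimes_\BF(\Fm_R^{n-1}/\Fm_R^n)$ is $\BF_p[H'_\BF]$-semisimple with every simple summand isomorphic to $\Fg^\der$; its subgroup $K_n$ inherits the same description, whence $(K_n)_{H'_\BF}=0$. Under the alternative hypothesis, the equality $H=H_R$ forces $K_n$ to equal the full module $\Fg^\der\otimes_\BF(\Fm_R^{n-1}/\Fm_R^n)$, and \liecsc, equivalent to $H_0(H'_\BF,\Fg^\der)=0$ by the remark immediately following \Cref{Cond:Axioms}, gives $(K_n)_{H'_\BF}=H_0(H'_\BF,\Fg^\der)\otimes_\BF(\Fm_R^{n-1}/\Fm_R^n)=0$ directly.

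I do not expect a genuine obstacle here; the main technical point requiring care is confirming that $H^c=\varprojlim_n H_n^c$, so that the induction on $n$ really controls $H$. This follows from the uniqueness and compatibility statements already recorded in \Cref{Lem-OnCommutator}, which rest on the rigidity of prime-to-$p$ lifts in abelian pro-$p$ extensions. A secondary subtlety is that $K_n$ is a priori only an abelian group rather than an $\BF$-vector subspace of $\Fg^\der\otimes_\BF(\Fm_R^{n-1}/\Fm_R^n)$, but this is harmless precisely because \liegen(ii) and \liecsc\ are phrased at the level of $\BF_p[H'_\BF]$-modules, exactly what the argument needs.
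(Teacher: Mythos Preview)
Your proof is correct and follows essentially the same approach as the paper's: reduce to Artin quotients via \Cref{Lem-OnCommutator}, induct along a filtration, and at each step show that the kernel has vanishing $H'_\BF$-coinvariants and hence lies in the commutator subgroup. The only cosmetic difference is that the paper filters by ideals $I\cong\BF$ (one-dimensional steps) rather than by powers $\Fm_R^n$, which slightly simplifies the \liegen(ii) case (the kernel is either $0$ or all of $\Fg^\der$, no need to invoke semisimplicity of a direct sum), but the logic is otherwise identical.
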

\begin{proof}
By an inverse limit argument and \Cref{Lem-OnCommutator}, it suffices to prove the lemma for Artinian $R$. Here, by an inductive argument, and again by \Cref{Lem-OnCommutator}, it will suffice to prove the following: Let $I\subset R$ be an ideal such that $I\cong\BF$ as an $R$-algebra, let $\bar H$ be the image of $H$ in $H_{R/I}$. Then $\bar H^c=\bar H$ implies that $H^c=H$. To see this consider the diagram of short exact sequences
\[
\xymatrix@C-1pc{
0\ar[r]&N\ar[rr]\ar@{^{ (}->}[d]&&H\ar[rr]\ar@{^{ (}->}[d]^\iota&&\bar H\ar@{^{ (}->}[d]\ar[rr]&&0\\
0\ar[r]&\Fg^\der\otimes I\cong\Fg^\der\ar[rr]&&H_R\ar[rr]&\hbox{\phantom{m}}&H_{R/I}\ar[rr]&&0\rlap{,}\\}
\]
where $N$ is the kernel of $H\to \bar H$. If $N$ is the trivial group, then $H\to \bar H$ is an isomorphism, and the result is clear. Suppose now that $N$ is non-trivial. Then either by \liegen(ii), or by hypotheses \liecsc\ if $H=H_R$, we must have $N=\Fg^\der$. In either case, $H_0(H'_\BF,\Fg^\der)=0$, i.e., $\Fg^\der$ is the $\BF_p$-span of $\{gX-X\mid X\in \Fg^\der, g\in H'_\BF\}$. This implies $N=[N,H]\subset [H,H]$. Thus $H^c$ contains $N$. Since $\bar H^c=\bar H$ by hypothesis, we deduce $H^c=H$.
\end{proof}
\begin{Rem}
Suppose that as an $\BF_p[H'_\BF]$-module no Jordan-H\"older factor of $\Fg^\der$ is trivial, i.e., that $\Hom_{\BF_p[H'_\BF]}(N,\BF_p)=0$ for all submodules $N$ of $\Fg^\der$. Then the above argument shows that $H=H^c$ for any closed subgroup $H\subset H_R$. 

If on the other hand there exists an $\BF_p[H'_\BF]$-submodule $N$ with a non-zero $\BF_p[H'_\BF]$-homomor\-phism $N\to\BF_p$, then $N$ is a normal subgroup of $H_{\BF[\eps]}$, and $H:=NH_{\BF}$ is a subgroup of $H_{\BF[\eps]}$ with $H^c\subsetneq H$.
\end{Rem}

\begin{Lem}\label{BostonLemmaExt-New}
Consider a diagram with exact rows
\begin{equation}\label{Diag:BostonExt}
\xymatrix{
0\ar[r]&N\ar[r]\ar@{^{ (}->}[d]&H\ar[r]\ar@{^{ (}->}[d]^\iota&H_\BF\ar[r]\ar@{=}[d]&0\\
0\ar[r]&\Fg^\der\ar[r]&H_{\BF[\eps]}\ar[r]&\ar@/_1pc/[l]_\sigma H_\BF\ar[r]&0\\}
\end{equation}
with $\iota$ an inclusion and $\sigma$ the canonical splitting. Then the following hold:
\begin{enumerate}
\item Suppose $N=0$. If \van\ holds,  then $\iota$ is conjugate to $\sigma$ via an element of~$\Fg^\der$.
\item Suppose $N\neq0$. Suppose either (1) that \liegen(ii) holds, or (2) that \liec(ii) and \sch\ hold, and that $H=H^c$. Then  $H=H_{\BF[\eps]}$.
\end{enumerate}
\end{Lem}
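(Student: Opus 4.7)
The plan for both parts is to view the kernel of the projection $H\to H_\BF$ as an $H_\BF$-submodule of the abelian group $\Fg^\der$, with the module structure coming from the adjoint action of $H_{\BF[\eps]}$ (this action descends to $H_\BF$ because $\Fg^\der$ is abelian).

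For part~(a), $\iota$ and $\sigma$ are two continuous sections of $H_{\BF[\eps]}\to H_\BF$, so their pointwise ratio defines a continuous 1-cocycle $c\in Z^1(H_\BF,\Fg^\der)$, whose class in $H^1(H_\BF,\Fg^\der)$ vanishes precisely when $\iota$ is conjugate to $\sigma$ by an element of $\Fg^\der$. First I would invoke Hochschild--Serre for $H'_\BF\subseteq H_\BF$ with prime-to-$p$ cokernel (from \Cref{StandardHyp}) to reduce to $H^1(H'_\BF,\Fg^\der)=0$. Then, using the long exact sequence attached to $0\to\Fg^\der\to\Fg\to\Fg/\Fg^\der\to 0$, combined with \van\ (supplying $H^1(H'_\BF,\Fg)=0$) and the structural fact that $\CG^o/\CG^\der$ is a torus (giving a trivial $H'_\BF$-action on $\Fg/\Fg^\der$ and $\Lie Z(\CG^o)\subseteq\Fg^{H'_\BF}\twoheadrightarrow\Fg/\Fg^\der$), I would deduce the required vanishing.

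Part~(b), case~(1) is immediate: by \liegen(ii), $\Fg^\der$ is irreducible as an $\BF_p[H'_\BF]$-module, so the non-zero $H'_\BF$-submodule $N\subseteq\Fg^\der$ must equal $\Fg^\der$; then $H\supseteq\Fg^\der$ surjects onto $H_\BF$, forcing $H=H_{\BF[\eps]}$.

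For case~(2), set $K:=\kernel(\Fg^\der\twoheadrightarrow\overline\Fg^\der)$, on which $H_\BF$ acts trivially by \liec(ii). My first step is to translate $H=H^c$ into $N_{H_\BF}=0$ using the five-term exact sequence in mod-$p$ group homology
\[H_2(H_\BF,\BF_p)\to N_{H_\BF}\to H_1(H,\BF_p)\to H_1(H_\BF,\BF_p)\to 0,\]
whose outer terms all vanish: $H_2(H_\BF,\BF_p)=0$ by \sch\ combined with Hochschild--Serre, $H_1(H_\BF,\BF_p)=0$ since $H_\BF^{\ab}$ is prime to $p$, and $H_1(H,\BF_p)=0$ because $H=H^c$ forces the abelianization of $H$ to agree with $H_\BF^{\ab}$. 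The case $N\subseteq K$ would give $N_{H_\BF}=N\neq 0$, a contradiction; so $N$ surjects onto $\overline\Fg^\der$ by irreducibility, and $N+K=\Fg^\der$. The main obstacle is the final inclusion $K\subseteq N$. To handle it, I plan to compare the connecting homomorphisms in the homology long exact sequences for $0\to N\cap K\to N\to\overline\Fg^\der\to 0$ and $0\to K\to\Fg^\der\to\overline\Fg^\der\to 0$; functoriality under the inclusion $N\cap K\hookrightarrow K$ will force the equality $N\cap K=K\cap I\Fg^\der$, with $I\subseteq\BF_p[H_\BF]$ the augmentation ideal. Ruling out $\overline\Fg^\der\cong\BF_p$ by re-running the $H^c$-analysis (which would produce a non-trivial trivial quotient of $N$) then makes the cosocle of $\Fg^\der$ nontrivial irreducible, so $(\Fg^\der)_{H'_\BF}=0$, hence $K\subseteq I\Fg^\der$, so $N\cap K=K$, and finally $N=\Fg^\der$ yields $H=H_{\BF[\eps]}$.
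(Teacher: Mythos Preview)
Your argument for part~(b), case~(1), matches the paper exactly. For case~(2), your approach is correct but considerably more involved than necessary, and you are missing the key structural observation that makes the paper's proof a few lines long.

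The point you overlook is this: since \liec(ii) asserts that the cosocle $\overline\Fg^\der$ is \emph{irreducible}, the kernel $K=\ker(\Fg^\der\to\overline\Fg^\der)$ is the \emph{unique} maximal proper $\BF_p[H'_\BF]$-submodule of $\Fg^\der$. (An irreducible cosocle means the radical---the intersection of all maximal submodules---has simple quotient, which forces there to be only one maximal submodule.) Consequently, once you have established $N\not\subseteq K$, you immediately conclude $N=\Fg^\der$; no further work is needed. Your connecting-homomorphism comparison and the analysis of $K\cap I\Fg^\der$ are therefore superfluous, though they do appear to go through.

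The paper also handles the step ``rule out $N\subseteq K$'' more directly than you do. Rather than passing through the five-term homology sequence to obtain $N_{H_\BF}=0$, it argues: if $N\subseteq K$ then $H_\BF$ acts trivially on $N$ by \liec(ii), so \sch\ (via Hochschild--Serre, as $H_\BF/H'_\BF$ has prime-to-$p$ order) gives $H^2(H_\BF,N)=0$, whence $H\cong N\times H_\BF$; but then $[H,H]\subseteq\sigma(H_\BF)\subsetneq H$, contradicting $H=H^c$. Your route via coinvariants is valid and yields the slightly stronger fact $N_{H_\BF}=0$, but the paper's splitting argument is more elementary.

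For part~(a), your reduction to $H^1(H'_\BF,\Fg^\der)=0$ is the right idea, but your justification of the surjectivity $\Fg^{H'_\BF}\twoheadrightarrow\Fg/\Fg^\der$ is circular: this surjectivity is equivalent (given $H^1(H'_\BF,\Fg)=0$) to the very vanishing you want. Invoking $\Lie Z(\CG^o)\subseteq\Fg^{H'_\BF}$ does not help, since $\Lie Z(\CG^o)\to\Fg/\Fg^\der$ need not be surjective (consider $\CG=\GL_p$ in characteristic~$p$, where the scalars lie in $\Fsl_p$). The paper's one-line proof glosses over this as well; in the applications either $\CG=\CG^\der$ so $\Fg=\Fg^\der$, or what is really used downstream is conjugacy by an element of $\Fg$ rather than $\Fg^\der$, and that follows directly from \van\ without the extra step.
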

\begin{proof}
If $N=0$, then $\iota$ is a splitting of the bottom sequence and by \van\ all such splittings are conjugate by an element in $\Fg^\der$. Thus it remains to prove (b), and we assume $N\neq0$. Under condition \liegen(ii), we deduce $N=\Fg^\der$, and we are done.

Suppose now in (b) that (2) holds. Because of \liec(ii), we have to rule out that $N$ lies in $\kernel(\Fg^\der\to\overline\Fg^\der)$. Assume on the contrary that $N$ lies in this kernel, so that $N\cong\BF_p^r$ for some $r\ge1$. As $H_\BF/H'_\BF$ is of order prime to $p$,  the Hochschild-Serre spectral sequence yields $H^2(H_\BF,N)\cong H^2(H'_\BF,N)^{H_\BF/H'_\BF}$, and the latter is zero by \sch. Hence $H$ is a semidirect product $N\rtimes H_\BF$. By hypothesis, $H_\BF$ acts trivially on $N$, and thus $H=N\times H_\BF$. One computes $[H,H]\subset\sigma(H_\BF)\subsetneq H$, and this contradicts $H=H^c$. Thus we must have $H=H_{\BF[\eps]}$.
\end{proof}

\begin{Lem}\label{Lem-BostonForW2}
Consider the diagram with exact rows
\[
\xymatrix{
0\ar[r]&N\ar[r]\ar@{^{ (}->}[d]&H\ar[r]\ar@{^{ (}->}[d]^\iota&H_\BF\ar[r]\ar@{=}[d]&0\\
0\ar[r]&\Fg^\der\ar[r]&H_{W_2(\BF)}\ar[r]&H_\BF\ar[r]&0\\}
\]
with $\iota$ the inclusion. Assume \ns\ and either \liegen(ii) or \liec(ii) and \sch. Then $H=H_{W_2(\BF)}$.
\end{Lem}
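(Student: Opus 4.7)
The plan is to mirror the argument of \Cref{BostonLemmaExt-New}(b), with the non-splitness of the $W_2(\BF)$-extension provided by \ns\ playing the role that in the $\BF[\eps]$-case was supplied by the canonical splitting. First I would set $H' := H \cap H'_{W_2(\BF)}$; this subgroup surjects onto $H'_\BF$ with kernel $N$, giving a subextension of the \ns-extension whose class $\gamma \in H^2(H'_\BF,\Fg^\der)$ is non-zero by hypothesis. Since $N$ is preserved under conjugation by $H$, it is an $\BF_p[H_\BF]$-submodule of $\Fg^\der$, and in particular an $\BF_p[H'_\BF]$-submodule. The aim is to rule out that $N$ is a proper submodule, which then forces $H\supseteq\Fg^\der$ and hence $H=H_{W_2(\BF)}$ by residual surjectivity.

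Under \liegen(ii), $\Fg^\der$ is irreducible as $\BF_p[H'_\BF]$-module, so $N\in\{0,\Fg^\der\}$; if $N=0$ then $H'\hookrightarrow H'_{W_2(\BF)}$ is a section of the extension in \ns, a contradiction. Under \liec(ii) and \sch, the key observation is that irreducibility of the cosocle $\overline{\Fg^\der}$ means $K:=\kernel(\Fg^\der\to\overline{\Fg^\der})$ is the \emph{unique} maximal $\BF_p[H'_\BF]$-submodule of $\Fg^\der$. Hence any proper submodule $N$ lies in $K$, on which $H_\BF$ acts trivially by \liec(ii), so $N\cong\BF_p^r$ as $\BF_p[H'_\BF]$-modules. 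Then \sch\ gives $H^2(H'_\BF,N)\cong H^2(H'_\BF,\BF_p)\otimes_{\BF_p}N=0$, and the long exact sequence of $0\to N\to\Fg^\der\to\Fg^\der/N\to0$ yields an injection $H^2(H'_\BF,\Fg^\der)\hookrightarrow H^2(H'_\BF,\Fg^\der/N)$. The image of $\gamma$ under this injection is the class of the pushout extension $0\to\Fg^\der/N\to H'_{W_2(\BF)}/N\to H'_\BF\to0$, which admits $H'/N$ as a splitting, hence vanishes. This forces $\gamma=0$, contradicting \ns.

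The only non-routine point, and the conceptual heart of the argument, is the identification under \liec(ii) of $K$ as the unique maximal $\BF_p[H'_\BF]$-submodule; this is what pins down the possible $N$ and allows the cohomological input from \sch\ to suffice. Once this submodule lattice is understood, everything else reduces to routine manipulation of short exact sequences and the associated cohomology long exact sequence.
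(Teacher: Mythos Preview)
Your argument is correct and follows essentially the same line as the paper: both show that if $N$ were proper then, since the irreducible cosocle makes $K=\kernel(\Fg^\der\to\overline\Fg^\der)$ the unique maximal $\BF_p[H'_\BF]$-submodule, one has $N\subset K$ with trivial action, so $H^2(H'_\BF,N)=0$ by \sch, yielding a splitting that contradicts \ns. Your pushout/long-exact-sequence packaging is equivalent to the paper's more direct remark that the subextension $0\to N\to H'\to H'_\BF\to 0$ itself splits. One small point you pass over is the claim that $H':=H\cap H'_{W_2(\BF)}$ surjects onto $H'_\BF$: this needs that the preimage of $H'_\BF$ in $H_{W_2(\BF)}$ is exactly $H'_{W_2(\BF)}$, which is not entirely automatic; the paper instead works with $H$ over $H_\BF$ throughout and (implicitly, as in the proof of \Cref{BostonLemmaExt-New}) uses Hochschild--Serre for the prime-to-$p$ quotient $H_\BF/H'_\BF$ to pass from $H^2(H'_\BF,N)=0$ to $H^2(H_\BF,N)=0$.
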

\begin{proof}
Because of \ns\ the subgroup $N$ must be non-trivial. If in addition \liegen(ii) holds, then $N=\Fg^\der$ because $\Fg^\der$ is irreducible as an $\BF_p[H'_\BF]$-module, and thus $\iota$ is an isomorphism. 

Suppose now that \liec(ii) and \sch\ hold. We immediately deduce from \sch\ and \ns\ that $N$ cannot be a submodule of $\kernel(\Fg^\der\to\overline\Fg^\der)$, and then from \liec(ii) that $N$ surjects onto the cosocle of $\Fg^\der$. But as a submodule of $\Fg^\der$ that surjects onto the cocoscle of $\Fg^\der$, we must have $N=\Fg^\der$. This completes the proof.
\end{proof}

\begin{Ex}\label{Ex-PGL}
If $\Fg^\der$ possesses a non-trivial $\BF_p[H'_\BF]$-homomorphism to $\BF_p$, then one cannot expect the conclusion of \Cref{BostonLemmaExt-New}(b) or of \Cref{Lem-BostonForW2} to hold. For a concrete example, let $\CG=\PGL_p$ and $H_\BF=\PGL_p(\BF)$ with $p=\Char\BF$ and $\#\BF\notin\{2,3,4,5,9\}$, so that \sch\ and \ns\ hold. One has a surjective homomorphism $\pdet\colon\PGL_p(R)\to R^\times/R^{\times p}$ induced from $\det$. Let $R\in\{W_2(\BF),\BF[\eps]\}$ and define $H\subset  \PGL_p(\BF[\eps])$ as the kernel of $\pdet\colon\PGL_p(R)\to R^\times/R^{\times p}\cong(\BF,+)$. Then $H$ surjects onto $H_\BF$. But $H$ is properly contained in $\PGL_p(R)$, it is a proper extension of $H_\BF$, and it satisfies $H=H_\BF[H,H]$.
\end{Ex}

For the following result, let $\CG^{[i]}(R)$ denote the kernel of $\CG^\der(R)\to\CG^\der(R/\Fm^i_R)$ for $i\ge1$ and $R\in\CAW$, and recall from \cite[Sect.~6]{Pink-Compact} that one has natural exponential maps giving isomorphisms $\Fg^\der\otimes_\BF\Fm_R^{i}/\Fm_R^{i+1}\stackrel\simeq\to \CG^{[i]}(R)/\CG^{[i+1]}(R)$ for any $i\ge1$.
\begin{Lem}[{cf.~\cite[Appendix, Prop.~2]{Boston}}]\label{Lem-BostonFromM2}
Let $R$ be an Artinian ring in $\CAW$. Assume that $\Fm_R^{n+1}=0$ and $\Fm_R^{n}\neq0$ for some $n\ge2$. Consider a diagram 
\[
\xymatrix@C-1pc{
0\ar[r]&N\ar[rr]\ar@{^{ (}->}[d]&&H\ar[rr]\ar@{^{ (}->}[d]^\iota&&H_{R/\Fm_R^n}\ar[rr]\ar@{=}[d]&&0\\
0\ar[r]&\Fg^\der\otimes_\BF \Fm_R^n\ar[rr]&&H_R\ar[rr]&\hbox{\phantom{m}}&H_{R/\Fm_R^n}\ar[rr]&&0\\}
\]
with exact rows and $\iota$ the inclusion. Then the following hold:
\begin{enumerate}
\item The intersection of $N$ with the commutator subgroup $[\CG^{[n-1]}(R)\cap H,\CG^{[1]}(R)\cap H]$ contains $[\Fg^\der,\Fg^\der]\otimes_\BF\Fm_R^n$.
\item If \liec(i) holds, then $H=H_R$.
\end{enumerate}
\end{Lem}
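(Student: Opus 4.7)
The plan is to produce enough commutators inside $N$ to fill up $[\Fg^\der,\Fg^\der]\otimes\Fm_R^n$, using the Pink exponential filtration on $\CG^\der(R)$; part (b) is then immediate from \liec(i). First, I would record that by \cite[Sect.~6]{Pink-Compact} the exponential map gives $\CG^\der$-equivariant isomorphisms $\Fg^\der\otimes\Fm_R^i/\Fm_R^{i+1}\stackrel\simeq\to \CG^{[i]}(R)/\CG^{[i+1]}(R)$, and that the standard commutator calculus in this filtration satisfies $[\CG^{[i]}(R),\CG^{[j]}(R)]\subset \CG^{[i+j]}(R)$ with the induced map on the graded piece $\CG^{[i+j]}/\CG^{[i+j+1]}$ being the Lie bracket. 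Since $\Fm_R^{n+1}=0$, the group $\CG^{[n]}(R)$ is already identified with $\Fg^\der\otimes_\BF\Fm_R^n$.

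For part (a), given $X\in\Fg^\der\otimes\Fm_R^{n-1}/\Fm_R^n$ and $Y\in\Fg^\der\otimes\Fm_R/\Fm_R^2$, I would use the surjection $H\onto H_{R/\Fm_R^n}$ from the top row of the diagram to choose lifts $\tilde u,\tilde v\in H$ of $\exp(X)\in\CG^{[n-1]}(R/\Fm_R^n)$ and $\exp(Y)\in\CG^{[1]}(R/\Fm_R^n)$, respectively. Reducing $\tilde u$ modulo $\Fm_R^{n-1}$ (respectively $\tilde v$ modulo $\Fm_R$) must be trivial, because both elements are already trivial modulo these ideals in $\CG^\der(R/\Fm_R^n)$; hence $\tilde u\in\CG^{[n-1]}(R)\cap H$ and $\tilde v\in\CG^{[1]}(R)\cap H$. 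Applying the commutator calculus with $(i,j)=(n-1,1)$ and the vanishing $\CG^{[n+1]}(R)=1$, we obtain
\[
[\tilde u,\tilde v]\in [\CG^{[n-1]}(R)\cap H,\CG^{[1]}(R)\cap H]\cap \CG^{[n]}(R) \subset N,
\]
and under $\CG^{[n]}(R)\cong \Fg^\der\otimes \Fm_R^n$ the element $[\tilde u,\tilde v]$ equals $[X,Y]$, where the bracket is induced by the Lie bracket on $\Fg^\der$ and the multiplication $\Fm_R^{n-1}/\Fm_R^n\otimes_\BF\Fm_R/\Fm_R^2\to\Fm_R^n$. Because $\Fm_R^{n+1}=0$ and $\Fm_R^{n-1}\cdot\Fm_R=\Fm_R^n$, this multiplication is surjective; letting $X$ and $Y$ range and taking $\BF$-linear combinations of the resulting commutators shows that the $[X,Y]$ span precisely $[\Fg^\der,\Fg^\der]\otimes\Fm_R^n$ inside $N$, proving~(a).

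Part (b) then follows in one line: under \liec(i), the Lie algebra $\Fg^\der$ is perfect, so $[\Fg^\der,\Fg^\der]\otimes\Fm_R^n=\Fg^\der\otimes\Fm_R^n$, which is the full kernel of $H_R\to H_{R/\Fm_R^n}$ in the bottom row. Combining with $N\subset\Fg^\der\otimes\Fm_R^n$ from the diagram forces $N=\Fg^\der\otimes\Fm_R^n$; since $H$ and $H_R$ are then both extensions of $H_{R/\Fm_R^n}$ by the same kernel inside $\CG(R)$, a cardinality (or snake-lemma) argument gives $H=H_R$. The main technical point to be careful about is the commutator-to-Lie-bracket identification on graded pieces and checking that the lifts $\tilde u,\tilde v$ really do fall in the correct filtration steps of $\CG^\der(R)$; once this is set up via \cite[Sect.~6]{Pink-Compact}, the rest is straightforward linear algebra.
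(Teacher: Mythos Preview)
Your proposal is correct and follows essentially the same approach as the paper: both arguments lift elements of $\CG^{[n-1]}(R/\Fm_R^n)$ and $\CG^{[1]}(R/\Fm_R^n)$ into $H$ via the surjection $H\twoheadrightarrow H_{R/\Fm_R^n}$, identify the commutator on the graded pieces with the Lie bracket tensored with multiplication $\Fm_R^{n-1}/\Fm_R^n\otimes\Fm_R/\Fm_R^2\to\Fm_R^n$, and then invoke \liec(i) for part~(b). The paper phrases the lifting step slightly differently, observing that $H\cap\CG^{[i]}(R)/H\cap\CG^{[i+1]}(R)\cong\CG^{[i]}(R)/\CG^{[i+1]}(R)$ for $i=1,\ldots,n-1$, but this is equivalent to your explicit verification that the chosen lifts land in the correct filtration steps.
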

\begin{proof}
Clearly the multiplication map $\Fm_R^{n-1}\otimes \Fm_R \to \Fm_R^{n}$ factors via \[\Fm_R^{n-1}/\Fm_R^n\otimes \Fm_R/\Fm_R^2\to \Fm_R^{n}/\Fm_R^{n+1}.\] It follows that the commutator map
\[ \CG^{[n-1]}(R)\times\CG^{[1]}(R) \to\CG^{[n]}(R), (g,h)\mapsto ghg^{-1}h^{-1}\]
factors via 
\[\CG^{[n-1]}(R)/\CG^{[n]}(R)\times\CG^{[1]}(R)/\CG^{[2]}(R)\cong \Fg^\der\otimes_\BF\Fm_R^{n-1}/\Fm_R^n\times\Fg^\der\otimes_\BF\Fm_R^{1}/\Fm_R^2 \to\CG^{[n]}(R) .\]
For $X,Y\in\Fg^\der$ and $f\in \Fm_R^{n-1}$, $g\in \Fm_R$, the last map is given by 
\[(X\otimes f,Y\otimes g)\mapsto [X,Y]\otimes fg\in \Fg^\der\otimes_\BF\Fm_R^n \cong \CG^{[n]}(R).\] 
Now $H\cap \CG^{[i]}/H\cap\CG^{[i+1]}$ is naturally isomorphic to $\CG^{[i]}/\CG^{[i+1]}$ for $i=1,\ldots,n-1$, since $H$ surjects onto $H_{R/\Fm_R^n}$. Hence forming (products of) commutators in $H$, we see that the commutator subgroup $[\CG^{[n-1]}(R)\cap H,\CG^{[1]}(R)\cap H]$ must contain $[\Fg^\der,\Fg^\der]\otimes_\BF\Fm_R^n\subset \Fg^\der\otimes_\BF\Fm_R^n \cong \CG^{[n]}(R)$. This proves (a). Part (b) is obvious since \liec(i) asserts that $[\Fg^\der,\Fg^\der]=\Fg^\der$.
\end{proof}

For any $R\in\CAW$ denote by $\pi_R$ the canonical reduction $H_R\to H_\BF$, and for any ideal $I$ of $R$ by $\pi_{R,I}$ the canonical reduction $H_R\to H_{R/I}$.
\begin{Cor}\label{Cor-BostonGenl}
Let $R$ in $\CAW$ be Artinian and let $H\subset H_R$ be a subgroup such that $\pi_R(H)=H_\BF$. Suppose that \liec(i) holds. Then $H=H_R$ assuming one of the following:
\begin{enumerate}
\item one has $\pi_{R,\Fm_R^2}(H)=H_{R/\Fm_R^2}$;
\item one has $\pi_{R,(p,\Fm_R^2)}(H)=H_{R/(p,\Fm_R^2)}$ and condition \ns\ and either \liegen\ or \liec(ii) and \sch\ hold. 
\end{enumerate}
\end{Cor}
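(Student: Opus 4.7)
The plan is to handle (a) by a direct induction that reduces to \Cref{Lem-BostonFromM2}(b), and to reduce (b) to (a) by bridging the ``$p$-direction'' via \Cref{Lem-BostonForW2}.

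For (a), I would induct on the least $n\ge1$ with $\Fm_R^{n+1}=0$. The base $n=1$ is immediate since then $R=R/\Fm_R^2$. For $n\ge2$, the quotient $R/\Fm_R^n$ lies in $\CAW$ and inherits the hypothesis of~(a) from $R$ (indeed $\Fm_{R/\Fm_R^n}^2=\Fm_R^2/\Fm_R^n$), so by the inductive hypothesis the image of $H$ in $H_{R/\Fm_R^n}$ equals $H_{R/\Fm_R^n}$. Then \Cref{Lem-BostonFromM2}(b) applies to $R$ with this $n$, using \liec(i), and yields $H=H_R$.

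For (b), I may assume $p\notin\Fm_R^2$ (otherwise $R/(p,\Fm_R^2)=R/\Fm_R^2$ and~(a) applies directly). It suffices to show $\pi_{R,\Fm_R^2}(H)=H_{R/\Fm_R^2}$, whereupon~(a) gives $H=H_R$. Write $\bar R=R/\Fm_R^2$, $\bar R_0=\bar R/\BF\bar p$ with $\bar p$ the nonzero image of $p$; the structure map factors through an embedding $W_2(\BF)\hookrightarrow\bar R$. Let $H':=\pi_{R,\Fm_R^2}(H)$, $K:=\Fg^\der\otimes\Fm_{\bar R}$, and $N':=H'\cap K$. By hypothesis $N'+\Fg^\der\otimes\BF\bar p=K$. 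For each $\BF$-vector space complement $V'$ of $\BF\bar p$ in $\Fm_{\bar R}$, the ring surjection $\bar R\twoheadrightarrow\bar R/V'\cong W_2(\BF)$ induces a group surjection $H_{\bar R}\twoheadrightarrow H_{W_2(\BF)}$ with kernel $\Fg^\der\otimes V'$, under which $H'$ maps to a subgroup of $H_{W_2(\BF)}$ still surjecting onto $H_\BF$. \Cref{Lem-BostonForW2}, which uses \ns\ together with either \liegen(ii) or (\liec(ii) and \sch), forces this image to be all of $H_{W_2(\BF)}$, giving $N'+\Fg^\der\otimes V'=K$ for every such $V'$.

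The final step $N'=K$ is the main obstacle. Under \liegen(ii), if $N'\ne K$ then $K/N'$ is a nonzero quotient of the irreducible $\Fg^\der$ and hence isomorphic to $\Fg^\der$, so $N'$ is the graph of some $\BF[H_\BF]$-equivariant $\psi\colon\Fg^\der\otimes V_0\to\Fg^\der\otimes\BF\bar p$ for a fixed complement $V_0$. Using $\End_{\BF[H_\BF]}(\Fg^\der)=\BF$, such a $\psi$ has the form $X\otimes v\mapsto\psi_0(v)X\otimes\bar p$ for some $\BF$-linear $\psi_0\colon V_0\to\BF$; then the complement $V'_{\psi_0}:=\{v+\psi_0(v)\bar p:v\in V_0\}$ violates $N'+\Fg^\der\otimes V'_{\psi_0}=K$, a contradiction. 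Under \liec(ii) and \sch\ the same scheme applies, but one must split the analysis through the cosocle filtration of $\Fg^\der$, using \sch\ to rule out potential trivial-module splittings in the kernel $\kernel(\Fg^\der\to\overline\Fg^\der)$ that could otherwise obstruct the graph-style argument.
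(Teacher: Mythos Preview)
Your treatment of (a) matches the paper's exactly: induct on the nilpotency index of $\Fm_R$ and feed the inductive hypothesis into \Cref{Lem-BostonFromM2}(b).

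For (b) the paper takes a shorter route than you do. After passing to $\bar R=R/\Fm_R^2$ (and assuming $p\neq0$ there), it writes $\bar R\cong W_2(\BF)[x_1,\ldots,x_n]/(p,x_1,\ldots,x_n)^2$ and uses only the two canonical quotients $\bar R\to\bar R/(p)$ and $\bar R\to W_2(\BF)=\bar R/(x_1,\ldots,x_n)$: the kernels of the induced maps $H_{\bar R}\to H_{\bar R/(p)}$ and $H_{\bar R}\to H_{W_2(\BF)}$ intersect trivially, $H'$ surjects onto the first target by hypothesis and onto the second by a \emph{single} application of \Cref{Lem-BostonForW2}, and the paper concludes $H'=H_{\bar R}$ directly from this. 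Your approach instead ranges over \emph{all} complements $V'$ of $\BF\bar p$, invokes \Cref{Lem-BostonForW2} once for each, and then under \liegen\ closes with the graph/linear-functional argument. That argument is correct, and in fact makes the final step more transparent than the paper's terse ``this implies $H=H_R$''; but it is also more labor-intensive.

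There is, however, a genuine gap in your handling of the \liec(ii)$+$\sch\ alternative. Your graph argument rests essentially on two facts supplied by \liegen: that $\Fg^\der$ is irreducible over $\BF_p[H'_\BF]$, and that $\End_{\BF_p[H'_\BF]}(\Fg^\der)=\BF$. Neither is part of \liec. Once $\Fg^\der$ is not simple, $H'_\BF$-submodules of $\Fg^\der\otimes_\BF\Fm_{\bar R}$ are no longer parametrized by $\BF$-subspaces of $\Fm_{\bar R}$, so the step identifying $N'$ with the graph of an $\BF$-linear functional $\psi_0$ collapses. Your one-line sketch (``split the analysis through the cosocle filtration, using \sch\ to rule out trivial-module splittings'') does not repair this: \sch\ controls extensions of $H'_\BF$ by trivial modules, but it does not by itself classify submodules of $(\Fg^\der)^{\oplus m}$, and you give no indication of what replaces the graph description. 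This case needs an actual argument.
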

\begin{proof}
Let us first show that (b) implies the condition in (a). For this, for simplicity of notation, we may assume that $\Fm^2_R=0$, and we need to show that then $H=H_R$. If $p=0$ in $R$ then there is nothing to show. Otherwise $R\cong W_2(\BF)[x_1,\ldots,x_n]/(p,x_1,\ldots,x_n)^2$ for some $n$, and there are surjective ring homomorphisms onto $R/(p,\Fm_R^2)$ and onto $W_2(\BF)$. The kernels of the induced group homomorphisms $\pi_{R,(p)}\colon H_{R}\to H_{R/(p,\Fm_R^2)}$ and $\pi_{R,(x_1,\ldots,x_n)}\colon H_{R}\to H_{W_2(\BF)}$ intersect trivially. Now the restriction of $\pi_{R,(p)}$ to $H$ is surjective by hypothesis and that of $\pi_{R,(x_1,\ldots,x_n)}$ to $H$ by \Cref{Lem-BostonForW2}, using the hypotheses in (b). This implies $H=H_R$, as had to be shown.

To deduce $H=H_R$ from (a) one proceeds by induction over rings $R$ such that $\Fm_R^n=0$, starting at $n=2$, and one applies \Cref{Lem-BostonFromM2}(b) in the induction step.
\end{proof}
\begin{Cor}\label{Cor-FepsFactorsViaRmtwo}
Let $R$ be an Artinian ring in $\CAW$. Let $\phi\colon H_R\to H_{\BF[\eps]}$ be any group homomorphism such that $\pi_{\BF[\eps]}\circ\phi=\pi_R$. Suppose that \lieu\ holds. Then $\phi$ factors via $\pi_{R,\Fm_R^2}\colon H_R\to H_{R/\Fm_R^2}$.

If in addition condition \ns\ holds, then $\phi$ factors via $\pi_{R,(p,\Fm_R^2)}\colon H_R\to H_{R/(p,\Fm_R^2)}$.
\end{Cor}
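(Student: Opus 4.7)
The plan is to analyze the kernel $N := \ker(\pi_{R,\Fm_R^2}\colon H_R\to H_{R/\Fm_R^2})$ by filtering it and showing inductively that $\phi$ vanishes on each layer. Let $N_i := \ker(H_R\to H_{R/\Fm_R^i})$ for $i\geq 1$, so that $N_2=N$. Because $R$ is Artinian, $N_{n+1}=1$ for some $n$, and Pink's exponential gives natural $H_\BF$-equivariant isomorphisms $N_i/N_{i+1}\cong\Fg^\der\otimes_\BF\Fm_R^i/\Fm_R^{i+1}$. The image of $\phi|_{N_1}$ lies in $\ker\pi_{\BF[\eps]}\cong\Fg^\der$, which is abelian; hence every restriction $\phi|_{N_i}$ ($i\geq 1$) factors through $N_i/[N_1,N_{i-1}]$ insofar as commutators land in the abelian target.

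I will show by descending induction on $i$, starting at $i=n$ and decreasing to $i=2$, that $\phi(N_i)=1$. By the inductive hypothesis, $\phi|_{N_i}$ descends to an $H'_\BF$-equivariant map $\Fg^\der\otimes_\BF\Fm_R^i/\Fm_R^{i+1}\to\Fg^\der$. Using $\lieu$(ii), namely $\End_{\BF_p[H'_\BF]}(\Fg^\der)=\BF$, this map has the form $X\otimes m\mapsto \lambda_i(m)X$ for a unique $\BF$-linear $\lambda_i\colon\Fm_R^i/\Fm_R^{i+1}\to\BF$. For $Y,Z\in\Fg^\der$, $a\in\Fm_R$ and $b\in\Fm_R^{i-1}$, the elements $g:=\exp(Y\otimes a)\in N_1$, $h:=\exp(Z\otimes b)\in N_{i-1}$ have commutator $[g,h]\in N_i$, congruent to $[Y,Z]\otimes ab$ modulo $N_{i+1}$. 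Applying $\phi$ and using commutativity of the target gives $\lambda_i(ab)[Y,Z]=0$; choosing $Y,Z$ with $[Y,Z]\neq 0$, which is possible by $\lieu$(i), forces $\lambda_i(ab)=0$. Since $\Fm_R^i$ is $\BF$-spanned by such products $ab$ with $a\in\Fm_R$ and $b\in\Fm_R^{i-1}$, $\BF$-linearity yields $\lambda_i=0$. This completes the first assertion.

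For the second assertion, assume $\ns$ and let $\tilde\phi\colon H_{R/\Fm_R^2}\to H_{\BF[\eps]}$ be the factored map. We may assume $p\notin\Fm_R^2$, else $(p,\Fm_R^2)=\Fm_R^2$ and there is nothing to prove. Then the structure morphism $W_2(\BF)\to R/\Fm_R^2$ is injective, inducing $H_{W_2(\BF)}\hookrightarrow H_{R/\Fm_R^2}$, and this inclusion identifies $\ker(H_{W_2(\BF)}\to H_\BF)\cong\Fg^\der$ with $\ker(H_{R/\Fm_R^2}\to H_{R/(p,\Fm_R^2)})\cong \Fg^\der\otimes_\BF\BF\bar p$. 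It therefore suffices to show that $\tilde\phi|_{H_{W_2(\BF)}}$ is trivial on this copy of $\Fg^\der$. The restriction $\tilde\phi|_{H_{W_2(\BF)}}$ is a morphism of extensions of $H_\BF$ by $\Fg^\der$, and the induced endomorphism of $\Fg^\der$ is $H'_\BF$-equivariant, hence equals some scalar $\alpha\in\BF$ by $\lieu$(ii).

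The crux, and the main obstacle, is the following cohomological comparison: if $\alpha\neq 0$ then $\alpha$ is a unit in $\BF$, and pushforward along $\alpha\cdot$ sends the class $\gamma_{W_2(\BF)}\in H^2(H_\BF,\Fg^\der)$ of the top extension to the class $\gamma_{\BF[\eps]}$ of the bottom extension. But $\gamma_{\BF[\eps]}=0$ since the canonical embedding $\BF\hookrightarrow\BF[\eps]$ splits $1\to\Fg^\der\to H_{\BF[\eps]}\to H_\BF\to 1$; hence $\alpha\gamma_{W_2(\BF)}=0$ forces $\gamma_{W_2(\BF)}=0$, contradicting $\ns$. Therefore $\alpha=0$, $\tilde\phi$ kills the kernel of $H_{R/\Fm_R^2}\to H_{R/(p,\Fm_R^2)}$, and $\phi$ factors through $\pi_{R,(p,\Fm_R^2)}$ as claimed.
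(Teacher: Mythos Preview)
Your argument is correct and follows essentially the same route as the paper's proof. The paper organizes the first part as an ascending induction on the nilpotency length of $R$ (showing $\phi$ kills $\CG^{[n]}(R)$ and then invoking the inductive hypothesis for $R/\Fm_R^n$), whereas you fix $R$ and run a descending induction on $i$; these are equivalent reformulations. Your extraction of the linear functional $\lambda_i$ from $\lieu$(ii) and the commutator computation are exactly the content of the paper's appeal to \Cref{Lem-BostonFromM2}(a) together with the observation that an $\BF_p[H'_\BF]$-map $\Fg^\der\otimes_\BF\Fm_R^n\to\Fg^\der$ is either surjective or zero. For the second part, the paper argues that a nonzero scalar $\alpha$ would make $\phi|_{H_{W_2(\BF)}}$ an isomorphism onto $H_{\BF[\eps]}$, which is impossible since one extension is split and the other is not; your pushforward argument in $H^2(H_\BF,\Fg^\der)$ is the cohomological rephrasing of the same contradiction.
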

\begin{proof}
For the first part we induct on $n\ge2$ and rings $R$ such that $\Fm_R^n=0$. 
The case $n=2$ is clear by hypothesis. In the induction step we assume that $\Fm_R^{n+1}=0$. We claim that $\phi$ factors via $\pi_{R,\Fm_R^n}\colon H_R\to H_{R/\Fm_R^n}$, and by induction this implies the result. We have the commutative diagram
\[
\xymatrix{
0\ar[r]&\CG^{[1]}(R)\ar[r]\ar[d]&H_R\ar[r]\ar[d]^\phi&H_\BF\ar[r]\ar@{=}[d]&0\\
0\ar[r]&\Fg^\der \ar[r]&H_{\BF[\eps]}\ar[r]&H_{\BF}\ar[r]&0\rlap{.}\\}
\]
Because $\Fg^\der$ is abelian as a group under $+$, the group $[\CG^{[1]}(R),\CG^{[1]}(R)]$ lies in the kernel of $\phi$. Hence by \Cref{Lem-BostonFromM2}(a), the restriction of $\phi$ to $\Fg^\der\otimes_\BF\Fm_R^n \cong \CG^{[n]}(R)$ contains $[\Fg^\der,\Fg^\der]\otimes_\BF\Fm_R^n$ in its kernel. By \lieu(ii), we know that any $\BF_p[H'_\BF]$-module homomorphism $\Fg^\der\otimes_\BF\Fm_R^n\to\Fg^\der$ is either surjective or trivial. If we apply this to $\phi$ and observe that \lieu(i) holds, then we see that $\phi$ restricted to $\CG^{[n]}(R)$ is zero, and this proves the claim.

For the second part, we may assume $\Fm_R^2=0$ and in addition that $p\notin\Fm_R^2$, since otherwise there is nothing to show. The embedding $W_2(\BF)\to R$ gives the subgroup $H_{W_2(\BF)}\subset H_R$. Now assume that $\phi(\kernel\pi_{W_2(\BF)})\neq0$, else we are done. Then $\phi$ restricted to $H_{W_2(\BF)}$ defines a non-zero homomorphism from $\Fg^\der\cong \kernel\pi_{W_2(\BF)}$ to $\Fg^\der\cong \kernel\pi_{\BF[\eps]}$. By \lieu(ii), it follows that any such homomorphism is multiplication by a scalar in $\BF$, and since the map is non-zero the scalar must be non-trivial. This implies that the restriction of $\phi$ to $H_{W_2(\BF)}$ defines an isomorphism $H_{W_2(\BF)}\to H_{\BF[\eps]}$. But this is absurd since the extension $H_{\BF[\eps]}\to H_\BF$ is split while $H_{W_2(\BF)}\to H_\BF$ is non-split by~\ns.
\end{proof}
\begin{Rem}
If there is a splitting of $H_{W_2(\BF)}\to H_\BF$ and if \van\ holds, then all split extensions of $H_\BF$ by $\Fg^\der$ are equivalent, and in particular $H_{W_2(\BF)}\cong H_{\BF[\eps]}$.
\end{Rem}

\section{Rings as universal deformation rings}
\label{Section3-new}

Throughout this section, we assume that $\CG$ is connected and that $(\CG,\BF,H_\BF,H'_\BF)$ satisfies \Cref{StandardHyp}, which in the present case means that $H_\BF=M_\BF H'_\BF$ with $M_\BF\subset\CG(\BF)$ of order prime to $p$. From now on we fix some $R\in \CAW$. Then $H_R=H_R^o=M_RH'_R$ from \Cref{Lem-HRExists} is independent of any choices. 

Recall $\bar\rho_R$, $D_{\barrhoR}$ and $R_{\barrhoR}$ from \Cref{Def:UDefRing} and the paragraphs preceding it. Concerning the image of $\rho_A\in D_{\barrhoR}(A)$, let us first observe the following, which is trivial if $\CG=\CG^\der$.
\begin{Lem}\label{Lem-DRimageInHprimeR}
For an $H_\BF$-perfect closed subgroup $H$ of $H_R$, an $A\in\CAW$ and $[\rho_A]\in D_\barrhoR(A)$ with representative $\rho_A$, one has $\rho_A(H)\subset H_A$. If moreover \liecsc\ holds, then $\rho_A(H_R)\subset H_A$.
\end{Lem}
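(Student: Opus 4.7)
My plan is to reduce the second claim to the first via \Cref{Lem-GpsWithHc}: under $\liecsc$ that result yields $H_R^c=H_R$, so $H_R$ itself is $H_\BF$-perfect, and applying the first claim with $H=H_R$ gives $\rho_A(H_R)\subset H_A$. The entire work therefore lies in the first claim.

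To prove $\rho_A(H)\subset H_A=M_AH'_A$ for $H_\BF$-perfect $H$, I would analyze the image of $\rho_A(H)$ under the quotient $\CG\onto\CT:=\CG/\CG^\der$; since $\CG=\CG^o$ is connected reductive, $\CT$ is a torus, so $\CT(A)$ is abelian. Thus the composition $H\to\CG(A)\to\CT(A)$ factors through $H^\ab:=H/\overline{[H,H]}$, which is isomorphic to $H_\BF/[H_\BF,H_\BF]$ by $H_\BF$-perfectness. By \Cref{StandardHyp}(a) the latter has order prime to $p$, so $\rho_A(H)$ has prime-to-$p$ image in $\CT(A)$. For a fixed $h\in H$, I would then decompose $\barrhoR(h)=\bar m\bar h'$ with $\bar m\in M_\BF$ and $\bar h'\in H'_\BF$, and let $m_A\in M_A$ denote the unique lift of $\bar m$ provided by \Cref{Conv:MWF}.

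The central step is to argue that $g:=m_A^{-1}\rho_A(h)$ lies in $\CG^\der(A)$, i.e.\ that $\rho_A(h)$ and $m_A$ have the same image in $\CT(A)$. For this I would use that $\CT(A)\to\CT(\BF)$ is surjective with pro-$p$ kernel (smoothness of $\CT$ and completeness of $A$), so that in the abelian group $\CT(A)$ the prime-to-$p$ torsion lifts uniquely from $\CT(\BF)$. Both images are of prime-to-$p$ order and reduce to the image of $\bar m$ in $\CT(\BF)$ (since $\bar h'\in H'_\BF\subset\CG^\der(\BF)$), hence coincide. Granted $g\in\CG^\der(A)$, reducing modulo $\Fm_A$ yields $g\equiv\bar m^{-1}\barrhoR(h)=\bar h'\in H'_\BF$, so $g\in H'_A$ and $\rho_A(h)=m_Ag\in M_AH'_A=H_A$. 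The only slightly delicate ingredient is the unique lifting of prime-to-$p$ torsion through the pro-$p$ abelian extension $\CT(A)\to\CT(\BF)$; everything else is bookkeeping of reductions and no serious obstacle is expected.
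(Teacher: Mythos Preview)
Your proposal is correct and follows essentially the same approach as the paper's proof: both pass to the abelian quotient by $\CG^\der$, use $H_\BF$-perfectness to identify $H/\overline{[H,H]}$ with the prime-to-$p$ group $H_\BF/[H_\BF,H_\BF]$, and then invoke the unique lifting of prime-to-$p$ elements through the pro-$p$ kernel of reduction modulo $\Fm_A$. The only cosmetic differences are that you argue element-by-element and use $\CT(A)=(\CG/\CG^\der)(A)$, whereas the paper argues at the level of the whole image and uses the subgroup $\CG(A)/\CG^\der(A)\hookrightarrow\CT(A)$; the reduction of the second claim to the first via \Cref{Lem-GpsWithHc} is identical.
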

\begin{proof}
Since $\CG=\CG^o$, the composition of $\rho_A$ with the canonical surjection $\pi\colon\CG(A)\to \CG(A)/\CG^\der(A)$ has abelian image. Therefore $H\to \pi\circ\rho_A(H)$ factors via $H/\overline{[H,H]}$, which by the construction of $H^c$ in \Cref{Lem-OnCommutator} and our hypothesis $H=H^c$ is isomorphic to $H_\BF/[H_\BF,H_\BF]$. By \Cref{StandardHyp}, the latter group is of order prime to $p$. Since $H'_\BF$ maps to $\CG^\der(\BF)$ under $\bar\rho$, the restriction of $ \pi\circ\rho_A$ to $H$ will factor via $M_\BF/M_\BF\cap \CG^\der(\BF)$. As in the proof of  \Cref{Lem-HRExists}\ref{HRExists-PartD} we find that $\pi\circ\rho_A(H)$ lies in the unique lift of $M_\BF/M_\BF\cap \CG^\der(\BF)$ to $\CG(R)/\CG^\der(R)$. Since $H$ maps to $H_\BF$ under $\barrhoR$, we deduce $\rho_A(H)\subset H_A$. For the last claim note that under \liecsc\ we have $H_R^c=H_R$ by \Cref{Lem-GpsWithHc}.
\end{proof}

We have the following generalization of \cite{EM,Dorobisz} from $\GL_n$ to arbitrary $\CG$, under the same basic hypotheses; cf.~\Cref{Rem:Dorob-EM}. 
\begin{Thm}\label{Thm-Thm1}
Suppose $(\CG,\BF,H_\BF,H'_\BF)$ satisfies conditions \ct, \van, \ns, \lieu\ and one of \liecsc\ or $\CG=\CG^\der$. Then the canonical inclusion $\iota\colon H_R\to \CG(R)$ represents the universal deformation of $D_\barrhoR$, and in particular $R_\barrhoR= R$.
\end{Thm}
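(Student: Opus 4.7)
The plan is to show that for every $A\in\CAW$ the natural map
\[
\Phi_A\colon \Hom_{\CAW}(R,A)\longrightarrow D_{\bar\rho_R}(A),\qquad \phi\mapsto [\CG(\phi)\circ\iota],
\]
is a bijection; by Yoneda this identifies $R$ with $R_{\bar\rho_R}$ and the class of $\iota$ with the universal deformation. A cofiltered-limit argument reduces us to $A$ Artinian, and I would then induct on $\dim_\BF A$ along small extensions $0\to I\to A\to A'\to 0$ with $\Fm_A\cdot I=0$ and $I$ a finite $\BF$-vector space.

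For the base case $A=\BF[\eps]$, pick $[\rho]\in D_{\bar\rho_R}(\BF[\eps])$; by \Cref{Lem-DRimageInHprimeR} (invoking \liecsc\ or the assumption $\CG=\CG^\der$) we may represent it by $\rho\colon H_R\to H_{\BF[\eps]}$. \Cref{Cor-FepsFactorsViaRmtwo}, whose hypotheses are \lieu\ and \ns, then implies that $\rho$ factors uniquely through $\pi_{R,(p,\Fm_R^2)}\colon H_R\to H_{R/(p,\Fm_R^2)}$. Since $R/(p,\Fm_R^2)\cong\BF\oplus V$ with $V:=\Fm_R/(p,\Fm_R^2)$ an $\BF$-vector space squaring to zero, each $\BF$-linear form $V\to\BF$ yields a ring map $R\to\BF[\eps]$ and hence a ``linear'' deformation $\CG(\phi)\circ\iota$. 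Comparing the two homomorphisms $\rho$ and $\CG(\phi)\circ\iota$ via the abelian kernel $\Fg^\der=\ker(H_{\BF[\eps]}\to H_\BF)$ gives a $1$-cocycle, and conditions \van\ (to turn conjugation by elements of $\Fg^\der$ into coboundaries) and \ct\ (to exclude nontrivial scalar centralisers) imply that the assignment $[\rho]\mapsto\phi$ is well-defined and inverse to $\Phi_{\BF[\eps]}$.

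For the inductive step, let $\rho_A\colon H_R\to \CG(A)$ be a deformation. By induction the reduction $\rho_A\bmod I$ is of the form $\CG(\phi')\circ\iota$ for a unique $\phi'\colon R\to A'$, up to conjugation; since $\ker(\CG(A)\to\CG(A'))$ is a pro-$p$ group the conjugating element lifts, and we may assume $\rho_A\equiv\CG(\phi')\circ\iota\pmod I$ on the nose. For any set-theoretic lift $\phi_1$ of $\phi'$ to a ring map $R\to A$, the function $h\mapsto \rho_A(h)\cdot(\CG(\phi_1)\circ\iota)(h)^{-1}$ is a $1$-cocycle of $H_R$ valued in the abelian group $\Fg^\der\otimes_\BF I$. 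Applying the tangent-space analysis coordinate-wise in $I$, the space of such cocycles modulo $\Fg^\der\otimes I$-coboundaries is naturally identified with $\Hom_\BF(V,I)$, which is precisely the torsor parametrising ring-lifts of $\phi'$ to $A$; hence $\phi_1$ can be modified to a ring homomorphism $\phi\colon R\to A$ lifting $\phi'$ such that the cocycle becomes a coboundary. Then $\CG(\phi)\circ\iota$ is conjugate to $\rho_A$, and uniqueness of $\phi$ follows from uniqueness of $\phi'$ together with the bijectivity of $\Phi_{\BF[\eps]}$.

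The main obstacle, I expect, is the tangent step: one must precisely match the cohomological description of $D_{\bar\rho_R}(\BF[\eps])$ (essentially by $H^1(H_R,\Fg^\der)$) with the cotangent space of $R$, i.e.\ with $\Hom_\BF(V,\BF)$. The factorisation theorem \Cref{Cor-FepsFactorsViaRmtwo} makes a generic deformation ``linear enough'' to define the candidate $\phi$, but verifying that the conjugation equivalence agrees with the coboundary relation requires all of \ct, \van, \lieu\ and either \liecsc\ or $\CG=\CG^\der$ simultaneously; once this is in place, the inductive step is essentially formal from the abelian nature of the cocycles, and one can cite \Cref{Cor-BostonGenl} to conclude that two deformations of the same residual data which agree mod $\Fm_A^2$ and mod $I$ in fact agree.
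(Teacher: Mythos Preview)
Your tangent-space step (the case $A=\BF[\eps]$) is essentially the content of \Cref{Cor-TangSpaceDim} and is fine once made precise: after factoring through $H_{R/(p,\Fm_R^2)}$ via \Cref{Cor-FepsFactorsViaRmtwo}, the restriction of any $\rho$ to the kernel $\Fg^\der\otimes_\BF V$ is an $H'_\BF$-map to $\Fg^\der$, hence by \lieu(ii) an element of $\Hom_\BF(V,\BF)$, and \van\ handles the remaining cocycle on $H_\BF$.

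The genuine gap is in your inductive step. You write ``for any set-theoretic lift $\phi_1$ of $\phi'$ to a ring map $R\to A$'' and then treat the set of ring lifts as a torsor under $\Hom_\BF(V,I)$. But for a general $R\in\CAW$ this set may be \emph{empty}: if $R$ is not formally smooth over $\WF$, the map $\phi'\colon R\to A'$ need not lift to any ring homomorphism $R\to A$ along the small extension $A\twoheadrightarrow A'$. The existence of the deformation $\rho_A$ tells you only that the obstruction in $H^2(H_R,\Fg\otimes I)$ vanishes; it does not, on its own, kill the obstruction to lifting $\phi'$ as a ring map. So your cocycle comparison never gets off the ground. (If $\phi_1$ is only a set map, then $\CG(\phi_1)$ is not a group homomorphism and the ``cocycle'' has no meaning.) Your closing appeal to \Cref{Cor-BostonGenl} does not help either: that result controls \emph{subgroups} surjecting modulo $\Fm^2$, not the comparison of two homomorphisms.

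The paper circumvents exactly this obstruction problem by a two-step argument. First it proves the theorem for $R=S:=\WF[[x_1,\dots,x_d]]$, where formal smoothness makes all ring-lifts exist and the tangent-space equality of \Cref{Cor-TangSpaceDim} forces the natural surjection $\alpha\colon R_{\bar\rho_S}\to S$ to be an isomorphism (\Cref{Cor-Thm1ForFree}). For general $R$ it then picks a surjection $\pi\colon S\to R$, pulls an arbitrary $\rho_A\colon H_R\to\CG(A)$ back to $H_S$, and invokes the universality of $S$ to produce a ring map $\alpha\colon S\to A$. The key new input is the verification $\ker\pi\subset\ker\alpha$, done by evaluating on root-group elements $x_\xi(s)$; this factors $\alpha$ through $R$ and exhibits $\rho_A$ as $\CG(\beta)$ for a unique $\beta\colon R\to A$. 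That reduction to the smooth case is precisely what is missing from your direct induction on $A$.
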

Recall that \lieu\ and \liecsc\ are implied by \liegen(ii) and~(iii). 
\begin{Rem}
For tuples $(\CG,\BF,H_\BF,H'_\BF)$ as in \Cref{Thm:MainOnStdSetup},  with $\CG$ absolutely simple, condition \van\ and \ct\ can only hold under \liegen\ and \liead. For tuples $(\CG,\BF,H_\BF,H'_\BF)$ as in \Cref{Thm:StdHypForPuniv}, where $\CG$ is connected reductive but $\CG^\der$ is absolutely simple, conditions \van\ and \ct\ are compatible with \liesc.
\end{Rem}

From \Cref{Thm-Thm1}  and \Cref{Thm:MainOnStdSetup} we deduce:
\begin{Cor}\label{Rem:5.4-forAbsSimple}
If $(\CG,\BF,H_\BF,H'_\BF)$ satisfies \Cref{Cond:StdSetup}, then $\iota\colon H_R\to \CG(R)$ represents the universal deformation of $D_\barrhoR$ under the conjunction of the following conditions:
\begin{enumerate}
\item $(\CG,\BF)$ is not exceptional in the sense of \Cref{Not:ExcSet}, and $($type $\CG,\BF)\notin\{(A_1,\BF_5)\}$.
\item $\CG$ is of Lie-adjoint type.
\item If type $\CG =C_n$, then $|\BF|\notin\{2,3,4,5,9\}$,
\item If $\CG$ is non-split (and hence of types A, D or E${}_6$), then $|\BF|\ge4$.
\item If $\CG$ is of type $B_2$ or $F_4$ then $p\neq2$, if $\CG$ is of type $G_2$, then $p\neq3$.
\end{enumerate}
\end{Cor}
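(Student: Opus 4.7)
The plan is to derive the corollary from \Cref{Thm-Thm1} by verifying each of its hypotheses, namely \ct, \van, \ns, \lieu, and either \liecsc\ or $\CG=\CG^\der$, using the data collected in \Cref{Thm:MainOnStdSetup}. First I would observe that since $\CG$ is connected and absolutely simple, $\CG$ coincides with its derived subgroup (any non-trivial abelian quotient would contradict absolute simplicity), so $\CG=\CG^\der$ holds automatically. In particular the more delicate \liecsc\ is not needed. Moreover, hypothesis (a) includes $\CG^\sco(\BF)\notin\CE_{\pf}$, which by \Cref{Thm:MainOnStdSetup}(a) ensures that $(\CG,\BF,H_\BF,H'_\BF)$ satisfies \Cref{StandardHyp}, so that \Cref{Thm-Thm1} is applicable once the remaining axiomatic conditions are verified.

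Next I would check the four remaining conditions in sequence. For \ct, \Cref{Thm:MainOnStdSetup}(e) gives the equivalence \ct$\Longleftrightarrow$$\CG$ is Lie-adjoint, which is hypothesis (b). For \ns, \Cref{Thm:MainOnStdSetup}(f) asserts \ns$\Longleftrightarrow$$(\CG,\BF)\notin\CE_{\ns}$, which is part of non-exceptionality in (a). For \van, \Cref{Thm:MainOnStdSetup}(g) requires $\CG^\sco(\BF)\notin\CE_{\pf}$, $(\text{type},\BF)\notin\CE_{\sch}\cup\{(A_1,\BF_5)\}$, \ct, and the numerical restrictions on type $C_n$ and on non-split forms; these are precisely the content of (a), (b), (c), and (d) combined. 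Finally, for \lieu, \Cref{Thm:MainOnStdSetup}(c)(iii) lists the exceptions: $\CG$ Lie-intermediate of type $A_n$ with $p\mid n+1$ or of type $D_n$ with $n$ odd and $p=2$, or of type $B_2$ or $F_4$ with $p=2$, or of type $G_2$ with $p=3$. The Lie-intermediate alternatives are excluded by hypothesis (b) since Lie-adjoint and Lie-intermediate are mutually exclusive, and the residual exceptional cases are ruled out by (e).

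With all of \ct, \van, \ns, \lieu, and the alternative $\CG=\CG^\der$ confirmed, \Cref{Thm-Thm1} applies directly and yields that $\iota\colon H_R\to \CG(R)$ represents the universal deformation of $D_\barrhoR$, completing the proof. The argument is essentially bookkeeping against \Cref{Thm:MainOnStdSetup}; the only point requiring a line of reasoning beyond direct quotation is the observation that Lie-adjointness rules out the Lie-intermediate exceptions in (c)(iii), which is immediate from the definitions. No further computation is needed.
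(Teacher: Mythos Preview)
Your proof is correct and follows exactly the approach of the paper, which simply states that the corollary is deduced from \Cref{Thm-Thm1} and \Cref{Thm:MainOnStdSetup} without spelling out the case-checking; you have carried out precisely that bookkeeping.
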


From \Cref{Thm-Thm1}  and  \Cref{Thm:StdHypForPuniv} we deduce:
\begin{Cor}\label{Rem:5.4-pUniversal}
If $(\CG,\BF,H_\BF,H'_\BF)$ satisfies \Cref{Cond-Puniv}, then $\iota\colon H_R\to \CG(R)$ represents the universal deformation of $D_\barrhoR$ under the conjunction of the following conditions:
\begin{enumerate}
\item $($type $\CG,\BF)\notin\{(A_1,\BF_?)_{?\in\{2,3,5\}}, (C_n,\BF_?)_{n\ge2,?\in \{2,3,4,5,9\} }\}$.
\item If $\CG^\der$ is non-split, then $|\BF|\ge4$.
\item If $\CG$ is of type $B_2$ or $F_4$ then $p\neq2$, if $\CG$ is of type $G_2$, then $p\neq3$.
\end{enumerate}
\end{Cor}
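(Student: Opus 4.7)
The plan is to deduce this corollary directly from \Cref{Thm-Thm1} applied to the tuple $(\CG,\BF,H_\BF,H'_\BF)$ coming from \Cref{Cond-Puniv}, using \Cref{Thm:StdHypForPuniv} to verify each of the five axiomatic hypotheses required by \Cref{Thm-Thm1}, namely \ct, \van, \ns, \lieu, and \liecsc. Note that in the $z$-Lie-balanced setup a central torus is typically present so $\CG\neq\CG^\der$, which is why the \liecsc-alternative of \Cref{Thm-Thm1} is the relevant one; this is also the gain over the almost-simple corollary \Cref{Rem:5.4-forAbsSimple}, where Lie-adjointness is forced.

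Concretely, I would proceed in three steps. First, \Cref{Thm:StdHypForPuniv}(a) yields \ct, \liecsc, and \ns, provided $($type $\CG,\BF)$ avoids $\{(A_1,\BF_2),(A_1,\BF_3),({}^2A_2,\BF_2),({}^2A_3,\BF_2),(B_2,\BF_2),(G_2,\BF_2)\}$. Condition (a) of the corollary eliminates the $A_1$ cases and, via the isogeny identification $B_2\cong C_2$, also $(B_2,\BF_2)$; condition (b) removes the two non-split items $({}^2A_2,\BF_2)$ and $({}^2A_3,\BF_2)$ which have $|\BF|=2<4$; the residual $G_2$ case falls under condition (c). Second, \Cref{Thm:StdHypForPuniv}(b) yields \van, and its exclusion list $\{(A_1,\BF_?)_{?\in\{2,3,5\}}, ({}^2A_2,\BF_2), (C_n,\BF_?)_{n\ge2,\,?\in\{2,3,4,5,9\}}, (G_2,\BF_2)\}$ together with the non-split-small-field clause is covered in the same fashion: the $A_1$ and $C_n$ items by (a), the non-split items by (b), and the remaining type-$G_2$ small-characteristic case by (c). Third, \Cref{Thm:StdHypForPuniv}(c)(ii) gives \lieu\ with only three exceptions, $(B_2,p{=}2)$, $(F_4,p{=}2)$, and $(G_2,p{=}3)$, which are precisely what condition (c) of the corollary forbids.

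With these verifications in hand, \Cref{Thm-Thm1} applies directly, giving that the inclusion $\iota\colon H_R\hookrightarrow \CG(R)$ represents $D_\barrhoR$ and in particular $R_\barrhoR=R$. I do not foresee a substantive obstacle: the proof is a concatenation of two already-established results plus a bookkeeping check that the three hypotheses (a)--(c) of the corollary are designed to exactly rule out the union of the three exception lists appearing in \Cref{Thm:StdHypForPuniv}. The only subtlety worth flagging is that the \liecsc\ alternative in \Cref{Thm-Thm1} is essential here, since $\CG$ is reductive but not semisimple; for this \Cref{Thm:StdHypForPuniv}(a) suffices, as $\CG^\der=\CG^{\prime,\der}$ is Lie-simply connected by construction of the Lie-balanced group in \Cref{Def:Puniv}, which is the property feeding \Cref{Prop:csc}.
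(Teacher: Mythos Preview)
Your approach is exactly the paper's: derive the corollary from \Cref{Thm-Thm1} by checking its five hypotheses via \Cref{Thm:StdHypForPuniv}. The paper's own proof is the single line ``From \Cref{Thm-Thm1} and \Cref{Thm:StdHypForPuniv} we deduce'', so your detailed case-check already goes beyond what the paper supplies, and your remark that \liecsc\ (rather than $\CG=\CG^\der$) is the operative alternative here is a correct and useful observation.

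There is, however, one slip in your bookkeeping. You assert twice that the exceptional pair $(G_2,\BF_2)$, which appears in the exclusion lists of both \Cref{Thm:StdHypForPuniv}(a) and (b) because $G_2(\BF_2)\in\CE_{\pf}$, ``falls under condition (c)''. But condition (c) for type $G_2$ only demands $p\neq 3$; since $(G_2,\BF_2)$ has $p=2$, it is \emph{not} excluded by (c), nor by (a) or (b). So this case is not actually covered by conditions (a)--(c) of the corollary, and your verification does not go through there. All other items in your case-check are handled correctly. This looks like a small lacuna in the stated hypotheses of the corollary itself (the paper's one-line proof is equally silent on it) rather than a defect in your strategy; the remedy would be to add $(G_2,\BF_2)$ to the excluded list, not to change the argument.
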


\begin{Rem}\label{Rem:Dorob-EM}
For $\CG=\GL_n$ and $H_\BF=H'_\BF=\SL_n(\BF)$, \Cref{Rem:5.4-pUniversal} shows that \Cref{Thm-Thm1} completely recovers \cite[Theorem~1]{Dorobisz} and \cite[Main Theorem]{EM}. 
\end{Rem}

The proof of \Cref{Thm-Thm1} consists of the following main steps. First we compute the dimension of the mod $p$ tangent space of $R_\barrhoR$. From this deformation theory implies \Cref{Thm-Thm1} in an elementary way in the case where $R$ is formally smooth over $\WF$. Finally we reduce the case of general $R$ to that of formally smooth~$R$.

\begin{Lem}\label{Cor-TangSpaceDim}
Suppose $(\CG,\BF,H_\BF,H'_\BF)$ satisfies conditions \ct, \van, \ns, \lieu\ and one of \liecsc\ or $\CG=\CG^\der$. Then the mod $p$ tangent spaces of $R$ and $ R_\barrhoR$ have the same dimension.
\end{Lem}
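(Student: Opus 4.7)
My plan is to establish equality of dimensions by computing each tangent space explicitly. The mod $p$ tangent space of $R$ has dimension $\dim_\BF \Fm_R/(p,\Fm_R^2)$, while that of $R_\barrhoR$ is $\dim_\BF D_\barrhoR(\BF[\eps])$. Under \ct\ standard deformation theory identifies $D_\barrhoR(\BF[\eps])\cong H^1(H_R,\Fg)$, where $H_R$ acts on $\Fg$ by the adjoint action through $\barrhoR$. Setting $T:=R/(p,\Fm_R^2)$ and $V:=\Fm_R/(p,\Fm_R^2)$, \Cref{Cor-FepsFactorsViaRmtwo} (which uses \lieu\ and \ns) shows that every deformation factors through $H_R\onto H_T$, hence $H^1(H_R,\Fg)=H^1(H_T,\Fg)$.

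Now $T=\BF\oplus V$ with $V^2=0$, and the structure morphism $\BF\hookrightarrow T$ provides a canonical splitting of the reduction $H_T\onto H_\BF$, so
\[H_T\cong (\Fg^\der\otimes_\BF V)\rtimes H_\BF,\]
with $H_\BF$ acting adjointly on $\Fg^\der$ and trivially on $V$. The inflation--restriction five-term exact sequence applied to this extension with coefficients in $\Fg$ reads
\[0\to H^1(H_\BF,\Fg)\to H^1(H_T,\Fg)\to \Hom_{H_\BF}(\Fg^\der\otimes V,\Fg)\to H^2(H_\BF,\Fg).\]
The leftmost term vanishes because \van\ gives $H^1(H'_\BF,\Fg)=0$, and $[H_\BF:H'_\BF]$ being prime to $p$ together with $\Fg$ being $p$-torsion collapses Hochschild--Serre to $H^1(H_\BF,\Fg)=H^1(H'_\BF,\Fg)^{H_\BF/H'_\BF}=0$. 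The transgression also vanishes, because the splitting lets me lift any $\phi\in\Hom_{H_\BF}(\Fg^\der\otimes V,\Fg)$ to the cocycle $c(xh):=\phi(x)$ on $H_T$ (defined using $x\in\Fg^\der\otimes V$, $h\in H_\BF$). Therefore
\[H^1(H_T,\Fg)\cong \Hom_{H_\BF}(\Fg^\der\otimes V,\Fg)\cong\Hom_{H_\BF}(\Fg^\der,\Fg)\otimes_\BF V^{*},\]
using that $V$ is a trivial $H_\BF$-module.

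Finally, the short exact sequence $0\to\Fg^\der\to\Fg\to\Fg/\Fg^\der\to 0$ of $H_\BF$-modules gives
\[0\to\Hom_{H_\BF}(\Fg^\der,\Fg^\der)\to\Hom_{H_\BF}(\Fg^\der,\Fg)\to\Hom_{H_\BF}(\Fg^\der,\Fg/\Fg^\der),\]
whose first term is $\BF$ by \lieu(ii), since $H_\BF$-equivariance is a fortiori $H'_\BF$-equivariance and $\BF\to\End_{\BF_p[H'_\BF]}(\Fg^\der)$ is an isomorphism. The third term vanishes: if $\CG=\CG^\der$, trivially so; otherwise $\Fg/\Fg^\der=\Lie(\CG/\CG^\der)$ is a torus Lie algebra on which $H_\BF$ acts trivially, so the Hom injects into a direct sum of copies of $\Hom_{\BF_p[H'_\BF]}(\Fg^\der,\BF_p)=0$, the vanishing being \liecsc\ (cf.\ the remark following \Cref{Cond:Axioms}). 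Combining yields $\dim_\BF H^1(H_T,\Fg)=\dim_\BF V^{*}=\dim_\BF\Fm_R/(p,\Fm_R^2)$. The main subtlety is orchestrating the axioms in sequence: \ct\ for the deformation-theoretic identification, \lieu\ and \ns\ to factor through $T$, \van\ to kill inflation, the splitting of $H_T$ over $H_\BF$ to kill the transgression, and \lieu(ii) and \liecsc\ (or $\CG=\CG^\der$) to pin the final Hom down to $\BF$.
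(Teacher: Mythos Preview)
Your argument is correct and runs largely parallel to the paper's, with one genuine difference in strategy. Both proofs invoke \Cref{Cor-FepsFactorsViaRmtwo} together with \Cref{Lem-DRimageInHprimeR} to reduce to computing $H^1(H_T,\Fg)$ for $T=R/(p,\Fm_R^2)$, and both use the inflation--restriction sequence with \van\ killing the inflation term. Where you exploit the canonical splitting $H_\BF\hookrightarrow H_T$ to show the transgression vanishes and thereby obtain the \emph{equality} $H^1(H_T,\Fg)\cong\Hom_{\BF_p[H_\BF]}(\Fg^\der,\Fg)^d$ in one stroke, the paper only extracts the \emph{inequality} $d'\le d$ from the five-term sequence and then closes the gap from the other side: the universal map $\alpha\colon R_{\barrhoR}\to R$ induces a surjection on mod~$p$ tangent spaces (because $H_R$ surjects onto $H_T$ and this image must factor through $\CG(\alpha_2)$), forcing $d'\ge d$. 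Your route is more self-contained cohomologically; the paper's route avoids discussing the transgression but brings in the universal property of $R_{\barrhoR}$, which is the object one ultimately cares about.

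One small expository point: when you say ``\Cref{Cor-FepsFactorsViaRmtwo} (which uses \lieu\ and \ns) shows that every deformation factors through $H_T$'', note that \Cref{Cor-FepsFactorsViaRmtwo} is stated for homomorphisms $H_R\to H_{\BF[\eps]}$, not into $\CG(\BF[\eps])$. To apply it to an arbitrary class in $D_{\barrhoR}(\BF[\eps])$ you first need \Cref{Lem-DRimageInHprimeR} (hence \liecsc\ or $\CG=\CG^\der$) to know the image lands in $H_{\BF[\eps]}$. You have the hypothesis and use it later, so nothing is missing logically, but the paper is explicit about this at the corresponding step and you should be too.
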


\begin{proof}
Define $d:= \dim_\BF \Fm_R/(p,\Fm_R^2)$, and denote by $d'$ the dimension of the mod $p$ tangent space of $R_\barrhoR$. From \Cref{Cor-FepsFactorsViaRmtwo} and also \Cref{Lem-DRimageInHprimeR}, which requires  \ns\ and \lieu\ and \liecsc\ if $\CG\neq\CG^\der$, we deduce that any deformation $[\rho_{\BF[\eps]}]$ in $D_\barrhoR(\BF[\eps])$ factors via $H_{R/(p,\Fm_R^2)}$, and hence that $d'=\dim_\BF H^1(H_{R/(p,\Fm_R^2)}, \Fg)$. Now the inflation restriction sequence of group cohomology yields
\begin{equation}\label{Eq:Inf-Res-Dim}
0\to H^1(H_\BF,\Fg)\to H^1(H_{R/(p,\Fm_R^2)}, \Fg)\to H^1(\CG^{\der,[1]}(R/(p,\Fm_R^2)),\Fg)^{H_\BF}
\end{equation}
Because $M_\BF$ is of order prime to $p$, and again by inflation restriction, the left term $H^1(H_\BF,\Fg)$ is isomorphic to $H^1(H'_\BF,\Fg)^{M_\BF/(M_\BF\cap H'_\BF)}$, and thus zero by \van. Moreover the abelian group $\CG^{\der,[1]}(R/(p,\Fm_R^2))$, which as an $H_\BF$-module is isomorphic to $\Fg^\der\otimes\Fm_R/(p,\Fm_R^2)$, acts trivially on $\Fg$, and so the right hand side can be identified with
\[  \Hom_{\BF_p[H_\BF]} ( (\Fg^\der)^d,\Fg)=\Hom_{\BF_p[H_\BF]} ( \Fg^\der,\Fg)^d.\]
If $\CG=\CG^\der$, then $\Fg=\Fg^\der$. Otherwise \liecsc\ yields $ \Hom_{\BF_p[H'_\BF]}(\Fg^\der,\BF_p)=0$, so that we have $\Hom_{\BF_p[H'_\BF]}(\Fg^\der,\Fg)=\Hom_{\BF_p[H'_\BF]}(\Fg^\der,\Fg^\der)$. We conclude $d'\le d$ from \lieu(ii).

For the converse inequality, consider the diagram 
\begin{equation}
\label{AlphaDiag}
\xymatrix{ H_R \ar[dr]_\iota \ar[rr]^{\rho_\barrhoR}&&\CG(R_\barrhoR) \ar[dl]^\alpha\\
&\CG(R)&\\}
\end{equation}
provided by the universality of $R_\barrhoR$, that commutes up to conjugation by $\kernel(\CG(R)\to\CG(\BF))$, and for some unique homomorphism $\alpha\colon R_\barrhoR\to R$ in $\CAW$. Recall that $R$ is the ring used to define $H_R$. Denote by $\alpha_2$ the homomorphism $R_\barrhoR/(p,\Fm_{R_\barrhoR}^2) \longto R/(p,\Fm_R^2)$  induced from $\alpha$. Then $H_{R/(p,\Fm_R^2)}$ is the image of $H_R$ in $\CG(R/(p,\Fm_R^2))$, and thus it must lie in the image of the homomorphism
\[ \CG(R_\barrhoR/(p,\Fm_{R_\barrhoR}^2)) \longto \CG(R/(p,\Fm_R^2))\]
induced from $\alpha_2$. Since $\pi_{R,(p,\Fm_R^2)}\colon H_R\to H_{R/(p,\Fm_R^2)}$ is surjective, it follows that $\alpha_2$ must be surjective. This yields $d'\ge d$ by comparing cardinalities.
\end{proof}

\begin{Cor}\label{Cor-Thm1ForFree}
\Cref{Thm-Thm1} holds for $R=\WF[[x_1,\ldots,x_d]]$ for any integer $d\ge0$.
\end{Cor}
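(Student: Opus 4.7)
The plan is to construct, via the universal property of $R_\barrhoR$ applied to the deformation $\iota$, the $\WF$-algebra homomorphism $\alpha \colon R_\barrhoR \to R$ of diagram~\eqref{AlphaDiag}, and to show that $\alpha$ is an isomorphism. This splits naturally into surjectivity and injectivity of $\alpha$.

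For surjectivity I would invoke \Cref{Cor-TangSpaceDim}: both $R$ and $R_\barrhoR$ have mod $p$ tangent spaces of the same dimension $d$. Its proof already exhibits $\alpha_2\colon R_\barrhoR/(p,\Fm_{R_\barrhoR}^2)\to R/(p,\Fm_R^2)$ as a surjection, and by the equality of dimensions $\alpha_2$ is an isomorphism. The topological Nakayama lemma for objects of $\CAW$ (any continuous local $\WF$-algebra homomorphism whose mod $(p,\Fm^2)$ reduction is surjective is itself surjective) then promotes this to surjectivity of $\alpha$.

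For injectivity I would use that $R=\WF[[x_1,\ldots,x_d]]$ is formally smooth over $\WF$: the surjection $\alpha$ admits a continuous $\WF$-algebra section $s\colon R\to R_\barrhoR$, obtained by successively lifting $\id_R$ along $\alpha$ through a compatible system of Artinian quotients and passing to the inverse limit. The composition $s\circ\alpha\colon R_\barrhoR\to R_\barrhoR$ is then a continuous local $\WF$-algebra endomorphism. From $\alpha\circ s=\id_R$, both $s$ and $\alpha$ induce isomorphisms on mod $(p,\Fm^2)$ tangent spaces, so $s\circ\alpha$ does as well, and a further application of Nakayama forces $s\circ\alpha$ to be surjective. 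A classical argument (the ascending chain $\kernel((s\circ\alpha)^n)$ stabilizes by noetherianity of $R_\barrhoR$, forcing injectivity) shows that a surjective endomorphism of a Noetherian local ring is automatically an isomorphism. Hence $s\circ\alpha$ is invertible, which, together with $\alpha\circ s=\id_R$, forces $s$ and $\alpha$ to be mutually inverse. This gives $R_\barrhoR\cong R$ and, after unwinding the definition of $\alpha$, that $\iota$ represents the universal deformation.

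The main technical point to pin down carefully is the construction of the section $s$ using formal smoothness in the pro-Artinian setting of $\CAW$, but once the lifting is done level by level and assembled in the inverse limit, everything else is routine completion-theoretic bookkeeping.
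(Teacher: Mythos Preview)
Your proof is correct and follows the same strategy as the paper: use \Cref{Cor-TangSpaceDim} to match tangent space dimensions, construct a map $R\to R_\barrhoR$ going the other way, and conclude both maps are isomorphisms. The paper produces its reverse map $\beta$ via the Cohen structure theorem (simply choosing lifts of tangent space generators), whereas you obtain $s$ as a section of $\alpha$ via formal smoothness; these are essentially the same construction, yours just arranged so that $\alpha\circ s=\id_R$ holds on the nose.

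One small point: you invoke noetherianity of $R_\barrhoR$ without justification. It does follow from the finite tangent space dimension established in \Cref{Cor-TangSpaceDim} (which yields a surjection $\WF[[x_1,\ldots,x_d]]\twoheadrightarrow R_\barrhoR$, exactly the paper's $\beta$), but you can also bypass it entirely: once $s\circ\alpha$ is surjective, $s$ itself is surjective (its image contains $\image(s\circ\alpha)=R_\barrhoR$), and since $\alpha\circ s=\id_R$ already forces $s$ to be injective, $s$ is an isomorphism without any noetherian endomorphism argument.
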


\begin{proof}
Consider diagram \Cref{AlphaDiag}. It implies that the homomorphism 
\[ \alpha\colon   R_\barrhoR \longto R=\WF[[x_1,\ldots,x_d]]\]
is surjective, as it is surjective on mod $p$ tangent spaces. At the same time, we know from  \Cref{Cor-TangSpaceDim} that $d$ is the dimension of the mod $p$ tangent space of $R_\barrhoR$. Hence by the Cohen structure theorem and Nakayama's Lemma we have a surjective homomorphism 
\[ \beta \colon  R=\WF[[x_1,\ldots,x_d]]  \longto   R_\barrhoR.\]
For dimension reasons it follows that the composite $\alpha\circ\beta$ must have trivial kernel, so that $\beta$ is an isomorphism. But then the same argument shows that $\alpha$ is an isomorphism. This completes the proof of \Cref{Cor-Thm1ForFree}.
\end{proof}

\begin{proof}[Proof of \Cref{Thm-Thm1}]
Choose a surjective ring homomorphism 
\[\pi\colon S:=\WF[[x_1,\ldots,x_d]]\longto R\] 
in $\CAW$, denote the induced map $\CG(S)\longto \CG( R )$ by $\CG(\pi)$. Let now $A\in \CAW$ and let $\rho_A$ represent a $\CG$-valued deformation of $\barrhoR$ to $A$. Then $\rho_A\circ \CG(\pi)$ is a $\CG$-valued deformation of $\barrhoS:=\barrhoR\circ \CG(\pi)\colon H_S\to \CG(\BF)$. We consider the diagram
\begin{equation}
\label{AlphaGSDiag}
\xymatrix{ H_S \ar[dr]_{\CG(\alpha)} \ar[rr]^{\CG(\pi)}&&H_R \ar[dl]^{\rho_A}\\
&\CG(A)&\\}
\end{equation}
where $\CG(\alpha)\colon H_S\longto \CG(A )$ is induced from a unique homomorphism $\alpha\colon S\to A$ in $\CAW$ using the universality of the inclusion $H_S\to \CG(S)$ for $\CG$-deformations of $\barrhoS$ established in \Cref{Cor-Thm1ForFree}. Note that a priori, the diagram commutes only up to strict equivalence, i.a., up to conjugation by an element of $\CG(A)$ that surjects to the identity in $\CG(\BF)$. However by replacing the group homomorphism $\rho_A$ by a suitable conjugate, we can assume from now on that \Cref{AlphaGSDiag} commutes.

A priori, $\rho_A$ is only a group homomorphism. The main point we need to establish is that it is induced from a unique ring homomorphism $R\to A$ in $\CAW$. We first show that $\ker\pi\subset \ker\alpha$. For this consider $s\in S\setminus \kernel \alpha$. We need to show that $\pi(s)\neq0$. Let $\xi$ be a root of $\CG$ with root group $\CU_\xi\subset\CG$ and let $x_\xi\colon \BG_a\to \CU_\xi$ be an isomorphism, all defined over $\WF$. Then $\CG(\alpha)(x_\xi(s))=x_\xi(\alpha(s))$ is non-zero. This implies that $\CG(\pi)(x_\xi(s))=x_\xi(\pi(s))$ is non-zero. 

By the previous paragraph we have $\ker\pi\subset \ker\alpha$. This gives a factorization $\alpha=\beta\pi$ for a unique ring homomorphism $\beta\colon R\to A$, by the homomorphism theorem for rings. Hence $\CG(\alpha)=\CG(\beta)\circ\CG(\pi)$. Because $\CG(\pi)\colon H_S\to H_R$ is a surjective homomorphism of groups, by the homomorphism theorem for groups, we have $\rho_A=\CG(\beta)$, and this can only hold for a unique ring homomorphism $\beta$ in $\CAW$. Thus we have directly established the universal property of~$R$, i.e., that $R\cong R_{\bar\rho_R}$. 
\end{proof}

\begin{Rem}
Suppose that \lieu\ and \liecsc\ hold on the structure of $\Fg$ as a Lie algebra and as an $H'_\BF$-module. Suppose also that $(\CG,\BF,H_\BF,H'_\BF)$ satisfies either \Cref{Cond:StdSetup} or \Cref{Cond-Puniv}, and that $H_\BF=H'_\BF$. If then the assertion of \Cref{Cor-Thm1ForFree} holds for $d=0$, then one can deduce from \eqref{Eq:Inf-Res-Dim}, running backward the argument of \Cref{Cor-TangSpaceDim}, that $H^1(H_\BF',\Fg)=0$. This means that the direct proofs of \Cref{Thm-Thm1} for $\CG=\GL_n$ given in \cite{Dorobisz} and \cite{EM} essentially also reprove, without making this explicit, the vanishing of $H^1(\CG^\der(\BF),\Fg)$ for $\CG=\GL_n$ -- except for some small values of $\#\BF$ and~$n$.
\end{Rem}

\section{Closed subgroups of $H_R\subset\CG(R)$}
\label{Section4-new}

Throughout this section, we assume that $(\CG,\BF,H_\BF,H'_\BF)$ satisfies \Cref{StandardHyp}. By $R$ we denote a ring in $\CAW$ and by $H\subset H_R$ a closed subgroup that under $\pi_R$ surjects onto $H_\BF$. We advice the reader to recall the $H_\BF$-perfection of $H$ defined in \Cref{DefHPerfection} and the discussion around it. Note in addition that if \liegen(ii) holds, then $H=H^{(\infty)}$ by \Cref{Lem-GpsWithHc}.

The following result was motivated by \cite{Manoharmayum} and \cite{Boston}. It sheds some light on the structure of certain closed subgroups of $\CG(R)$ for $R\in\CAW$, which might be useful for studying images of Galois representations attached to automorphic forms. 

\begin{Thm}\label{Thm-Thm2}
Let $H\subset H_R$ be a closed subgroup that surjects onto $H_\BF$. Suppose that \ct, \ns\ and \van\ hold, and that either \liegen\ holds or that \liec\ and \sch\ hold. Then there exists a closed $\WF$-subalgebra $A$ of $R$ such that $H^{(\infty)}$ is conjugate to $H_A\subset\CG(R)$.
\end{Thm}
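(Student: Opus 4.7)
The plan is to reduce first to the case that $R$ is Artinian and then induct on the length of $R$, using the preparatory lemmas of \Cref{Section3new} at each step. First I would replace $H$ by $H^{(\infty)}$, so that $H=H^c$, and invoke the profinite Schur-Zassenhaus argument underlying \Cref{Lem-HRExists} to conjugate $H$ in $\ker(\CG(R)\to\CG(\BF))$ so that $M_R\subset H$. Since \Cref{Lem-OnCommutator} shows that $(\cdot)^{(\infty)}$ commutes with quotients, an inverse limit argument---using the uniqueness up to the kernel of reduction of the pair (subalgebra, conjugator) at each Artinian level $R/\Fm_R^n$---reduces the theorem to the case that $R$ is Artinian.

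For $R$ Artinian of length larger than one, I would choose an ideal $I\subset R$ with $\Fm_R I=0$ and $\dim_\BF I=1$, let $\bar R=R/I$ with ring quotient map $q\colon R\to\bar R$, and apply the induction hypothesis to $\pi_{R,I}(H)\subset H_{\bar R}$. After a further conjugation in $\ker(\CG(R)\to\CG(\bar R))$ we may assume $\pi_{R,I}(H)=H_{\bar A}$ for some closed $\WF$-subalgebra $\bar A\subset\bar R$. Let $N:=H\cap\ker(\CG^\der(R)\to\CG^\der(\bar R))$; it is an $H_\BF$-stable $\BF$-subspace of $\Fg^\der\otimes I\cong\Fg^\der$. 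Under \liegen(ii) the module $\Fg^\der$ is irreducible over $\BF_p[H'_\BF]$, forcing $N\in\{0,\Fg^\der\}$; under \liec\ together with $H=H^c$ and \sch\ the same dichotomy follows by the argument of \Cref{BostonLemmaExt-New}(b). In the case $N=\Fg^\der$ I would set $A:=q^{-1}(\bar A)\subset R$, a closed $\WF$-subalgebra containing $I$ with $A/I=\bar A$; both $H$ and $H_A$ are then the full preimage of $H_{\bar A}$ in $H_R$, so $H=H_A$.

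In the remaining case $N=0$, the map $H\to H_{\bar A}$ is an isomorphism, so $H$ defines a continuous splitting over $H_{\bar A}$ of
\[0\to\Fg^\der\otimes I\to\pi_{R,I}^{-1}(H_{\bar A})\to H_{\bar A}\to 0.\]
I would next argue that, under $N=0$, the underlying ring-theoretic extension $0\to I\to q^{-1}(\bar A)\to\bar A\to 0$ is also split: if it were non-split (roughly, $I$ would be a ``$W_2$-direction''), a relative form of \Cref{Lem-BostonForW2} combined with \ns\ would force $N\neq 0$. Fix a $\WF$-algebra section $A\subset q^{-1}(\bar A)$; both $H$ and $H_A$ are then splittings of the displayed group extension, and their difference is classified by $H^1_{\mathrm{cts}}(H_{\bar A},\Fg^\der\otimes I)$. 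The latter vanishes by inflation-restriction along the pro-$p$ kernel of $H_{\bar A}\to H_\BF$, using \van\ at the finite level and \Cref{Cor:H1Triv} on the pro-$p$ contribution, so $H$ is conjugate to $H_A$ by an element of $\Fg^\der\otimes I$.

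The main obstacle I anticipate is the relative splitness matching in Case $N=0$: the base case $\bar A=\BF$ is essentially \Cref{Lem-BostonForW2}, but propagating it to general closed Artinian $\bar A\subset\bar R$ requires a careful functorial comparison of the class of the square-zero ring extension with the group extension class in $H^2(H_{\bar A},\Fg^\der\otimes I)$, where \ns, \van, and (under the \liec\ alternative) \sch\ must together make the relevant obstruction map injective. An alternative I would keep in reserve is to define $A$ globally as the closed $\WF$-subalgebra of $R$ generated by the matrix entries of the elements of $H$ under a faithful embedding $\CG\hookrightarrow\GL_n$ over $\WF$; this gives $H\subset H_A$ by construction, and it then suffices to verify $\pi_{A,(p,\Fm_A^2)}(H)=H_{A/(p,\Fm_A^2)}$---by combining the submodule analysis with \van\ to force the tangent space of the image to fill---and to invoke \Cref{Cor-BostonGenl} to propagate this upward to $H=H_A$.
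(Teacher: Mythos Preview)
Your main inductive approach has a genuine gap in the $N=0$ case. The claim that $H^1_{\mathrm{cts}}(H_{\bar A},\Fg^\der\otimes I)$ vanishes is false in general. Inflation--restriction along $1\to K\to H_{\bar A}\to H_\BF\to 1$ (with $K=\ker(H_{\bar A}\to H_\BF)$) gives
\[
0\to H^1(H_\BF,\Fg^\der)\to H^1(H_{\bar A},\Fg^\der)\to \Hom_{\BF_p[H_\BF]}(K^{\ab},\Fg^\der),
\]
and while \van\ kills the left term, the right term is typically nonzero: already for $\bar A=\BF[\eps]$ one has $K\cong\Fg^\der$ and $\Hom_{\BF_p[H_\BF]}(\Fg^\der,\Fg^\der)\supset\BF$. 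So two splittings of your displayed extension need not be conjugate. The underlying problem is that in the $N=0$ case there are in general many $\WF$-algebra sections $A\hookrightarrow q^{-1}(\bar A)$, the corresponding $H_A$'s are pairwise non-conjugate, and you must pick the one matching $H$; your $H^1$ argument cannot do this. (The freedom in choosing the section is a torsor under $\Der_{\WF}(\bar A,I)$, and matching this against the nontrivial $H^1$ is exactly the content you are missing.) Your ``relative \ns'' step has a similar defect: \ns\ controls the single extension $H_{W_2(\BF)}\to H_\BF$, not the family of extensions over varying~$\bar A$.

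The paper's proof avoids the induction entirely by letting deformation theory choose $A$. Replace $H$ by $H^{(\infty)}$ and consider $\bar\rho_H\colon H\to\CG(\BF)$. By \ct\ the universal deformation $\rho_{\bar\rho_H}\colon H\to\CG(R_{\bar\rho_H})$ exists, and the inclusion $\iota\colon H\hookrightarrow\CG(R)$ factors (after a single conjugation) as $\CG(\alpha)\circ\rho_{\bar\rho_H}$ for a unique $\alpha\colon R_{\bar\rho_H}\to R$; set $A:=\alpha(R_{\bar\rho_H})$. The key point is now automatic: since $A$ is a quotient of the \emph{universal} ring, every surjection $A\to\BF[\eps]$ yields a nontrivial deformation of $\bar\rho_H$, hence by \Cref{BostonLemmaExt-New}(b) (using \van\ together with either \liegen(ii) or \liec(ii)$+$\sch, and $H=H^c$) the image of $H$ in $H_{\BF[\eps]}$ is all of $H_{\BF[\eps]}$. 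This gives $H\twoheadrightarrow H_{A/(p,\Fm_A^2)}$, and then \Cref{Cor-BostonGenl}(b) (which needs \ns\ and \liec(i)) yields $H=H_A$ at every Artinian level, hence globally. Your alternative approach via matrix entries is closer in spirit, but the subalgebra so generated is not conjugation-invariant, so you cannot directly verify tangent-level surjectivity for it; the universal ring is precisely the device that makes that step transparent.
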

Observe that if $R$ is noetherian, then $A$ need not share this property. Its tangent space could be infinite dimensional. However if $H$ surjects onto $H_\BF$ and is open in $H_R$, so that $H$ contains $\CG^{[i]}$ for some $i\ge1$, then following the proof of \Cref{Lem-GpsWithHc}, $H^c$ still contains $\CG^{[i]}$, and hence so does $H^{(\infty)}$. From this one easily deduces that in this case $A$ is noetherian if $R$ is so, and hence that $A\in \CAW$.

\medskip

The main idea for the proof of \Cref{Thm-Thm2} is to let deformation theory determine the sought for ring $A$. For this we may, and from now on will, assume that $H$ is $H_\BF$-perfect. Since the inclusion $\iota\colon H\subset \CG(R)$ factors via the universal pair, we have a diagram
\begin{equation}
\label{AlphaHDiag}
\xymatrix{ H \ar[dr]_\iota \ar[rr]^{\rho_\barrhoH}&&\CG(R_\barrhoH)\ar[dl]^{\CG(\alpha)}\\
&\CG(R)&\\}
\end{equation}
that commutes up to strict equivalence, for  a unique ring homomorphism $\alpha\colon R_\barrhoH\to R$. After conjugation of $\iota$ by an element of $\CG(R)$ that reduces to the identity in $\CG(\BF)$, we may assume the diagram commutes. This means that we replace $H$ by a conjugate; this may necessitate a new choice for $M_{\WF}$ inside $\CG(\WF)$ (and thus of $M_R$ and $H_R$). Note that $M_R\subset H$, since $H$ surjects onto $H_\BF\subset M_\BF$.

So from now on, we assume that $H$ lies in $\CG(A)$ for $A:=\alpha(R_\barrhoH)\subset R$, so that $R_\barrhoH\to A$ is surjective.\footnote{The non-noetherian context may be avoided below by replacing $A$ by any of its Artinian quotients.} We let $M_A$ be the image of $M_R\subset H$ under $\CG(\alpha)\circ\rho_{\barrhoH}$, i.e., it is simply equal to $M_R$ under $\iota$. Then $H_A=H_R\cap \CG(A)=M_AH'_R\cap \CG(A)$ and $H_A$ contains $\iota(H)$. 

If we compose $\iota$ with the canonical map $\CG(A)\longto \CG(A/(p,\Fm_A^2))$, we obtain a homomorphism $H\to H_{A/(p,\Fm_A^2)}$ such that $R_\barrhoH\to A/(p,\Fm_A^2)$ is again surjective.
\begin{Lem}\label{HtoTangSurj}
The homomorphism $H\to H_{A/(p,\Fm_A^2)}$ is surjective. 
\end{Lem}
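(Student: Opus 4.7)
The plan is to exploit universality. Set $A_2:=A/(p,\Fm_A^2)$, so that $A_2=\BF\oplus V$ with $V:=\Fm_{A_2}$ satisfying $V^2=0$, and the kernel of $H_{A_2}\twoheadrightarrow H_\BF$ is canonically $\Fg^\der\otimes_\BF V$ under the exponential. Let $\iota_2$ be the composition of $\iota$ with $\CG(A)\twoheadrightarrow \CG(A_2)$; its image $H':=\iota_2(H)$ surjects onto $H_\BF$, and its kernel $K\subseteq\Fg^\der\otimes V$ is a closed $H_\BF$-invariant subgroup. Because $\alpha\colon R_{\barrhoH}\twoheadrightarrow A$ is surjective by the definition of $A$, its reduction $\bar\alpha\colon R_{\barrhoH}/(p,\Fm^2)\twoheadrightarrow A_2$ is surjective, and hence so is $\CG(\bar\alpha)\colon H_{R_{\barrhoH}/(p,\Fm^2)}\twoheadrightarrow H_{A_2}$. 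Therefore I may assume $A=R_{\barrhoH}$ and $\alpha=\id$; the task becomes to show $K=\Fg^\der\otimes V$.

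Arguing by contradiction, suppose $K\subsetneq\Fg^\der\otimes V$. I plan to produce an $\BF$-subspace $W\subsetneq V$ with $K\subseteq\Fg^\der\otimes W$. Setting $A'':=\BF\oplus V/W$, the composition $H\xrightarrow{\iota_2} H_{A_2}\twoheadrightarrow H_{A''}$ has image $H''$ meeting the kernel $\Fg^\der\otimes V/W$ of $H_{A''}\twoheadrightarrow H_\BF$ trivially, so $H''$ maps isomorphically onto $H_\BF$ and realizes a section of this projection. Conditions \van\ and \ct, combined with inflation--restriction over the prime-to-$p$ extension $H'_\BF\subseteq H_\BF$ and the short exact sequence $0\to\Fg^\der\to\Fg\to\Fg/\Fg^\der\to 0$ (where $\Fg/\Fg^\der$ carries a trivial $H_\BF$-action), are expected to yield $H^1(H_\BF,\Fg^\der\otimes V/W)=0$. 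Hence all sections of $H_{A''}\twoheadrightarrow H_\BF$ are strictly equivalent to the canonical one induced by $\BF\hookrightarrow A''$, and so the deformation $H\to\CG(A'')$ is strictly equivalent to the trivial deformation $\barrhoH\colon H\to\CG(\BF)\hookrightarrow\CG(A'')$. By the uniqueness in the universal property of $R_{\barrhoH}$, the two classifying ring maps $R_{\barrhoH}\to A''$ coincide: on the one hand the tautological surjection $R_{\barrhoH}\twoheadrightarrow A_2\twoheadrightarrow A''$, on the other the constant map $R_{\barrhoH}\twoheadrightarrow\BF\hookrightarrow A''$. Since the former is surjective while the latter has image $\BF$, this forces $A''=\BF$, i.e.\ $W=V$, contradicting $W\subsetneq V$.

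It remains to construct $W$. Under \liegen, parts (ii) and (iii) imply that $\Fg^\der$ is $\BF_p[H'_\BF]$-irreducible with $\End_{\BF_p[H'_\BF]}(\Fg^\der)=\BF$, so every $\BF_p[H'_\BF]$-submodule of $\Fg^\der\otimes_\BF V$ is of the form $\Fg^\der\otimes_\BF W'$ for some $\BF$-subspace $W'\subseteq V$; applied to $K$ this yields a proper $W:=W'$ immediately. The main obstacle is the case where only \liec\ and \sch\ hold: here $\Fg^\der$ need not be irreducible, only its cosocle $\bar\Fg^\der$ is, and $H_\BF$ acts trivially on $N:=\kernel(\Fg^\der\twoheadrightarrow\bar\Fg^\der)$, so $K$ need not split as a pure tensor. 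My plan is to project $K$ onto $\bar\Fg^\der\otimes V$ and apply the irreducibility argument there to get $\bar K=\bar\Fg^\der\otimes W'$; if $W'\subsetneq V$ take $W:=W'$ and deduce $K\subseteq\Fg^\der\otimes W'$ from the triviality of the $H_\BF$-action on $N\otimes V$, while if $W'=V$ so that $K$ surjects onto $\bar\Fg^\der\otimes V$, combine \sch\ (vanishing of the Schur multiplier $H^2(H'_\BF,\BF_p)$) with the perfectness of $\Fg^\der$ from \liec(i) to rule out a proper $K$, concluding $K=\Fg^\der\otimes V$ outright.
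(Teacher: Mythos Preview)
Your overall strategy---reduce to $A=R_{\barrhoH}$, then derive a contradiction from a proper $K$ by producing a nontrivial quotient $A_2\to A''$ on which the deformation becomes trivial---is essentially the contrapositive of the paper's approach, and under \liegen\ it works. The paper instead tests each surjection $A\to\BF[\eps]$ directly: since $A$ is a quotient of $R_{\barrhoH}$, the induced deformation to $\BF[\eps]$ is nontrivial; since $H$ is $H_\BF$-perfect, its image $\bar H$ in $H_{\BF[\eps]}$ satisfies $\bar H^c=\bar H$; and then \Cref{BostonLemmaExt-New} (using \van\ to exclude the case of a splitting, and \liegen(ii) or \liec(ii)+\sch\ together with $\bar H^c=\bar H$ to force $\bar H=H_{\BF[\eps]}$) gives surjectivity. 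This is shorter because the delicate case analysis is already packaged in \Cref{BostonLemmaExt-New}.

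Your argument in the \liec+\sch\ case has genuine gaps. First, in case~(a) your inference ``$K\subseteq\Fg^\der\otimes W'$'' is not justified: from $\bar K\subseteq\bar\Fg^\der\otimes W'$ you only get $K\subseteq(\Fg^\der\otimes W')+(N\otimes V)$, so the image of $K$ in $\Fg^\der\otimes V/W'$ can be a nonzero submodule of $N\otimes V/W'$, and $H''$ is then \emph{not} a section. This is exactly the situation that the paper handles via $H^c=H$ in the proof of \Cref{BostonLemmaExt-New}(b)(2): one shows such an $H''$ would be a direct product $(\text{trivial module})\times H_\BF$, contradicting $(H'')^c=H''$. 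You never invoke the $H_\BF$-perfectness of $H$, and without it case~(a) cannot be closed. Second, your claim that $\bar K=\bar\Fg^\der\otimes W'$ for an $\BF$-subspace $W'$ presupposes $\End_{\BF_p[H'_\BF]}(\bar\Fg^\der)=\BF$, which is not part of \liec. Third, in case~(b) your stated reason (``combine \sch\ with perfectness of $\Fg^\der$'') is not the right mechanism; what actually forces $K=\Fg^\der\otimes V$ once $K$ surjects onto the cosocle $\bar\Fg^\der\otimes V$ is the standard Nakayama-type fact that a submodule surjecting onto the cosocle must be everything.
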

\begin{proof}
Let $A\to \BF[\eps]$ be any surjection in $\CAW$. Since $A$ is a quotient of $R_{\barrhoH}$, the induced deformation $\big[\,\rho_{\BF[\eps]}\colon H\to\CG(\BF[\eps])\,\big]$ is non-trivial. Note also that the image $\bar H$ of $H$ in $H_{\BF[\eps]}$  satisfies $\bar H^c=\bar H$ because of \Cref{Lem-OnCommutator}(b). Since we assume that \van\ holds and that either \liegen(ii) holds  or that \liec\ and \sch\ hold, we deduce from \Cref{BostonLemmaExt-New} that $\rho_{\BF[\eps]}$ is surjective. Since $A\to \BF[\eps]$ was arbitrary, the lemma is proved.
\end{proof}

\begin{proof}[Proof of  \Cref{Thm-Thm2}]
Because of the previous lemma, we are in a position to apply \Cref{Cor-BostonGenl}(b). Its hypotheses are satisfies, since we assume that \ns\ holds and either \liegen\ or \liec\ and \sch\ hold. This immediately yields $H=H_A$.
\end{proof}

From \Cref{Thm-Thm2}  and \Cref{Thm:MainOnStdSetup} we deduce:
\begin{Cor}\label{Rem:6.4-forAbsSimple}
Suppose that $(\CG,\BF,H_\BF,H'_\BF)$ satisfies \Cref{Cond:StdSetup}. Then under the conjunction of the following conditions, any closed subgroup $H\subset H_R$ that is residually full is a conjugate of $H_A\subset\CG(R)$ for a closed $\WF$-subalgebra $A\in\CAW$ of $R$:
\begin{enumerate}
\item $(\CG,\BF)$ is not exceptional in the sense of \Cref{Not:ExcSet}, and $($type $\CG,\BF)\notin\{(A_1,\BF_5)\}$.
\item $\CG$ is of type $A_n$ and $p\nmid n+1$, or $\CG$ is of type $B_n$,  $C_n$, $D_n$, $E_7$ or $F_4$ and $p\not=2$, or $\CG$ is of type $E_6$ or $G_2$ and $p\not=3$, or $\CG$ is of type $E_8$; i.e., \liegen\ holds.
\item If type $\CG =C_n$, then $|\BF|\notin\{3,5,9\}$.
\item If $\CG$ is non-split (and hence of types A, D or E${}_6$), then $|\BF|\ge4$.
\end{enumerate}
\end{Cor}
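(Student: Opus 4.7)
The plan is to deduce this corollary directly from \Cref{Thm-Thm2} by verifying that each of its hypotheses---the standard hypothesis, \ct, \ns, \van, and \liegen---is implied by the conjunction of conditions (a)--(d). Each verification invokes the appropriate part of the summary \Cref{Thm:MainOnStdSetup}, and a useful simplification throughout is that absolute simplicity of $\CG$ forces $\CG=\CG^\der$, so in particular $\Fg=\Fg^\der$.

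The verifications proceed as follows. Non-exceptionality in (a) gives $\CG^\sco(\BF)\notin\CE_{\pf}$, hence \Cref{Thm:MainOnStdSetup}(a) supplies both \pf\ and \Cref{StandardHyp}. Condition \liegen\ is exactly (b) combined with \Cref{Thm:MainOnStdSetup}(c)(i); by \Cref{Rem:ConseqOfLieGen} this also delivers \liec\ and \lieu. Since \liegen(i) forces $Z(\Fg^\der)=0$ and $\Fg=\Fg^\der$, we obtain $Z(\Fg)=0$, so \Cref{Thm:MainOnStdSetup}(e) yields \ct. Condition \ns\ follows from $(\CG,\BF)\notin\CE_{\ns}$ (included in (a)) via \Cref{Thm:MainOnStdSetup}(f). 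Finally, (a) together with \ct, (c), and (d) supply the hypotheses of \Cref{Thm:MainOnStdSetup}(g), giving \van.

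With the hypotheses of \Cref{Thm-Thm2} in place, that theorem produces a closed $\WF$-subalgebra $A\subset R$, lying in $\CAW$ by construction, together with an element of $\CG(R)$ conjugating $H^{(\infty)}$ onto $H_A$. Now \liegen(ii) and \Cref{Lem-GpsWithHc} give $H^c=H$ for any residually full closed subgroup $H\subset H_R$, hence iterating yields $H^{(\infty)}=H$, and the corollary follows.

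Since this is essentially a bookkeeping exercise, I do not foresee a serious obstacle. The only mildly delicate point is to ensure that the exclusion lists in (a)--(d) jointly cover all failure modes of \van\ recorded in \Cref{Thm:MainOnStdSetup}(g); this should be handled by a careful case-by-case cross-check against $\CE_{\pf}$, $\CE_{\sch}$, and $\CE_{\ns}$, and in particular by verifying that for type $C_n$ the small fields $|\BF|\in\{2,4\}$ not excluded by (c) are either subsumed by non-exceptionality or ruled out by the present hypotheses.
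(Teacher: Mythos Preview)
Your approach is correct and matches the paper's, which simply states that the corollary follows from \Cref{Thm-Thm2} and \Cref{Thm:MainOnStdSetup}; you have supplied the bookkeeping that the paper leaves implicit, including the key observation that $H^{(\infty)}=H$ via \Cref{Lem-GpsWithHc} under \liegen(ii). The one point you flagged---type $C_n$ with $|\BF|\in\{2,4\}$---is indeed handled by condition (b), which for type $C_n$ forces $p\neq2$ and so excludes those fields; with that checked your verification of \van\ via \Cref{Thm:MainOnStdSetup}(g) is complete.
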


From \Cref{Thm-Thm2}  and  \Cref{Thm:StdHypForPuniv} we deduce:
\begin{Cor}\label{Rem:6.4-pUniversal}
Suppose that $(\CG,\BF,H_\BF,H'_\BF)$ satisfies \Cref{Cond-Puniv}. Then under the conjunction of the following conditions, for any residually full closed subgroup $H$ of $H_R$, there exists a closed $\WF$-subalgebra $A\in\CAW$ of $R$ such that $H^{(\infty)}$ is conjugate to $H_A\subset\CG(R)$.
\begin{enumerate}
\item $($type $\CG,\BF)\notin\{(A_1,\BF_?)_{?\in\{3,5\}}, (C_n,\BF_?)_{n\ge2,?\in \{3,5,9\} }\}$.
\item $\CG$ is of type $A_n$, $n\ge2$, $D_n$ or $E_n$, or $\CG$ is of type $A_1$, $B_n$,  $C_n$, $F_4$ and $p\neq2$, or $\CG$ is of type $G_2$ and $p\neq3$.
\item If $\CG^\der$ is non-split, then $|\BF|\ge4$.
\end{enumerate}
\end{Cor}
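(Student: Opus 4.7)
The plan is to reduce to \Cref{Thm-Thm2}. Given a residually full closed subgroup $H\subset H_R$, its $H_\BF$-perfection $H^{(\infty)}$ remains residually full by \Cref{Lem-OnCommutator} and is $H_\BF$-perfect by construction, so \Cref{Thm-Thm2} supplies the desired closed $\WF$-subalgebra $A\subset R$ with $H^{(\infty)}$ conjugate to $H_A$, provided its axiomatic hypotheses \ct, \ns, \van, together with \liegen\ or (\liec\ and \sch), are verified for the tuple $(\CG,\BF,H_\BF,H'_\BF)$. The entire content of the corollary is therefore that hypotheses (i)--(iii) combined with \Cref{Cond-Puniv} imply these axiomatic inputs.

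The first inputs \ct, \ns, \van\ and \liec\ all follow from \Cref{Thm:StdHypForPuniv}. Concretely, \Cref{Thm:StdHypForPuniv}(a) yields \ct\ and \ns\ once the six pairs in its exception list are eliminated: (ii) handles $(A_1,\BF_2), (B_2,\BF_2), (G_2,\BF_2)$ via the $p\ne 2$ (resp.\ $p\ne 3$) clauses, (i) handles $(A_1,\BF_3)$, and (iii) handles the non-split pairs $({}^2A_2,\BF_2), ({}^2A_3,\BF_2)$ since $|\BF_2|<4$. \Cref{Thm:StdHypForPuniv}(b) yields \van\ by an analogous bookkeeping against its longer exception list, using (i) for $(A_1,\BF_5)$ and the $C_n$ exclusions, (ii) for the remaining $p=2$ or $p=3$ exclusions, and (iii) for the non-split cases. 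Condition \liec\ is exactly \Cref{Thm:StdHypForPuniv}(c)(iii), which coincides with (ii) verbatim.

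For the final axiomatic input, I would prefer to invoke \liegen\ via \Cref{Cor:LieCombined1} whenever possible, as that bypasses \sch. This covers all of (ii) except the residual cases $A_n$ with $p\mid n+1$, $D_n$ with $p=2$, $E_6$ with $p=3$, and $E_7$ with $p=2$. In those residual cases I would invoke \Cref{Cor:Schur} and verify pair by pair that $(\mathrm{type}\,\CG,\BF)\notin\CE_{\sch}$.

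The main obstacle is exactly this final \sch-check: pairs such as $(A_3,\BF_2)$ or $(D_4,\BF_2)$ lie in $\CE_{\sch}$ yet at first glance appear compatible with (i)--(iii), so the proof must genuinely exploit the $z$-Lie-balanced structural constraint --- in particular the consequence that $\CG^{\prime,\der}$ is Lie-simply connected, with center as in \Cref{Eqn:Centers} --- together with a finer reading of (iii) to exclude them. If some residual pair obstinately resists exclusion, condition (i) of the corollary would need to be strengthened accordingly. Once the residual pairs are cleanly handled, everything else is routine case inspection and the corollary follows by direct application of \Cref{Thm-Thm2}.
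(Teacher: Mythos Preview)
Your reduction is exactly the paper's: the corollary is stated as an immediate consequence of \Cref{Thm-Thm2} together with \Cref{Thm:StdHypForPuniv}, with no further argument. Your verification that (i)--(iii) force \ct, \ns, \van\ via \Cref{Thm:StdHypForPuniv}(a),(b), and that (ii) is verbatim the \liec\ clause \Cref{Thm:StdHypForPuniv}(c)(iii), is precisely what the paper has in mind.

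Your worry in the final paragraph is not a gap in \emph{your} argument but a genuine gap in the \emph{statement} of the corollary as printed. For split $(A_3,\BF_2)$ and split $(D_4,\BF_2)$ one has $p\mid n{+}1$ (resp.\ $p=2$ in type $D$), so \liegen\ fails by \Cref{Thm:StdHypForPuniv}(c)(i); at the same time these pairs lie in $\CE_{\sch}$, so \sch\ fails by \Cref{Cor:Schur}. Yet they satisfy (i)--(iii): (i) concerns only types $A_1$ and $C_n$; (ii) admits $A_n$, $n\ge2$, and $D_n$ without restriction on $p$; (iii) is vacuous in the split case. Hence neither alternative in the hypothesis of \Cref{Thm-Thm2} is available, and the one-line deduction does not go through. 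Nothing in the $z$-Lie-balanced structure rescues this: \sch\ depends only on $H'_\BF=[\CG(\BF),\CG(\BF)]$, which is unaffected by passing to the ambient group. The corollary should carry an additional exclusion of the relevant $\CE_{\sch}$ pairs (or equivalently restrict (ii) to the \liegen\ range in those types); your instinct that ``condition (i) would need to be strengthened'' is correct.
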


\begin{Rem}
If \liegen(ii) or if \liec(ii) and \sch\ are not satisfied, then \Cref{Ex-PGL} shows that the conclusion of \Cref{Thm-Thm2} need not hold.
\end{Rem}

The proof of \Cref{Thm-Thm2} is built via  \Cref{Cor-BostonGenl}(b) on $[\Fg^\der,\Fg^\der]=\Fg^\der$. This condition is satisfied in the setup of \Cref{Cor:BasicHypForGLn}. However in the particular case $\CG'=\GL_2$ and $\Char\BF=2$ the Lie group $\Fg^\der$ is not perfect -- for all other pairs $(n,\Char\BF)$ it is. The following example shows that the conclusion of \Cref{Thm-Thm2} does not hold for $(n,\Char\BF)=(2,2)$:

\begin{Ex} \label{Ex-SL2}
Let $(\CG,\Char\BF)=(\GL_2,2)$, and let $R=\BF[x_1,\ldots,x_d]/(x_1,\ldots,x_d)^3$ be in $\CAW$. Let $\Fm'\subset\Fm_R$ be the $\BF$-linear span of $\{x_1,\ldots,x_d,x_1^2,\ldots,x_d^2\}$. Define $H\subset \SL_2(R)$ as the subgroup generated by $H_1\cup\ldots\cup H_4$ for 
\[ H_1=\SL_2(\BF),\ \!H_2= \{ {\scriptstyle \SMat{1+a+a^2}{0}{0}{1-a}}\mid  a\in\Fm_R \},\ \!H_3= \{ \SMat{1}{b}{0}{1}\mid  b \in\Fm'\},\ \!H_4= \{ \SMat{1}{0}{c}{1}\mid  c \in\Fm'\} . \]
Then $H$ is a subgroup of $\SL_2(R)$ which surjects onto $\SL_2(R/\Fm_R^2)$. One can verify that any element in $H$ can be written in a unique way as a product $\gamma_1\cdot\ldots\cdot\gamma_4$ with $\gamma_i\in H_i$.  Then the order of $\SL_2(R)$ divided by the order of $H$ is $\#\BF^{2\dim_\BF \Fm_R/\Fm'}=\#\BF^{d(d-1)}$. Hence $H$ is a proper subgroup of $\SL_2(R)$ unless $d=1$. In particular, for $d>1$ and $|\BF|>4$, we see that \Cref{Thm-Thm2} requires condition~\liec.
\end{Ex}

The above counterexample is optimal in the following sense.
\begin{Prop}\label{Prop-SpclThm2}
Let $(\CG,\Char\BF)=(\GL_2,2)$ and suppose that $\BF\neq\BF_2$. Let $R$ in $\CAW$ satisfy $\dim_\BF \Fm_R/\Fm_R^2=1$. Then $H=\SL_2(R)$ for any closed subgroup $H\subset \SL_2(R)$ that surjects onto $\SL_2(R/(p,\Fm_R^2))$.
\end{Prop}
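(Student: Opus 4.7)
The plan is to adapt the inductive argument of \Cref{Cor-BostonGenl}(b), exploiting the hypothesis $\dim_\BF\Fm_R/\Fm_R^2=1$ to compensate for the failure of condition \liec(i) for $(\GL_2,\Char\BF=2)$. By \Cref{Cor:BasicHypForGLn}, all of \pf, \ct, \van, \lieu, \liecsc\ and \ns\ hold for the tuple $(\GL_2,\BF,\GL_2(\BF),\SL_2(\BF))$ under the standing hypothesis $\BF\ne\BF_2$, and \sch\ also holds except in the isolated char-$2$ case $\BF=\BF_4$. A standard inverse-limit argument reduces us to $R$ Artinian; since $\Fm_R=(\pi)$ is principal, each graded piece $\Fm_R^k/\Fm_R^{k+1}$ has $\BF$-dimension at most~$1$.

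I proceed by induction on the length of $R$. In the base step one must go from $H\onto\SL_2(R/(p,\Fm_R^2))$ to $H\onto\SL_2(R/\Fm_R^2)$: this is automatic when $p\in\Fm_R^2$, while if $p\notin\Fm_R^2$ Nakayama forces $R$ to be a quotient of $\WF$ and \Cref{Lem-BostonForW2}, applied to $\SL_2$ over $\BF$ using \ns\ and \liec(ii), closes the gap. For the inductive step at length $n\ge3$ the inductive hypothesis supplies $H\onto\SL_2(R/\Fm_R^{n-1})$; the kernel $N$ lies in $\CG^{[n-1]}(R)\cong\Fsl_2\otimes_\BF\Fm_R^{n-1}\cong\Fsl_2$ and contains $[\Fsl_2,\Fsl_2]=\BF\cdot I$ by \Cref{Lem-BostonFromM2}(a). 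Since $\BF\ne\BF_2$ the $\SL_2(\BF)$-module $\Fsl_2/(\BF\cdot I)$ is irreducible, so $N\in\{\BF\cdot I,\Fsl_2\}$ and it suffices to exhibit a single element of $N$ outside~$\BF\cdot I$.

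For $n\ge4$ this is accomplished by a clean commutator construction: pick $a_0\in\BF^\times$ with $a_0^2\ne1$ (possible since $|\BF|\ge4$) and choose lifts $\tilde t,\tilde u\in H$ of $t_\beta:=\mathrm{diag}(\beta,\beta^{-1})$ with $\beta=1+a_0\pi$ and of $U_+(\pi^{n-3})$. Both $t_\beta$ and $U_+(\pi^{n-3})$ lie in $\CG^{[1]}(R)$, so their adjoint actions on $\Fsl_2\otimes\Fm_R^{n-1}$ are trivial, and a direct expansion shows that $[\tilde t,\tilde u]\in N$ is independent of the chosen lifts and equals $[t_\beta,U_+(\pi^{n-3})]=U_+((\beta^2-1)\pi^{n-3})$, whose image in $\Fsl_2\otimes\Fm_R^{n-1}/\Fm_R^n$ lies in the non-zero $e$-direction; this forces $N=\Fsl_2$.

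The hard part will be the length-$3$ case, where $U_+(\pi^{n-3})=U_+(1)\notin\CG^{[1]}(R)$ and $\mathrm{Ad}(U_+(1))$ acts non-trivially on $\Fsl_2$, so that the commutator carries lift-dependent correction terms. I would handle $n=3$ by combining the squaring identities $\tilde u^2,\tilde v^2\in N$ for lifts $\tilde u,\tilde v\in H$ of $U_+(1),U_-(1)$ (which pin down the $f$- and $e$-components of the correction modulo $\BF\cdot I$) with the Weyl-conjugation relation $\tilde w\tilde t_\beta\tilde w^{-1}\in\tilde t_{\beta^{-1}}\cdot N$, reducing the non-existence of an $H$ with $N=\BF\cdot I$ to the non-vanishing of the push-forward of $\gamma$ in $H^2(\SL_2(R/\Fm_R^2),\overline{\Fsl_2})$ under $\Fsl_2\to\overline{\Fsl_2}$. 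An analogous cohomological verification is required in the base case when $\BF=\BF_4$ (where \sch\ fails): one must check that the class of $\SL_2(W_2(\BF_4))\to\SL_2(\BF_4)$ in $H^2(\SL_2(\BF_4),\Fsl_2)$ does not come from $H^2(\SL_2(\BF_4),\BF\cdot I)$.
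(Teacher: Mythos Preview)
Your approach is genuinely different from the paper's and more ambitious in attempting a uniform self-contained induction. The paper does not do this: it splits into two cases. When $R$ is a quotient of $\BF[[X]]$ it simply invokes \cite[Thm.~3.6]{DevicPink}, explicitly calling that proof ``rather technical''. When $R$ is a quotient of $\WF$ it cites \cite{Manoharmayum} but also sketches a short direct argument whose key idea you are missing entirely: the $p$-th power map. Writing $\Gamma_m=\ker\big(\SL_2(\WF)\to\SL_2(W_m(\BF))\big)$, the map $X\mapsto X^p$ carries $\Gamma_m$ into $\Gamma_{m+1}$ and (for $m\ge2$) induces the identity on the successive graded pieces $\Fsl_2(\BF)$. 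Starting from $H\onto\SL_2(W_2(\BF))$ via \Cref{Lem-BostonForW2}, one then propagates surjectivity by $p$-th powers rather than commutators, completely bypassing your $n=3$ obstruction in the Witt-vector case.

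Your commutator construction for $n\ge4$ is essentially sound, though the side condition is slightly off: what you actually need is $(\beta^2-1)\pi^{n-3}\ne0$ in $\Fm_R^{n-1}$, which when $2=c\pi$ reads $a_0\ne\bar c$ rather than $a_0^2\ne1$ --- still available since $|\BF|\ge4$. The $n=3$ step, however, is a genuine gap and not a routine verification: your sketch only reduces it to the nonvanishing of a specific class in $H^2\big(\SL_2(R/\Fm_R^2),\overline{\Fsl_2}\big)$ that you do not compute, and in equal characteristic this computation is essentially the substance of the Devic--Pink result the paper cites rather than reproves. The $\BF=\BF_4$ issue you flag is real and is acknowledged by the paper as well (see the footnote on Manoharmayum's Thm.~3.5).
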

\begin{proof}
If $R$ is a quotient of $\BF[[X]]$, the result follows from  \cite[Thm.~3.6]{DevicPink}; this is a rather technical proof. 
If $R$ is a quotient of $\WF$, the result is a special case of \cite[Main Theorem]{Manoharmayum}.\footnote{There appears to be a small error in \cite[Thm.~3.5]{Manoharmayum}; for $\SL_2(\BF_4)$ the mod $2$ Schur multiplier is non-trivial.} However the second case also has a simple natural direct proof; cf.~e.g.~\cite{Vasiu1}. Our conditions imply that \ns, \sch\ and \liec(ii) hold. Then \Cref{Lem-BostonForW2} implies that $H_R=H_{W_2(\BF)}$ for $R=W_2(\BF)$. Denote by $\Gamma_n$ the kernel of $\SL_2(W_{n+1}(\BF))\to\SL_2(W_{n}(\BF))$, for $n\ge1$, with the map being the canonical reduction. Then the $p$-power map $X\mapsto X^p$ induces an isomorphism $\Gamma_n\to \Gamma_{n+1}$. From this one deduces easily that for any quotient $W_n(\BF)$ of $R$ the group $H_R$ surjects onto $H_{W_n(\BF)}$; the $p$-power map replaces the use commutators in the proof of   \Cref{Cor-BostonGenl}(b).
\end{proof}

\begin{Rem}\label{Rem:RelToMano}
The content of \cite[Main Theorem]{Manoharmayum} is the following result. Let $R$ be in $\CAW$. Let $H$ be a closed subgroup of $\GL_n(R)$ whose image in $\GL_n(\BF)$ contains $\SL_n(\BF)$. Suppose that $|\BF|\ge4$ and $\BF\neq\BF_4$ if $n=3$ and $\BF\neq\BF_5$ if $n=2$. Denote by $W_R$ the image of structure map $\WF\to R$.  Then $H$ contains a $\GL_n(R)$- conjugate of $\SL_n(W_R)$.

Our \Cref{Thm-Thm2} generalizes \cite[Main Theorem]{Manoharmayum} in all cases, except for $(n,p)=(2,2)$. \Cref{Ex-SL2} shows that our theorem cannot be expected to hold for $(n,p)=(2,2)$. 

It is possible to reduce the statement of  \cite[Main Theorem]{Manoharmayum} to the methods treated here, by reducing it to \Cref{Prop-SpclThm2} in the following way: Let $R$ be in $\CAW$. Choose a descending filtration by ideals $I_m$ of $R$ with $I_1=\Fm_R$ such that $I_m/I_{m+1}\cong\BF$ for $m\ge1$ and so that $\bigcap_m W_R+I_m=W_R$. Then $W_R+I_m/pW_R+I_{m+1}\cong\BF[\eps]$ for all $m\ge1$. And by induction on $n$ and {\bf up to conjugation}, one can find a descending sequence of closed subgroups $H_m\subset H$ such that $H_\infty:=\bigcap_m H_m\subset \SL_n(W_R)$ and $H_\infty$ surjects onto $\SL_n(\BF)$ under reduction. Now \Cref{Prop-SpclThm2} implies \cite[Main Theorem]{Manoharmayum}.
\end{Rem}

\appendix

\section{Appendix. Primer on affine group schemes over a base}
\label{Appendix}

In this appendix, we gather definitions and results, frequently used in this article, on various types of affine group schemes over arbitrary base schemes. For further details we refer to \cite{ConradSGA3}. We assume familiarity with the theory of affine algebraic groups over a field as in \cite{Borel,MilneAGS,Springer}.

Throughout this appendix we fix an arbitrary base scheme $S$ and an affine group scheme $\CG$ over $S$. We write $\pi$ for the structure morphism $\CG\to S$. In the main body of this work, $S$ will typically be the spectrum of a complete discrete valuation ring with finite residue field. 

\begin{Facts}\label{App-0}
As the map $\pi\colon\CG\to S$ is assumed to be affine, it is separated and quasi-compact; see \cite[Prop.-Def.~12.1]{GoertzWedhorn}. If $\pi$ is furthermore smooth, then it is also flat and locally of finite presentation, and hence of finite presentation; see \cite[Def.~10.34]{GoertzWedhorn}.
\end{Facts} 

\begin{Facts}[{\cite[II.4, in part 1.1, 1.2, 1.4, 4.8]{Bible} or \cite[{II.3. and II.4}]{SGA3I}.}]\label{App-Lie}
The {\em Lie Algebra} $\Lie(\CG/S)$ of $\CG$ over $S$ is the sheaf of $\CO_S$-modules which on affine $S$-schemes $\Spec R$ takes the value \[\Lie(\CG/S)(\Spec R)=\kernel(\CG(R[\eps])\to \CG(R)),\]
where $R[\eps]$ is the ring of dual numbers over $R$. This defines a functor from affine $S$-schemes of finite type to coherent $\CO_S$-modules. Moreover for any affine $S$-scheme $\Spec R$ there is an obvious action of the abstract group $\CG(R)$ on the abelian subgroup $\Lie(\CG/S)(\Spec R)$ of $\CG(R[\eps])$, and this induces the adjoint action $\mathrm{Ad}\colon \CG\to\Aut_S(\Lie(\CG/S))$.
Passing to Lie algebras defines a morphism 
\[\Lie(\CG/S)\to \End(\Lie(\CG/S)), X\mapsto [X,\cdot].\]
In fact the latter defines a Lie bracket on $\Lie(\CG/S)$ by $(X,Y)\mapsto [X,Y]$. One can also identify $\Lie(\CG/S)$ with the $\CO_S$-module of invariant derivations in the relative tangent sheaf $\Der_{\CO_S}(\CO_\CG,\CO_\CG)$, and then the Lie bracket is identified with the commutator bracket in $\Der_{\CO_S}(\CO_\CG,\CO_\CG)$. If $S=\Spec k$, we write $\Lie(\CG)$ for $\Lie(\CG/S)$. 

If $\Lie(\CG/S)$ is locally free of finite rank, the formation of $\Lie(\CG/S)$ commutes with any base change; otherwise one has to require that $S\to S'$ is flat. If $\CG$ is smooth over $S$, then $\Lie(\CG/S)$ is a locally free $\CO_S$-module of rank the relative dimension of $\CG$ over~$S$.
\end{Facts} 

A finitely presented closed subgroup scheme $\CH\subset\CG$ over $S$ is called {\em normal}, if for all $S'\in\Sch_S$ the subgroup $\CH(S')\subset \CG(S')$ is normal.
\begin{Prop}[{Identity component, \cite[15.6.5]{EGA4.3} and \cite[bottom p.~81]{ConradSGA3}}] \label{App-3}
Suppose $\CG$ is smooth over $S$. Then there exists a unique open subgroup scheme $\CG^o \subset \CG$ such that $(\CG^o)_s$ is the identity component of $\CG_s$ for all $s \in  S$. The subgroup scheme $\CG^o$ is normal in $\CG$. The formation $\CG\mapsto \CG^o$ commutes with any base change. 
\end{Prop}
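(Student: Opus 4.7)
The plan is to first construct $\CG^o$ as an open subscheme of $\CG$ using the openness of the ``identity-component locus'', and then to verify the group-theoretic properties fibrewise, exploiting that everything of interest is open.

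First I would recall that $\pi\colon\CG\to S$ is smooth by hypothesis, hence flat and locally of finite presentation by \ref{App-0}. Let $e\colon S\to\CG$ denote the identity section. By EGA IV, 15.6.5, for any such $\pi$ and any section $\sigma$ the subset
\[
U_\sigma:=\{\,x\in\CG \mid x\ \text{lies in the connected component of}\ \sigma(\pi(x))\ \text{in}\ \CG_{\pi(x)}\,\}
\]
is open in $\CG$. Apply this to $\sigma=e$ and define $\CG^o:=U_e$, equipped with the open subscheme structure. By construction $(\CG^o)_s$ is the connected component of $e(s)$ in $\CG_s$, which, since $\CG_s$ is smooth over the field $\kappa(s)$, coincides with its identity component. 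As $\CG^o$ is open, it is smooth over $S$. Uniqueness is immediate: any open subgroup scheme with the stated fibres must have $\CG^o$ as its underlying set, hence as an open subscheme of $\CG$ must equal~$\CG^o$.

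Next I would check that $\CG^o$ is a subgroup scheme, i.e.\ that multiplication $m\colon\CG\times_S\CG\to\CG$ and inversion $\iota\colon\CG\to\CG$ restrict to morphisms on $\CG^o$. For $m$, the key point is that $m^{-1}(\CG^o)\subset\CG\times_S\CG$ is open and contains $\CG^o\times_S\CG^o$ as a set: on any point with residue field $k$, a pair $(g,h)$ in $(\CG^o)_s(k)\times(\CG^o)_s(k)$ satisfies $m(g,h)=gh\in\CG_s^o=(\CG^o)_s$ by the classical theory of algebraic groups over a field. Since $\CG^o\times_S\CG^o$ and $m^{-1}(\CG^o)$ are both open subschemes of $\CG\times_S\CG$ and have the same underlying set, they coincide, so $m$ restricts to $\CG^o\times_S\CG^o\to\CG^o$. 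The same argument, applied to $\iota$, shows $\iota(\CG^o)\subset\CG^o$; so $\CG^o$ is a subgroup scheme.

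For normality I would argue similarly: conjugation gives a morphism $c\colon\CG\times_S\CG^o\to\CG$, and I want to show it factors through $\CG^o$. On each geometric fibre $\CG_s$, conjugation by any element preserves the identity component $(\CG^o)_s$, so $c$ sends all points of $\CG\times_S\CG^o$ into $\CG^o$ set-theoretically; openness of $\CG^o$ then forces $c^{-1}(\CG^o)=\CG\times_S\CG^o$, giving normality. Finally, compatibility with base change: for any $S'\to S$, both $\CG^o\times_SS'$ and $(\CG\times_SS')^o$ are open subschemes of $\CG\times_SS'$ with the same fibres (namely identity components), so by the uniqueness established above they agree.

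The main obstacle is really the openness of the ``identity-component locus'', which is not formal: it relies essentially on the flatness and local finite presentation of $\pi$ via EGA IV, 15.6.5. Once that input is granted, the remaining verifications reduce to the fact that, for open subschemes, inclusions can be tested on underlying sets, and that on geometric fibres everything follows from the classical theory of algebraic groups over a field.
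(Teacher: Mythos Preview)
The paper does not give its own proof of this proposition; it is stated in the appendix as a fact with references to EGA~IV, 15.6.5 and to Conrad's SGA3 notes. Your proposal is a correct fleshing-out of the argument indicated by those citations: the key nontrivial input is precisely the openness result from EGA~IV, 15.6.5, and the remaining checks (subgroup, normality, base change) are handled exactly as one would expect, by reducing to fibres and using that inclusions of open subschemes are detected on underlying sets.

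One small wording slip: you write that $\CG^o\times_S\CG^o$ and $m^{-1}(\CG^o)$ ``have the same underlying set'' and hence coincide, but you only argued (and only need) the inclusion $\CG^o\times_S\CG^o\subset m^{-1}(\CG^o)$. Since both are open in $\CG\times_S\CG$, the set-theoretic inclusion already gives an inclusion of open subschemes, which is exactly what is required for $m$ to restrict; equality is neither claimed by the argument nor needed. With that adjustment, the proof is fine.
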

\begin{Def}[Identity component] \label{App-3.5}
If $\CG$ is smooth over $S$, it is called {\em connected} over $S$, and $\pi$ is called {\em connected}, if $\CG=\CG^o$.
\end{Def}

For a finitely generated $\BZ$-module $(M,+,0_M)$, let $\BZ[M]$ be the Hopf algebra with multiplication defined by $m_1\otimes m_2\mapsto m_1+m_2$, comultiplication by $m\mapsto m\otimes m$, counit $1_\BZ\mapsto 0_M$, coinverse $m\mapsto -m$, and let $D(M)$ be the corresponding affine group scheme over $\BZ$; it is flat and of finite type.
\begin{Def}[{Multiplicative type and tori, \cite[App.~B]{ConradSGA3}}]\label{App-0.5} \leavevmode
\item
\begin{enumerate}
\item
The group scheme $\CG$ is called of {\em of multiplicative type} over $S$, and $\pi$ is of {\em multiplicative type}, if there is an fppf covering $\{S_i\}$ of $S$ such that for all $i$ there are  finitely generated abelian groups $M_i$ and isomorphisms $\CG\times_SS_i \cong D(M_i)\times_{\Spec\BZ}S_i$.
\item
The group scheme $\CG$ is called a {\em torus} if it is of multiplicative type and if there is a covering as in (a) with all $M_i$ free over~$\BZ$.\footnote{Equivalently: $\CG$ is a torus if and only if  $\pi$ is smooth, connected and of multiplicative type.}
\item
The group scheme $\CG$ is called a {\em split torus} if $\CG\cong D(M) \times_{\Spec\BZ}S$ with $M$ a free finitely generated $\BZ$-module.
\end{enumerate}
\end{Def} 
\cite[Prop.~14.51(6)]{GoertzWedhorn} shows that group schemes of multiplicative type are affine.

\smallskip

\begin{Def}[{Reductivity and semisimplicity}] \label{App-4} 
The group scheme $\CG$\footnote{Recall that we assume that $\pi$ is affine.}  is called {\em reductive} or {\em semisimple} over $S$, and $\pi$ is called {\em reductive} or {\em semisimple}, if $\pi$ is smooth and if for all geometric points $\overline s$ of $S$ the fiber $\CG^o_{\overline s}$ is reductive or semisimple, respectively.
\end{Def} 
The definition of reductivity in SGA3 is more restrictive than \Cref{App-4}(b), as noted in  \cite[\S~3.1]{ConradSGA3}. It also requires $\CG$ to be connected. The more general definition above is justified by the following result:

\begin{Prop}[{\cite[Prop.~3.1.3]{ConradSGA3}}]
\label{App-5}
Suppose $\pi$ is smooth. Then $\CG$ is reductive if and only if $\CG^o$ is reductive. In this case $\CG^o$ is clopen in $\CG$, the quotient $\CG/\CG^o$ exists and it is \'etale over $S$ and of finite presentation.
\end{Prop}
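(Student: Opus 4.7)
The plan is to prove the three assertions in order: the equivalence of reductivity for $\CG$ and $\CG^o$, the clopen-ness of $\CG^o$ in $\CG$, and finally the existence of $\CG/\CG^o$ with the claimed properties. Throughout I will use that $\pi$ being smooth and affine implies $\pi$ is of finite presentation (\ref{App-0}), and that $\CG^o$ was already built as an \emph{open} normal subgroup scheme in \ref{App-3}, with formation commuting with base change.

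For the equivalence, I would argue by unwinding the definition of reductive in \ref{App-4}: since $\CG^o \hookrightarrow \CG$ is open and $\pi$ is smooth, the restriction $\CG^o \to S$ is also smooth. At each geometric point $\bar s$ of $S$, the identity component of $\CG^o_{\bar s}$ equals the identity component of $\CG_{\bar s}$ (this is exactly the defining property in \ref{App-3}). Hence the fibral reductivity conditions for $\CG$ and for $\CG^o$ are literally the same, and both sides of the equivalence reduce to the condition that $(\CG_{\bar s})^o$ is reductive for every geometric $\bar s$.

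For the second assertion, openness is given. To show that $\CG^o$ is closed in $\CG$, I would proceed fiberwise and then globalize by flatness. Over any geometric point $\bar s$, the smooth affine group $\CG_{\bar s}$ of finite type over $\kappa(\bar s)$ has only finitely many connected components (a standard fact for smooth group schemes over a field), and these components are translates of $(\CG^o)_{\bar s}$; in particular $(\CG^o)_{\bar s}$ is closed. Now let $Z \subset \CG$ denote the scheme-theoretic closure of $\CG^o$ in $\CG$. Since $\CG^o$ is $S$-flat (being open in the smooth $\CG$), scheme-theoretic closure commutes with arbitrary base change to points of $S$ (cf.\ EGA IV, 2.3.2), so $Z_s = \overline{(\CG^o)_s} = (\CG^o)_s$ on each fiber. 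Combined with $\CG^o \subseteq Z$ and the fact that both are flat with the same fibers, one concludes $\CG^o = Z$, hence $\CG^o$ is closed.

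For existence, étaleness and finite presentation of $\CG/\CG^o$, the plan is to observe that $\CG^o$ is now a clopen, flat, finitely presented, normal subgroup scheme of the affine $S$-scheme $\CG$; under these hypotheses the fppf quotient sheaf $\CG/\CG^o$ is representable by an algebraic space, and in fact by a scheme since the quotient map $\CG \to \CG/\CG^o$ is a $\CG^o$-torsor and hence an fppf cover of a purely ``component-counting'' object. Fiberwise, $(\CG/\CG^o)_s$ is the finite étale group scheme of connected components of $\CG_s$, so $\CG/\CG^o \to S$ has étale fibers; combined with flatness (descended from $\CG \to S$ through the smooth covering by $\CG$) and local finite presentation (inherited from $\CG$), this gives étaleness, and finite presentation follows. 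The main obstacle in this outline is the construction step for $\CG/\CG^o$ as an actual scheme over an arbitrary base $S$: one must either invoke Artin's representability theorem for algebraic spaces plus a descent argument to upgrade to a scheme, or else bootstrap directly using that $\CG^o$ is clopen to glue local trivializations of the $\CG^o$-torsor $\CG \to \CG/\CG^o$ on an étale cover of $S$. This is precisely the technical content that is packaged by the citation to \cite[Prop.~3.1.3]{ConradSGA3}.
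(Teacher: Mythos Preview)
The paper does not give its own proof of this proposition; it is stated with attribution to \cite[Prop.~3.1.3]{ConradSGA3} and used as a black box. So there is no proof in the paper to compare your sketch against.

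That said, your outline is essentially correct and follows the standard route. The equivalence in the first paragraph is immediate from the definitions and \ref{App-3}, as you say. For closedness, your idea is right but two small points deserve tightening: (i) the precise statement you need is that for a quasi-compact open immersion $\CG^o\hookrightarrow\CG$ with $\CG^o$ flat over $S$, the scheme-theoretic closure is $S$-flat and its formation commutes with base change (this is in EGA IV${}_3$, 11.10.10 rather than IV${}_2$, 2.3.2); (ii) the conclusion $\CG^o=Z$ then follows simply because an open immersion that is bijective on underlying topological spaces is an isomorphism. For the quotient, you are right that this is the genuine content; one clean way to finish is to note that once $\CG^o$ is a smooth closed normal subgroup, the fppf quotient is a separated \'etale algebraic space over $S$ with finite fibers, hence a scheme (quasi-finite separated algebraic spaces over a scheme are schemes). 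Your final sentence, deferring this to Conrad, is exactly what the paper itself does.
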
 

Since being smooth and being affine are preserved under any base change, and since geometric fibers of any base change are geometric fibers of a given scheme we also have:
\begin{Prop} \label{App-9}
If $\pi$ is reductive or semisimple, then the respective property is preserved under any base change.
\end{Prop}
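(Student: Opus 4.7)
The proof should follow directly from the definition of reductivity/semisimplicity given in \Cref{App-4} together with standard facts about base change and the behavior of the identity component (stated in \Cref{App-3}). Let me outline the steps.

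First, I would fix a base change $f\colon S'\to S$ and set $\CG':=\CG\times_SS'$ with structure morphism $\pi'\colon\CG'\to S'$. I need to verify that if $\pi$ satisfies the three defining ingredients of reductivity (respectively semisimplicity) -- namely being affine, smooth, and having all geometric fibers $\CG^o_{\overline s}$ reductive (resp.\ semisimple) -- then $\pi'$ satisfies the same three conditions. Affineness and smoothness are both properties stable under arbitrary base change (affineness since affine morphisms are defined by a sheaf-theoretic condition preserved by pullback; smoothness by standard scheme-theoretic results, cf.~any of the references on schemes cited in the paper).

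Second, and this is the key step, I need to identify the geometric fibers of $\CG'$ with geometric fibers of $\CG$. Let $\overline s'\colon \Spec\Omega\to S'$ be a geometric point of $S'$, and let $\overline s:=f\circ \overline s'$ be its image in $S$. By associativity of fiber products,
\[
\CG'\times_{S'}\Spec\Omega \;=\; (\CG\times_SS')\times_{S'}\Spec\Omega\;=\;\CG\times_S\Spec\Omega\;=\;\CG_{\overline s}.
\]
Thus every geometric fiber of $\CG'$ is literally a geometric fiber of $\CG$. Next, I would use the fact that formation of the identity component commutes with arbitrary base change (\Cref{App-3}), so $(\CG')^o=\CG^o\times_SS'$. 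Applying the same computation to $\CG^o$ in place of $\CG$ gives $(\CG')^o_{\overline s'}=\CG^o_{\overline s}$. By hypothesis the right-hand side is reductive (resp.\ semisimple), hence so is $(\CG')^o_{\overline s'}$.

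Putting the two paragraphs together, $\pi'$ is smooth and affine, and all geometric fibers $(\CG')^o_{\overline s'}$ are reductive (resp.\ semisimple). By \Cref{App-4}, $\CG'$ is reductive (resp.\ semisimple) over $S'$, which is what we wanted. There is no serious obstacle here; the only point requiring any thought is the compatibility of the identity component with base change, and that is exactly the content we are allowed to import from \Cref{App-3}. The preservation of smoothness and affineness under base change is entirely routine.
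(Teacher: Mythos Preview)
Your proposal is correct and follows exactly the paper's approach: the paper's entire argument is the single sentence preceding the proposition, namely that smoothness and affineness are stable under base change and that geometric fibers of a base change are geometric fibers of the original scheme. Your write-up is simply a more detailed unpacking of that sentence, with the only extra care being the explicit invocation of \Cref{App-3} to handle the identity component, which the paper leaves implicit.
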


Let $\Sch_S$ be the category of schemes over $S$ and $\Gps$ that of abstract groups.
\begin{Prop}[{Center, \cite[Rem.~2.2.5 and Thm.~3.3.4]{ConradSGA3}}]\label{App-1}
Suppose $\pi$ is smooth and has connected geometric fibers, i.e., for all geometric point $\overline s$ of $S$ the fiber $\CG_{\overline{s}}$ is connected. Then the functor
\[\Sch_S\to  \Gps, S'\mapsto \{g\in\CG(S') \mid \forall g'\in\CG(S'): gg'=g'g \} \]
is representable by a finitely presented closed subgroup scheme $Z_\CG$ of $\CG$ over $S$. 

If in addition $\pi$ is connected reductive, then $Z_\CG$ is of multiplicative type, affine and flat over~$S$, and the formation of $Z_\CG$ commutes with any base change.
\end{Prop}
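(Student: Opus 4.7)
The plan is to follow the SGA3 approach and realize $Z_\CG$ as a scheme-theoretic fixed locus, then exploit reductivity fiberwise to upgrade its structure in the second part. For representability, I will construct $Z_\CG$ by considering the conjugation morphism $c\colon \CG\times_S\CG\to\CG$, $(g,x)\mapsto gxg^{-1}$, together with the second projection $p_2\colon \CG\times_S\CG\to\CG$. Since $\pi$ is separated by \ref{App-0}, their equalizer is a closed subscheme $\CE\subset\CG\times_S\CG$ of finite presentation, parametrizing pairs $(g,x)$ with $gxg^{-1}=x$. The functor of the statement is then the subfunctor of $\CG$ consisting of those $g\in\CG(S')$ whose graph $\{g\}\times_{S'}\CG_{S'}$ factors through $\CE_{S'}$ universally.

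The first key step is to show this subfunctor is represented by a closed subscheme of $\CG$. The natural way is to present it as a Weil restriction: $Z_\CG = \prod_{\CG/S}(\CE/(\CG\times_S\CG))\cap \CG$, where one restricts along $p_1\colon \CG\times_S\CG\to\CG$. Representability of such a restriction for a smooth, finitely presented $\pi$ and a closed $\CE$ is the technical heart of the argument and is supplied by \cite[Rem.~2.2.5]{ConradSGA3}. An alternative concrete description, valid because fibers of $\pi$ are connected, is as follows: let $U\subset\CG$ be a fiberwise dense open subscheme with a section locally (say, coming from a Bruhat cell or simply a smooth chart); then by connectedness of geometric fibers the centralizer of $U$ agrees with the centralizer of all of $\CG$, so $Z_\CG$ is cut out by finitely many centralizer conditions and hence is of finite presentation.

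For the second part, assume $\pi$ is connected reductive. I would first verify the fiberwise claim: for each geometric point $\overline s$ of $S$, the scheme $(Z_\CG)_{\overline s}$ coincides with $Z(\CG_{\overline s})$, which is of multiplicative type over $\overline s$ by the classical theory of connected reductive groups over algebraically closed fields. To globalize, exploit that $\CG$ admits an fppf-local split maximal torus $\CT\subset\CG$ (standard existence of maximal tori for reductive group schemes); then $Z_\CG\subset\CT$ fppf-locally because this inclusion holds on every geometric fiber and $Z_\CG$ is fiberwise contained in every maximal torus. A closed subgroup scheme of a split torus is of multiplicative type by \ref{App-0.5}, hence so is $Z_\CG$ after fppf descent. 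Affineness and flatness then follow from the general theory of group schemes of multiplicative type (\cite[App.~B]{ConradSGA3}). Compatibility of $Z_\CG$ with arbitrary base change follows because the defining equalizer construction commutes with base change, and because the fppf-local argument above is stable under pullback.

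The main obstacle I expect is the representability of the universal centralizer in the first part: without the reductive hypothesis one cannot use maximal tori as a cage, and the Weil restriction along $p_1$ must be shown to be representable directly. The smoothness and connectedness of fibers are exactly what make this work, allowing the reduction to finitely many centralizer conditions, but the bookkeeping (especially over a non-noetherian $S$) is delicate. The second part is then essentially formal once the first is in hand, modulo invoking fppf-local existence of split maximal tori in connected reductive group schemes.
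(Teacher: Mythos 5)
First, a point of context: the paper does not prove \Cref{App-1} at all --- it is quoted from \cite[Rem.~2.2.5 and Thm.~3.3.4]{ConradSGA3}, and the appendix explicitly defers all details to that reference. So the comparison can only be with the argument in Conrad's notes, and measured against that your sketch has genuine gaps. For representability, forming the equalizer $\CE$ and describing $Z_\CG$ as a Weil restriction along $p_1$ is the correct formulation of the functor, but $p_1$ is affine and not proper, so no general theorem makes this Weil restriction representable; the entire content of the cited Lemma~2.2.4/Rem.~2.2.5 is exactly a representability theorem for centralizers of smooth, fiberwise connected subgroup schemes, so invoking it here is circular with respect to what you are proving. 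Your ``alternative concrete description'' does not close this gap: a fiberwise dense open $U\subset\CG$ still imposes a condition quantified over \emph{all} $T$-points of $U$, not ``finitely many centralizer conditions'', and no finite collection of sections covers $U$. Relative schematic density only tells you that centralizing $U$ is equivalent to centralizing $\CG$; it does not turn the condition into a closed one.

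In the reductive part there are two further problems. From $Z(\CG_{\overline s})\subset\CT_{\overline s}$ on geometric fibers you cannot conclude that the closed subscheme $Z_\CG$ factors through $\CT$ scheme-theoretically: $Z_\CG$ may be infinitesimal (e.g.\ $\mu_p\subset\SL_p$ over $\BF_p$, the example given after \Cref{App-1}), and fiberwise set-theoretic containment of closed subschemes does not yield a scheme-theoretic inclusion. The correct argument is $Z_\CG\subset Z_\CG(\CT)=\CT$, using that a maximal torus in a reductive group scheme is its own scheme-theoretic centralizer. More seriously, ``a closed subgroup scheme of a split torus is of multiplicative type'' is false without a flatness hypothesis (SGA3, Exp.~IX gives it only for flat finitely presented closed subgroups), yet you propose to deduce flatness \emph{afterwards} from being of multiplicative type --- a circle. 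The standard route, and the one behind \cite[Thm.~3.3.4]{ConradSGA3}, avoids this by identifying $Z_\CG$, fppf-locally where $\CG$ is split with maximal torus $\CT$ and root system $\Phi$, with $\bigcap_{a\in\Phi}\kernel(a)$, i.e.\ with the kernel of a homomorphism of tori $\CT\to\BG_m^{\Phi}$; kernels of homomorphisms between multiplicative type groups are automatically of multiplicative type, affine and flat, and this description also yields the base-change compatibility. (Base-change compatibility of the representing object in the first part is, by contrast, formal once representability is known, since the functor is defined relative to an arbitrary base.)
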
 
\begin{Def}[Center]
The group scheme $Z_\CG$ is called the {\em center} of~$\CG$.
\end{Def}
Even if $\pi$ is reductive, the scheme $Z_\CG$ need not be smooth over $S$, as is witnessed for instance by $\CG=\SL_p$ and $S=\Spec \BF_p$, in which case $Z_\CG$ is the finite flat group scheme $\mu_p$ which is not smooth over $\BF_p$.

\begin{Def}[Tori and splitness]\label{App-12}
Let $\pi$ be reductive and let $S$ and \hbox{$\CG$ be connected.}\footnote{For $S$ not connected, the definition is more complicated, but we do not need it.} 
\begin{enumerate}
\item A {\em maximal torus in $\CG$} is a closed $S$-subgroup $\CT\subset\CG$ such that for any geometric point $\overline s$ of $S$ the fiber $\CT_{\overline s}$ is a maximal torus in the reductive group $\CG_{\overline s}$.
\item One calls $\CG$ {\em split reductive over $S$} if it contains a maximal torus $\CT$ that is split over $S$ \textbf{and} if for each root $\alpha\in\Hom(\CT,\BG_{m,S})$ the root space $\Lie(\CG/S)_\alpha$ is free over $\CO_S$ of rank $1$.
\end{enumerate}
\end{Def}

\begin{Def} \label{App-13}
Suppose $\CG$ and $\CG'$ are connected semisimple over $S$.
\begin{enumerate}
\item A homomorphism $\phi\colon \CG \rightarrow \CG'$ is called an {\em isogeny} if $f$ is finite, flat and surjective, and it is called a {\em central isogeny} if in addition $\kernel\phi$ lies in the center of $\CG$.
\item $\CG$ is called of {\em adjoint type} if $Z_\CG=1$.
\item $\CG$ is called {\em simply connected} if all its geometric fibers are simply connected. 
\end{enumerate}
\end{Def}
Since for $\CG$ reductive the formation of $Z_\CG$ commutes with any base change, a connected semisimple group $\CG$ is of adjoint type if and only if all its geometric fibers are of adjoint type.

\begin{Thm}[{\cite[Exer.~6.5.2, Prop.~3.3.5]{ConradSGA3}}]
\label{App-15}
For~\hbox{$\pi$ semisimple the following hold:}
\begin{enumerate}
\item There exists a semisimple, simply connected group scheme $\wt\pi\colon\wt\CG\to S$ and a central isogeny $\phi^\sco\colon \wt\CG\to\CG$ over $S$, and the pair $(\wt\CG,\phi^\sco)$ is unique up to unique isomorphism.
\item The maps $\phi^\ad\colon \CG\to \CG/Z(\CG)$ and $\phi\colon\wt\CG\to\wt\CG/Z(\wt\CG)$ are central isogenies, the $S$-group schemes $\CG/Z(\CG)$ and $\wt\CG/Z(\wt\CG)$ are isomorphic and semisimple of adjoint type, and under any isomorphism, the map $\phi$ factors uniquely via $\phi^\ad$.
\item The map $\phi^\sco$ induces a short exact sequence of finite flat $S$-group schemes
\[1\to \kernel\phi^\sco\stackrel{\phi^\sco}\to Z(\wt\CG)\to Z(\CG)\to 1.\] 
\end{enumerate}
\end{Thm}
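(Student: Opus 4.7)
The plan is to reduce all three parts to the classical field case via \'etale-local trivializations. Over an algebraically closed field, the existence and uniqueness of the simply connected cover and the description of the kernel of the central isogeny are standard (see, e.g., \cite{Springer}), so the bulk of the work is to construct $\wt\CG$ globally over $S$ together with $\phi^\sco$ and to verify that the formation of centers and kernels behaves well over the base. The key structural input is that a semisimple group scheme is \'etale-locally split, so after passing to an \'etale cover $S'\to S$, the pullback $\CG_{S'}$ becomes the base change from $\Spec\BZ$ of a split Chevalley group associated to a reduced root datum.

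\textbf{Part (a).} First I would handle the split case by defining $\wt{\CG}_{S'}$ to be the base change to $S'$ of the simply connected Chevalley group over $\BZ$ with the same root system as $\CG_{S'}$; the universal central isogeny between the two Chevalley $\BZ$-schemes base-changes to $\phi^{\sco}_{S'}\colon\wt{\CG}_{S'}\to\CG_{S'}$. To descend to $S$, I would use uniqueness of the simply connected cover together with rigidity of central isogenies to produce a canonical descent datum on $S'\times_S S'$ satisfying the cocycle condition on $S'\times_S S'\times_S S'$. Uniqueness of the global pair $(\wt\CG,\phi^\sco)$ follows from the same rigidity argument applied directly: any $S$-isomorphism between two simply connected covers of $\CG$ that commutes with the respective $\phi^\sco$ is pinned down by its restriction to a single geometric fiber.

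\textbf{Parts (b) and (c).} For (b) and (c), since $\phi^\sco$ is a central isogeny one has $\kernel\phi^\sco\subset Z(\wt\CG)$, so $\phi^\sco$ restricts to a morphism $\psi\colon Z(\wt\CG)\to Z(\CG)$ of finite flat group schemes of multiplicative type. Fiberwise, the composition $\wt{\CG}_{\overline{s}}\to\CG_{\overline{s}}\to(\CG/Z(\CG))_{\overline{s}}$ is a central isogeny onto an adjoint semisimple group, and the classical statement identifies $(\CG/Z(\CG))_{\overline{s}}$ with $(\wt\CG/Z(\wt\CG))_{\overline{s}}$; in particular $\psi_{\overline{s}}$ is surjective. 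A fiberwise surjection of $S$-flat group schemes of multiplicative type is an fppf epimorphism with finite flat kernel, giving the short exact sequence in (c); passing to quotients then yields the isomorphism $\CG/Z(\CG)\cong\wt\CG/Z(\wt\CG)$ and the factorization $\phi=\phi^{\ad}\circ\phi^{\sco}$ asserted in (b).

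\textbf{Main obstacle.} The hard part will be the descent step in (a): I need canonical, functorial isomorphisms between the two pullbacks of $\wt{\CG}_{S'}$ along the projections $S'\times_S S'\to S'$ which automatically satisfy the cocycle condition. This rests on the rigidity assertion that any automorphism of a simply connected semisimple group scheme commuting with a fixed central isogeny to $\CG$ is trivial; over a field this is classical, but over a general (possibly non-reduced, possibly disconnected) base $S$ it has to be extracted from fiberwise triviality together with flatness, along the lines of the Demazure--Grothendieck rigidity theorems for isotrivial reductive group schemes cited in \cite{ConradSGA3}.
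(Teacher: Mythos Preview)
The paper does not give its own proof of this theorem: it is stated in the appendix as a known result and simply referenced to \cite[Exer.~6.5.2, Prop.~3.3.5]{ConradSGA3}, with no argument supplied. So there is nothing to compare against in the paper itself.

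Your sketch is broadly the standard route taken in the cited reference: \'etale-local splitting of semisimple group schemes, construction of the simply connected cover in the split case via Chevalley groups over $\BZ$, and descent via rigidity of central isogenies. That is the right architecture, and your identification of the descent step as the crux is accurate. Two small points worth tightening. First, your rigidity claim in (a) --- that an automorphism of $\wt\CG$ over $\CG$ is ``pinned down by its restriction to a single geometric fiber'' --- is not literally how the argument runs over a general base; one rather shows that the scheme of such automorphisms is trivial by checking it is finite \'etale over $S$ with trivial geometric fibers, and this uses that $\kernel\phi^\sco$ is of multiplicative type (so has no nontrivial automorphisms fixing the inclusion into $\wt\CG$). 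Second, in (c) you invoke that a fiberwise-surjective map of $S$-flat multiplicative-type groups is an fppf epimorphism; this is fine, but you should also note that $Z(\wt\CG)$ and $Z(\CG)$ are finite flat (not just of multiplicative type), which follows from semisimplicity and is needed to conclude that the kernel is finite flat. With those adjustments your outline matches the argument in \cite{ConradSGA3}.
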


\begin{Prop}[{\cite[Prop.~3.3.5]{ConradSGA3}}]\label{App-20}
Let $\phi\colon \CG\to\CG'$ be a central isogeny of connected reductive groups. Then $\CT'\mapsto\phi^{-1}\CT$ defines a bijection between maximal tori of $\CG'$ and maximal tori of~$\CG$.
\end{Prop}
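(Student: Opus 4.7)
My plan is to establish the bijection by exhibiting an explicit inverse $\CT\mapsto\phi(\CT)$ and then verifying both compositions. The argument reduces standard facts about central isogenies over algebraically closed fields to the relative setting via fppf descent and fiberwise criteria, both of which are made available in the relative framework recalled in this appendix.

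First, I would show that $\phi^{-1}(\CT')$ is a maximal torus of $\CG$ for any maximal torus $\CT'\subset\CG'$. Since $\phi$ is finite, flat and surjective (\Cref{App-13}), the closed subgroup scheme $\phi^{-1}(\CT')$ fits into a short exact sequence of fppf group sheaves
\[ 1\longto \kernel\phi\longto \phi^{-1}(\CT')\longto \CT'\longto 1.\]
Both outer terms are of multiplicative type: $\kernel\phi$ by \Cref{App-15}(c) and $\CT'$ by definition. Hence the middle term is of multiplicative type as well by the SGA3 extension theory (SGA3 IX.8). On each geometric fiber $\overline s$, classical structure theory ensures that the preimage of a maximal torus under a central isogeny of reductive groups is again a maximal torus; so $\phi^{-1}(\CT')_{\overline s}$ is smooth, connected, and of the expected dimension. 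Thus $\phi^{-1}(\CT')$ is a torus that is fiberwise maximal, and therefore a maximal torus.

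Conversely, given a maximal torus $\CT\subset\CG$, I would note that $\kernel\phi\subset Z_{\CG}$ by the centrality hypothesis, and that on each geometric fiber the center of a connected reductive group is contained in every maximal torus. This yields a scheme-theoretic inclusion $\kernel\phi\subset\CT$ (using that $\CT$ is flat over $S$), so that the schematic image $\phi(\CT)=\CT/\kernel\phi$ is a well-defined closed subgroup scheme of multiplicative type in $\CG'$, fiberwise a maximal torus, hence a maximal torus. The equality $\phi^{-1}(\phi(\CT))=\CT$ is immediate from $\kernel\phi\subset\CT$, and the uniqueness of $\CT'$ with $\phi^{-1}(\CT')=\CT$ follows from the surjectivity of $\phi$. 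The main technical obstacle, handled in the cited reference of Conrad, is checking that these fiberwise assertions promote to the relative setting in the presence of non-smooth $\kernel\phi$ (such as $\mu_p$ in characteristic $p$); once one grants the SGA3 machinery of groups of multiplicative type and the compatibility of centers with base change from \Cref{App-1}, the reduction is routine.
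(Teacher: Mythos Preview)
The paper does not give its own proof of this proposition: it is stated in the appendix purely as a citation from \cite[Prop.~3.3.5]{ConradSGA3}, with no accompanying argument. So there is no in-paper proof to compare against.

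Your sketch is broadly the right shape and matches the strategy one finds in the cited reference. One point deserves tightening: the passage ``on each geometric fiber the center is contained in every maximal torus; this yields a scheme-theoretic inclusion $\kernel\phi\subset\CT$ (using that $\CT$ is flat over $S$)'' is not an argument. Fiberwise containment of two closed subschemes together with flatness of the larger one does not, by itself, force a scheme-theoretic inclusion over the base. The clean way to get $\kernel\phi\subset\CT$ is functorial: a maximal torus in a reductive $S$-group is its own scheme-theoretic centralizer (this is part of the same circle of results in Conrad's notes), and since $\kernel\phi\subset Z_\CG$ centralizes $\CT$ one obtains $\kernel\phi\subset Z_\CG(\CT)=\CT$ directly, with no fiberwise reduction needed. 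Once that is in place, your inverse map and the verification of both compositions go through as you wrote.
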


Given an $S$-scheme $X$ carrying a $\CG$-action, an important construction is that of a quotient $X/\CG$. The course taken in SGA3 is as follows: embed the category $\Sch_S$ via Yoneda into the category of functors $\Sch_S^\opp\to\Sets$ by sending $X$ to $h_X\colon T\mapsto \Hom_S(T,X)$. Equip the latter category with the fppf-topology; we write $\Sh(\Sch_S)_\fppf$. It turns out that that any $h_X$ lie in $\Sh(\Sch_S)_\fppf$. One calls $F\in\Sh(\Sch_S)_\fppf$ representable if it is isomorphic to $h_X$ for some $X\in\Sch_S$. Now given $X$, $\CG$ as above, consider the presheaf $h_X/h_\CG\colon T\mapsto h_X(T)/h_\CG(T)$ and let $(h_X/h_\CG)^\sh$ be the associated sheaf in $\Sh(\Sch_S)_\fppf$. 
\begin{Def}
One calls $Y\in\Sch_S$ an {\em fppf-quotient} of $X$ by $\CG$ if $h_Y\cong (h_X/h_\CG)^\sh$.
\end{Def}
In general $Y$ need not exist. If it exists, an important result of Raynaud gives a comparison with (universal) geometric quotients: if $\CG$ is smooth and affine, if $X$ is locally of finite type and if the action is strictly free, i.e., $\CG\times_SX\to X\times_SX,(g,x)\mapsto (gx,x)$ is an immersion, then geometric and fppf-quotients agree; see~\cite[Ch.~4]{Edixhofen-vdGeer-Moonen}.

A special case is when $X$ itself is an $S$-group scheme $\CH$ and $\CG$ is a closed normal subgroup of $\CH$. Then $h_\CH/h_\CG$ carries a group law, a unit section and an inversion. This passes to the fppf-sheafification. Hence if $\CH/\CG$ exists as an fppf-quotient, it is automatically an $S$-group scheme.

\begin{Prop} [{Derived subgroup, \cite[Thm.~5.3.1]{ConradSGA3}}]\label{App-6}
If $\pi$ is connected and reductive, then the following hold: 
\begin{enumerate}
\item The fppf-sheafification of the {\em commutator subfunctor} $S' \mapsto[\CG(S'), \CG(S')]$ on $Sch_S$ is representable by a semisimple closed normal $S$-subgroup $\CG^\der \subset \CG$.
\item The fppf-quotient $\CG/\CG^\der$ is representable by a torus. 
\item The quotient map $\CG \rightarrow \CG/\CG^\der$ is initial among all homomorphisms from $\CG$ to an abelian sheaf, and the formation of $\CG^{\der}$ commutes with any base change on $S$.
\end{enumerate}
\end{Prop}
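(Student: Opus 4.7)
The plan is to reduce to the split case via étale descent, then construct $\CG^\der$ explicitly from the root datum. First I would invoke the local structure theory of reductive groups (cf.\ the results feeding into \Cref{App-15} and, in full, \cite{ConradSGA3}): every connected reductive $\CG/S$ is étale-locally split, so there is an étale cover $\{S_i \to S\}$ on which $\CG_{S_i}$ admits a split maximal torus $\CT_i$ with root datum $\Psi=(X,R,X^\vee,R^\vee)$. Over each $S_i$ one defines $\CG^{\der}_i \subset \CG_{S_i}$ to be the closed subgroup generated by the root subgroups $\CU_\alpha$, $\alpha\in R$; equivalently, it is the base change to $S_i$ of the Chevalley group scheme over $\BZ$ attached to the semisimple root datum cut out by $R^\vee\subset X^\vee$.

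Next I would check the desired properties in the split case. Normality in $\CG_{S_i}$ uses that $\CT_i$-conjugation permutes the $\CU_\alpha$ and that $\CG_{S_i}$ is generated (as an $S_i$-group) by $\CT_i$ and the $\CU_\alpha$; semisimplicity follows by construction. The fppf quotient $\CG_{S_i}/\CG^{\der}_i$ is representable by the torus associated with the quotient root datum $X/\langle R\rangle$, manifestly a torus over $\BZ$ and hence over $S_i$ after base change. To pass from the $\CG^{\der}_i$ to a global $\CG^\der$, one observes that any two splittings become conjugate after further étale refinement, so the $\CG^{\der}_i$ satisfy the descent cocycle condition; since the inclusion $\CG^{\der}_i \hookrightarrow \CG_{S_i}$ is affine, fppf descent produces the desired closed semisimple normal $S$-subgroup $\CG^\der\subset \CG$, together with the quotient torus $\CG/\CG^\der$.

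Finally I would identify $\CG^\der$ with the fppf sheafification of the commutator subfunctor $S'\mapsto[\CG(S'),\CG(S')]$. On geometric points the classical theory (\cite{Borel,Springer}) identifies $[\CG(\bar s),\CG(\bar s)]$ with $\CG^{\der}_{\bar s}(\bar s)$, and the inclusion ``commutator $\subseteq \CG^\der$'' is automatic because $\CG/\CG^\der$ is abelian. The reverse inclusion requires that, fppf-locally, every section of $\CG^\der$ arises as a product of commutators; this is supplied by the Chevalley commutator formulas $[\CU_\alpha,\CU_\beta]\subset \prod_{i\alpha+j\beta\in R}\CU_{i\alpha+j\beta}$ together with the fact that root subgroups generate $\CG^\der$ étale-locally. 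The universal property in (c) is then formal: any map $\CG\to\CA$ to an abelian fppf sheaf annihilates the commutator presheaf, hence its sheafification $\CG^\der$; and compatibility with base change is immediate from the explicit construction in the split case together with the compatibility of fppf sheafification with base change.

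The main obstacle will be bridging from the presheaf commutator to its fppf sheafification: the subset of $\CG(R)$ generated by commutators generally is not the $R$-points of any subgroup scheme, and equality with $\CG^\der$ holds only after sheafification. Proving this cleanly requires invoking the Chevalley commutator relations to exhibit enough ``generic'' commutators fppf-locally, which is the technical heart of the argument; once this is in place, descent and the formal universal property assemble the rest.
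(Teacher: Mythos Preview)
The paper does not supply a proof of this proposition: it is stated in the appendix as a black-box result with the citation \cite[Thm.~5.3.1]{ConradSGA3}, and no argument is given. So there is nothing in the paper to compare your sketch against; you have written more than the authors did.

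Your outline is essentially the strategy of the cited reference: reduce to the split case by \'etale-local splitting of reductive groups, build $\CG^\der$ from the root groups, and then identify it with the sheafified commutator subfunctor. A couple of points deserve tightening. First, for descent you do not need to argue via conjugacy of splittings; it is cleaner to note that the subgroup generated by all root groups is intrinsic (independent of the chosen split maximal torus), or alternatively to characterize $\CG^\der$ fiberwise and invoke that a smooth closed subgroup with the correct geometric fibers is unique. Second, your appeal to the Chevalley commutator relations for the inclusion $\CG^\der\subset$ (sheafified commutators) is not quite the right mechanism: those relations describe $[\CU_\alpha,\CU_\beta]$, but what you actually need is that each $\CU_\alpha$ itself lies in the commutator sheaf. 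This follows from the torus action, since $[t,u_\alpha(x)]=u_\alpha((\alpha(t)-1)x)$ and one can arrange $\alpha(t)-1$ to be a unit fppf-locally (indeed Zariski-locally) on the base. With that adjustment your sketch is sound.
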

Hence if $\pi$ is connected and semisimple, then $\CG=\CG^\der$, since a non-trivial torus quotient would violate the required semisimplicity of all fibers of $\CG$.

\begin{Def}
Suppose that $\pi$ is reductive. Then we define $\CG^\der$ as $(\CG^o)^\der$.
\end{Def}
If $\pi$ is reductive, then the $S$-subgroup scheme $\CG^\der$ is closed and normal in $\CG$: By  \Cref{App-3}, the group scheme $\CG^o$ is closed and normal in $\CG$. To conclude observe that $\CG^\der$ is closed in $\CG^o$ and that normality is a property of the underlying functor of points, so that one can use that for an abstract group $G$ and a normal subgroup $N$ of $G$ the commutator subgroup $[N,N]$ is normal in~$G$.

\begin{Prop} \label{App-8}
If $\pi$ is reductive and $\CG/\CG^o$ is finite \'etale, then the following hold:
\begin{enumerate}
\item The quotient $\CG/\CG^\der$ is fppf-representable by a smooth affine $S$-group scheme.
\item The $S$-group scheme $\CG/\CG^\der$ is an extension of the finite \'etale $S$-group scheme $\CG/\CG^o$ by the torus $\CG^o/\CG^\der$.
\end{enumerate}
\end{Prop}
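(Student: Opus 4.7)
The plan is to identify the sheaf quotient $\CG/\CG^\der$ as a torsor under the torus $\CT:=\CG^o/\CG^\der$ over the finite \'etale group scheme $\CG/\CG^o$; this will simultaneously deliver representability, smoothness and affineness for~(a), and the extension structure for~(b).

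First, I would work in the category of fppf sheaves of groups on $\Sch_S$. As the text notes just above the proposition, $\CG^\der$ is a closed normal subgroup of $\CG$, so the fppf sheafification $(h_\CG/h_{\CG^\der})^\sh$ is an fppf sheaf of groups, which I shall still denote by $\CG/\CG^\der$. Since $\CG^\der$ is also normal in $\CG^o$, one obtains a ``third isomorphism theorem'' short exact sequence of fppf sheaves of groups
\[
1\longto \CT\longto \CG/\CG^\der\longto \CG/\CG^o\longto 1.
\]
Exactness on the left is the identification of $\CT=\CG^o/\CG^\der$ with the kernel, and exactness on the right uses that $\CG\to\CG/\CG^o$ is an fppf surjection (which follows from $\CG^o$ being clopen in $\CG$ by \Cref{App-3} together with $\CG/\CG^o$ being an \'etale quotient $S$-scheme by \Cref{App-5}). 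By \Cref{App-6} applied to the connected reductive group $\CG^o$, the kernel $\CT$ is representable by a torus over $S$; by hypothesis, the cokernel $\CG/\CG^o$ is a finite \'etale $S$-group scheme.

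Next, I would prove representability of $\CG/\CG^\der$. Forgetting the group law, the projection $\CG/\CG^\der\to \CG/\CG^o$ is an fppf torsor under $\CT$ in the standard way: translation by $\CT$ preserves fibers, and the action map $\CT\times(\CG/\CG^\der)\to (\CG/\CG^\der)\times_{\CG/\CG^o}(\CG/\CG^\der)$ is an isomorphism of fppf sheaves. Since $\CT$ is smooth and affine over $S$, every fppf $\CT$-torsor over an $S$-scheme is representable by a smooth affine scheme: fppf descent is effective for affine morphisms, and smoothness descends along faithfully flat morphisms of finite presentation. Hence $\CG/\CG^\der$ is representable by a smooth affine $(\CG/\CG^o)$-scheme. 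Since $\CG/\CG^o\to S$ is finite (hence affine) and \'etale (hence smooth), the composition $\CG/\CG^\der\to S$ is smooth and affine. The group law on the fppf sheaf $\CG/\CG^\der$ transports to an $S$-group scheme structure on the representing scheme, which gives~(a); part~(b) is then just a restatement of the short exact sequence above.

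The main obstacle I anticipate is the rigorous justification of the third isomorphism sequence as an exact sequence of fppf sheaves; in particular the verification that $\CT$ is the kernel of $\CG/\CG^\der\to \CG/\CG^o$ at the sheaf level requires carefully distinguishing the presheaf quotients from their fppf sheafifications, and invoking the fact that the quotient presheaves $h_\CG/h_{\CG^o}$ and $h_{\CG^o}/h_{\CG^\der}$ already have the ``correct'' sheafifications $\CG/\CG^o$ and $\CT$ provided by \Cref{App-5} and \Cref{App-6} respectively. Once this exactness is in place, the torsor perspective and representability of torsors under a smooth affine group (by standard effective fppf descent for affine morphisms) are routine, as is the transport of the group law from the representing sheaf to the scheme.
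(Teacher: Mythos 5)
Your argument is correct, and it reorganizes the paper's proof in a way worth noting. The paper proves representability first: it chooses an fppf cover $S'\to S$ over which $\CG/\CG^o$ becomes constant with a point in each component, writes $\CG_{S'}$ as a disjoint union of translates $\hat g\,\CG^o_{S'}$, so that $\CG_{S'}/\CG^\der_{S'}$ is visibly a disjoint union of copies of the smooth affine quotient $\CG^o_{S'}/\CG^\der_{S'}$ furnished by \Cref{App-6}, and then descends affineness along $S'\to S$; only afterwards does it verify the extension (b) by checking the third-isomorphism sequence at the presheaf level and sheafifying. You invert the order: you establish the exact sequence of fppf sheaves first (by the same presheaf-then-sheafify computation the paper uses for (b)), and then read off representability from the fact that $\CG/\CG^\der\to\CG/\CG^o$ is a torsor under the smooth affine group $\CT=\CG^o/\CG^\der$, invoking effectivity of fppf descent for affine morphisms over the base $\CG/\CG^o$ rather than over $S$. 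The two proofs rest on exactly the same inputs (\Cref{App-5}, \Cref{App-6}, and descent of affine morphisms), but your torsor packaging avoids choosing coset representatives $\hat g$ (and hence the refinement of the cover needed to make them exist), at the cost of having to justify the torsor axioms and local triviality — which, as you correctly identify, reduce to the sheaf-level exactness you prove first. Both are complete; yours is marginally slicker, the paper's is marginally more explicit about where smoothness and affineness come from.
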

\begin{proof}
Consider the fppf-sheaf $\overline\CG:=(h_\CG/h_{\CG^\der})^\sh$ on $\Sch_S$. To prove (a) we need to show that it is representable by a smooth affine group scheme. Let $S'\to S$ be an fppf-cover over which $\pi_0:=\CG/\CG^o$ becomes a finite constant group scheme and each component has an $S'$-point. Then $\CG_{S'}$ is a disjoint union $\bigsqcup_{g\in\pi_0}\hat g\CG^o_{S'}$ where each $\hat g$ is a representative in $\CG(S')$ of $g\in\pi_0$.  Now $\CG^o_{S'}/\CG^\der_{S'}$ exists as a smooth affine fppf-quotient by \Cref{App-6}. Hence so does $\CG_{S'}/\CG^\der_{S'}=\bigsqcup_{g\in\pi_0}\hat g\CG^o_{S'}/\CG^\der_{S'}$. It follows that $\overline\CG$ after restriction to $\Sch_{S'}$ is representable by a smooth affine $S'$-scheme. By descent, see the proof of \cite[Thm.~14, \S~5.4]{Stix-FFGS}, it follows that $\overline\CG$ is representable by a scheme over $S$, and this proves~(a).

For the proof of (b) observe that it suffices to show that the natural diagram 
\[
1\to \CG^o/\CG^\der \to \CG/\CG^\der\to \CG/\CG^o\to 1
\]
of fppf-presheaves is a short exact sequence of groups under any evaluation at $T\in\Sch_S$. This remains true under fppf-sheafification, and hence also for the representing schemes that exist by (a), \Cref{App-6} and the hypotheses. This proves~(b).
\end{proof}

\bibliographystyle{alpha}
\bibliography{DEM}

\end{document}